\documentclass[10pt]{amsart}

\usepackage[english]{babel}
\usepackage{amsmath,amssymb,amsfonts,amsthm,amsopn}
\usepackage{latexsym,graphicx}
\usepackage[title]{appendix}
\usepackage{tikz-cd}
\usepackage{tikz}
\usepackage{mathtools}
\usepackage{multirow}
\usepackage{enumerate}
\usepackage[autostyle=true]{csquotes}
\mathtoolsset{showonlyrefs}

\newtheorem{theorem}{Theorem}[section]

\newtheorem{corollary}[theorem]{Corollary}
\newtheorem{proposition}[theorem]{Proposition}
\theoremstyle{definition}
\newtheorem{definition}[theorem]{Definition}
\newtheorem{example}[theorem]{Example}
\theoremstyle{remark}
\newtheorem{remark}[theorem]{Remark}

\newcommand{\field}[1]{\mathbb{#1}}
\newcommand{\bR}{\field{R}}
\newcommand{\bN}{\field{N}}
\newcommand{\bZ}{\field{Z}}
\newcommand{\bC}{\field{C}}

\newcommand{\cF}{\mathcal{F}}
\newcommand{\cS}{\mathcal{S}}
\newcommand{\cD}{\mathcal{D}}

\newcommand{\cE}{\mathcal{E}}
\newcommand{\cG}{\mathcal{G}}
\newcommand{\cM}{\mathcal{M}}

\newcommand{\cA}{\mathcal{A}}

\newcommand{\cI}{\mathcal{I}}

\def\rd{\bR^d}
\def\rdd{{\bR^{2d}}}

\def\zd{\bZ^d}

\def\ud{\,\mathrm{d}}

\def\Op{\operatorname{Op}}
\def\Opw{\operatorname{Op}_{\mathrm{w}}}

\DeclareMathOperator*{\supp}{supp}
\DeclareMathOperator*{\Sp}{Sp}
\DeclareMathOperator*{\Mp}{Mp}
\DeclareMathOperator*{\Sym}{Sym}

\DeclareMathOperator*{\GL}{GL}

\DeclareMathOperator*{\indlim}{ind\,lim}

\newcommand{\norm}[1]{\lVert#1\rVert}
\newcommand{\scal}[2]{\left\langle #1,#2\right\rangle}

\newcommand{\asympconst}{\asymp}

\begin{document}

 \setlength{\emergencystretch}{2em}

\title[Modulation spaces: an excursus]{Modulation spaces: from Feichtinger's original definition to modern Gabor and symplectic characterizations}

\author{Antonio Caputo}
\address{Department of Mathematics, University of Torino, Italy}
\email{antonio.caputo@unito.it}
\author{Elena Cordero}
\address{Department of Mathematics, University of Torino, Italy}
\email{elena.cordero@unito.it}
\author{Gianluca Giacchi}
\address{Dipartimento di Matematica, Universit\`a della Svizzera Italiana, Switzerland}
\email{gianluca.giacchi2@unibo.it}
\author{Luigi Rodino}
\address{Dipartimento di Matematica, University of Torino, Italy}
\email{luigi.rodino@unito.it}

\thanks{This chapter is dedicated to Hans G. Feichtinger on the occasion of his 75th birthday.}
\subjclass[2020]{42B35, 42C15, 46E35, 35S05, 81S30}
\keywords{Modulation spaces, time-frequency analysis, Gabor frames, metaplectic operators, symplectic matrices, Wiener amalgam spaces, ultradistributions}

\begin{abstract}

We present an expository excursus on modulation spaces, from Feichtinger's original construction on locally compact abelian groups to recent symplectic formulations. We trace the same local-global principle through bounded uniform partitions of unity, the short-time Fourier transform, coorbit methods, and Gabor coefficients, while distinguishing equivalent descriptions from the weighted, quasi-Banach, and Gelfand-Shilov extensions of the underlying functional setting. We also discuss Gabor matrices, convolution and embedding estimates, and an application to a nonlinear dispersive equation. The final part is devoted to metaplectic Wigner distributions $W_{\mathcal A}$. In the shift-invertible case these representations are, up to chirps and a linear change of phase-space variables, rescaled short-time Fourier transforms. This structural fact explains both their characterization of modulation and Wiener amalgam spaces and their discretization by metaplectic Gabor frames, and places the recent symplectic theory naturally within Feichtinger's modulation-space framework.

\end{abstract}

\maketitle

\section{Introduction}\label{sec:introduction}
Modulation spaces are function spaces defined by Hans Feichtinger, in which distributions are classified according to their joint concentration in position and frequency. They quantify these two aspects within a common phase-space framework, while allowing their relative roles to be distinguished by mixed norms and weights. Their construction grew out of Feichtinger's work on abstract harmonic analysis, Banach convolution algebras, Wiener amalgam spaces, and numerical questions, together with the conviction that a useful theory should connect continuous and discrete models, ordinary functions and generalized functions, and mathematical rigor with the needs of signal analysis.
This perspective, later described as part of  \emph{conceptual harmonic analysis}, is especially visible in Feichtinger's historical accounts of the subject \cite{Feichtinger2006LookingBack,feichtinger2024modelling}.

\subsection{The original uniform-decomposition perspective}
The original construction took shape at the beginning of the 1980s. Inspired by the decomposition methods used for Besov and Triebel-Lizorkin spaces, Feichtinger replaced their dyadic frequency coverings by uniform coverings and bounded uniform partitions of unity (BUPUs).
Local Fourier information was measured in a Fourier-Lebesgue norm and the resulting local quantities were assembled globally by an $\ell^q$-norm.
In present notation, the Fourier transform is
\begin{equation}\label{intro.defFT}
    \widehat f(\xi)=\cF f(\xi)=\int_{\rd}f(y)e^{-2\pi i\xi\cdot y}d y, \qquad \xi\in\rd,
\end{equation}
and the Euclidean model may be summarized schematically, in terms of {\em Wiener amalgam spaces}, as
\begin{equation}\label{intro.original-viewpoint}
 M^{p,q}(\rd)=\cF^{-1}W(\cF L^p,\ell^q)(\rd).
\end{equation}
For $1\leq p,q\leq\infty$, a smooth BUPU $(\psi_k)_{k\in\zd}$ adapted to a uniform frequency covering gives
\begin{equation}\label{intro.uniform-norm}
 \norm{f}_{M^{p,q}}
 \asymp
 \left\|\left(\|\cF^{-1}(\psi_k\widehat f)\|_{L^p}\right)_{k\in\zd}\right\|_{\ell^q}.
\end{equation}
Thus the local norm is applied to each frequency-localized piece after inverse Fourier transformation; the outer sequence norm assembles these pieces.

We write $M^p=M^{p,p}$.
The construction was originally developed in the setting of locally compact abelian (LCA) groups and first appeared in the influential 1983 technical report \cite{Feichtinger1983}. An updated version was later published in \cite{Feichtinger1983ModulationSpaces}.

The endpoint $S_0:=M^1$, now called the \emph{Feichtinger algebra}, had already appeared as a Fourier-invariant Segal algebra \cite{Feichtinger1981SegalAlgebra}.
In Feichtinger's own retrospective account, the passage from BUPUs to continuous moving windows revealed the decisive role of the short-time Fourier transform and of the Heisenberg group.

\subsection{The STFT and the coorbit perspective}
The subsequent comparison with wavelet analysis led, with Gr\"ochenig, to coorbit theory \cite{FeichtingerGrochenig1988,FeichtingerGrochenig1989}. In its basic integrable setting, coorbit theory constructs function spaces by measuring the coefficients of a unitary group representation in suitable function spaces on the group. For time-frequency analysis, the relevant representation is the {\em Schr\"odinger representation} of the reduced Heisenberg group:
\begin{equation}
    \rho(x,\xi;\tau)f(y)=e^{i\tau}e^{-i\pi x\cdot\xi}\pi(x,\xi)f(y), \qquad (x,\xi;\tau)\in\rdd\times[0,2\pi),
\end{equation}
where for $x,\xi\in\rd$, translation and modulation are the unitary operators
\begin{equation}\label{intro.tf-shifts}
 T_xf(y)=f(y-x),\qquad M_\xi f(y)=e^{2\pi i\xi\cdot y}f(y),
\end{equation}
and the associated time-frequency shift is
\begin{equation}
 \pi(z)=\pi(x,\xi)=M_\xi T_x,\qquad z=(x,\xi)\in\rdd.
\end{equation}
The commutation rule
\begin{equation}
 M_\xi T_x=e^{2\pi i\xi\cdot x}T_xM_\xi
\end{equation}
shows that $z\mapsto\pi(z)$ is a projective representation of phase space; the additional central variable in $\rho$ turns it into a representation of the reduced Heisenberg group.  This noncommutativity is not a technical nuisance: it is the algebraic mechanism behind time-frequency analysis and Gabor expansions, beginning with Gabor's foundational work \cite{gabor1946theory}.

For a nonzero window $g\in L^2(\rd)$, the short-time Fourier transform (STFT) of $f\in L^2(\rd)$ is
\begin{equation}\label{intro.defSTFT}
 V_gf(x,\xi)
 =\langle f,\pi(x,\xi)g\rangle=\int_{\rd}f(t)\overline{g(t-x)}e^{-2\pi i\xi\cdot t} dt,
 \qquad x,\xi\in\rd.
\end{equation}
Here, $\langle \cdot,\cdot \rangle$ denotes the sesquilinear standard inner product of $L^2(\rd)$. Observe that the definition extends uniquely to $(f,g)\in\cS'(\rd)\times\cS(\rd)$ by interpreting $\langle \cdot,\cdot \rangle$ as the duality pairing $\cS'(\rd)\times\cS(\rd)$, antilinear in the second component.
For fixed time $x\in\rd$, $V_gf(x,\xi)$ measures how the frequency $\xi$ contributes to $f$ in a neighborhood of $x$. A reverse interpretation is allowed thanks to the {\em fundamental identity of time-frequency analysis},
\begin{equation}\label{intro.FITA}
    V_{\widehat g}\widehat f(x,\xi)=e^{-2\pi ix\cdot \xi}V_gf(-\xi,x).
\end{equation}
The same coefficient can therefore be read either as frequency information near a fixed position or, after Fourier transformation, as position information near a fixed frequency.

The STFT leads to the modern weighted formulation developed in Section~\ref{sec:stft-definition}. There, a moderate phase-space weight $m$ is incorporated into a mixed norm on $V_gf$, and different admissible windows define equivalent norms. With suitable windows, the same construction extends to $0<\min\{p,q\}<1$ and yields quasi-Banach modulation spaces. 
Precisely, let $m$ be a $v$-moderate weight. This means that $v$ is {\em submultiplicative}, i.e., $v(z+w)\leq v(z)v(w)$, and $m(z+w)\lesssim m(z)v(w)$. The reader may refer to \cite{G2007} as an exhaustive reference for the role of weights in time-frequency analysis.
For $0<p,q\leq\infty$, the weighted mixed-(quasi-)norm space $L_m^{p,q}(\rdd)$ is endowed with
 \begin{equation}
 \norm{F}_{L_m^{p,q}}
 =\left(\int_{\rd}\left(\int_{\rd}|F(x,\xi)|^p m(x,\xi)^p\,dx\right)^{q/p}d\xi\right)^{1/q},
\end{equation}
with the standard endpoint modifications. For the properties of mixed-norm spaces we refer to \cite[Section 2.2]{CorderoRodino2020}. Then, for a fixed window $g\in\cS(\rd)\setminus\{0\}$,
\begin{equation}\label{intro.defimoderna}
    \norm{f}_{M^{p,q}_m}:=\norm{V_gf}_{L^{p,q}_m}.
\end{equation}
We write $M^{p,q}$ when the weight is identically one.
On the diagonal, $p=1$ gives the {\em Segal algebra} $S_0(\rd)=M^1(\rd)$, initially defined as the Wiener amalgam space $W(\cF L^1,\ell^1)(\rd)$ of functions that are locally in $\cF L^1(\rd)$ and globally summable,
whereas $p=2$ gives $L^2(\rd)$, the space of {\em finite-energy signals}. Technically, the term {\em signal} is reserved for functions on $\bR$, but along the years it became more and more common to use it also for functions on $\rd$.
The fundamental identity \eqref{intro.FITA} shows that $\cF:M^p(\rd)\to M^p(\rd)$ is a topological automorphism for every $0<p\leq\infty$. With a Fourier-invariant normalized window, such as a Gaussian, the corresponding STFT norm is preserved. On mixed spaces $M^{p,q}$ with $p\neq q$ the situation is different, see \eqref{eq:fourier-modulation-wiener} below.
This modern STFT definition is not a competing definition but the continuous, representation-theoretic form of the original Wiener-amalgam idea.
Time-frequency shifts act continuously on weighted spaces and isometrically in the unweighted case.
Duality, interpolation, convolution and pointwise-product estimates, embeddings, and atomic decompositions are all compatible with this phase-space geometry.  These properties make modulation spaces a natural replacement for Lebesgue or Sobolev spaces whenever localization in both variables matters, and account for their use in Gabor analysis, pseudodifferential and Fourier integral operators, dispersive and Schr\"odinger equations, quantum mechanics, and signal processing \cite{CorderoRodino2020,Grochenig2001}.

The endpoint $M^1=S_0$ occupies a central position in Feichtinger's perspective.  It is a Fourier-invariant Banach algebra under both convolution and pointwise multiplication, it is stable under time-frequency shifts, and it is minimal among the natural Banach spaces with these invariance properties.  Its dual is identified with $M^\infty$, and one obtains the Banach Gelfand triple
\begin{equation}\label{intro.gelfand-triple}
 S_0(\rd)=M^1(\rd)\hookrightarrow L^2(\rd)=M^2(\rd)
 \hookrightarrow M^\infty(\rd)=S_0'(\rd).
\end{equation}
This triple is Feichtinger's preferred arena for signal analysis: $S_0$ provides test functions that are simultaneously well localized in time and frequency, $L^2$ carries the unitary theory, and $S_0'$ contains the generalized signals that can be tested against every element of $S_0$.  In the terminology emphasized in his recent work, the elements of $S_0'$ are \emph{mild distributions}. Equivalently, they have bounded STFT with respect to one, hence every, nonzero $S_0$ window.  Dirac masses, Dirac combs, and translation-bounded measures fit naturally into this class. A sequential realization, inspired by Lighthill's approach to tempered distributions, identifies $S_0'(\rd)$ in a natural way, and with equivalent norms, with the space of equivalence classes of mild Cauchy sequences of bounded continuous functions, thus recovering the same Banach Gelfand triple
\cite{feichtinger2020sequential}. Unlike a framework based exclusively on pointwise functions or on the finer topology of the Schwartz class, the triple \eqref{intro.gelfand-triple} is well suited to sampling, kernels, weak-$\ast$ limits, and numerical approximation while retaining Fourier invariance \cite{MR2477142,feichtinger2024modelling,Jakobsen2018FeichtingerAlgebra}.

Parallel to time-frequency analysis runs multiresolution analysis, which analyzes signals in terms of position and resolution by replacing modulation with dilations. Its main tool is the so-called {\em wavelet transform}, constructed from a representation of the affine group instead of the Heisenberg group. Coorbit theory unifies time-frequency and multiresolution analysis within a common representation-theoretic framework where the analyzing transform is tied to the chosen representation.
Adopting an alternative point of view, the final part of this chapter introduces metaplectic time-frequency representations and examines their use in characterizing modulation-space (quasi-)norms. Rather than starting from the STFT as the prescribed analyzing transform, we consider a broader family of time-frequency representations and ask which of them characterize the same spaces $M_m^{p,q}$. 
Despite their different starting points, both approaches highlight the central role of the STFT in the characterization of modulation spaces. We shall see how this role emerges in the metaplectic setting in the final part of the chapter.

\subsection{Metaplectic operators}
In this work, we shall use metaplectic operators on $L^2(\rdd)$. The metaplectic group $\Mp(2d,\bR)$ is the two-fold cover of the symplectic group $\Sp(2d,\bR)$. The latter consists of $4d\times4d$ real matrices $\cA$ so that $\cA^\top J\cA=J$, where
\begin{equation}
    J=\begin{pmatrix}
        O_{2d} & I_{2d}\\
        -I_{2d} & O_{2d}
    \end{pmatrix}
\end{equation}
represents the canonical symplectic form of $\bR^{4d}$. Metaplectic operators are unitary on $L^2(\rdd)$ and they restrict to homeomorphisms of $\cS(\rdd)$. On top of this, they extend by duality to homeomorphisms of $\cS'(\rdd)$. If $ {\hat\cA}\in\Mp(2d,\bR)$, its {\em projection} $\pi^{Mp}(\hat \cA)=\cA\in\Sp(2d,\bR)$ is the unique symplectic matrix such that  {it }intertwines the Schr\"odinger representation of the Heisenberg group:
\begin{equation}
    \hat\cA\rho(z;\tau)\hat\cA^{-1}=\rho(\cA z;\tau), \qquad z\in\bR^{4d},\;\tau\in\bR.
\end{equation}
We shall partition $\cA$ into $d\times d$ blocks as
\begin{equation}\label{intro.blockA}
    \cA=\begin{pmatrix}
        A_{11} & A_{12} & A_{13} & A_{14}\\
        A_{21} & A_{22} & A_{23} & A_{24}\\
        A_{31} & A_{32} & A_{33} & A_{34}\\
        A_{41} & A_{42} & A_{43} & A_{44}
    \end{pmatrix}, \qquad A_{jk}\in\bR^{d\times d},\;j,k=1,\ldots,4.
\end{equation}
Conversely, $\cA$ determines $\hat\cA$ up to a sign, that is $\pi^{Mp}$ is a group homomorphism with kernel $\{\pm\mathrm{id}_{L^2}\}$.
For the purpose of this chapter, this sign is completely irrelevant, and will therefore be omitted. 
In particular, the analytic and geometric properties of the operator $\hat\cA$ can be related directly to the structure of a linear symplectic transformation, and read from it.
 {Standard metaplectic operators include} the unitarily-rescaled Fourier transform $i^{-d}\cF$,  {the} unitary rescalings
\begin{align}
    \mathfrak T_EF(z)=i^m|\det(E)|^{1/2}F(Ez), \qquad E\in\mathrm{GL}(2d,\bR),\; z\in\rdd,
\end{align}
where $m$ is  {chosen} properly according to $\mathrm{arg}\det(E)^{1/2}$, and chirp-products
\begin{align}
    \mathfrak p_QF(z)=\Phi_Q(z)F(z), \qquad Q\in\mathrm{Sym}(2d,\bR),
\end{align}
where $\Phi_Q(z)=e^{i\pi Qz\cdot z}$. In the analysis of time-frequency representations there is another metaplectic  {operator} that plays a fundamental role, the {\em partial Fourier transform with respect to the  {frequency} variables}:
\begin{equation}\label{intro.defF2}
    \cF_2F(x,\xi)=i^{-d/2}\int_{\rd}F(x,y)e^{-2\pi i\xi\cdot y}\mathrm dy, \qquad F\in\cS(\rdd), \;x,\xi\in\rd.
\end{equation}
As with the sign ambiguity in the choice of a metaplectic lift, we shall suppress the unimodular constants (\emph{phase factors}) appearing in the formulas above, since they do not affect the norm estimates and characterizations considered here. For simplicity, we therefore regard these operators up to an overall phase and identify them with their symplectic projections. 
This identification is understood at the projective level: the map $\pi^{Mp}$ itself remains a two-to-one group homomorphism, not an isomorphism. The phase factors are essential to the precise group structure of $\Mp(2d,\bR)$ and must be retained whenever exact operator identities are required.

For notation purposes, we defined metaplectic operators on $L^2(\rdd)$. We stress that the metaplectic group $\Mp(d,\bR)$ of operators acting on $L^2(\rd)$ is defined analogously, the projection $\pi^{Mp}:\hat\delta\in\Mp(d,\bR)\mapsto\delta\in\Sp(d,\bR)$ being a 2:1 group epimorphism onto the group of $2d\times2d$ symplectic matrices, herein denoted by $\Sp(d,\bR)$.

\subsection{Metaplectic Wigner distributions}
Time-frequency  {representations} defined using these operators date back to a paper of Bai, Li and Cheng, who defined the {\em new Wigner-Ville distribution} in \cite{Bai2012}, later generalized by Zhang and Luo in \cite{ZhangNewWigner}. In parallel (cross-){\em matrix Wigner distributions} were considered in \cite{Bayer2020,CordTrap} outside a metaplectic framework.
The widest possible generalization of these representations was first stated by two of the authors in \cite{CRPartI}.
\begin{definition}
    Let $\hat\cA\in\Mp(2d,\bR)$. The corresponding {\em metaplectic Wigner distribution}, or $\cA$-Wigner distribution, is the sesquilinear time-frequency representation
    \begin{equation}
        W_\cA(f,g)=\hat\cA(f\otimes\bar g), \qquad f,g\in L^2(\rd).
    \end{equation}
\end{definition}
Observe that the identification up to a phase $\cA\leftrightarrow\hat\cA$ is implied both in the notation $W_\cA$ and in the terminology ``$\cA$-Wigner distribution''. 
The properties of these objects have been established in the past years by the authors in \cite{CGRPartII,CorderoGiacchi2023JMPA,CorderoGiacchi2024MetaplecticGabor,CGRUnified}. More recently, a characterization in terms of the Schr\"odinger representation of the Heisenberg group was obtained in \cite{GS2026}. 

The main question we review in this chapter is 
\begin{quote}
    {\it Which metaplectic Wigner distributions can be used to measure the local time-frequency concentration of signals?}
\end{quote}
That is to say, under which conditions
\begin{equation}\label{intro.equiv.mod.spaces}
    \norm{f}_{M^{p,q}_m}\asymp \norm{W_\cA(f,g)}_{L^{p,q}_m},
\end{equation}
where $g\in\cS(\rd)\setminus\{0\}$ is a fixed window, is true for every $0<p,q\leq\infty$ and moderate weight $m$.

This question was first answered in \cite[Corollary 3.6]{CRPartI} for {\em $\tau$-Wigner distributions} ($\tau\in\bR$)
\begin{equation}\label{intro.def.Wtau}
    W_\tau(f,g)(x,\xi)=\int_{\rd}f(x+\tau y)\overline{g(x-(1-\tau)y)}e^{-2\pi i\xi\cdot y}\ud y, \qquad f,g\in L^2(\rd),\; x,\xi\in\rd,
\end{equation}
thereby extending to $M^{p,q}_m$, de Gosson's result \cite[Proposition 397]{DeGosson2011}, stated for the classical {\em Wigner distribution} ($\tau=1/2$ in \eqref{intro.def.Wtau}) and $M^p_{v_s}$ spaces, here $v_s(z)=(1+|z|^2)^{s/2}$, $s\in\bR$.

Back to metaplectic Wigner  {distributions} in their full generality, in \cite{CorderoRodino2023JFA} the following pivotal condition stemmed naturally while proving the inequality
\begin{equation}
    \norm{f}_{M^{p}_{v_s}}\lesssim \norm{W_\cA(f,g)}_{L^{p}_{v_s}}.
\end{equation}
\begin{definition}\label{intro.defSI}
    A metaplectic Wigner distribution is {\em shift-invertible} if there exists a matrix $E_\cA\in\bR^{2d\times2d}$ such that
    \begin{equation}\label{intro.defEAtrnasls}
        |W_\cA(\pi(z)f,g)|=|T_{E_\cA z}W_\cA(f,g)|, 
    \end{equation}
    holds for every $z\in\rdd$ and every $f,g\in L^2(\rd)$.
\end{definition}
A simple computation shows that if \eqref{intro.defEAtrnasls} holds and $\cA$ has blocks as in \eqref{intro.blockA}, then it must be
\begin{equation}\label{intro.defEA}
    E_\cA=\begin{pmatrix}
        A_{11} & A_{13}\\
        A_{21} & A_{23}
    \end{pmatrix},
\end{equation}
see \cite[Equation (73)]{CorderoRodino2023JFA}. In particular,
\begin{quote}
    {\it $W_\cA$ is shift-invertible if and only if $E_\cA\in \mathrm{GL}(2d,\bR)$}.
\end{quote}
In \cite[Theorem 2.28]{CorderoRodino2023JFA}, the authors prove that {\em covariant} shift-invertible metaplectic Wigner distributions satisfy \eqref{intro.equiv.mod.spaces} for $m=v_s$. We recall that $W_\cA$ is covariant if
\begin{equation}
    W_\cA(\pi(z)f,\pi(z)g)=W_\cA(f,g)(\cdot-z), \qquad f,g\in L^2(\rd)
\end{equation}
for every $z\in\rdd$.  {Their proof relies} on the characterization of covariant and shift-invertible metaplectic Wigner distributions, given either as an integral transform \cite[Theorem 2.27]{CorderoRodino2023JFA} or in terms of the corresponding symplectic projection \cite[Proposition 2.25]{CorderoRodino2023JFA}.

In \cite{CGRPartII}, {\em Wigner decomposable} metaplectic Wigner distributions are considered. These are distributions in the form
\begin{equation}
    W_\cA(f,g)=\mathfrak p_Q\cF_2\mathfrak T_E(f\otimes \bar g), \qquad f,g\in L^2(\rd),
\end{equation}
for $Q\in\Sym(2d,\bR)$ and $E\in\mathrm{GL}(2d,\bR)$. These are essentially matrix Wigner distributions. In \cite[Section 4]{CGRPartII}, a condition under which these time-frequency representations are shift-invertible is obtained, together with the corresponding characterization of modulation spaces $M^{p,q}_m$.

So far, the characterization of modulation spaces has been only given for particular sub-classes of metaplectic Wigner  {distributions} and shift-invertibility appeared as a background condition that for some reason is always in the way. It is only in \cite{CorderoGiacchi2023JMPA} that shift-invertibility is established as the key property for a metaplectic time-frequency representation to measure local time-frequency concentration, where the  {full characterization} of \eqref{intro.equiv.mod.spaces} is provided in the Banach setting. We remark that when $p\neq q$, shift-invertibility alone is not sufficient to obtain \eqref{intro.equiv.mod.spaces}, but a further condition on $E_\cA$ is needed, consistently with the results in \cite{FuhrShafkulovska2024} about the boundedness of linear change of variables  {within} mixed-norm Lebesgue spaces. Specifically, if $p\neq q$, the further condition is $A_{21}=O_d$. On top of this, the weight has to satisfy $m\asymp m\circ E_\cA {,}$ a condition that, for polynomial weights $v_s$, is trivially true.
The result at issue is \cite[Theorem 3.7]{CorderoGiacchi2023JMPA} and its proof relies on direct estimates. 
In the following work \cite[Theorem 7.1]{CorderoGiacchi2024MetaplecticGabor}, the result is extended to the whole quasi-Banach setting by means of the explicit characterization of shift-invertible metaplectic Wigner distributions in Corollary 4.4 therein. Importantly, the authors show that a $W_\cA$ is shift-invertible if and only if it is a rescaled STFT, up to rescalings and chirp-products. We refer to Theorem~\ref{thm:rescaled-stft} below for the details. 
However, the finest characterization of shift-invertible metaplectic Wigner distributions is given in \cite[Theorem 3.4]{CorderoGiacchi2024}, where it is proven that the mapping
\begin{equation}
    (E,Q,\widehat\delta)\in\mathrm{GL}(2d,\bR)\times\Sym(2d,\bR)\times\Mp(d,\bR)\longmapsto W_\cA(f,g)=\mathfrak T_E\mathfrak p_QV_{\hat\delta g}f
\end{equation}
is a bijection (observe that there is no phase defining $W_\cA$).

Moreover, in \cite{CorderoGiacchi2024MetaplecticGabor}, the authors consider {\em metaplectic Gabor frames}, and obtain the atomic characterization of modulation spaces through shift-invertible metaplectic Wigner distributions, see \cite[Theorem 7.3]{CorderoGiacchi2024MetaplecticGabor}. We refer to Section \ref{sec:gabor} for the detailed definition of Gabor frame.
It is  {worth stressing} that similar results are obtained for Wiener amalgam spaces in \cite[Theorem 7.2]{CorderoGiacchi2024MetaplecticGabor}. In this case, the condition $A_{21}=O_d$ for $E_\cA$ is replaced by $A_{13}=O_d$.
Up to the underlying structural conditions on $E_\cA$ and $m$, shift-invertibility is therefore sufficient for a metaplectic Wigner distribution to characterize quasi-Banach modulation spaces. The loop is closed by \cite[Theorem 1.6]{Giacchi}, where it is proven that in order for $W_\cA$ to characterize $M^p$ spaces, shift-invertibility is also necessary. The density of matrices $\cA\in\Sp(2d,\bR)$ satisfying the shift-invertibility condition $E_\cA\in\mathrm{GL}(2d,\bR)$ in $\Sp(2d,\bR)$ is proven in the same work, see \cite[Theorem 1.5]{Giacchi}.

\medskip
By any measure, the STFT is therefore the undisputed protagonist of symplectic analysis of time-frequency spaces. Its central role emerges from rather different principles in coorbit theory and in the metaplectic setting.
Understanding this connection is one of the themes of the present chapter, to which we shall return in the final part.
In conclusion, the recurring appearance of the STFT illustrates the remarkable lasting strength of Feichtinger's original time-frequency viewpoint.

\medskip
\noindent\textbf{Outline.}
Section~\ref{sec:feichtinger-original} recalls the original definition in the setting of locally compact abelian groups and compares its uniform frequency geometry with the dyadic geometry of Besov spaces. Section~\ref{sec:stft-definition} passes to the Euclidean STFT formulation and the general weighted theory. Section~\ref{sec:gabor} is devoted to Gabor frames and discrete modulation-space norms, Section~\ref{sec:gabor-operators} to Gabor matrices of operators and the Sj\"ostrand class, and Section~\ref{sec:quasi-banach} to the extension of these ideas to the quasi-Banach regime. Section~\ref{sec:weights-ultra} treats faster weights, Gelfand-Shilov spaces, and ultradistributions. Section~\ref{sec:guo-baoxiang} returns to frequency-uniform decompositions and illustrates their usefulness through a representative nonlinear PDE application. Finally, Section~\ref{sec:symplectic} studies metaplectic Wigner distributions, their modulation and Wiener amalgam characterizations, a brief comparison with the Cohen class, and their discretization by metaplectic Gabor frames.

\section{Feichtinger's original construction}
\label{sec:feichtinger-original}

\subsection{The locally compact abelian setting}

Let $G$ be a LCA group and $\widehat{G}$ its dual group.  {As recalled in the Introduction, Feichtinger's original construction was formulated in this general harmonic-analysis setting; here we spell out the LCA ingredients needed below.}

Characters play a central role in the theory of LCA groups, as they take the place of complex exponentials in the classical Euclidean setting, and are the key objects used to define the Fourier transform in the general setting. Throughout this section we write the group operation on $G$ additively.

\begin{definition}
Let $G$ be an LCA group. A (continuous) character on $G$ is a continuous group homomorphism $\xi:G\to\mathbb{S}^1$, where $\mathbb{S}^1:=\{e^{2\pi i\theta} \mid \theta\in\bR\}\subseteq\bC$ is the unit circle group (torus) in $\bC$, i.e., $\xi$ satisfies $\xi(x+y)=\xi(x)\xi(y)$ and $|\xi(x)|=1$ for all $x,y\in G$.
\end{definition}

The dual group $\widehat{G}$ of $G$ is the set of all characters on $G$, and is a group under the operation of pointwise multiplication $(\xi+\eta)(x):=\xi(x)\eta(x)$ for all $x\in G$. If we endow $\widehat{G}$ with the compact-open topology, the topology of uniform convergence on compact sets, $\widehat{G}$ becomes an LCA group.

Translation and modulation operators are defined according to the algebraic structure of the group $G$ as follows:
\begin{equation}
T_uf(x):=f(x-u), \qquad M_\xi f(x):=\xi(x)f(x), \qquad x,u\in G, \quad \xi\in\widehat{G}.
\end{equation}
In general, $T_u$ and $M_\xi$ do not commute, but enjoy the commutation relation $M_\xi T_u=\xi(u)T_uM_\xi$.

\begin{theorem}[Haar measure]\label{thm:haar-measure}
Every LCA group $G$ admits a nonzero translation-invariant regular Borel measure, finite on compact sets and unique up to a positive factor.
\end{theorem}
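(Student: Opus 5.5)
The plan is the classical Riesz-representation proof of Weil, as in Folland's text, split into existence and uniqueness. Let $C_c^+(G)$ be the nonzero nonnegative continuous compactly supported functions. For $f,\varphi\in C_c^+(G)$ I will define the covering number
\[
(f:\varphi)=\inf\Bigl\{\textstyle\sum_j c_j \;:\; f\le \sum_{j=1}^n c_j T_{u_j}\varphi,\ c_j\ge0,\ u_j\in G\Bigr\}.
\]
It is finite because $\supp f$ is compact and finitely many translates of the open set $\{\varphi>\|\varphi\|_\infty/2\}$ cover it. Three properties follow directly: $(f:\varphi)$ is translation invariant in $f$, subadditive, and positively homogeneous; and $(f:\varphi)\le (f:h)(h:\varphi)$. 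Fixing a reference $f_0\in C_c^+(G)$, set $I_\varphi(f)=(f:\varphi)/(f_0:\varphi)$. Then
\[
(f_0:f)^{-1}\le I_\varphi(f)\le (f:f_0),
\]
so each $I_\varphi$ lies in the compact product $\prod_f[(f_0:f)^{-1},(f:f_0)]$.

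Step 1 (main obstacle): approximate additivity. Given $f_1,f_2\in C_c^+(G)$ and $\varepsilon>0$, I must find a neighbourhood $V$ of $0$ such that
\[
I_\varphi(f_1)+I_\varphi(f_2)\le (1+\varepsilon)\,I_\varphi(f_1+f_2)
\]
whenever $\supp\varphi\subset V$. I will take $g\in C_c^+(G)$ equal to $1$ on $\supp(f_1+f_2)$, put $F=f_1+f_2+\delta g$, and set $h_i=f_i/F$. These functions are uniformly continuous, so for $\varphi$ supported in a small enough $V$ we have $|h_i(x)-h_i(y)|<\delta$ whenever $x-y\in V$. Any covering $F\le\sum c_jT_{u_j}\varphi$ then yields $f_i\le \sum c_j(h_i(u_j)+\delta)T_{u_j}\varphi$. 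Summing over $i$ gives
\[
(f_1:\varphi)+(f_2:\varphi)\le (F:\varphi)(1+2\delta),
\]
and the $\delta g$ term is absorbed using $(g:\varphi)\le (g:f_0)(f_0:\varphi)$. This uniform-continuity step is where local compactness and the topology really enter.

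Step 2: take a limit point. For each neighbourhood $V$ of $0$, let $K_V$ be the closure of $\{I_\varphi:\supp\varphi\subset V\}$ in the compact product. These sets have the finite intersection property, so by Tychonoff some $I$ lies in all of them. By Step 1, $I$ is additive, positively homogeneous, translation invariant, and satisfies $I(f_0)=1$. It extends by linearity to a positive linear functional on $C_c(G)$. The Riesz representation theorem then gives a regular Borel measure $\mu$, finite on compact sets, with $I(f)=\int f\,d\mu$. Translation invariance of $I$ transfers to $\mu$ through the uniqueness part of Riesz, and $\mu\neq0$ because $I(f_0)=1$.

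Step 3: uniqueness. Let $\mu,\nu$ be two such measures. Every $f\in C_c^+(G)$ has positive integral, since finitely many translates of $\{f>c\}$ cover any compact set, so a zero integral would force the measure to vanish on compacts. Fix $f,g\in C_c^+(G)$. By Fubini and the invariance of $\nu$ under translations, which here includes the inversion $x\mapsto -x$ because $G$ is abelian so the modular function is trivial, one computes
\[
\int f\,d\mu\int g\,d\nu=\iint f(x)\,g(y)\,d\mu(x)\,d\nu(y)
\]
in a second way: substitute $y\mapsto x+y$, then $x\mapsto x-y$. This shows that $\int f\,d\mu\big/\int f\,d\nu$ does not depend on $f$. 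Regularity then gives $\mu=c\,\nu$.
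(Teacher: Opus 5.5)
The paper gives no proof of this statement; it quotes Haar's theorem as standard background for the LCA setting. Your existence part (Steps 1--2) is Weil's covering-number construction and is sound. The only elision there is small: absorbing $\delta I_\varphi(g)$ into $\varepsilon I_\varphi(f_1+f_2)$ also needs the lower bound $I_\varphi(f_1+f_2)\ge (f_0:f_1+f_2)^{-1}$, which you already have.

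There is, however, a genuine logical flaw in Step 3. After your two substitutions you reach $\iint f(x-y)g(x)\,d\mu(x)\,d\nu(y)$. To identify this with $\int g\,d\mu\int f\,d\nu$ you need $\int f(-y)\,d\nu(y)=\int f\,d\nu$, and you justify it by triviality of the modular function. That is circular. The modular function, and the comparison between $\nu$ and its reflection $E\mapsto\nu(-E)$, are obtained \emph{from} uniqueness: the reflected measure is another invariant Radon measure, hence equals $c\nu$ with $c^2=1$. Commutativity gives $\Delta\equiv1$ for free, but not inversion invariance.

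What your computation actually proves, for any two nonzero invariant Radon measures $\mu,\nu$ and $f,g\in C_c(G)$, is
\[
\int f\,d\mu\int g\,d\nu=\int g\,d\mu\int \check f\,d\nu,\qquad \check f(y)=f(-y).
\]
This already suffices, and the repair costs one line. Either of the following works:
\begin{enumerate}[(a)]
\item Put $\mu=\nu$ and choose $g$ with $\int g\,d\nu>0$. This gives $\int\check f\,d\nu=\int f\,d\nu$ non-circularly, and then your argument goes through.
\item Separate variables directly: $\int g\,d\mu/\int g\,d\nu=\int f\,d\mu/\int\check f\,d\nu$ for all $f,g\in C_c^+(G)$. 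The left side depends only on $g$ and the right side only on $f$, so both equal a constant $c>0$, and $\mu=c\nu$ follows from the uniqueness part of Riesz.
\end{enumerate}

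Finally, make explicit that Fubini is applied to integrands such as $f(x)g(x+y)$, which lie in $C_c(G\times G)$. This is what makes the interchange legitimate for Radon measures even when $G$ is not $\sigma$-compact.
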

We write $dx$ for a fixed Haar measure on $G$ and choose the dual Haar measure on $\widehat G$ compatibly with Fourier inversion. Lebesgue measure on $\bR^d$, counting measure on $\bZ$, and normalized arc-length measure on $\mathbb S^1$ are the familiar examples. All $L^p$ spaces below refer to these measures.

\begin{definition}
Let $G$ be an LCA group, and $f\in L^1(G)$. The Fourier transform of $f$ is defined as follows:
\begin{equation}
\widehat{f}(\xi):=\int_G\overline{\xi(x)}f(x)dx, \qquad \xi\in\widehat{G}.
\end{equation}
\end{definition}

The basic $L^1$ estimate is unchanged:
\begin{equation}
 \|\widehat f\|_{L^\infty(\widehat G)}\leq\|f\|_{L^1(G)},
\end{equation}
since characters have modulus one.

The action of the Fourier transform interchanges the translation operator $T_u$ and the modulation operator $M_\xi$: for $u\in G$, $\xi,\eta\in\widehat{G}$ and $f\in L^1(G)$, it holds
\begin{equation}
\widehat{M_\eta f}(\xi)=T_\eta\widehat{f}(\xi), \qquad \widehat{T_uf}(\xi)=M_{-u}\widehat{f}(\xi),
\end{equation}
where, by Pontryagin duality, $u\in G$ is identified with the character $\xi\mapsto\xi(u)$ of $\widehat G$, so that $M_{-u}\widehat f(\xi)=\overline{\xi(u)}\,\widehat f(\xi)$.
The Riemann-Lebesgue theorem strengthens the conclusion to $\widehat f\in C_0(\widehat G)$: the transform is continuous and vanishes at infinity.

The Haar measure allows us to define the convolution product of $L^1$ functions as follows:
\begin{equation}
(f*g)(x):=\int_{G}f(y)g(x-y)dy, \qquad f,g\in L^1(G),
\end{equation}
which turns $L^1(G)$ into a commutative Banach algebra. The fundamental property
\begin{equation}
\widehat{f*g}=\widehat{f}\,\widehat{g}, \qquad f,g\in L^1(G),
\end{equation}
states that $\cF$ is an algebra homomorphism from $L^1(G)$ to $C_0(\widehat{G})$, considered with pointwise multiplication.

We will often consider {\em submultiplicative} weight functions on the group $G$, i.e., symmetric, locally bounded and measurable functions $w:G\to[1,+\infty)$ such that $w(x+y)\leq w(x)w(y)$ for all $x,y\in G$, and the weighted $L^p$ spaces $L_w^p(G)$ of functions $f$ such that $fw\in L^p$ endowed with norm $\|f\|_{L^p_w}=\|fw\|_{L^p}$. In particular, $L_w^1(G)$ is called Beurling algebra, and the space $A_w(\widehat{G}):=\cF L_w^1(G)$ is a Banach algebra of continuous functions on $\widehat{G}$ under pointwise multiplication.

More generally, we can consider for a submultiplicative weight function $w$, a $w$-{\em moderate} weight $m:G\to(0,+\infty)$, i.e., a function such that
\begin{equation}
m(x+y)\lesssim w(x)m(y), \quad \text{ for all } x,y\in G,
\end{equation}
and the weighted $L^p$ spaces $L_m^p(G)$ with respect to $m$.

In Feichtinger's original LCA framework the submultiplicative control weight is taken to satisfy the Beurling-Domar non-quasianalyticity condition
\begin{equation} \label{eq:Beurling-Domar}
\sum_{n=1}^{+\infty}n^{-2}\log w(nx)<\infty \quad \text{ for all } x \in G.
\end{equation}
For example, $w(x)=(1+|x|)^a$ with $a\geq0$ satisfies this condition on $\rd$. It is convenient to use the equivalent radial weights below. For $s\in\bR$ and any Euclidean dimension $n$, we use the single notation
\begin{equation}\label{eq:polynomial-weight}
 v_s(x)=\langle x\rangle^s=(1+|x|^2)^{s/2},\qquad x\in\bR^n.
\end{equation}
The ambient dimension of $v_s$ is always determined by context, we will primarily use $n=d$ or $n=2d$. Thus $1\otimes v_s$ and $v_s\otimes1$ denote one-sided weights on phase space, while $v_s$ used directly on a phase space denotes the radial polynomial weight there. See \cite{Feichtinger1983ModulationSpaces} for the original LCA assumptions.

In order to define Wiener-type spaces $W(B,C)$ for general Banach spaces $B$ and $C$, it is crucial to consider Banach spaces which behave well with respect to $A_w(\widehat{G})$. In particular, following the framework of \cite{Feichtinger1983}, we say that a Banach space $B$ is in {\em standard situation} with respect to $A_w(\widehat{G})$ if it satisfies the following three conditions:
\begin{enumerate}[1.]
\item $(A_w)_0\hookrightarrow B \hookrightarrow (A_w)_0'$, where $(A_w)_0:=A_w\cap C_c(\widehat{G})$ is considered as a topological vector space with respect to its natural inductive limit topology, $(A_w)_0'$ denotes the topological dual and $\hookrightarrow$ means continuous embedding.
\item $B$ is a Banach module over $A_w(\widehat{G})$ with respect to pointwise multiplication, i.e., $\|hf\|_B\lesssim\|h\|_{A_w}\|f\|_B$ for $h\in A_w(\widehat{G})$ and $f\in B$.
\item $B$ is a Banach module over $L_{w_{\mathrm{dual}}}^1(\widehat{G})$ with respect to convolution on $\widehat{G}$, where $w_{\mathrm{dual}}$ is a weight function on $\widehat{G}$ satisfying the Beurling-Domar condition \eqref{eq:Beurling-Domar}.
\end{enumerate}

Particularly relevant examples are the Fourier transforms of weighted, solid $BF$-spaces on $G$. Recall that a Banach space $B$ is called a $BF$-space on $G$ if it is continuously embedded into the space $L^1_{\text{loc}}(G)$ of locally integrable functions on $G$, endowed with the family of seminorms $\|f\chi_K\|_{L^1}$, where $K\subseteq G$ is compact and $\chi_K$ is the characteristic function of $K$. $B$ is said to be solid if
\begin{equation}
g\in L^1_{\text{loc}}(G), \ f\in B, \ |g(x)|\leq|f(x)| \text{ locally a.e.} \Rightarrow g\in B, \ \|g\|_B\leq\|f\|_B.
\end{equation}

Given a Banach space $B$ of distributions on the LCA group $\widehat{G}$ which is in standard situation with respect to $A_w(\widehat{G})$, and a continuous moderate weight $v$ on $\widehat G$, we can describe the Wiener-type space $W(B,L^q_v)(\widehat{G})$ as follows: fixing any test function $g\in A_w(\widehat{G})\cap C_c(\widehat{G})\setminus\{0\}$, we have
\begin{equation}
W(B,L^q_v)(\widehat{G})=\{f\in B_{\text{loc}} \mid F^{(g)}:t\mapsto\|(T_tg)f\|_B\in L^q_v(\widehat{G})\},
\end{equation}
and
\begin{equation}
\|f\|_{W(B,L^q_v)}:=\left(\int_{\widehat{G}}|F^{(g)}(t)|^qv(t)^q\,dt\right)^{1/q}, \qquad \text{ for } 1\leq q<\infty,
\end{equation}
with the usual modification for $q=\infty$.

Notice that $B_{\text{loc}}$ is the set of all distributions, i.e., elements of the dual of $A_w(\widehat{G})\cap C_c(\widehat{G})$ which belong to $B$ locally, and thus $(T_tg)f$ belongs to $B$ for all $t\in\widehat G$. This definition allows us to show that different test functions $g$ define the same space (for $B$, $q$, and $v$ fixed) yielding equivalent norms. Furthermore, there is an equivalent discrete characterization, via uniform partitions of unity, which we now recall.

Let us call a family $(\psi_i)_{i\in I}$ a bounded, uniform partition of unity in $A_w(\widehat{G})$ if there exists some relatively compact set $\widehat{Q}\subseteq\widehat{G}$ such that
\begin{enumerate}[1.]
    \item $\sum_{i\in I}\psi_i\equiv1$ and the family is locally finite.
 \item  $\sup_{i\in I}\|\psi_i\|_{A_w(\widehat{G})}<\infty$ (uniform boundedness).
    \item $\supp\psi_i\subseteq t_i+\widehat{Q}$ for $i\in I$.
    \item $\sup_{i\in I}|\{j \mid (t_i+\widehat{Q})\cap(t_j+\widehat{Q})\neq\emptyset\}|<\infty$ (uniform finite overlap).
\end{enumerate}
Then $f\in B_{\text{loc}}$ belongs to $W(B,L^q_v)$ if and only if
\begin{equation}
\|f\|_{D(\widehat{Q},B,\ell^q_v)}:=\left(\sum_{i\in I}\|f\psi_i\|_B^qv(t_i)^q\right)^{1/q}<\infty,
\end{equation}
with the usual modification for $q=\infty$, and this expression defines an equivalent norm on $W(B,L^q_v)$.

The role of the two control weights is different: $w$ governs the Fourier algebra used for localization, while $w_{\mathrm{dual}}$ controls translations on the dual group.

We can now assemble the construction of modulation spaces on $G$. If $B_0$ is an admissible solid BF-space on $G$ in the sense of \cite{Feichtinger1983ModulationSpaces}, its Fourier image is an admissible local component on $\widehat G$, and
\begin{equation}\label{eq:lca-modulation-amalgam}
 M(B_0,L_v^q)(G)=\cF_G^{-1}W(\cF_G B_0,L_v^q)(\widehat G),
 \qquad 1\leq q\leq\infty.
\end{equation}
Here $\|\widehat f\|_{\cF_G B_0}=\|f\|_{B_0}$. Thus modulation spaces are obtained by localizing on the dual group, measuring the inverse Fourier transforms of the localized pieces in $B_0$, and imposing the global $L_v^q$ condition. The next subsection makes this mechanism explicit when $B_0=L^p(\rd)$.

\subsection{Original definition by convolution with modulated windows}

In the Euclidean case $G=\bR^d$, the modulation operator reads
  \begin{equation}
 M_{\xi}f(x)=e^{2\pi i x\cdot \xi}f(x), \qquad x,\xi\in\rd.
\end{equation}
Let $g\in\cS(\rd)\setminus\{0\}$. Feichtinger's historical Euclidean notation $M_s^{p,q}$ corresponds, in the modern weighted notation used in the rest of this chapter, to the \emph{frequency-weighted} space

\begin{equation}
 M_s^{p,q}(\rd)=M_{1\otimes v_s}^{p,q}(\rd).
\end{equation}

Thus, for $1\leq p,q\leq\infty$,

\begin{equation}
\label{eq:feichtinger-original-norm}
 \norm{f}_{M_{1\otimes v_s}^{p,q}}
 =\left(\int_{\rd} \norm{M_{\xi}g*f}_{L^p}^q \langle \xi\rangle^{sq}\,d\xi\right)^{1/q},
\end{equation}

with the usual modification for $q=\infty$. See
\cite{Feichtinger1983,Feichtinger1983ModulationSpaces,Feichtinger1989Atomic}.  Throughout the chapter we display the frequency factor $1\otimes v_s$ explicitly, and reserve $m$ for a general phase-space weight.

  \begin{remark}[From the historical weight to a phase-space weight]
 Formula~\eqref{eq:feichtinger-original-norm} is a special case of the modern weighted theory.  If $m$ is a polynomially moderate weight on $\rdd$ and $g(t)=\overline{\varphi(-t)}$, then

\begin{equation}
 |(M_\xi g*f)(x)|=|V_\varphi f(x,\xi)|,
\end{equation}

so that, anticipating Section~\ref{sec:stft-definition},

\begin{equation}
 \norm{f}_{M_m^{p,q}}
 \asymp
 \left(\int_{\rd}\left(\int_{\rd}|(M_\xi g*f)(x)|^p m(x,\xi)^p\,dx\right)^{q/p}d\xi\right)^{1/q}.
\end{equation}

Hence the convolution formulation itself does not force a frequency-only weight. The latter is a historical specialization.  This is also consistent with the general LCA construction $M(B,L_v^q)(G)$ in Feichtinger's 1983 report
\cite{Feichtinger1983ModulationSpaces}.
\end{remark}

\subsection{Basic structural properties in the historical specialization}

The original theory already contains
 independence of the auxiliary window, completeness, duality, interpolation, convolution, trace and embedding results, together with continuous and discrete atomic descriptions.  To avoid confusing the special weight $1\otimes v_s$ with a general phase-space weight, we record here only the historical specialization and state the general weighted forms in Section~\ref{sec:stft-definition}.

\begin{theorem}[Independence of the window]
\label{thm:window-independence-original}
Let $1\leq p,q\leq\infty$ and $s\in\bR$. If $g_1,g_2\in\cS(\rd)\setminus\{0\}$ are admissible windows, then the norms defined by~\eqref{eq:feichtinger-original-norm} with $g=g_1$ and $g=g_2$ are equivalent.
\end{theorem}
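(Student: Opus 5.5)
The plan is to rewrite the norm \eqref{eq:feichtinger-original-norm} as a weighted mixed norm of a short-time Fourier transform and then compare two STFTs by a pointwise convolution inequality on phase space. By the remark above, with $\varphi_j(t)=\overline{g_j(-t)}$ we have $|(M_\xi g_j*f)(x)|=|V_{\varphi_j}f(x,\xi)|$. Hence the quantity in \eqref{eq:feichtinger-original-norm} with $g=g_j$ equals $\norm{V_{\varphi_j}f}_{L^{p,q}_{1\otimes v_s}}$. It therefore suffices to show
\begin{equation}
\norm{V_{\varphi_1}f}_{L^{p,q}_{1\otimes v_s}}\lesssim \norm{V_{\varphi_2}f}_{L^{p,q}_{1\otimes v_s}},
\end{equation}
and to obtain the reverse inequality by symmetry. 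Here $f$ ranges over $\cS'(\rd)$, where these quantities are defined.

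The first step is the change-of-window pointwise estimate. Since $\varphi_2\neq0$, choose $\gamma\in\cS(\rd)$ with $\langle\gamma,\varphi_2\rangle\neq0$; for instance $\gamma=\varphi_2$. The inversion formula $f=\langle\gamma,\varphi_2\rangle^{-1}\int_{\rdd}V_{\varphi_2}f(w)\,\pi(w)\gamma\,dw$ holds weakly in $\cS'$. Pairing it with $\pi(z)\varphi_1$ gives
\begin{equation}
|V_{\varphi_1}f(z)|\le |\langle\gamma,\varphi_2\rangle|^{-1}\,\big(|V_{\varphi_2}f|*|V_{\varphi_1}\gamma|\big)(z),\qquad z\in\rdd.
\end{equation}
This uses $|\langle\pi(w)\gamma,\pi(z)\varphi_1\rangle|=|V_{\varphi_1}\gamma(z-w)|$, which follows from the commutation rule for $\pi$.

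The second step is a weighted Young inequality for mixed norms. The weight $1\otimes v_s$ is $v_{|s|}$-moderate by Peetre's inequality, $\langle\xi\rangle^s\le 2^{|s|/2}\langle\xi-\eta\rangle^{|s|}\langle\eta\rangle^s$. Consequently,
\begin{equation}
\norm{F*H}_{L^{p,q}_{1\otimes v_s}}\lesssim \norm{F}_{L^{p,q}_{1\otimes v_s}}\norm{H}_{L^1_{v_{|s|}}},\qquad 1\le p,q\le\infty.
\end{equation}
To prove this, apply Minkowski's integral inequality first in the $x$-variable ($L^p$) and then in the $\xi$-variable ($L^q$), absorbing the weight via moderateness. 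The inequality holds for all $1\le p,q\le\infty$ including the endpoints. Finally, $V_{\varphi_1}\gamma\in\cS(\rdd)$ because both windows are Schwartz, so $\norm{V_{\varphi_1}\gamma}_{L^1_{v_{|s|}}}<\infty$. This closes the estimate with constant $C=|\langle\gamma,\varphi_2\rangle|^{-1}\,2^{|s|/2}\norm{V_{\varphi_1}\gamma}_{L^1_{v_{|s|}}}$.

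The main obstacle is justifying the pointwise inequality for a general distribution $f$ rather than for $f\in\cS$. The inversion formula holds in the weak sense on $\cS'$, and $V_{\varphi_2}f$ grows at most polynomially, so the integral $\int V_{\varphi_2}f(w)\langle\pi(w)\gamma,\pi(z)\varphi_1\rangle\,dw$ converges absolutely thanks to the rapid decay of $V_{\varphi_1}\gamma$. This absolute convergence is what I will verify carefully.

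A second, lesser point is that $\norm{V_{\varphi_2}f}_{L^{p,q}_{1\otimes v_s}}<\infty$ must be used only through the convolution bound. This means no a priori membership in a smaller space is required, and the equivalence holds on the whole class where either side is finite.
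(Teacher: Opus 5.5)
Your argument is correct and follows the route the paper indicates. The remark before the statement identifies $|(M_\xi g*f)(x)|$ with $|V_\varphi f(x,\xi)|$ for $\varphi(t)=\overline{g(-t)}$, and window independence then comes, as after Theorem~\ref{thm:window-independence-stft}, from the pointwise bound $|V_{g}f|\le|\langle h,\gamma\rangle|^{-1}\,(|V_hf|*|V_g\gamma|)$ combined with weighted mixed-norm Young for the $v_{|s|}$-moderate weight $1\otimes v_s$. For $f\in\cS'(\rd)$, the identity behind that pointwise bound comes from the weak inversion formula on $\cS'$, not from absolute convergence of the integral alone; the paper also takes this from the literature.
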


 \begin{theorem}[Discrete characterization]
\label{thm:discrete}
Let $1\leq p,q\leq\infty$, $s\in\bR$, and let $(\psi_i)_{i\in I}$ be a bounded uniform partition of unity adapted to a uniform covering $(t_i+\widehat Q_0)_{i\in I}$ of $\rd$. Then
\begin{equation}
 \norm f_{M_{1\otimes v_s}^{p,q}}
 \asymp
 \left(\sum_{i\in I}\|\cF^{-1}(\psi_i\widehat f)\|_{L^p}^q\,\langle t_i\rangle^{sq}\right)^{1/q},
\end{equation}
with the usual modification for $q=\infty$.
\end{theorem}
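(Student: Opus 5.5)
We compare the continuous norm \eqref{eq:feichtinger-original-norm} with the discrete one by passing to the Fourier side. Since $\widehat{M_\xi g*f}=(T_\xi\widehat g)\widehat f$, the continuous norm equals
\begin{equation}
\Bigl(\int_{\rd}\|\cF^{-1}\bigl((T_\xi\widehat g)\widehat f\bigr)\|_{L^p}^q\langle\xi\rangle^{sq}\,d\xi\Bigr)^{1/q}.
\end{equation}
This is exactly the Wiener amalgam norm of $\widehat f$ in $W(\cF L^p,L^q_{v_s})(\rd)$ with localizing function $\widehat g$. By Theorem~\ref{thm:window-independence-original} we may choose $g$ freely, so we take $\widehat g\in C_c^\infty(\rd)$. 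The local component $B=\cF L^p$ is in standard situation with respect to $A_w$ for polynomial $w$. The two module properties follow from $\|\cF^{-1}(h\widehat f)\|_{L^p}\le\|\cF^{-1}h\|_{L^1}\|\cF^{-1}\widehat f\|_{L^p}$ (Young) and from the isometric action of modulations on $L^p$.

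The plan has two inequalities, each with the polynomial weight handled by moderateness: $\langle\xi\rangle^s\lesssim\langle t_i\rangle^s\langle\xi-t_i\rangle^{|s|}$, and $\langle \xi-t_i\rangle$ is bounded on the relevant compact sets.

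\textbf{Discrete $\lesssim$ continuous.} Choose $\widehat g$ with $|\widehat g|\ge c>0$ on a neighbourhood $U$ of $0$. Cover $\widehat Q_0$ by finitely many translates $\eta_1+U,\dots,\eta_N+U$. For $\xi\in t_i+\eta_k+U$ we write $\psi_i\widehat f=\bigl(\psi_i\,\overline{\chi}/T_\xi\widehat g\bigr)(T_\xi\widehat g)\widehat f$, where $\chi$ is a smooth cutoff. The multiplier $h_{i,\xi}$ in parentheses is then uniformly bounded in $\cF L^1$ (indeed in $A_w$), using the uniform boundedness of the BUPU and the smooth division on the compact set where $|T_\xi \widehat g|\ge c$. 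This gives $\|\cF^{-1}(\psi_i\widehat f)\|_{L^p}\lesssim \|\cF^{-1}((T_\xi\widehat g)\widehat f)\|_{L^p}$ for all $\xi$ in a set of fixed positive measure near $t_i$. Averaging over $\xi$ in this set, raising to the power $q$, and summing over $i$ gives the bound, since by uniform finite overlap each $\xi$ is counted boundedly often.

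\textbf{Continuous $\lesssim$ discrete.} Write $(T_\xi\widehat g)\widehat f=\sum_{i\in J(\xi)}(T_\xi\widehat g)\psi_i\widehat f$, where $J(\xi)=\{i:(t_i+\widehat Q_0)\cap(\xi+\supp\widehat g)\ne\emptyset\}$ has cardinality bounded uniformly in $\xi$. Young's inequality with $\|\cF^{-1}T_\xi\widehat g\|_{L^1}=\|g\|_{L^1}$ then yields
\begin{equation}
\|\cF^{-1}((T_\xi\widehat g)\widehat f)\|_{L^p}\lesssim \sum_{i\in J(\xi)}\|\cF^{-1}(\psi_i\widehat f)\|_{L^p}.
\end{equation}
We raise this to the power $q$, using finite cardinality to replace the sum by an $\ell^q$ sum. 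Integrating in $\xi$ and applying Fubini, each $i$ contributes over a set of $\xi$ of measure at most $|\widehat Q_0-\supp\widehat g|$, which is finite. The case $q=\infty$ is handled with suprema.

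\textbf{Main obstacle.} The delicate step is the first inequality, specifically the uniform $\cF L^1$ bound for the quotient multipliers $h_{i,\xi}$. This needs a Wiener-type division lemma: smooth compactly supported functions with derivatives controlled uniformly lie in $\cF L^1$ with uniform bounds. Combined with the uniform $A_w$ bound on $\psi_i$ from the BUPU, this gives the required uniform bound. If the $\psi_i$ are only assumed to be bounded in $A_w$ rather than smooth, we instead use that $A_w$ is an algebra and apply Wiener's lemma locally to $1/T_\xi\widehat g$ on $\supp\psi_i$. The translation invariance of the $A_w$ norm up to $w$ keeps the constants uniform in $i$.
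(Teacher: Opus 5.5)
Your route is the paper's. The paper gives no separate proof of Theorem~\ref{thm:discrete}: it is the specialization, via $M^{p,q}_{1\otimes v_s}=\cF^{-1}W(\cF L^p,L^q_{v_s})$ (cf.\ \eqref{eq:lca-modulation-amalgam}), of the BUPU characterization of $W(B,L^q_v)$ recalled in the LCA subsection, which is taken from Feichtinger's work. You re-derive that amalgam discretization by hand, and your upper bound (continuous $\lesssim$ discrete) is fine as written.

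The lower bound needs a repair. You only assume $|\widehat g|\ge c$ on a small neighbourhood $U$ of $0$. Then for $\xi\in t_i+\eta_k+U$ the function $T_\xi\widehat g$ is bounded below only near $t_i+\eta_k$, not on all of $\supp\psi_i\subseteq t_i+\widehat Q_0$. So the identity $\psi_i\widehat f=(\psi_i\overline{\chi}/T_\xi\widehat g)(T_\xi\widehat g)\widehat f$ cannot hold with a bounded quotient. The finite covering would instead force you to split $\psi_i$ into pieces, each controlled by an average over a different $\xi$-set, which is not what you then write.

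Since you are free to choose the window, drop the covering. Fix a ball $B$, take $\widehat g\in C_c^\infty(\rd)$ with $|\widehat g|\ge c>0$ on a neighbourhood $U$ of $\overline{\widehat Q_0}-\overline B$, and take $\chi\in C_c^\infty(U)$ with $\chi\equiv1$ on $\overline{\widehat Q_0}-\overline B$. Then for every $i$ and every $\xi\in t_i+B$,
\[
\psi_i\widehat f=\psi_i\,T_\xi\bigl(\chi/\widehat g\bigr)\,(T_\xi\widehat g)\,\widehat f,
\qquad
\bigl\|\psi_i\,T_\xi(\chi/\widehat g)\bigr\|_{\cF L^1}\le\sup_{j\in I}\|\psi_j\|_{\cF L^1}\,\|\chi/\widehat g\|_{\cF L^1}.
\]
The bound holds because $\cF L^1$ is a Banach algebra whose norm is exactly translation invariant (translation in frequency is modulation in time). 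So no Wiener-type division lemma is needed. Your averaging over $t_i+B$ together with Peetre's inequality then concludes.

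Both counting steps you use also need a justification. These are the bounded number of $i$ with $\xi\in t_i+B$, and $\sup_\xi\#J(\xi)<\infty$. Both amount to relative separatedness of $(t_i)$. This follows from uniform finite overlap because $\widehat Q_0$ contains a ball, since the support of a nonzero continuous $\psi_i$ has interior. Hence $t_i$ and $t_j$ close together forces $(t_i+\widehat Q_0)\cap(t_j+\widehat Q_0)\neq\emptyset$.
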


 \begin{theorem}[Atomic characterization]
\label{thm:atomic}
For the same exponents, weight, and covering, $f\in M_{1\otimes v_s}^{p,q}(\rd)$ if and only if it admits a decomposition $f=\sum_{i\in I}f_i$
in $\cS'(\rd)$ with $\supp\widehat f_i\subseteq t_i+\widehat Q_0$ and
\begin{equation}
 \left(\sum_{i\in I}\|f_i\|_{L^p}^q\langle t_i\rangle^{sq}\right)^{1/q}<\infty.
\end{equation}
The infimum over all such decompositions gives an equivalent norm.
   This is the uniform-frequency analogue of atomic descriptions of Besov spaces. See
\cite{Feichtinger1989Atomic,Feichtinger1983ModulationSpaces}.
\end{theorem}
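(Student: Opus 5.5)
The plan is to prove the two inequalities separately, using Theorem~\ref{thm:discrete} as the reference norm. I assume, as usual, that $\widehat Q_0$ is relatively compact and has nonempty interior, and that $(t_i+\widehat Q_0)_{i\in I}$ has uniformly bounded overlap (so $(t_i)$ is relatively separated).

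\emph{Necessity.} Let $f\in M_{1\otimes v_s}^{p,q}(\rd)$ and put $f_i:=\cF^{-1}(\psi_i\widehat f)$.
\begin{enumerate}[1.]
\item Since $\sum_i\psi_i\equiv 1$ and the family is locally finite, $f=\sum_i f_i$ in $\cS'(\rd)$. Convergence is unconditional: testing against $\varphi\in\cS$ gives $\sum_i\langle \widehat f,\psi_i\widehat\varphi\rangle$, and the polynomial growth of $\widehat f$ in the distributional sense is compensated by the decay of $\widehat\varphi$.
\item By construction $\supp\widehat f_i\subseteq t_i+\widehat Q_0$.
\item Theorem~\ref{thm:discrete} bounds the coefficient sequence $\big(\|f_i\|_{L^p}\langle t_i\rangle^{s}\big)_i$ in $\ell^q$ by $\norm f_{M_{1\otimes v_s}^{p,q}}$.
\end{enumerate}
Hence the infimum over decompositions is at most a constant times $\norm f$.

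\emph{Sufficiency.} Let $f=\sum_j f_j$ be any admissible decomposition. I will estimate the discrete norm of Theorem~\ref{thm:discrete} computed with a \emph{second} BUPU $(\phi_k)_{k\in I}$ subordinate to the enlarged covering $(t_k+\widehat Q_0)$. Theorem~\ref{thm:discrete} is stated for any such BUPU, so this is legitimate.
\begin{enumerate}[1.]
\item Fix $k$. By the finite-overlap property there is a uniformly bounded set $J(k):=\{j:(t_j+\widehat Q_0)\cap(t_k+\widehat Q_0)\neq\emptyset\}$, and
\[
\cF^{-1}(\phi_k\widehat f)=\sum_{j\in J(k)}\cF^{-1}(\phi_k\widehat f_j).
\]
\item Each term satisfies $\|\cF^{-1}(\phi_k\widehat f_j)\|_{L^p}=\|(\cF^{-1}\phi_k)*f_j\|_{L^p}\le \|\cF^{-1}\phi_k\|_{L^1}\|f_j\|_{L^p}$. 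The $L^1$ norms are uniformly bounded because $\sup_k\|\phi_k\|_{A_w}<\infty$ (and $w\ge1$). This is Young's inequality, valid for all $1\le p\le\infty$.
\item For $j\in J(k)$ we have $|t_j-t_k|\le C$, so $\langle t_k\rangle^s\asymp\langle t_j\rangle^s$ by Peetre's inequality.
\item Raise to the $q$-th power and sum over $k$. Each $j$ belongs to boundedly many $J(k)$, which yields
\[
\norm f_{M_{1\otimes v_s}^{p,q}}\lesssim \Big(\sum_j\|f_j\|_{L^p}^q\langle t_j\rangle^{sq}\Big)^{1/q}.
\]
The case $q=\infty$ is analogous.
\end{enumerate}

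\emph{Membership of $f$.} Taking the infimum over decompositions gives the reverse inequality, but one must still justify that $f\in M^{p,q}_{1\otimes v_s}$, i.e.\ that the discrete expression of Theorem~\ref{thm:discrete} characterizes membership for an arbitrary $f\in\cS'(\rd)$. This holds because the theorem asserts an equivalence of norms on $\cS'$. Alternatively, one can truncate to finite sums $\sum_{|j|\le N}f_j$, bound their norms uniformly, and pass to the weak-$*$ limit, using that $M^{p,q}_{1\otimes v_s}$ is a dual space (or that the norm is lower semicontinuous under $\cS'$-convergence, by Fatou's lemma).

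The main obstacle is this last point together with step 1 of the sufficiency part. The issue is that $\sum_j f_j$ converges only in $\cS'$, so the interchange $\phi_k\widehat f=\sum_{j\in J(k)}\phi_k\widehat f_j$ must be justified distributionally. It holds because $\phi_k\widehat f_j=0$ for $j\notin J(k)$ by the support condition, and multiplying by $\phi_k\in\cS$ is continuous on $\cS'$, provided the $\phi_k$ are smooth Schwartz functions, which we may choose.
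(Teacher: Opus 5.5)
Your proof is correct. The paper gives no argument of its own beyond citing \cite{Feichtinger1983ModulationSpaces,Feichtinger1989Atomic}, and your reduction to Theorem~\ref{thm:discrete} is the standard decomposition-space proof of such statements: $f_i=\cF^{-1}(\psi_i\widehat f)$ handles necessity, and sufficiency comes from $\phi_k\widehat f=\sum_{j\in J(k)}\phi_k\widehat f_j$ together with the uniform $\cF L^1$ bound on the BUPU, Peetre's inequality and bounded overlap. On the necessity side the BUPU must be subordinate to $(t_i+\widehat Q_0)$ itself, not an enlarged covering, and smooth with uniformly bounded derivatives so that $\sum_i\psi_i\widehat\varphi\to\widehat\varphi$ in $\cS$. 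For the converse, membership is most cleanly settled by your Fatou argument on finite partial sums.
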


\subsection{Relation with Besov spaces}
The comparison with Besov spaces is useful because it isolates the geometric choice that distinguishes the two scales. Both are built by decomposing frequency space and aggregating local pieces, but Besov spaces use dyadic annuli whereas Feichtinger's construction uses uniformly sized frequency cells. For $s\in\bR$ and $1\leq p,q\leq\infty$, choose $\varphi_0,\varphi\in\cS(\rd)$ such that
\begin{equation}
 \supp\varphi_0\subseteq\{|\xi|\leq2\}, \quad \supp\varphi\subseteq\left\lbrace\frac{1}{2}\leq|\xi|\leq2\right\rbrace, \quad \varphi_0(\xi)+\sum_{j=1}^{\infty}\varphi(2^{-j}\xi)\equiv1.
\end{equation}
With $\varphi_j(\xi)=\varphi(2^{-j}\xi)$ for $j\geq1$, the inhomogeneous Besov norm is
\begin{equation}
 \|f\|_{B^{p,q}_s}:=\left(\sum_{j=0}^{\infty}2^{sjq}\|f*\cF^{-1}(\varphi_j)\|_{L^p}^q\right)^{1/q},
\end{equation}
with the usual modification for $q=\infty$. The parameter $s$ measures smoothness, $p$ measures the local $L^p$ size, and $q$ aggregates the dyadic pieces. In particular, $B_s^{2,2}=H^s$, while the scale also contains the classical H\"older-Zygmund spaces at the appropriate endpoint.

The formal similarities with modulation spaces are substantial. Both scales enjoy independence of the chosen decomposition, completeness, duality, interpolation, and precise embedding results. The essential difference is the covering geometry: modulation spaces resolve frequency space uniformly, while Besov spaces become coarser at high frequencies. The following standard facts make the comparison precise.

\begin{proposition}[Standard Besov properties]
\label{thm:window-independence-besov}\label{thm:completeness-besov}
\label{thm:duality-besov}\label{thm:interpolation-besov}
Let $1\leq p,q\leq\infty$ and $s\in\bR$.
\begin{enumerate}[(i)]
 \item Different admissible dyadic partitions yield equivalent norms on $B_s^{p,q}$. This is a Banach space with continuous embeddings $\cS\hookrightarrow B_s^{p,q}\hookrightarrow\cS'$. If $p,q<\infty$, then $\cS$ is dense and $(B_s^{p,q})^*=B_{-s}^{p',q'}$ under the distributional pairing.
 \item If $1\leq p_1,q_1<\infty$, $1\leq p_2,q_2\leq\infty$, $s_1,s_2\in\bR$, and $0<\theta<1$, then
 \begin{equation}
 (B_{s_1}^{p_1,q_1},B_{s_2}^{p_2,q_2})_{[\theta]}=B_s^{p,q},
 \end{equation}
 with equivalent norms, where
 \begin{equation}
 \frac1p=\frac{1-\theta}{p_1}+\frac\theta{p_2},\qquad
 \frac1q=\frac{1-\theta}{q_1}+\frac\theta{q_2},\qquad
 s=(1-\theta)s_1+\theta s_2.
 \end{equation}
\end{enumerate}
\end{proposition}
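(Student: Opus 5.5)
We prove the standard Besov facts by the retract method: realize $B^{p,q}_s$ as a complemented subspace of the vector-valued sequence space $\ell^q_s(L^p)$, with norm $\|(F_j)\|=(\sum_j 2^{sjq}\|F_j\|_{L^p}^q)^{1/q}$. Define the coretraction $\mathcal J f=(f*\cF^{-1}\varphi_j)_{j\ge0}$. By definition, $\mathcal J$ is an isometry from $B^{p,q}_s$ into $\ell^q_s(L^p)$. Next choose $\tilde\varphi_j$ equal to $1$ on $\supp\varphi_j$ and supported in a slightly larger dyadic annulus. For $j\ge1$ take $\tilde\varphi_j=\tilde\varphi(2^{-j}\cdot)$, so each $\tilde\varphi_j$ overlaps with only $\varphi_{j-1},\varphi_j,\varphi_{j+1}$. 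The retraction is $\mathcal R(F_j)=\sum_j F_j*\cF^{-1}\tilde\varphi_j$. Then $\mathcal R\mathcal J=\mathrm{id}$.

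The key estimate is the uniform bound $\|\cF^{-1}\tilde\varphi_j\|_{L^1}=\|\cF^{-1}\tilde\varphi\|_{L^1}$, which follows from dilation invariance of the $L^1$ norm. Combined with Young's inequality, it gives $\|F*\cF^{-1}\tilde\varphi_j\|_{L^p}\lesssim\|F\|_{L^p}$ for all $1\le p\le\infty$. By finite overlap, the $k$-th Littlewood–Paley piece of $\mathcal R(F_j)$ involves only $|j-k|\le2$. Hence $\mathcal R$ is bounded $\ell^q_s(L^p)\to B^{p,q}_s$ with constants depending on $2^{2|s|}$, and this settles boundedness of $\mathcal R$.

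Part (i) then follows. For independence of the partition, if $(\varphi_j')$ is another admissible partition, then $f*\cF^{-1}\varphi_k'=\sum_{|j-k|\le N}(f*\cF^{-1}\varphi_j)*\cF^{-1}\varphi_k'$, and the same $L^1$ bound gives one direction; symmetry gives the other. For completeness, $B^{p,q}_s$ is isomorphic to the range of the bounded projection $\mathcal J\mathcal R$ on the complete space $\ell^q_s(L^p)$. Convergence of $\mathcal R(F_j)$ in $\cS'$ follows from Bernstein's inequality $\|F_j*\cF^{-1}\tilde\varphi_j\|_{L^\infty}\lesssim 2^{jd/p}\|F_j\|_{L^p}$ together with the polynomial growth $2^{-sj}$. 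The same Bernstein estimates give $\cS\hookrightarrow B_s^{p,q}\hookrightarrow\cS'$: for $f\in\cS$ the pieces decay faster than any power of $2^{-j}$, and the pairing $\langle f,\phi\rangle=\sum_{|j-k|\le1}\langle f*\cF^{-1}\varphi_j,\phi*\cF^{-1}\varphi_k\rangle$ is controlled by $\|f\|_{B_s^{p,q}}$ times a Schwartz seminorm of $\phi$.

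For density when $p,q<\infty$, truncate $f$ to $\sum_{j\le N}f*\cF^{-1}\varphi_j$, then mollify and cut off in space. For duality, use $(\ell^q_s(L^p))^*=\ell^{q'}_{-s}(L^{p'})$, valid for $p,q<\infty$. Applying $\mathcal J^*$ and $\mathcal R^*$, with the adjoint windows $\overline{\tilde\varphi_j}$, transports this duality to $(B^{p,q}_s)^*=B^{p',q'}_{-s}$ under the pairing above.

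Part (ii) follows from the abstract retract theorem for complex interpolation: since $\mathcal J$ and $\mathcal R$ do not depend on $(p,q,s)$, it suffices to know that $(\ell^{q_1}_{s_1}(L^{p_1}),\ell^{q_2}_{s_2}(L^{p_2}))_{[\theta]}=\ell^q_s(L^p)$. This is the weighted vector-valued $\ell^q$ interpolation theorem (Stein–Weiss with change of weights, combined with Riesz–Thorin for $L^p$). It requires the first couple to have $p_1,q_1<\infty$, which ensures density of finitely supported simple sequences and matches the hypothesis. The main obstacle is this weighted vector-valued interpolation step, especially at the endpoints $p_2$ or $q_2=\infty$, where density fails. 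I would handle it by the Calderón product description $X_0^{1-\theta}X_1^\theta$, and check directly that the analytic family $F(z)=\big(2^{-j(s_1(1-z)+s_2z)}|a_j|^{\ldots}\big)$ built from a given element has the correct boundary norms.
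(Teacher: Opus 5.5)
The paper states this proposition as standard background and gives no proof, so there is no internal argument to compare with. Your retract argument is the classical proof (Bergh--L\"ofstr\"om, Triebel) and is correct: you realize $B^{p,q}_s$ as a complemented subspace of $\ell^q_s(L^p)$ through $\mathcal J$ and $\mathcal R$, and transport completeness, duality and complex interpolation from the sequence space. A few points should be tightened.

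\textbf{Overlap count.} If $\tilde\varphi$ is supported in a strictly larger annulus than $\{1/2\le|\xi|\le 2\}$, then $\tilde\varphi_j$ also meets $\varphi_{j\pm2}$. The overlap is therefore $|j-k|\le 2$, as you use later, not $|j-k|\le1$.

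\textbf{Convergence of $\mathcal R F$ in $\cS'$.} An $L^\infty$ bound of size $2^{j(d/p-s)}$ does not by itself give convergence of $\sum_j G_j$, where $G_j=F_j*\cF^{-1}\tilde\varphi_j$. You must pair with $\phi\in\cS$ and use frequency localization. Take a real cutoff $\chi_j$ equal to $1$ on $\supp\tilde\varphi_j$; then $\langle G_j,\phi\rangle=\langle G_j,\cF^{-1}(\chi_j\widehat\phi)\rangle$. Since $\|\cF^{-1}(\chi_j\widehat\phi)\|_{L^{p'}}\lesssim_N 2^{-jN}$, H\"older with $\|G_j\|_{L^p}\lesssim 2^{-sj}\|F\|$ already suffices, and Bernstein is not needed. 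This is the same mechanism you invoke for $B^{p,q}_s\hookrightarrow\cS'$.

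\textbf{Duality.} You have the roles of the windows slightly conflated. The map $\mathcal J^*$ produces the distribution $\sum_j G_j*\cF^{-1}\overline{\varphi_j}$, whereas $\mathcal R^*$ produces the sequence $(g*\cF^{-1}\overline{\tilde\varphi_j})_j$. Both maps are bounded by the same Young-plus-finite-overlap argument. That argument holds also when $p'=\infty$ or $q'=\infty$, which is exactly the case needed when $p=1$ or $q=1$.

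\textbf{Endpoints in (ii).} The case $p_2=\infty$ or $q_2=\infty$ is less of an obstacle than you suggest. Since $\theta<1$ and $p_1,q_1<\infty$, the target exponents satisfy $p,q<\infty$. Hence finitely supported simple sequences are dense in $\ell^q_s(L^p)$, and that density is all the inclusion $\ell^q_s(L^p)\subseteq[X_0,X_1]_\theta$ requires. The reverse inclusion follows from the three lines lemma tested against finitely supported simple dual sequences. Neither direction uses density in $\ell^{q_2}_{s_2}(L^{p_2})$. Your Calder\'on-product route also works: $X_0=\ell^{q_1}_{s_1}(L^{p_1})$ has order continuous norm, so $[X_0,X_1]_\theta=X_0^{1-\theta}X_1^{\theta}=\ell^q_s(L^p)$.
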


The following embeddings quantify the difference between the two covering geometries; the shifts in smoothness compensate for the number of uniform cells inside a dyadic annulus. See \cite{CorderoRodino2020} for this comparison.

\begin{theorem}
\label{thm:embedding-Besov-modulation}
Let $1\leq p_1,q_1,p_2,q_2,p,q\leq\infty$ be such that $p_1\leq p\leq p_2$ and $q_1\leq q\leq q_2$. Then we have the embeddings
\begin{equation}
B_{d\theta_1(p_1,q_1)}^{p_1,q_1}(\rd)\hookrightarrow M^{p,q}(\rd)\hookrightarrow B_{d\theta_2(p_2,q_2)}^{p_2,q_2}(\rd),
\end{equation}
where
\begin{equation}
\theta_1(p,q)=\max\left(0,\frac{1}{q}-\min\left(\frac{1}{p},\frac{1}{p'}\right)\right), \quad \theta_2(p,q)=\min\left(0,\frac{1}{q}-\max\left(\frac{1}{p},\frac{1}{p'}\right)\right).
\end{equation}
\end{theorem}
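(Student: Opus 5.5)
The plan is to reduce both embeddings to a comparison of sequence norms over two decompositions of frequency space. On one side we use the uniform decomposition of Theorem~\ref{thm:discrete} with $s=0$. On the other we use the dyadic decomposition defining $B^{p,q}_s$.

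Fix a BUPU $(\psi_k)_{k\in\zd}$ subordinate to unit cubes $k+[-1,1]^d$, and the dyadic system $(\varphi_j)_{j\ge0}$. Write $\Lambda_j=\{k: \supp\psi_k\cap\supp\varphi_j\neq\emptyset\}$, so that $\#\Lambda_j\asymp 2^{jd}$. The basic local estimates concern a single cell or a single annulus.

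First, since the $\psi_k$ have uniformly bounded $\cF L^1$ norms, Young's inequality gives
\begin{equation}
\|\cF^{-1}(\psi_k\widehat f)\|_{L^p}\lesssim\|\cF^{-1}(\varphi_j\widehat f)\|_{L^p}
\end{equation}
for $k\in\Lambda_j$, summed over the boundedly many overlapping $j$.

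Conversely, for an annulus we split $\varphi_j\widehat f=\sum_{k\in\Lambda_j}\psi_k\varphi_j\widehat f$. We then need two Bernstein/Nikol'skii-type inequalities for functions with frequency support in unit cells:
\begin{equation}
\Big\|\sum_{k\in\Lambda}f_k\Big\|_{L^p}\lesssim(\#\Lambda)^{\min(1/p,1/p')... }
\end{equation}
The precise forms we aim for are the following. For $2\le p\le\infty$, $\|\sum_k f_k\|_{L^p}\lesssim(\#\Lambda)^{1-1/p}\,\|(\|f_k\|_{L^p})\|_{\ell^{p}}$, obtained by interpolating the $L^2$ orthogonality bound with the trivial $L^\infty$ triangle inequality. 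For $1\le p\le2$, $\|(\|f_k\|_{L^p})\|_{\ell^{p'}}\lesssim\|\sum f_k\|_{L^p}$, from Hausdorff--Young-type vector-valued interpolation between $\ell^\infty(L^1)$ and $\ell^2(L^2)$. Together with H\"older's inequality on $\ell^q$ versus $\ell^{p}$ or $\ell^{p'}$ over $\#\Lambda_j\asymp 2^{jd}$ terms, these produce the exponent factors $2^{jd\theta}$.

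With these tools the argument proceeds in three steps.

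First, for the right embedding $M^{p,q}\hookrightarrow B^{p_2,q_2}_{d\theta_2}$, reduce to $p_2=p$ and $q_2=q$. This uses $\ell^q\subset\ell^{q_2}$ on the dyadic sequence, and the uniform-cell Bernstein inequality $\|h\|_{L^{p_2}}\lesssim\|h\|_{L^p}$ applied to each piece $\cF^{-1}(\psi_k\widehat f)$; since $\theta_2$ is monotone in the right direction, no loss occurs. Then bound
\begin{equation}
\|\cF^{-1}(\varphi_j\widehat f)\|_{L^p}\lesssim 2^{jd(\max(1/p,1/p')-1/q)_+}\,\big\|(\|\cF^{-1}(\psi_k\widehat f)\|_{L^p})_{k\in\Lambda_j}\big\|_{\ell^q},
\end{equation}
using the decoupling inequality above plus H\"older. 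Multiplying by $2^{jd\theta_2}$ and taking the $\ell^q$ norm in $j$ gives the claim, because the $\Lambda_j$ have bounded overlap.

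Second, for the left embedding, argue symmetrically: bound each $\|\cF^{-1}(\psi_k\widehat f)\|_{L^p}$, $k\in\Lambda_j$, by dyadic data and sum $\ell^q$ over $\Lambda_j$. This gives a factor $2^{jd/q}$, improved by $2^{-jd\min(1/p,1/p')}$ via the reverse decoupling inequality. The result is $2^{jd\theta_1}$, and the reduction in $(p_1,q_1)$ is handled as before.

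Third, the endpoint cases $q=\infty$ and $p\in\{1,\infty\}$ need only routine modifications.

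The main obstacle is the decoupling step, i.e.\ proving the two vector-valued inequalities with the sharp exponent $\min(1/p,1/p')$, respectively $\max(1/p,1/p')$, uniformly in $\#\Lambda$. I would try complex interpolation of the map $(f_k)\mapsto\sum_k\cF^{-1}(\tilde\psi_k\widehat{f_k})$ between $p=1$, $p=2$ and $p=\infty$, where $\tilde\psi_k$ are fattened cutoffs. Alternatively, these facts can simply be cited from \cite{CorderoRodino2020}, as the paper suggests.
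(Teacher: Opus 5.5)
The paper gives no proof of its own here, only a pointer to \cite{CorderoRodino2020}. Your scheme is the standard route and matches the paper's heuristic remark about counting uniform cells per annulus: compare uniform and dyadic pieces, count the $\#\Lambda_j\asymp 2^{jd}$ unit cells meeting an annulus, and apply H\"older in $k$. Your reduction to $(p_2,q_2)=(p,q)$ is also fine. Note that $\theta_2$ is monotone only in $q$ (e.g.\ $\theta_2(1,2)=-\tfrac12<0=\theta_2(2,2)$), so the change of $p$ must go through $M^{p,q}\hookrightarrow M^{p_2,q}$, as you do.

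The gap is in the local inequalities, which are exactly where $\theta_1,\theta_2$ come from. First, your $p\ge 2$ inequality is misstated. Interpolating $\ell^2(L^2)\to L^2$ with $\ell^1(L^\infty)\to L^\infty$ gives $\|\sum_k f_k\|_{L^p}\lesssim\|(\|f_k\|_{L^p})_k\|_{\ell^{p'}}$, with a constant independent of $\#\Lambda$. Your form $(\#\Lambda)^{1-1/p}\|\cdot\|_{\ell^p}$ loses a factor $(\#\Lambda)^{1/p}$: at $p=q=2$ it only gives $M^2\hookrightarrow B^{2,2}_{-d/2}$. Even the best $\ell^p$ form, with constant $(\#\Lambda)^{1-2/p}$, loses after H\"older when $q<p$: for $p=\infty$, $q=1$ it gives $B^{\infty,1}_{-d}$ instead of $B^{\infty,1}_0$. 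H\"older has to be applied directly between $\ell^q$ and $\ell^{p'}$.

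Second, and more seriously, each embedding needs its inequality on the whole range $1\le p\le\infty$. You have the synthesis bound only for $p\ge 2$ and the analysis bound only for $p\le 2$.
\begin{itemize}
\item For the right embedding with $1<p<2$, only the triangle inequality is left. It gives $2^{jd(1-1/q)}$ instead of $2^{jd(1/p-1/q)_+}$, so $M^{4/3}\hookrightarrow B^{4/3,4/3}_0$ is out of reach.
\item For the left embedding with $2<p<\infty$, only the trivial count is left. It gives $2^{jd/q}$ instead of $2^{jd(1/q-1/p)_+}$, so $B^{4,4}_0\hookrightarrow M^4$ is out of reach.
\end{itemize}
What you need are the localized forms of $M^{p,\min(p,p')}\hookrightarrow L^p\hookrightarrow M^{p,\max(p,p')}$, valid for all $1\le p\le\infty$:
\[
\Big\|\sum_{k}f_k\Big\|_{L^p}\lesssim\big\|(\|f_k\|_{L^p})_k\big\|_{\ell^{\min(p,p')}},
\qquad
\big\|(\|\cF^{-1}(\psi_k\widehat h)\|_{L^p})_k\big\|_{\ell^{\max(p,p')}}\lesssim\|h\|_{L^p}.
\]
Here $\widehat{f_k}$ is supported in the fattened cell around $k$, and the constants are uniform in the finite index set.
\begin{itemize}
\item The first inequality follows by interpolating your operator $(f_k)\mapsto\sum_k\cF^{-1}(\tilde\psi_k\widehat{f_k})$ between $\ell^1(L^1)\to L^1$, $\ell^2(L^2)\to L^2$ and $\ell^1(L^\infty)\to L^\infty$.
\item The second follows by interpolating $h\mapsto(\cF^{-1}(\psi_k\widehat h))_k$ between $L^1\to\ell^\infty(L^1)$, $L^2\to\ell^2(L^2)$ and $L^\infty\to\ell^\infty(L^\infty)$.
\item Alternatively, for finite exponents one embedding follows from the other by duality, since $\theta_2(p,q)=-\theta_1(p',q')$.
\end{itemize}
H\"older over $\#\Lambda_j\asymp 2^{jd}$ terms then yields exactly $2^{jd(\max(1/p,1/p')-1/q)_+}$ and $2^{jd(1/q-\min(1/p,1/p'))_+}$, and the rest of your argument goes through.
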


\section{The STFT formulation and the general weighted theory}
\label{sec:stft-definition}

The reference framework in this section is Chapter~2 of Cordero-Rodino
\cite{CorderoRodino2020}. Proofs of the standard statements below can be found there.  In order to remain inside $\cS'(\rd)$, we assume in this section that the control weights  {are moderate and have at most polynomial growth}.  Faster weights, which require Gelfand-Shilov test functions and ultradistributions, are treated in Section~\ref{sec:weights-ultra}.

\subsection{The short-time Fourier transform}
In view of its pivotal importance, let us describe briefly the main properties of the STFT.
 {For $f,g\in L^2(\rd)$, we use the definition in \eqref{intro.defSTFT} and proceed directly to the properties needed below.} 

For a fixed $g\in L^2(\rd)$, $V_gf$ is a uniformly continuous function in $L^2(\rdd)$, see \cite[Proposition 1.2.10 and Corollary 1.2.12]{CorderoRodino2020}, satisfying the {\em Moyal's identity},
\begin{equation}\label{moyalForSTFT}
    \langle V_{g_1}f_1,V_{g_2}f_2\rangle=\langle f_1,f_2\rangle\langle g_2,g_1\rangle, \qquad f_{1,2},g_{1,2}\in L^2(\rd),
\end{equation}
see \cite[Theorem 1.2.11]{CorderoRodino2020}. Moyal's identity yields an inversion formula for the STFT that reads as follows, see \cite[Theorem 1.2.16]{CorderoRodino2020} and the preliminary discussion therein.

\begin{theorem}\label{inversionSTFT}
    Let $g,\gamma\in L^2(\rd)$ be such that $\langle g,\gamma\rangle\neq0$. Then, for every $f\in L^2(\rd)$, 
    \begin{equation}\label{formulainversionSTFT}
        f=\frac{1}{\langle\gamma,g\rangle}\int_{\rdd}V_gf(z)\pi(z)\gamma\ud z,
    \end{equation}
    where the integral is interpreted in the distributional sense.
\end{theorem}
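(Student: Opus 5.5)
The inversion formula follows from Moyal's identity \eqref{moyalForSTFT}, once the vector-valued integral in \eqref{formulainversionSTFT} is read weakly. For $f\in L^2(\rd)$ we define $\tilde f$ as the antilinear–linear object determined by testing against an arbitrary $h\in L^2(\rd)$ (or $h\in\cS(\rd)$, if one prefers the distributional reading):
\begin{equation}
\langle \tilde f,h\rangle:=\frac{1}{\langle\gamma,g\rangle}\int_{\rdd}V_gf(z)\,\langle\pi(z)\gamma,h\rangle\ud z .
\end{equation}
The first step is to check that this is well defined. Since $\langle\pi(z)\gamma,h\rangle=\overline{V_\gamma h(z)}$, and both $V_gf$ and $V_\gamma h$ lie in $L^2(\rdd)$ for square-integrable windows, the integrand is in $L^1(\rdd)$ by Cauchy–Schwarz. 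We also get the bound
\begin{equation}
|\langle\tilde f,h\rangle|\le|\langle\gamma,g\rangle|^{-1}\norm{f}_2\norm{g}_2\norm{\gamma}_2\norm{h}_2 ,
\end{equation}
which uses Moyal's identity with equal entries, $\norm{V_gf}_2=\norm f_2\norm g_2$. Hence $h\mapsto\langle\tilde f,h\rangle$ is a bounded conjugate-linear functional on $L^2$, and by Riesz $\tilde f\in L^2(\rd)$. This is exactly the sense in which the integral is understood.

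The second step identifies $\tilde f$. The integral equals $\langle V_gf,V_\gamma h\rangle_{L^2(\rdd)}$, so Moyal's identity \eqref{moyalForSTFT} with $f_1=f$, $g_1=g$, $f_2=h$, $g_2=\gamma$ gives
\begin{equation}
\langle\tilde f,h\rangle=\frac{1}{\langle\gamma,g\rangle}\,\langle f,h\rangle\,\langle\gamma,g\rangle=\langle f,h\rangle .
\end{equation}
Since this holds for all $h$, we conclude $\tilde f=f$, which is \eqref{formulainversionSTFT}.

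The only delicate point is bookkeeping: the sesquilinear conventions must match so that the constant comes out as $\langle\gamma,g\rangle$ and not its conjugate. This depends on the order of the windows in \eqref{moyalForSTFT}, and I would check it carefully. One could optionally add that the integral also converges in the strong sense, as the $L^2$-limit of integrals over compact sets $K\uparrow\rdd$. This follows from the same estimate with $V_gf\chi_{K^c}\to0$ in $L^2$, but it is not needed for the distributional interpretation stated.
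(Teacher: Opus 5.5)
Your proof is correct and follows the same route the paper relies on. The paper gives no proof of its own but cites the standard Moyal-based argument in Cordero--Rodino: define the integral weakly, show it is a bounded functional via Cauchy--Schwarz and $\|V_gf\|_2=\|f\|_2\|g\|_2$, and identify it with $f$ via Moyal's identity with $(f_1,g_1,f_2,g_2)=(f,g,h,\gamma)$. The constant you flagged as delicate already comes out right in your computation, since $\langle V_gf,V_\gamma h\rangle=\langle f,h\rangle\langle\gamma,g\rangle$.
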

Another consequence of Moyal's formula is that, under the assumption of Theorem~\ref{inversionSTFT}, if $V_\gamma^\ast$ denotes the adjoint of $V_\gamma$, then
\begin{equation}
   \frac 1{\langle \gamma,g\rangle} V^\ast_\gamma V_g= \mathrm{id}_{L^2},
\end{equation}
as described in \cite[Section 1.2.4]{CorderoRodino2020}.
By \cite[Theorem 1.2.23]{CorderoRodino2020}, $(f,g)\in\cS(\rd)\times\cS(\rd)\mapsto V_gf\in\cS(\rdd)$.

 {As noted after \eqref{intro.defSTFT}, for $f\in\cS'(\rd)$ and $g\in\cS(\rd)$ the STFT is defined by duality and is a continuous function on $\rdd$; see \cite[Corollary 1.2.19]{CorderoRodino2020}.} By \cite[Proposition 1.2.20]{CorderoRodino2020}, $V_gf$ is also of moderate growth and, therefore, defines a tempered distribution. More generally, by interpreting
\begin{equation}
    V_gf=\cF_2\mathfrak T_{R_{st}}(f\otimes\bar g),
\end{equation}
where $\cF_2$ is the partial Fourier transform in \eqref{intro.defF2} and $\mathfrak T_{R_{st}}$ is the rescaling with
\begin{equation}
    R_{st}=\begin{pmatrix}
        O_d & I_d\\
        -I_d & I_d
    \end{pmatrix},
\end{equation}
the STFT extends to $(f,g)\in\cS'(\rd)\times\cS'(\rd)$. Importantly, this {\em metaplectic perspective} is the starting point for the definition of metaplectic Wigner distributions. The inversion formula \eqref{formulainversionSTFT} holds in $\cS'(\rd)$ whenever $f,g\in\cS'(\rd)$, see \cite[Theorem 1.2.26]{CorderoRodino2020}.

 \subsection{The formulation via the short-time Fourier transform}
\begin{definition} \label{def:modulation-space-stft}
 {Let $1\leq p,q\leq\infty$, let $m$ be a $v$-moderate weight on $\rdd$, and fix $g\in\cS(\rd)\setminus\{0\}$. $M_m^{p,q}(\rd)$ consists of $f\in\cS'(\rd)$ for which $\norm{V_gf}_{L_m^{p,q}}<\infty$.}
\end{definition}
 {When $m\equiv1$, this agrees with the unweighted notation introduced above. On the diagonal we write $M_m^p=M_m^{p,p}$.}

\begin{theorem}[Window independence]\label{thm:window-independence-stft}
Let $1\leq p,q\leq\infty$ and let $m$ be   $v$-moderate. Any two nonzero Schwartz windows define equivalent norms on $M_m^{p,q}$.  More generally, in the Banach range every nonzero $g\in M_v^1(\rd)$ is an admissible window and
\begin{equation}
 \norm{V_gf}_{L_m^{p,q}}\asymp\norm f_{M_m^{p,q}}.
\end{equation}
\end{theorem}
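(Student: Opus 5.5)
The plan is to reduce window independence to a pointwise convolution inequality on phase space and then apply a weighted Young inequality for mixed norms. Fix $g,\gamma\in M_v^1(\rd)\setminus\{0\}$ and a reference window $\phi\in\cS(\rd)$ with $\langle \phi,\phi\rangle\neq0$; since $\cS\subset M^1_v$ for polynomial $v$, the Schwartz statement is a special case. The first step is the change-of-window estimate
\begin{equation}
 |V_{g}f(z)|\le \frac{1}{|\langle \gamma,\phi\rangle|}\,\bigl(|V_\phi f|*|V_{g}\gamma|\bigr)(z),\qquad z\in\rdd,
\end{equation}
valid for $f\in\cS'(\rd)$. To obtain it, insert the inversion formula of Theorem~\ref{inversionSTFT}, in its $\cS'$ version with analysis window $\phi$ and synthesis window $\gamma$, into $V_gf(z)=\langle f,\pi(z)g\rangle$. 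This gives
\begin{equation}
 V_gf(z)=\frac{1}{\langle\gamma,\phi\rangle}\int_{\rdd}V_\phi f(w)\,\langle\pi(w)\gamma,\pi(z)g\rangle\,dw .
\end{equation}
The commutation rule then gives $|\langle\pi(w)\gamma,\pi(z)g\rangle|=|V_g\gamma(z-w)|$. The interchange of pairing and integral is justified first for $f\in\cS$ and then by density and weak-$*$ continuity, using that $V_\phi f$ has at most polynomial growth.

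The second step is the weighted Young inequality $\|F*H\|_{L^{p,q}_m}\lesssim\|F\|_{L^{p,q}_m}\|H\|_{L^1_v}$. It follows from $m(z)\lesssim m(w)v(z-w)$, which lets one write $(|F|*|H|)\,m\lesssim (|F|m)*(|H|v)$, combined with Minkowski's integral inequality applied iteratively in $x$ and then $\xi$; this works for all $1\le p,q\le\infty$. Combining the two steps gives
\begin{equation}
 \|V_gf\|_{L^{p,q}_m}\lesssim \|V_g\gamma\|_{L^1_v}\,\|V_\phi f\|_{L^{p,q}_m}.
\end{equation}
Applying the same argument with the roles of $g$ and $\phi$ exchanged yields the reverse bound, so both norms are equivalent to the reference norm $\|f\|_{M^{p,q}_m}$. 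Any two admissible windows are then equivalent by transitivity. The choice of $\gamma$ is free subject to $\langle\gamma,\phi\rangle\neq0$, and one may take $\gamma=\phi$.

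The main obstacle is showing that $V_g\gamma\in L^1_v(\rdd)$ whenever $g,\gamma\in M^1_v$, since only $V_\phi g,V_\phi\gamma\in L^1_v$ are given. I would use the same inversion trick to bound
\begin{equation}
 |V_g\gamma|\lesssim |V_\phi\gamma|*|(V_\phi g)^\ast|,
\end{equation}
where $F^\ast(z)=\overline{F(-z)}$. Since $v$ is submultiplicative, $L^1_v$ is a convolution algebra, and $v(-z)\asymp v(z)$ for symmetric $v$; hence the right-hand side lies in $L^1_v$. The remaining care goes into justifying the pointwise identities for general $f\in\cS'$, via the extension of the STFT and its inversion formula to $\cS'$ recalled above, and into checking that the finiteness of one norm implies that of the other without assuming $f\in M^{p,q}_m$ a priori. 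The latter is automatic because the bounds are pointwise.
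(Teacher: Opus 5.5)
Your proposal is correct and follows the same route as the paper: the pointwise change-of-window inequality $|V_gf|\le|\langle\gamma,\phi\rangle|^{-1}\,|V_\phi f|*|V_g\gamma|$ (the Cordero--Rodino lemma quoted right after the statement) combined with the weighted mixed-norm Young inequality, plus the standard convolution-algebra bound giving $V_g\gamma\in L^1_v$ for $M^1_v$ windows. The one step to tighten is the justification for non-Schwartz windows: there $\pi(z)g\notin\cS$, so the $\cS'$-density argument is unavailable, and mere polynomial growth of $V_\phi f$ need not make $|V_\phi f|*|V_g\gamma|$ finite; instead read the pairing in the $M^\infty_{1/v}$--$M^1_v$ duality and use $|V_\phi f|\lesssim v$, which holds for every $f\in M^{p,q}_m$.
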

 \noindent See \cite[Proposition~2.3.2 and Theorem~2.3.12]{CorderoRodino2020}. This result follows by a fundamental convolution identity holding for the STFT, see \cite[Lemma 1.2.29]{CorderoRodino2020}:
 \begin{equation}
     |V_gf|\leq\frac{1}{|\langle h,\gamma\rangle|}\big(|V_hf|\ast|V_g\gamma|\big), 
 \end{equation}
 holding pointwise (it is an inequality, not an identity) for every $f\in\cS'(\rd)$ and $g,h,\gamma\in\cS(\rd)\setminus\{0\}$ such that $\langle h,\gamma\rangle\neq0$.

\subsection{Modulation spaces and other function spaces}
So far, we have discussed the inclusion relations between modulation spaces and Besov spaces. For particular choices of Lebesgue exponents and weights, we are also able to retrieve spaces appearing in harmonic analysis.

The one-sided weights $m\otimes 1$ and $1\otimes m$ explain the two basic effects measured by a modulation norm: a weight in $x$ controls decay, whereas a weight in $\xi$ controls smoothness.
In particular,
\begin{equation}\label{eq:weighted-L2-identities}
\begin{split}
 &M_{m\otimes1}^{2}(\rd)=L_m^2(\rd),\\
 &M_{1\otimes m}^{2}(\rd)=\cF^{-1}L_m^2(\rd),\\
 &M_{1\otimes v_s}^{2}(\rd)=H^s(\rd),
 \end{split}
\end{equation}
with equivalent norms, for polynomially moderate weights $m$ on $\rd$. Here, $H^s(\rd)$ denotes the potential Sobolev space. See \cite[Proposition~2.3.6]{CorderoRodino2020}. We also remark that $M^2_{v_s}(\rd)=Q_s(\rd)$ is the Shubin-Sobolev space, see the details in \cite[Lemma 4.4.19]{CorderoRodino2020}.

More generally, multiplication by $v_s(x)$ and the Bessel potential $\langle D\rangle^s$ shift, respectively, the space and frequency factors of a weight. See \cite[Theorem~2.3.14]{CorderoRodino2020}.
Finally, for the radial polynomial weight in \eqref{eq:polynomial-weight},
\begin{equation}
 \cS(\rd)=\bigcap_{s\geq0}M_{v_s}^{p,q}(\rd),\qquad
 \cS'(\rd)=\bigcup_{s\geq0}M_{1/v_s}^{p,q}(\rd),
\end{equation}
for every $1\leq p,q\leq\infty$. See \cite[Proposition~2.3.6]{CorderoRodino2020}.

\subsection{Banach structure, duality and interpolation}
  \begin{theorem}[Basic Banach properties]
 Let $1\leq p,q\leq\infty$ and let $m$ be   $v$-moderate. Then $M_m^{p,q}(\rd)$ is a Banach space, continuously embedded in $\cS'(\rd)$. If $p,q<\infty$, then $\cS(\rd)$ is dense in $M_m^{p,q}$. Moreover,
\begin{equation}
 \norm{\pi(z)f}_{M_m^{p,q}}\lesssim v(z)\norm f_{M_m^{p,q}},\qquad z\in\rdd.
\end{equation}
 \end{theorem}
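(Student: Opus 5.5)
The plan is to prove the three assertions (completeness with the embedding into $\cS'$, density of $\cS$, and the time-frequency shift estimate) separately, working with a fixed window $g\in\cS(\rd)\setminus\{0\}$ normalized so that $\norm{g}_{L^2}=1$. By Theorem~\ref{thm:window-independence-stft} the choice of window does not matter. The tools are the inversion formula of Theorem~\ref{inversionSTFT} with $\gamma=g$ and the fact that $V_g:\cS(\rd)\to\cS(\rdd)$ is continuous.

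\textbf{Shift estimate.} A direct computation from \eqref{intro.defSTFT} and the commutation rule gives, for $z=(z_1,z_2)$,
\begin{equation}
 |V_g(\pi(z)f)(w)|=|V_gf(w-z)|, \qquad w\in\rdd .
\end{equation}
Since $m$ is $v$-moderate, $m(w)\lesssim m(w-z)v(z)$. Substituting $w\mapsto w+z$ in the mixed norm, which is translation invariant, then yields $\norm{\pi(z)f}_{M^{p,q}_m}\lesssim v(z)\norm{f}_{M^{p,q}_m}$.

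\textbf{Embedding into $\cS'$.} For $\varphi\in\cS(\rd)$, Moyal's identity, extended by duality, gives $\langle f,\varphi\rangle=\langle V_gf,V_g\varphi\rangle$. H\"older's inequality in mixed norms then gives
\begin{equation}
|\langle f,\varphi\rangle|\le \norm{V_gf}_{L^{p,q}_m}\norm{V_g\varphi}_{L^{p',q'}_{1/m}}.
\end{equation}
Because $1/m\lesssim v$ and $v$ grows at most polynomially, $\norm{V_g\varphi}_{L^{p',q'}_{1/m}}\lesssim\sup_w\langle w\rangle^{N}|V_g\varphi(w)|$ for $N$ large. This quantity is a Schwartz seminorm of $\varphi$, by the continuity of $V_g$ on Schwartz spaces. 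Hence $M^{p,q}_m\hookrightarrow\cS'$.

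\textbf{Completeness.} This is the main step. Take a Cauchy sequence $(f_n)$ in $M^{p,q}_m$. Then $V_gf_n\to F$ in $L^{p,q}_m(\rdd)$, which is complete, and by the embedding above $f_n\to f$ in $\cS'$. Pointwise convergence $V_gf_n(w)=\langle f_n,\pi(w)g\rangle\to V_gf(w)$ holds, so $F=V_gf$ a.e., using a subsequence converging a.e. or Fatou's lemma. Thus $f\in M^{p,q}_m$ and $\norm{f_n-f}_{M^{p,q}_m}\to0$.

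The delicate point is to avoid needing closedness of the range of $V_g$ abstractly. The pointwise identification above sidesteps this. As an alternative, one can use the adjoint: $V_g^\ast:L^{p,q}_m\to M^{p,q}_m$ is bounded by the convolution inequality, since $V_g(V_g^\ast F)=F\ast V_gg$ up to phase factors and $L^{p,q}_m\ast L^1_v\subseteq L^{p,q}_m$. Together with $V_g^\ast V_g=\mathrm{id}$, this shows $M^{p,q}_m$ is isomorphic to a closed subspace of $L^{p,q}_m$.

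\textbf{Density for $p,q<\infty$.} Write $f=V_g^\ast V_gf$ and approximate $V_gf$ in $L^{p,q}_m$ by compactly supported functions $F_n$, which is possible because $p,q<\infty$. The elements $V_g^\ast F_n=\int F_n(z)\pi(z)g\,\ud z$ belong to $\cS(\rd)$: they are integrals of a continuous compactly supported family of Schwartz functions. Boundedness of $V_g^\ast$ then gives $V_g^\ast F_n\to f$ in $M^{p,q}_m$.

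Within this step, the hardest estimate is the boundedness of $V_g^\ast$ on the weighted mixed-norm space. It amounts to the Young-type inequality $L^{p,q}_m\ast L^1_v\subseteq L^{p,q}_m$, which I would prove via Minkowski's integral inequality together with the moderateness of $m$.
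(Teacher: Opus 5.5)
Your proposal is correct and is essentially the standard argument the paper relies on through its citation of Cordero--Rodino (Proposition~2.3.8 and Theorem~2.3.9). That argument gets the shift bound from $|V_g(\pi(z)f)|=|T_zV_gf|$ and moderateness of $m$, the embedding into $\cS'$ from the weak inversion formula and H\"older, and density from truncating $V_gf$ and applying $V_g^\ast$, which is bounded because $|V_gV_g^\ast F|\le|F|\ast|V_gg|$ and the weighted Young inequality holds. The only variation is your primary completeness argument: you identify the $L^{p,q}_m$-limit with $V_gf$ via the $\cS'$-limit and Fatou, rather than setting $f=V_g^\ast F$ and using $f_n=V_g^\ast V_gf_n$; both routes are valid, and you also give the second one.
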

 \noindent See
\cite[Proposition~2.3.8 and Theorem~2.3.9]{CorderoRodino2020}. Importantly, time-frequency shift invariance characterizes the weighted Feichtinger algebras $M^1_v$ as the smallest Banach spaces with this property.
\begin{theorem}[Minimality of $M^1_v$]
Let $v$ be a submultiplicative moderate weight. Let $(B,\norm{\cdot}_B)$ be a Banach space contained in $\cS'(\rd)$ with the following properties.
\begin{enumerate}[1.]
    \item For every $x,\xi\in\rd$
    \begin{equation}
        \norm{\pi(x,\xi)f}_B\leq v(x,\xi)\norm{f}_B, \qquad f\in B.
    \end{equation}
    \item $M^1_v\cap B\neq\{0\}$.
\end{enumerate}
Then, $M^1_v$ is embedded in $B$.    
\end{theorem}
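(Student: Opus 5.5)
We prove the minimality statement by expanding every $f\in M^1_v$ in time-frequency shifts of a single function $g\in M^1_v\cap B$, and then estimating the resulting vector-valued integral in $B$ using property~1. Fix a nonzero $g\in M^1_v\cap B$, which exists by property~2. By Theorem~\ref{inversionSTFT} with $\gamma=g$, every $f\in M^1_v$ (in particular every $f\in L^2$, and by density-type arguments every $f\in M^1_v$) satisfies
\begin{equation}
 f=\frac{1}{\norm{g}_2^2}\int_{\rdd}V_gf(z)\,\pi(z)g\ud z .
\end{equation}
Window independence in the Banach range (Theorem~\ref{thm:window-independence-stft}) allows us to take this same $g\in M^1_v$ as analyzing window, so that $\norm{V_gf}_{L^1_v}\asymp\norm f_{M^1_v}$.

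Next we read the integral as a $B$-valued Bochner integral, or, if measurability in $B$ is an issue, as a limit of Riemann-type sums. Property~1 gives $\norm{\pi(z)g}_B\le v(z)\norm g_B$. Hence the integrand has $B$-norm at most $|V_gf(z)|\,v(z)\norm g_B$, which is integrable. The integral therefore converges absolutely in $B$, and
\begin{equation}
 \Big\|\int_{\rdd}V_gf(z)\pi(z)g\ud z\Big\|_B\le\norm g_B\norm{V_gf}_{L^1_v}\lesssim\norm g_B\norm f_{M^1_v}.
\end{equation}
We then need the $B$-valued integral to coincide with $f$. Both $B$ and $M^1_v$ embed into $\cS'$, and the inversion formula holds in $\cS'$. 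So it suffices that the embedding $B\hookrightarrow\cS'$ is continuous (this is implicit in ``Banach space contained in $\cS'$''; it is usually assumed or derived via the closed graph theorem). Pairing with test functions then commutes with the Bochner integral, which identifies the two objects. This gives $f\in B$ with $\norm f_B\lesssim\norm f_{M^1_v}$, i.e.\ the continuous embedding $M^1_v\hookrightarrow B$.

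The main obstacle is making the vector-valued integral rigorous: strong measurability of $z\mapsto\pi(z)g$ in $B$ is not given, since we only have norm bounds and not continuity. I would first avoid this by using a discrete Gabor expansion instead. For a Gabor frame with window $g$ (or with a dual window in $M^1_v$), $f=\sum_\lambda c_\lambda\pi(\lambda)\gamma$ with $(c_\lambda)\in\ell^1_v$ and $\norm{c}_{\ell^1_v}\lesssim\norm f_{M^1_v}$. This is an absolutely convergent series in $B$, and no measurability is needed. Finding such a frame whose synthesis window lies in $B$ requires care: the expansion must use $\pi(\lambda)g$ as building blocks, i.e.\ the synthesis atom is $g$ itself and the coefficients $\langle f,\pi(\lambda)\gamma\rangle$ come from a dual window $\gamma\in M^1_v$ (via the frame-operator symmetry $f=\sum\langle f,\pi(\lambda)\gamma\rangle\pi(\lambda)g$). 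This is guaranteed by Gabor frame theory for sufficiently dense lattices, with dual windows in $M^1_v$ for $g\in M^1_v$ (the Gröchenig–Leinert type result), which the paper develops in Section~\ref{sec:gabor}.
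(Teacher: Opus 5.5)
The paper gives no proof of this statement; it only refers to \cite[Proposition~375]{DeGosson2011} and \cite{Feichtinger1981SegalAlgebra}. The argument you finally settle on is the standard proof of minimality (cf.\ \cite{Grochenig2001}), and it is correct. Take the given $0\neq g\in M^1_v\cap B$ as synthesis window and a dual window $\gamma\in M^1_v$, and write $f=\sum_\lambda\scal{f}{\pi(\lambda)\gamma}\pi(\lambda)g$. The analysis estimate of Section~\ref{sec:gabor}, with $m=v$ and $p=q=1$, gives $\sum_\lambda|\scal{f}{\pi(\lambda)\gamma}|v(\lambda)\lesssim\norm{\gamma}_{M^1_v}\norm{f}_{M^1_v}$. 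Property~1 then makes the series absolutely convergent in $B$. The series also converges unconditionally in $L^2$, so comparing partial sums along one fixed enumeration in $\cS'$ identifies the $B$-limit with $f$. Hence $\norm{f}_B\lesssim\norm{g}_B\norm{\gamma}_{M^1_v}\norm{f}_{M^1_v}$.

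Three points in the write-up need fixing.

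First, drop the Bochner-integral version rather than presenting it as a proof. Without strong measurability of $z\mapsto\pi(z)g$ in $B$, the claim that the integral converges absolutely in $B$ is unfounded. Riemann sums do not rescue it: the norm bound in property~1 makes the sums bounded in $B$ but says nothing about their convergence there. Continuity of $z\mapsto\pi(z)g$ in $B$ is only available a posteriori, once $M^1_v\hookrightarrow B$ is known.

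Second, continuity of $B\hookrightarrow\cS'$ does not follow from the closed graph theorem. That theorem presupposes that the graph of the inclusion is closed. Closedness is exactly the compatibility between $\norm{\cdot}_B$ and the topology of $\cS'$ that a bare set inclusion does not provide. It has to be read as part of the hypothesis, which is the standard meaning of ``Banach space contained in $\cS'$''. Without some such compatibility, the $B$-limit of the series need not equal $f$.

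Third, Section~\ref{sec:gabor} does not show that an arbitrary nonzero $g\in M^1_v$ generates a frame $\cG(g,\Lambda)$ on some lattice with dual window in $M^1_v$; it always starts from a given frame. This has to be imported, and there are two standard routes.
\begin{enumerate}[(i)]
\item Use Walnut's criterion together with the Gr\"ochenig--Leinert theorem. Since $g\in M^1_v\subseteq W(C,\ell^1)$ is continuous and nonzero, $\sum_k|g(x-\alpha k)|^2\geq c>0$ for small $\alpha$, and then small $\beta$ yields a frame. Gr\"ochenig--Leinert then gives $\gamma=S^{-1}g\in M^1_v$; it applies because the polynomial control weights of Section~\ref{sec:stft-definition} satisfy the GRS condition.
\item Use the Feichtinger--Gr\"ochenig discretization of the reproducing formula \cite{FeichtingerGrochenig1989}. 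For fine enough sampling it is inverted on $M^1_v$ by a Neumann series, so it needs neither an $L^2$ frame nor a Wiener-type lemma.
\end{enumerate}
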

We refer to \cite[Proposition 375]{DeGosson2011} for the proof of the previous result; see \cite{Feichtinger1981SegalAlgebra} for the original unweighted minimality of $S_0$.

\begin{theorem}[Duality] \label{thm:duality}
 Let $1\leq p,q<\infty$ and let $m$ be   $v$-moderate. Then
\begin{equation}
 (M_m^{p,q})^*\simeq M_{1/m}^{p',q'}.
\end{equation}
If $\varphi\in\cS(\rd)\setminus\{0\}$, the duality may be written as
\begin{equation}
 \langle f,h\rangle
 =\frac{1}{\|\varphi\|_2^2}\int_{\rdd}V_\varphi f(z)\overline{V_\varphi h(z)}\,dz.
\end{equation}
 \end{theorem}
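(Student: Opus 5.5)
The plan is to move the problem to phase space with the STFT and use that $V_\varphi$ is an isometry onto a closed subspace of a mixed-norm space whose dual is known. Normalize $\|\varphi\|_2=1$. By Theorem~\ref{thm:window-independence-stft}, $V_\varphi: M^{p,q}_m\to L^{p,q}_m(\rdd)$ is an isomorphism onto its range $\mathcal V=V_\varphi(M^{p,q}_m)$. The adjoint $V_\varphi^\ast F=\int_{\rdd}F(z)\pi(z)\varphi\,dz$ satisfies $V_\varphi^\ast V_\varphi=\mathrm{id}$ (Theorem~\ref{inversionSTFT}). The first step is to show that $V_\varphi^\ast$ is bounded $L^{p,q}_m\to M^{p,q}_m$ for all $1\le p,q\le\infty$. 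For this, write
\[
V_\varphi(V_\varphi^\ast F)=F\ast_\natural V_\varphi\varphi
\]
as a twisted convolution. This gives the pointwise bound $|V_\varphi V_\varphi^\ast F|\le |F|\ast|V_\varphi\varphi|$. Since $V_\varphi\varphi\in\cS(\rdd)\subset L^1_v$ and $m$ is $v$-moderate, weighted Young's inequality in mixed norms yields $\|V_\varphi^\ast F\|_{M^{p,q}_m}\lesssim\|F\|_{L^{p,q}_m}$. As a consequence, $P=V_\varphi V_\varphi^\ast$ is a bounded projection of $L^{p,q}_m$ onto $\mathcal V$, so $\mathcal V$ is closed and complemented.

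Next, define the map $\Theta:M^{p',q'}_{1/m}\to(M^{p,q}_m)^\ast$ by
\[
\Theta(h)(f)=\int V_\varphi f\,\overline{V_\varphi h}\,dz.
\]
By Hölder's inequality for weighted mixed norms ($m\cdot m^{-1}=1$) this is bounded, with $\|\Theta h\|\lesssim\|h\|_{M^{p',q'}_{1/m}}$. On $\cS$ the integral coincides with $\langle f,h\rangle$ by Moyal's identity \eqref{moyalForSTFT}. Since $p,q<\infty$, $\cS$ is dense in $M^{p,q}_m$, so the formula is consistent with the distributional pairing.

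For injectivity, suppose $\Theta h=0$. Testing against $f=\pi(w)\varphi$, which lies in $\cS$, gives $\overline{V_\varphi h(w)}\,\|\varphi\|_2^2=0$ for every $w$, because $V_\varphi\pi(w)\varphi=\pi$-shifted copies of $V_\varphi\varphi$ reproduce point evaluations. Hence $V_\varphi h\equiv0$ and $h=0$ by the inversion formula.

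Surjectivity is the main obstacle. Given $\ell\in(M^{p,q}_m)^\ast$, transport it to the functional $\ell\circ V_\varphi^\ast\circ P$, which is bounded on all of $L^{p,q}_m$ by the first step. Since $p,q<\infty$, the dual of weighted mixed-norm Lebesgue spaces is $(L^{p,q}_m)^\ast=L^{p',q'}_{1/m}$ (Benedek–Panzone), so there is $H\in L^{p',q'}_{1/m}$ with $\ell(V_\varphi^\ast PF)=\int F\overline H$. Set $h=V_\varphi^\ast H$, which belongs to $M^{p',q'}_{1/m}$ by the first step applied with exponents $(p',q')$ and weight $1/m$; this covers the case $p'$ or $q'=\infty$. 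Then for $f\in M^{p,q}_m$,
\[
\ell(f)=\ell(V_\varphi^\ast V_\varphi f)=\int V_\varphi f\,\overline H=\langle f,V_\varphi^\ast H\rangle=\Theta(h)(f),
\]
where the middle identities are first checked for $f\in\cS$ and then extended by density.

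Norm equivalence then follows from the open mapping theorem, or directly from $\|h\|\lesssim\|H\|=\|\ell\|$. The delicate points are the mixed-norm Young and Hölder inequalities with moderate weights, and the justification of $\langle f,V_\varphi^\ast H\rangle=\int V_\varphi f\,\overline H$, which I would verify on $\cS$ and extend by continuity.
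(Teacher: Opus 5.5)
Your argument is correct and is essentially the standard proof behind the reference the paper cites for this result (\cite[Theorem~2.3.10]{CorderoRodino2020}). It uses the same steps: $V_\varphi$ embeds $M^{p,q}_m$ isometrically into $L^{p,q}_m$, $V_\varphi^\ast$ is bounded back via $|V_\varphi V_\varphi^\ast F|\le|F|\ast|V_\varphi\varphi|$, the pulled-back functional is represented through $(L^{p,q}_m)^\ast=L^{p',q'}_{1/m}$, and $h=V_\varphi^\ast H$. Your only deviation is extending $\ell$ explicitly as $\ell\circ V_\varphi^\ast$ rather than by Hahn--Banach, which is a harmless simplification; the extra $P$ is redundant since $V_\varphi^\ast P=V_\varphi^\ast$.

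Two small corrections. For $f\in\cS$ and $h\in M^{p',q'}_{1/m}\subset\cS'$, the identity $\langle f,h\rangle=\int V_\varphi f\,\overline{V_\varphi h}$ comes from the weak inversion formula on $\cS'$, not from the $L^2$ Moyal identity. Also, ``$\|H\|=\|\ell\|$'' should read $\|H\|\lesssim\|\ell\|$.
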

 \noindent See
\cite[Theorem~2.3.10]{CorderoRodino2020}.
At endpoints it is convenient to use the closure $\cM_m^{p,q}$ of $\cS(\rd)$ in $M_m^{p,q}$. If $p,q<\infty$, then $\cM_m^{p,q}=M_m^{p,q}$.
\begin{theorem}[Complex interpolation]\label{thm:interpolation}
Let $m_j$ be moderate weights, $1\leq p_j,q_j\leq\infty$, $j=1,2$, and $0<\theta<1$. Set
\begin{equation}
 \frac{1}p=\frac{1-\theta}{p_1}+\frac{\theta}{p_2},\qquad
 \frac{1}q=\frac{1-\theta}{q_1}+\frac{\theta}{q_2},\qquad
 m=m_1^{1-\theta}m_2^\theta.
\end{equation}
Then
\begin{equation}
 (\cM_{m_1}^{p_1,q_1},\cM_{m_2}^{p_2,q_2})_{[\theta]}=\cM_m^{p,q}
\end{equation}
with equivalent norms.  In particular, when all exponents are finite the same formula holds with $M$ in place of $\cM$.
 \end{theorem}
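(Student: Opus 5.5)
The plan is to transfer the problem to the coefficient side via the STFT. The coefficient spaces are weighted mixed-norm Lebesgue spaces, where complex interpolation is classical. Fix a Gaussian window $\varphi$ with $\|\varphi\|_2=1$. Then $V_\varphi$ is an isometry from $M_{m_j}^{p_j,q_j}$ into $L_{m_j}^{p_j,q_j}(\rdd)$, and $V_\varphi^\ast$ is a left inverse by the inversion formula of Theorem~\ref{inversionSTFT}. Consequently $P=V_\varphi V_\varphi^\ast$ is a projection onto the range of $V_\varphi$. This exhibits each $\cM_{m_j}^{p_j,q_j}$ as a retract of a suitable subspace of $L_{m_j}^{p_j,q_j}$. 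By the standard retraction theorem for complex interpolation, it then suffices to know two things: how $L_{m_1}^{p_1,q_1}$ and $L_{m_2}^{p_2,q_2}$ interpolate, and that $V_\varphi^\ast$ is bounded back to the modulation spaces.

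For the first step I would invoke the Stein--Weiss / Benedek--Panzone result for mixed-norm spaces with weights. It gives $(L_{m_1}^{p_1,q_1},L_{m_2}^{p_2,q_2})_{[\theta]}=L_m^{p,q}$ with $m=m_1^{1-\theta}m_2^\theta$ and the stated exponent relations, valid when $p_j,q_j<\infty$. For the second step, the boundedness of $V_\varphi^\ast\colon L_{m_j}^{p_j,q_j}\to M_{m_j}^{p_j,q_j}$ follows from the convolution inequality stated after Theorem~\ref{thm:window-independence-stft}. In kernel form one has $|V_\varphi V_\varphi^\ast F|\le |F|\ast|V_\varphi\varphi|$. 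Moreover $V_\varphi\varphi\in L^1_v$ because $\varphi\in\cS$ and $v$ grows at most polynomially. The weighted Young inequality $L_m^{p,q}\ast L_v^1\subseteq L_m^{p,q}$ for $v$-moderate $m$ then closes this step. Both weights $m_1,m_2$ are moderate with respect to polynomially growing submultiplicative weights $v_1,v_2$. I would take $v=v_1v_2$ as a common control weight, or note that the Gaussian works for each separately. Since the operators $V_\varphi$ and $V_\varphi^\ast$ do not depend on $j$, they form compatible couples.

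Combining these, $V_\varphi$ maps the interpolation space $(\cM_{m_1}^{p_1,q_1},\cM_{m_2}^{p_2,q_2})_{[\theta]}$ boundedly into $L_m^{p,q}$, and $V_\varphi^\ast$ maps $L_m^{p,q}$ back. This yields the two continuous inclusions with equivalent norms, $\cM\subseteq$ interpolation space $\subseteq\cM$.

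The main obstacle is the endpoint case where some $p_j$ or $q_j$ equals $\infty$. There the complex method does not see all of $L^\infty$, and one must work with the closures $\cM_{m_j}^{p_j,q_j}$ of $\cS$. I would handle this by replacing $L_{m_j}^{p_j,q_j}$ with the closure of $C_c$, or of $\cS(\rdd)$, in it; the complex interpolation of those closed subspaces is known to be the corresponding closure for $L_m^{p,q}$. I would then check that $V_\varphi$ maps $\cS(\rd)$ into $\cS(\rdd)$, by the cited Theorem 1.2.23, and that $V_\varphi^\ast$ maps $\cS(\rdd)$ into $\cS(\rd)$. Together with density of $\cS$ in the intersection, this makes the retraction compatible with these closures. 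Once the exponents are all finite, the $\cM$ and $M$ spaces coincide, giving the final assertion.
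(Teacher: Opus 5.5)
Your architecture is the standard retraction argument; the paper itself only refers to Cordero--Rodino for this result. For $p_j,q_j<\infty$ your argument is complete. $V_\varphi$ and $V_\varphi^\ast$ make $M^{p_j,q_j}_{m_j}$ a retract of $L^{p_j,q_j}_{m_j}$, and $|V_\varphi V_\varphi^\ast F|\le|F|\ast|V_\varphi\varphi|$ with weighted Young gives the co-retraction. Weighted mixed-norm interpolation then finishes, though Stein--Weiss and Benedek--Panzone have to be combined, e.g.\ via Calder\'on products, since $m_j$ need not be tensor weights.

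The gap is the endpoint step, which is exactly where the $\cM$-formulation has content. You assert $(\mathcal L_1,\mathcal L_2)_{[\theta]}=\mathcal L^{p,q}_m$, where $\mathcal L_j$ is the closure of $C_c$ (or of $\cS(\rdd)$) in $L^{p_j,q_j}_{m_j}$ and $\mathcal L^{p,q}_m$ is the analogous closure in $L^{p,q}_m$. The cited theorems do not give this. At an infinite exponent these closures ($C_0$ inside $L^\infty$, or $C_0(\rd_\xi;L^p_x)$ inside $L^{p,\infty}$) are not solid, so Calder\'on's lattice theory does not apply. Moreover, the Benedek--Panzone/Stein--Weiss constructions leave $\mathcal L_1$ on the boundary line. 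For instance, let $p_1=\infty>p_2$, $G\in C_c$, and $\alpha(z)=\frac{1-z}{p_1}+\frac{z}{p_2}$. The inner extremal factor $\operatorname{sgn}(G)\,|G|^{p\alpha(z)}$ equals $\operatorname{sgn}(G)\,|G|^{ipt/p_2}$ at $z=it$. It is unimodular on $\{G\neq0\}$ and zero elsewhere, hence discontinuous and not in $C_0$. The Stein--Weiss factor $m_1^{z-\theta}m_2^{\theta-z}$ causes the same problem for discontinuous weights. The inclusion $(\mathcal L_1,\mathcal L_2)_{[\theta]}\hookrightarrow\mathcal L^{p,q}_m$ is easy by density of $C_c$ in $\mathcal L_1\cap\mathcal L_2$. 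The reverse bound $\|G\|_{(\mathcal L_1,\mathcal L_2)_{[\theta]}}\lesssim\|G\|_{L^{p,q}_m}$ on a dense class is left unproved, and it is the whole difficulty.

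The simplest repair is to retract onto solid spaces instead. Let $Y_j$ be the closure of the bounded compactly supported functions in $L^{p_j,q_j}_{m_j}$; for finite exponents this is all of $L^{p_j,q_j}_{m_j}$.
\begin{itemize}
\item Take $G$ a finite combination of indicators of cubes from a grid. The usual extremal family then consists of bounded compactly supported functions, so it lies in $Y_1\cap Y_2$ and gives $\|G\|_{(Y_1,Y_2)_{[\theta]}}\lesssim\|G\|_{L^{p,q}_m}$. Such $G$ are dense in $\mathcal L^{p,q}_m\supseteq V_\varphi(\cM^{p,q}_m)$.
\item In the other direction, $(Y_1,Y_2)_{[\theta]}\hookrightarrow L^{p,q}_m$, and bounded compactly supported functions are dense in $Y_1\cap Y_2$.
\item The retraction survives: $V_\varphi(\cS(\rd))\subseteq\cS(\rdd)\subseteq Y_j$, and $V_\varphi^\ast$ maps bounded compactly supported functions into $\cS(\rd)$. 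Together with $V_\varphi^\ast V_\varphi=\mathrm{id}$ this yields $(\cM^{p_1,q_1}_{m_1},\cM^{p_2,q_2}_{m_2})_{[\theta]}=\cM^{p,q}_m$.
\end{itemize}
Alternatively, you can keep your $\mathcal L_j$ and regularize the extremal family. Replace $|G|$ by $|G|+\delta\psi$, with $\psi\in C_c$ equal to $1$ on the support of $G$, after passing to equivalent continuous weights. A third option is to discretize with a Gabor frame and interpolate $\ell^{p,q}_m$ with $c_0$ in place of $\ell^\infty$. These sequence spaces are solid, so the lattice results apply directly.
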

 \noindent See
\cite[Proposition~2.3.16]{CorderoRodino2020}.

  \subsection{Embeddings, convolution and pointwise multiplication}

\begin{theorem}[General weighted embedding]\label{thm:embedding-general}
Let $1\leq p_1\leq p_2\leq\infty$, $1\leq q_1\leq q_2\leq\infty$, and assume $m_2\lesssim m_1$. Then
\begin{equation}
 M_{m_1}^{p_1,q_1}(\rd)\hookrightarrow M_{m_2}^{p_2,q_2}(\rd).
\end{equation}
\end{theorem}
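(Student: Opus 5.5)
Since the norm does not depend on the window (Theorem~\ref{thm:window-independence-stft}), I fix one Schwartz window $g$, say a normalized Gaussian, and prove the pointwise-to-mixed-norm bound
\begin{equation}
 \norm{V_gf}_{L^{p_2,q_2}_{m_2}}\lesssim\norm{V_gf}_{L^{p_1,q_1}_{m_1}},\qquad f\in M^{p_1,q_1}_{m_1}(\rd).
\end{equation}
The weights are handled first. The hypothesis $m_2\lesssim m_1$ gives $|V_gf|m_2\lesssim|V_gf|m_1$ pointwise, so $\norm{V_gf}_{L^{p_2,q_2}_{m_2}}\lesssim\norm{V_gf}_{L^{p_2,q_2}_{m_1}}$. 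It therefore suffices to prove the embedding with a single weight $m=m_1$ and increasing exponents.

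The exponents cannot be raised by a Lebesgue inclusion on $\rdd$, since no such inclusion holds in general. Instead I use the reproducing property of the STFT. By the inversion formula of Theorem~\ref{inversionSTFT}, with $\gamma=g$ and $\norm{g}_2=1$, we have $V_gf=V_g(V_g^\ast V_gf)$, which gives the pointwise bound
\begin{equation}
 |V_gf(w)|\le \big(|V_gf|\ast|V_gg|\big)(w),
\end{equation}
i.e.\ the convolution inequality recalled after Theorem~\ref{thm:window-independence-stft} with $h=\gamma=g$. Writing $F=|V_gf|$, we get $F\le F\ast\Phi$ with $\Phi=|V_gg|\in\cS(\rdd)$.

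Next I insert the weight. Moderateness gives $m(z)\lesssim m(z-w)v(w)$, hence $Fm\lesssim (Fm)\ast(\Phi v)$, and $\Phi v\in L^1_{\mathrm{loc}}$ with rapid decay because $v$ grows at most polynomially. Young's inequality for mixed-norm spaces, applied iteratively in $x$ and then in $\xi$, gives $\norm{(Fm)\ast(\Phi v)}_{L^{p_2,q_2}}\le\norm{Fm}_{L^{p_1,q_1}}\norm{\Phi v}_{L^{r,s}}$. Here
\begin{equation}
 1+\tfrac1{p_2}=\tfrac1{p_1}+\tfrac1r,\qquad 1+\tfrac1{q_2}=\tfrac1{q_1}+\tfrac1s,
\end{equation}
and these relations admit $r,s\in[1,\infty]$ exactly because $p_1\le p_2$ and $q_1\le q_2$. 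Since $\Phi v$ belongs to every $L^{r,s}$, the constant is finite and the desired bound follows.

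The main obstacle is to justify the reproducing inequality for arbitrary $f\in\cS'(\rd)$ rather than only for $f\in L^2$. I would take this from the distributional version of the inversion formula, valid in $\cS'$ and recalled after Theorem~\ref{inversionSTFT}: $V_gf$ is continuous and of polynomial growth, so the convolution integral converges absolutely and the identity holds pointwise.

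A second technical point is the mixed-norm Young inequality itself. I would prove it by applying Minkowski's integral inequality and the one-dimensional Young inequality in the inner $x$ variable, and then again in the outer $\xi$ variable, as in the mixed-norm properties of \cite[Section 2.2]{CorderoRodino2020}.
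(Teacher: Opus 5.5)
Your proof is correct and complete. The paper gives no argument of its own for this statement, only the pointer to \cite[Theorem~2.4.17]{CorderoRodino2020}, and your proof uses only ingredients the paper recalls:
\begin{enumerate}[(i)]
\item window independence;
\item the pointwise inequality $|V_gf|\le\norm{g}_2^{-2}\,|V_gf|\ast|V_gg|$, which the paper already states for every $f\in\cS'(\rd)$, so the ``main obstacle'' you flag is settled;
\item $v$-moderateness, to move the weight onto the kernel;
\item the iterated mixed-norm Young inequality with $1+\frac1{p_2}=\frac1{p_1}+\frac1r$ and $1+\frac1{q_2}=\frac1{q_1}+\frac1s$, using that $|V_gg|\,v$ lies in every $L^{r,s}(\rdd)$.
\end{enumerate}

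The usual alternatives discretize instead. One option is to show that $V_gf$ belongs to the amalgam $W(L^\infty,L^{p_1,q_1}_{m})$ and then use the nesting $\ell^{p_1,q_1}\hookrightarrow\ell^{p_2,q_2}$. Another, available inside this paper, combines Theorem~\ref{thm:gabor-characterization-Banach} for a Gaussian frame on a rectangular lattice $\alpha\zd\times\beta\zd$ with the trivial inclusion $\ell^{p_1,q_1}_{m_1}\hookrightarrow\ell^{p_2,q_2}_{m_2}$ of sequence spaces. Those routes make the mechanism more visible: local boundedness of the STFT plus nesting of $\ell^{p,q}$. However, they depend on amalgam or frame machinery. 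Yours needs only the reproducing inequality and Young, and gives an explicit embedding constant controlled by $\norm{|V_gg|\,v}_{L^{r,s}}$.
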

\noindent This is
\cite[Theorem~2.4.17]{CorderoRodino2020}.
For the historical frequency weights there is a sharper necessary-and-sufficient criterion.
\begin{theorem}[Sharp embedding for $1\otimes v_s$]\label{thm:embedding}
 Let $1\leq p_j,q_j\leq\infty$ and $s_j\in\bR$. Then
\begin{equation}
 M_{1\otimes v_{s_1}}^{p_1,q_1}(\rd)\hookrightarrow M_{1\otimes v_{s_2}}^{p_2,q_2}(\rd)
\end{equation}
if and only if $p_1\leq p_2$ and either
\begin{equation}
 q_1\leq q_2,\quad s_1\geq s_2,
\end{equation}
or
\begin{equation}
 q_1>q_2,\quad \frac{s_1}{d}+\frac{1}{q_1}>\frac{s_2}{d}+\frac{1}{q_2}.
\end{equation}
\end{theorem}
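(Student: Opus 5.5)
The plan is to reduce everything to the discrete description of Theorem~\ref{thm:discrete}, using the lattice BUPU $(\psi_k)_{k\in\zd}$ adapted to unit cubes centered at $k$. In this description
\begin{equation}
\norm f_{M^{p,q}_{1\otimes v_s}}\asymp\big\|\big(\langle k\rangle^{s}\|\square_k f\|_{L^p}\big)_k\big\|_{\ell^q},
\qquad \square_k f:=\cF^{-1}(\psi_k\widehat f).
\end{equation}
Two separate parameters then have to be handled: the local exponent $p$ and the global pair $(q,s)$.

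\emph{Local exponent.} Since $\supp\widehat{\square_k f}$ lies in a cube of fixed size, Bernstein's inequality (equivalently, a Nikol'skij-type estimate for band-limited functions uniform in $k$) gives $\|\square_k f\|_{L^{p_2}}\lesssim\|\square_k f\|_{L^{p_1}}$ whenever $p_1\le p_2$. The constant does not depend on $k$, because modulation does not change the $L^p$ norms. This reduces sufficiency to the weighted sequence embedding $\ell^{q_1}_{v_{s_1}}(\zd)\hookrightarrow\ell^{q_2}_{v_{s_2}}(\zd)$.

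\emph{Sequence embedding.} If $q_1\le q_2$ and $s_1\ge s_2$, the embedding is immediate from $\ell^{q_1}\subset\ell^{q_2}$ and $\langle k\rangle^{s_2}\le\langle k\rangle^{s_1}$. If $q_1>q_2$, apply H\"older with $1/q_2=1/q_1+1/r$. This gives the embedding exactly when $(\langle k\rangle^{s_2-s_1})_k\in\ell^r(\zd)$, that is when $(s_1-s_2)r>d$, which is the condition $s_1/d+1/q_1>s_2/d+1/q_2$.

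\emph{Necessity.} Test on functions whose localized pieces are controlled.
\begin{itemize}
\item For $p_1\le p_2$, take $f=\lambda^{-d}\phi(\cdot/\lambda)$-type dilations of a fixed Schwartz $\phi$ with $\widehat\phi$ supported in a single cell. For $\lambda\to\infty$ only boundedly many cells are involved. The norm then behaves like $\|f\|_{L^{p}}\sim\lambda^{d/p-d}$, which is incompatible with the embedding if $p_1>p_2$.
\item For the $(q,s)$ conditions, take $f=\sum_{k}c_k M_k\phi$ with $\widehat\phi$ supported in a small ball, so that the pieces are disjoint and each satisfies $\|\square_k f\|_{L^p}\asymp|c_k|$ uniformly in $p$. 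The embedding then forces $\ell^{q_1}_{v_{s_1}}\hookrightarrow\ell^{q_2}_{v_{s_2}}$ on $\zd$.
\item That forces $s_1\ge s_2$: if $q_1\le q_2$, test with single deltas $c=\delta_k$ and let $|k|\to\infty$.
\item It also forces, when $q_1>q_2$, the strict summability condition. By the standard duality/Landau argument (test with $c_k=\langle k\rangle^{-a}$ truncated to large balls), one obtains the sharp strict inequality.
\end{itemize}

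The main obstacle is making the necessity rigorous with uniform constants, in particular building test functions on which the discrete norm of Theorem~\ref{thm:discrete} is computed exactly up to constants for both exponent pairs simultaneously. I would handle this by choosing $\widehat\phi$ supported well inside the interior of one cell where $\psi_0\equiv1$. For the $q_1>q_2$ sharpness I would use the critical sequence $\langle k\rangle^{-s_1-d/q_1}(\log\langle k\rangle)^{-2/q_2}$-type examples to exclude the equality case.
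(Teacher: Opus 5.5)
The paper gives no argument of its own for this statement; it only cites \cite{CorderoRodino2020}. Your route is the standard local/global splitting, and most of it is correct:
\begin{itemize}
\item the uniform-decomposition norm of Theorem~\ref{thm:discrete};
\item Bernstein/Nikol'skij on each unit cell for the local exponent;
\item reduction of the global part to $\ell^{q_1}_{v_{s_1}}(\zd)\hookrightarrow\ell^{q_2}_{v_{s_2}}(\zd)$, via $f=\sum_k c_kM_k\phi$ with $\widehat\phi$ supported where only $\psi_0$ is nonzero and equals one (e.g.\ $|\xi|_\infty<1/4$ for the partition $\sigma_k$ of Section~\ref{sec:guo-baoxiang}).
\end{itemize}
Sufficiency, the dilation test for $p_1\le p_2$, and the single-delta test for $s_1\ge s_2$ all work.

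\textbf{The gap.} The step that fails as written is the sharpness step you single out. Take $c_k=\langle k\rangle^{-s_1-d/q_1}\bigl(\log(e+|k|)\bigr)^{-2/q_2}$. In the equality case $s_2=s_1+d/q_1-d/q_2$ you get
\[
\sum_{k\in\zd}\langle k\rangle^{s_2q_2}|c_k|^{q_2}=\sum_{k\in\zd}\langle k\rangle^{-d}\bigl(\log(e+|k|)\bigr)^{-2}<\infty .
\]
So this $c$ lies in both $\ell^{q_1}_{v_{s_1}}$ and $\ell^{q_2}_{v_{s_2}}$ and excludes nothing. A factor $\bigl(\log(e+|k|)\bigr)^{-\beta}$ works only if $\beta q_1>1\geq\beta q_2$, e.g.\ $\beta=1/q_2$.

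The truncated powers $\langle k\rangle^{-a}\boldsymbol{1}_{\{|k|\le N\}}$ need the same care. In the equality case only the critical choice $a=s_1+d/q_1$ works, giving a norm ratio $\asymp(\log N)^{1/q_2-1/q_1}\to\infty$. For $a<s_1+d/q_1$ both norms grow like $N^{s_1-a+d/q_1}$, and for $a>s_1+d/q_1$ both stay bounded. So ``test with $\langle k\rangle^{-a}$'' is not enough until $a$ is specified.

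\textbf{A fix covering every failing case.} Use the H\"older-extremal test. Put $u_k=\langle k\rangle^{s_2-s_1}$, $1/r=1/q_2-1/q_1$, and
\[
c_k=\langle k\rangle^{-s_1}u_k^{r/q_1}\boldsymbol{1}_{\{|k|\le N\}} .
\]
Since $q_2(1+r/q_1)=r$, one gets exactly
\[
\frac{\|c\|_{\ell^{q_2}_{v_{s_2}}}}{\|c\|_{\ell^{q_1}_{v_{s_1}}}}=\Bigl(\sum_{|k|\le N}u_k^{r}\Bigr)^{1/r}.
\]
This holds also for $q_1=\infty$, where $r=q_2$. The right-hand side is unbounded in $N$ if and only if $(s_1-s_2)r\le d$, i.e.\ exactly when $s_1/d+1/q_1\le s_2/d+1/q_2$. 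Transported through your disjoint-cell test functions, this gives the necessity of the strict inequality, and with that replacement your proof is complete.
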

\noindent See
\cite[Proposition~2.4.18]{CorderoRodino2020}.

The general convolution and product statements make the geometry of the weight explicit.  Let
\begin{equation}
 m_1(x)=m(x,0),\quad m_2(\xi)=m(0,\xi),\qquad
 v_1(x)=v(x,0),\quad v_2(\xi)=v(0,\xi),
\end{equation}
where $m$ is $v$-moderate, and let $\nu$ be an auxiliary polynomially moderate weight on $\rd$. All exponents in the next two statements belong to $[1,\infty]$.

\begin{theorem}[Convolution]\label{thm:convolution}
If
\begin{equation}\label{indexconv}
 \frac{1}p+1\leq\frac{1}{p_1}+\frac{1}{p_2},\qquad
 \frac{1}q\leq\frac{1}{q_1}+\frac{1}{q_2},
\end{equation}
then
\begin{equation}\label{eq:general-convolution-modspaces}
 M_{m_1\otimes\nu}^{p_1,q_1}(\rd)*
 M_{v_1\otimes(v_2\nu^{-1})}^{p_2,q_2}(\rd)
 \hookrightarrow M_m^{p,q}(\rd),
\end{equation}
and
\begin{equation}
 \norm{f*g}_{M_m^{p,q}}
 \lesssim
 \norm f_{M_{m_1\otimes\nu}^{p_1,q_1}}
 \norm g_{M_{v_1\otimes(v_2\nu^{-1})}^{p_2,q_2}}.
\end{equation}
\end{theorem}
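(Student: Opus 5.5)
The strategy is to compute the STFT of $f*g$ with a window that is itself a convolution, so that it factors as a convolution in $x$ for each fixed $\xi$. Then Young's inequality in $x$, followed by a Hölder/Young-type estimate in $\xi$, gives the bound.

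\emph{Step 1: factorization.} Fix $\varphi\in\cS(\rd)\setminus\{0\}$ and use the window $\gamma=\varphi*\varphi\neq0$ to compute $\norm{f*g}_{M^{p,q}_m}$; this is allowed by Theorem~\ref{thm:window-independence-stft}. Writing $V_\gamma h(x,\xi)=(h*M_\xi\gamma^*)(x)$ with $\gamma^*(t)=\overline{\gamma(-t)}$, we use
\begin{equation}
 M_\xi(\varphi^**\varphi^*)=(M_\xi\varphi^*)*(M_\xi\varphi^*).
\end{equation}
This gives, up to a unimodular factor, the identity
\begin{equation}
 V_\gamma(f*g)(x,\xi)=\big(V_\varphi f(\cdot,\xi)*V_\varphi g(\cdot,\xi)\big)(x).
\end{equation}
The identity is first checked for $f,g\in\cS$, where it follows from associativity and commutativity of convolution. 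It then extends by density, or by the $\cS'$ definition of the STFT, once the resulting bound shows that $f*g$ is well defined in $\cS'$.

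\emph{Step 2: estimate in $x$ for fixed $\xi$.} Moderateness gives $m(x,\xi)\lesssim m(x-y,0)\,v(y,\xi)$. Since $v$ is submultiplicative, $v(y,\xi)\le v(y,0)\,v(0,\xi)=v_1(y)v_2(\xi)$. Hence
\begin{equation}
 m(x,\xi)\lesssim m_1(x-y)\,v_1(y)\,v_2(\xi).
\end{equation}
Inserting this into the convolution and applying the weighted Young inequality in $x$ under the first condition of \eqref{indexconv} yields
\begin{equation}
 \norm{V_\gamma(f*g)(\cdot,\xi)m(\cdot,\xi)}_{L^p}\lesssim \norm{V_\varphi f(\cdot,\xi)m_1}_{L^{p_1}}\, \norm{V_\varphi g(\cdot,\xi)v_1}_{L^{p_2}}\,v_2(\xi).
\end{equation}

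\emph{Step 3: estimate in $\xi$.} Now split $v_2(\xi)=\nu(\xi)\cdot(v_2\nu^{-1})(\xi)$. Call $F(\xi)$ the first norm times $\nu(\xi)$ and $G(\xi)$ the second norm times $v_2\nu^{-1}(\xi)$. By definition,
\begin{equation}
 \norm{F}_{L^{q_1}}=\norm{f}_{M^{p_1,q_1}_{m_1\otimes\nu}},\qquad \norm{G}_{L^{q_2}}=\norm{g}_{M^{p_2,q_2}_{v_1\otimes(v_2\nu^{-1})}}.
\end{equation}
It remains to bound $\norm{FG}_{L^q}$. When $1/q=1/q_1+1/q_2$ this is Hölder's inequality. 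When $1/q<1/q_1+1/q_2$ one needs $\norm{FG}_{L^q}\lesssim\norm F_{L^{q_1}}\norm G_{L^{q_2}}$, which does not follow from Hölder for arbitrary functions.

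\emph{Main obstacle.} That last case is the hard step. The way around it is to exploit that $\xi\mapsto\norm{V_\varphi f(\cdot,\xi)}$ is essentially locally constant at unit scale; this follows from the convolution inequality $|V_gf|\le|V_hf|*|V_g\gamma|$ stated after Theorem~\ref{thm:window-independence-stft}. One then discretizes $\xi$ over unit cubes $k+[0,1]^d$, $k\in\zd$, as in Theorem~\ref{thm:discrete}, dominating $F,G$ by sequences $(F_k),(G_k)$ with $\norm{(F_k)}_{\ell^{q_1}}\lesssim\norm F_{L^{q_1}}$ and likewise for $G$. On sequences the inclusions $\ell^{r}\hookrightarrow\ell^{s}$ for $r\le s$ are available, so
\begin{equation}
 \norm{(F_kG_k)}_{\ell^q}\le\norm{(F_kG_k)}_{\ell^{r}}\le\norm{(F_k)}_{\ell^{q_1}}\norm{(G_k)}_{\ell^{q_2}},\qquad \frac1r=\frac1{q_1}+\frac1{q_2}\ge\frac1q.
\end{equation}
Combining the three steps gives the stated estimate, and the embedding \eqref{eq:general-convolution-modspaces} follows.
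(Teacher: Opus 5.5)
Your Steps 1 and 3 are fine. Step 1 is even exact: with $\gamma=\varphi*\varphi$ one has $V_\gamma(f*g)(\cdot,\xi)=V_\varphi f(\cdot,\xi)*V_\varphi g(\cdot,\xi)$ with no phase. \textbf{The gap is in Step 2.} It has exactly the defect you diagnosed in Step 3, and you did not catch it there.

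On $\rd$, Young's inequality $\|A*B\|_{L^p}\le\|A\|_{L^{p_1}}\|B\|_{L^{p_2}}$ holds for general functions only if $\frac1p+1=\frac1{p_1}+\frac1{p_2}$. Dilating $A$ and $B$ shows this, e.g.\ $L^1*L^1\not\subseteq L^\infty$. So when the first condition in \eqref{indexconv} is strict, your fixed-$\xi$ bound is not justified.

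It is in fact false slice by slice. Take $\varphi$ Gaussian and fix $\xi$. Then $\widehat{V_\varphi f(\cdot,\xi)}(\eta)=\hat f(\eta+\xi)\overline{\hat\varphi(\eta)}$. Choosing $f=g\in\cS$ suitably makes $V_\varphi f(\cdot,\xi)=V_\varphi g(\cdot,\xi)=u$ an arbitrary even function with $\hat u\in C_c^\infty$. Taking $u=\epsilon^{-d}\rho(\cdot/\epsilon)$ gives $\|u*u\|_{L^\infty}/\|u\|_{L^1}^2\gtrsim\epsilon^{-d}\to\infty$ in the case $p_1=p_2=1$, $p=\infty$, $m\equiv1$. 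The smoothing that rescues the theorem appears only after integrating in $\xi$, because $|V_\varphi f|\lesssim|V_\varphi f|*|V_\varphi\varphi|$ convolves in both phase-space variables. As written, your argument covers only the case of equality in the $x$-condition.

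The simplest repair is to reduce to equality in both conditions before estimating. The paper gives no proof and only cites \cite[Proposition~2.4.19]{CorderoRodino2020}; the standard argument for that result is your Steps 1--3 with Young and H\"older applied at equality.

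\begin{itemize}
\item[(a)] Put $\frac1{\tilde p_1}:=1+\frac1p-\frac1{p_2}$, which lies in $[\frac1p,\frac1{p_1}]$.
\item[(b)] Choose $\tilde q_j\ge q_j$ with $\frac1{\tilde q_1}+\frac1{\tilde q_2}=\frac1q$, e.g.\ $\frac1{\tilde q_2}:=\min\{\frac1{q_2},\frac1q\}$ and $\frac1{\tilde q_1}:=\frac1q-\frac1{\tilde q_2}$.
\item[(c)] Theorem~\ref{thm:embedding-general}, with the same weight and larger exponents, gives $\|f\|_{M^{\tilde p_1,\tilde q_1}_{m_1\otimes\nu}}\lesssim\|f\|_{M^{p_1,q_1}_{m_1\otimes\nu}}$ and $\|g\|_{M^{p_2,\tilde q_2}_{v_1\otimes(v_2\nu^{-1})}}\lesssim\|g\|_{M^{p_2,q_2}_{v_1\otimes(v_2\nu^{-1})}}$.
\item[(d)] For the new exponents Young and H\"older hold exactly, so your unit-cube discretization is no longer needed.
\end{itemize}

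That discretization is essentially the proof of the inclusion theorem. If you prefer a self-contained route, you must run it in $x$ as well. There it has to be coupled with the $\xi$-integration through the two-variable convolution inequality, not performed for each fixed $\xi$.
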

 \noindent See
\cite[Proposition~2.4.19]{CorderoRodino2020}.
\begin{theorem}[Pointwise multiplication]\label{thm:multiplication}
If
\begin{equation}\label{condproduct}
 \frac{1}p\leq\frac{1}{p_1}+\frac{1}{p_2},\qquad
 \frac{1}q+1\leq\frac{1}{q_1}+\frac{1}{q_2},
\end{equation}
then
\begin{equation}\label{eq:general-product-modspaces}
 M_{(v_1\nu)\otimes m_2}^{p_1,q_1}(\rd)\cdot
 M_{\nu^{-1}\otimes v_2}^{p_2,q_2}(\rd)
 \hookrightarrow M_m^{p,q}(\rd),
\end{equation}
with the corresponding norm inequality.
\end{theorem}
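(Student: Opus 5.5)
The plan is to reduce the statement to a mixed-norm Young-type estimate on the STFT, using the convolution/product duality induced by the fundamental identity. Since pointwise multiplication is conjugate to convolution under $\cF$, the cleanest route is to work directly with a product-adapted STFT computation. Fix windows $g_1,g_2\in\cS(\rd)\setminus\{0\}$ and use $g=g_1g_2$ as the window for the product $f_1f_2$ (this is admissible by Theorem~\ref{thm:window-independence-stft}). For $z=(x,\xi)$ we write
\begin{equation}
V_{g}(f_1f_2)(x,\xi)=\int_{\rd}V_{g_1}f_1(x,\xi-\eta)\,V_{g_2}f_2(x,\eta)\,d\eta ,
\end{equation}
which follows by writing $f_1\overline{T_xg_1}\cdot f_2\overline{T_xg_2}$ and using that the Fourier transform of a product is the convolution of the Fourier transforms, in the $\xi$ variable. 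We first justify this for Schwartz functions and then extend it by density, or by weak-$\ast$ arguments at the endpoints using the $\cM$ spaces.

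Next we distribute the weight. By $v$-moderateness,
\begin{equation}
m(x,\xi)\lesssim m(0,\xi-\eta)\,v(x,\eta)\lesssim m_2(\xi-\eta)\,v_1(x)\,v_2(\eta).
\end{equation}
Then we insert $\nu(x)\nu(x)^{-1}$. This gives the pointwise bound
\begin{equation}
|V_g(f_1f_2)(x,\xi)|\,m(x,\xi)\lesssim \int_{\rd}\bigl(|V_{g_1}f_1|\,(v_1\nu)\otimes m_2\bigr)(x,\xi-\eta)\,\bigl(|V_{g_2}f_2|\,\nu^{-1}\otimes v_2\bigr)(x,\eta)\,d\eta .
\end{equation}
We then take the $L^p$ norm in $x$. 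For fixed $\xi$ we apply Minkowski's integral inequality and then Hölder in $x$ with $1/p\le 1/p_1+1/p_2$. Because the inequality may be strict, we first use the embedding $L^{r}\hookrightarrow L^p$ on the relevant local pieces. More precisely, we reduce to $1/p=1/p_1+1/p_2$ by Theorem~\ref{thm:embedding-general}, since a smaller $p$ on the target side is harder. This produces $\int \Phi_1(\xi-\eta)\Phi_2(\eta)\,d\eta$, where $\Phi_j$ denote the $x$-norms. Finally, Young's inequality in $\xi$ with $1/q+1=1/q_1+1/q_2$ yields the $L^q$ bound, with the same reduction to equality via Theorem~\ref{thm:embedding-general}.

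The main obstacle is the rigorous meaning of $f_1f_2$ for general distributions, and the validity of the product formula outside $\cS$, especially when $p_j$ or $q_j$ equals $\infty$. The approach is to define the product on $\cS\times\cS$, prove the estimate there, and then extend by density when all exponents are finite. At infinite endpoints we instead approximate $f_j$ by $f_j\cdot\varphi(\cdot/n)$ and use narrow (weak-$\ast$) convergence together with the uniform bound and Fatou's lemma. The reduction of $1/p<1/p_1+1/p_2$ to equality also needs care, since the embedding must be applied to the factor with the larger exponent while keeping the weights unchanged; this is allowed because Theorem~\ref{thm:embedding-general} permits equal weights.
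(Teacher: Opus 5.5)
Your core estimate is the standard argument behind the result the paper only cites (\cite[Proposition~2.4.23]{CorderoRodino2020}); the paper gives no proof of its own. The identity $V_{g_1g_2}(f_1f_2)(x,\cdot)=V_{g_1}f_1(x,\cdot)*V_{g_2}f_2(x,\cdot)$ is correct, and so is the splitting $m(x,\xi)\lesssim (v_1\nu)(x)\,m_2(\xi-\eta)\cdot\nu(x)^{-1}v_2(\eta)$. Minkowski plus H\"older in $x$ followed by Young in $\xi$ is also right (take $g_1g_2\neq0$, e.g.\ Gaussians).

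The step that fails is the reduction to equality in the $x$-exponents, as justified in your main text. You propose lowering the target exponent to $\tilde p$ with $1/\tilde p=1/p_1+1/p_2$. This is impossible in the Banach range whenever $1/p_1+1/p_2>1$, which the hypothesis allows (e.g.\ $p_1=p_2=1$). Then $\tilde p<1$ and Minkowski's integral inequality in $L^{\tilde p}_x$ breaks down.

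The reduction has to be done on the factors. Set $w_1=(v_1\nu)\otimes m_2$ and $w_2=\nu^{-1}\otimes v_2$, choose $\tilde p_j\geq p_j$ with $1/\tilde p_1+1/\tilde p_2=1/p$, and use $M^{p_j,q_j}_{w_j}\hookrightarrow M^{\tilde p_j,q_j}_{w_j}$ (Theorem~\ref{thm:embedding-general} with unchanged weights). In general both exponents must be raised: for $p_1=p_2=1$, $p=\infty$ you need $\tilde p_1=\tilde p_2=\infty$. So raising only ``the factor with the larger exponent'', as in your last paragraph, is not enough either. For the $\xi$-exponents either side works, because $1/q_1+1/q_2-1\in[0,1]$.

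Your endpoint argument also fails as written. $f_j\varphi(\cdot/n)$ is localized in space but not smoothed, so it is not in $\cS(\rd)$. The estimate proved on $\cS\times\cS$ therefore does not apply to it, and the product of two such truncations is no better defined than $f_1f_2$ itself. The real issue is to give $f_1f_2$ a meaning for general $f_j$. One clean way is to \emph{define} it as $\|g\|_2^{-2}V_g^*H$, where
\[
H(x,\xi)=\int_{\rd}V_{g_1}f_1(x,\xi-\eta)\,V_{g_2}f_2(x,\eta)\,d\eta
\]
converges a.e.\ and lies in $L^{p,q}_m$ by your own estimate. You would then use the boundedness of $V_g^*$ from $L^{p,q}_m$ to $M^{p,q}_m$, and check that this definition agrees with the pointwise (or $M^1$--$M^\infty$ duality) product whenever the latter exists. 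Alternatively, regularize in both variables, e.g.\ $\varphi(\cdot/n)(f_j*\psi_n)$ with $\psi_n$ an approximate identity, together with bounds uniform in $n$ on $M^{p_j,q_j}_{w_j}$.
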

\noindent See
\cite[Proposition~2.4.23]{CorderoRodino2020}.  
For convolution, the $x$-index follows Young's inequality and the $\xi$-index follows H\"older's inequality. For pointwise multiplication, the roles are reversed. In the unweighted case, the conditions \eqref{indexconv} and \eqref{condproduct} are also sharp, see \cite[Propositions 2.7.2 and 2.7.4]{CorderoRodino2020}. See also the results in \cite{GFWZ2016} for sharp conditions for convolution and product (and embedding) between modulation spaces with tensorial polynomial weights. 

  \subsection{Fourier transform and the importance of the weight geometry}

The Fourier transform rotates the STFT:
\begin{equation}\label{eq:fourier-stft-rotation}
 |V_{\widehat g}\widehat f(x,\xi)|=|V_gf(-\xi,x)|.
\end{equation}

This identity does \emph{not} justify a general map $M_m^{p,q}\to M_{m\circ J}^{q,p}$, where $J(x,\xi)=(\xi,-x)$, with the standard order of mixed norms.  The correct statements distinguish the diagonal case from the mixed one.

  \begin{theorem}[Fourier transform on weighted modulation spaces]
Let $1\leq p\leq\infty$ and let $u,w$ be even moderate weights on $\rd$. Then
\begin{equation}\label{eq:fourier-product-weights}
 \cF:M_{u\otimes w}^{p}(\rd)\longrightarrow M_{w\otimes u}^{p}(\rd)
\end{equation}
is a topological isomorphism.  In particular,
\begin{equation}
 \cF:M_{1\otimes v_s}^{p}\longrightarrow M_{v_s\otimes1}^{p},
\end{equation}
so a frequency-only Sobolev weight is rotated into a space-only weight.  For a radial phase-space weight $v_s$, the diagonal space $M_{v_s}^{p}$ is Fourier invariant.
\end{theorem}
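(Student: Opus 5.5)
The idea is to use the fundamental identity \eqref{intro.FITA}, in its modulus form \eqref{eq:fourier-stft-rotation}, together with window independence (Theorem~\ref{thm:window-independence-stft}). First fix the window: take $g$ the normalized Gaussian, so that $\widehat g=g$. Then \eqref{eq:fourier-stft-rotation} reads $|V_g\widehat f(x,\xi)|=|V_gf(-\xi,x)|$ for every $f\in\cS'(\rd)$. Since $\cF$ is a homeomorphism of $\cS'(\rd)$, the whole statement reduces to a norm identity together with the same argument for $\cF^{-1}$.

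The key step is the norm computation on the diagonal $p=q$. For $p<\infty$ we write
\begin{equation}
\norm{\widehat f}^p_{M^p_{w\otimes u}}=\int_{\rdd}|V_gf(-\xi,x)|^p\,w(x)^pu(\xi)^p\,dx\,d\xi .
\end{equation}
Next we change variables $(x',\xi')=(-\xi,x)$, which has unit Jacobian. Because $u$ is even, $u(\xi)=u(-x')$, and the expression becomes
\begin{equation}
\int_{\rdd}|V_gf(x',\xi')|^p\,u(x')^pw(\xi')^p\,dx'\,d\xi'=\norm{f}^p_{M^p_{u\otimes w}}.
\end{equation}
For $p=\infty$ the same argument works with suprema in place of integrals. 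So $\cF$ maps $M^p_{u\otimes w}$ isometrically into $M^p_{w\otimes u}$ for this choice of norm, and hence boundedly for any admissible window by Theorem~\ref{thm:window-independence-stft}. Here we use that $u\otimes w$ and $w\otimes u$ are moderate, with respect to $v_u\otimes v_w$ and $v_w\otimes v_u$ respectively, so the spaces are well defined. For the inverse, $\cF^{-1}f(t)=\widehat f(-t)$ and $\cF^{-1}=\cF^3$, with $\cF^2$ the reflection $f\mapsto f(-\cdot)$. The reflection satisfies $|V_g(f(-\cdot))(x,\xi)|=|V_gf(-x,-\xi)|$ for even $g$, so it preserves every space with even weights. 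Applying the previous step with the roles of $u$ and $w$ swapped then gives boundedness of $\cF^{-1}:M^p_{w\otimes u}\to M^p_{u\otimes w}$. This yields the topological isomorphism.

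The particular cases are immediate. Taking $u=1$ and $w=v_s$ (both even) gives $\cF:M^p_{1\otimes v_s}\to M^p_{v_s\otimes1}$. For the radial weight $v_s$ on $\rdd$, which is not a tensor product, we rerun the same change of variables directly. The map $(x,\xi)\mapsto(-\xi,x)$ is orthogonal, so $v_s(-\xi,x)=v_s(x,\xi)$, and hence $\norm{\widehat f}_{M^p_{v_s}}=\norm f_{M^p_{v_s}}$ with the Gaussian window.

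The only real subtlety is that the variable swap interchanges the inner and outer integrations. This is harmless only because $p=q$. That is why the statement is restricted to the diagonal; for $p\neq q$ the same computation would land in a space with reversed mixed-norm order, as warned before the theorem. I expect no further obstacle beyond checking the evenness and moderateness bookkeeping.
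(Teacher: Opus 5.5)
Your proposal is correct and follows the route the paper indicates, namely the rotation identity \eqref{eq:fourier-stft-rotation} and the cited Cordero--Rodino results: a Fourier-invariant Gaussian window, the unit-Jacobian change of variables $(x,\xi)\mapsto(-\xi,x)$, evenness of the weight, and the observation that swapping the variables is harmless only on the diagonal $p=q$. As a minor simplification, your norm identity $\|\widehat f\|_{M^p_{w\otimes u}}=\|f\|_{M^p_{u\otimes w}}$ holds for every $f\in\cS'(\rd)$, so bijectivity of $\cF$ on $\cS'$ already gives surjectivity, and the separate $\cF^{-1}=\cF^3$ argument (which also uses evenness of $w$) is not needed.
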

 \noindent See
\cite[Theorem~2.3.27 and Proposition~2.3.28]{CorderoRodino2020}.
For $p\neq q$, the natural Fourier image is a Wiener amalgam space.  With the same product weights,
\begin{equation}\label{eq:fourier-modulation-wiener}
 \cF\big(M_{u\otimes w}^{p,q}(\rd)\big)
 =W(\cF L_u^p,L_w^q)(\rd)
\end{equation}
with equivalent norms. See
\cite[Equation~(2.54)]{CorderoRodino2020}.  This distinction is one of the main reasons to keep the full weight $m(x,\xi)$ visible in the notation.

\section{Gabor frames and discrete characterizations}
\label{sec:gabor}

\subsection{Gabor systems and frames}

We retain the polynomial control-weight assumptions of Section~\ref{sec:stft-definition}. Gabor analysis asks when the STFT can be sampled on a discrete set without losing stable reconstruction. The natural sampling sets are lattices $\Lambda\subset\rdd$, and frame theory provides the correct language because redundancy is not a defect here. It is precisely what permits stable and flexible reconstruction. Indeed, by the Balian-Low theorem, a Gabor system which is an orthonormal (or Riesz) basis cannot be generated by a window that is well localized both in time and in frequency.

For a rectangular lattice $\Lambda=\alpha\zd\times\beta\zd$, with $\alpha,\beta>0$, the samples $V_gf(\alpha m,\beta n)$ are exactly the Gabor coefficients of $f$ for the convention $\pi(x,\xi)=M_\xi T_x$. The basic question is therefore whether these coefficients determine $f$ stably and whether their sequence-space decay still measures the same modulation-space regularity as the continuous STFT norm. This continuous-to-discrete passage is one of the central manifestations of Feichtinger's local-global philosophy.

For this rectangular lattice, one seeks a dual window $\gamma$ such that
\begin{equation}
 f=\sum_{m,n\in\zd}c_{mn}\pi(\alpha m,\beta n)\gamma,
 \qquad c_{mn}=V_gf(\alpha m,\beta n).
\end{equation}
Such a reconstruction requires a frame hypothesis; it does not hold for an arbitrary window and lattice. Using $T_{\alpha m}M_{\beta n}$ instead changes only the matching phase convention for coefficients and atoms.

For a general lattice $\Lambda\subset\rdd$ and $g\in L^2(\rd)\setminus\{0\}$, the associated Gabor system is
\begin{equation}
 \cG(g,\Lambda)=\{\pi(\lambda)g \mid \lambda\in\Lambda\}.
\end{equation}
It is a Gabor frame for $L^2(\rd)$ if it satisfies the frame definition in the Hilbert space $L^2(\rd)$, i.e., if there exist constants $A,B>0$ such that
\begin{equation}
 A\norm{f}_{2}^2\leq \sum_{\lambda\in\Lambda}|\scal{f}{\pi(\lambda)g}|^2\leq B\norm{f}_{2}^2,
 \qquad f\in L^2(\rd),
\end{equation}
with Gabor coefficients
\begin{equation}
c_\lambda=\scal{f}{\pi(\lambda)g}.
\end{equation}
If $\cG(g,\Lambda)$ is a Gabor frame for $L^2(\rd)$, then $\|(\scal{f}{\pi(\lambda)g})_\lambda\|_{\ell^2}$ is an equivalent norm for $L^2(\rd)$.

Whether $\cG(g,\Lambda)$ is a frame depends on both the window and the lattice. For the Gaussian $\varphi(t)=e^{-\pi|t|^2}$ and a product lattice
\begin{equation}
 \Lambda=(\alpha_1\bZ\times\cdots\times\alpha_d\bZ)
 \times(\beta_1\bZ\times\cdots\times\beta_d\bZ),
 \qquad \alpha_j,\beta_j>0,
\end{equation}
the criterion is particularly explicit:
\begin{equation}
 \cG(\varphi,\Lambda)\text{ is a frame for }L^2(\rd)
 \quad\Longleftrightarrow\quad
 \alpha_j\beta_j<1\quad(j=1,\ldots,d).
\end{equation}
For $d=1$ this is the celebrated result of Lyubarskii and Seip-Wallst\'en \cite{Lyubarskii1992,SeipWallsten1992}; the case $d>1$ follows by tensorization. See \cite{Grochenig2001,CorderoRodino2020} for the frame theory used here.

Given a lattice $\Lambda\subset\rdd$ and $g\in\cS(\rd)$, we can consider the analysis operator
\begin{equation}
C_gf=(\langle f,\pi(\lambda)g\rangle)_{\lambda\in\Lambda}, \quad \text{ for } f\in L^2(\rd),
\end{equation}
the synthesis operator
\begin{equation}
D_gc=\sum_{\lambda\in\Lambda}c_\lambda\pi(\lambda)g, \quad \text{ for } c\in \ell^2(\Lambda),
\end{equation}
and the Gabor frame operator
\begin{equation}
S_{g,g}f=D_gC_gf=\sum_{\lambda\in\Lambda}\langle f,\pi(\lambda)g\rangle \pi(\lambda)g.
\end{equation}

\begin{theorem}[Reproducing formulas for Gabor frames]
Assume that $\cG(g,\Lambda)$ is a Gabor frame for $L^2(\rd)$ with frame bounds $A$ and $B$. Let $\gamma:=S^{-1}g$ be the {\em canonical dual window} of $g$.
\begin{enumerate}[(i)]
    \item The Gabor frame operator $S$ is a topological isomorphism of $L^2(\rd)$ onto itself, self-adjoint and positive, with
    \begin{equation}
        A\,{\mathrm{id}_{L^2}}\leq S\leq B\,{\mathrm{id}_{L^2}},
    \end{equation}
    equivalently, $A\|f\|_2^2\leq\langle Sf,f\rangle\leq B\|f\|_2^2$.
    \item $S^{-1}$ is a topological isomorphism, self-adjoint and positive, and
    \begin{equation}
        B^{-1}\,{\mathrm{id}_{L^2}}\leq S^{-1}\leq A^{-1}\,{\mathrm{id}_{L^2}}.
    \end{equation}
    The corresponding quadratic-form bounds are the frame inequalities for the {\em canonical dual frame}. Therefore, $\cG(\gamma,\Lambda)=\{\pi(\lambda)\gamma \mid \lambda\in\Lambda\}$ is a Gabor frame for $L^2(\rd)$, with frame bounds $0<B^{-1}\leq A^{-1}$ (recall that $S$ commutes with $\pi(\lambda)$, $\lambda\in\Lambda$, so that $S^{-1}\pi(\lambda)g=\pi(\lambda)\gamma$).
    \item For $f\in L^2(\rd)$, we have the reproducing formulas
    \begin{equation}
    \label{eq:reconstruction-formulas}
        f=\sum_{\lambda\in\Lambda}\scal{f}{\pi(\lambda)\gamma}\pi(\lambda)g, \quad f=\sum_{\lambda\in\Lambda}\scal{f}{\pi(\lambda)g}\pi(\lambda)\gamma,
    \end{equation}
    and these series converge unconditionally in the norm of $L^2(\rd)$.
    \item If $A=B$, then $S=A\,\mathrm{id}_{L^2}$, $S^{-1}=A^{-1}\,\mathrm{id}_{L^2}$ and, for all $f\in L^2(\rd)$,
    \begin{equation}
        f=\frac{1}{A}\sum_{\lambda\in\Lambda}\scal{f}{\pi(\lambda)g}\pi(\lambda)g.
    \end{equation}
\end{enumerate}
\end{theorem}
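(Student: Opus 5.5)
The plan is to derive everything from the frame inequality rewritten as an operator inequality, together with the fact that $S$ commutes with the time-frequency shifts $\pi(\mu)$, $\mu\in\Lambda$. For (i), note first that the upper frame bound makes $C_g:L^2(\rd)\to\ell^2(\Lambda)$ bounded with $\|C_g\|\leq B^{1/2}$. A direct computation gives $D_g=C_g^\ast$, so $S=C_g^\ast C_g$ is bounded, self-adjoint, and satisfies
\begin{equation}
 \langle Sf,f\rangle=\|C_gf\|_{\ell^2}^2=\sum_{\lambda\in\Lambda}|\langle f,\pi(\lambda)g\rangle|^2 .
\end{equation}
The frame inequality is therefore exactly $A\,\mathrm{id}\leq S\leq B\,\mathrm{id}$. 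Positivity together with the lower bound gives $\|Sf\|_2\geq A\|f\|_2$, so $S$ is injective with closed range. Since $S$ is self-adjoint, its range has orthogonal complement $\ker S=\{0\}$, hence $S$ is surjective. Bounded invertibility then follows from the open mapping theorem, or directly from $\|S^{-1}\|\leq A^{-1}$.

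For (ii), I would use spectral calculus. The spectrum of $S$ lies in $[A,B]$, so the spectrum of $S^{-1}$ lies in $[B^{-1},A^{-1}]$, which is the stated operator inequality for $S^{-1}$; self-adjointness and positivity of $S^{-1}$ are inherited from $S$. Next I check the commutation $\pi(\mu)S\pi(\mu)^\ast=S$ for $\mu\in\Lambda$. By the commutation rule, $\pi(\mu)\pi(\lambda)=c\,\pi(\lambda+\mu)$ with $|c|=1$, so conjugating $S$ amounts to reindexing the sum over the lattice; the unimodular phases cancel because each term is of the form $\langle\cdot,h\rangle h$. Hence $S^{-1}$ also commutes with $\pi(\mu)$, which gives $S^{-1}\pi(\lambda)g=\pi(\lambda)\gamma$. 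To get the frame bounds for $\cG(\gamma,\Lambda)$, write
\begin{equation}
 \sum_{\lambda\in\Lambda}|\langle f,\pi(\lambda)\gamma\rangle|^2=\sum_{\lambda\in\Lambda}|\langle S^{-1}f,\pi(\lambda)g\rangle|^2=\langle SS^{-1}f,S^{-1}f\rangle=\langle S^{-1}f,f\rangle ,
\end{equation}
which lies between $B^{-1}\|f\|_2^2$ and $A^{-1}\|f\|_2^2$.

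For (iii), apply the identities $f=SS^{-1}f$ and $f=S^{-1}Sf$. Expanding $S(S^{-1}f)$ and using the self-adjointness of $S^{-1}$ gives
\begin{equation}
 f=\sum_{\lambda\in\Lambda}\langle f,\pi(\lambda)\gamma\rangle\,\pi(\lambda)g .
\end{equation}
Applying the bounded operator $S^{-1}$ termwise to $Sf$ gives the second formula. The step I expect to need the most care is unconditional convergence. Here I would use that $D_g:\ell^2(\Lambda)\to L^2(\rd)$ is bounded, so for a coefficient sequence $c\in\ell^2(\Lambda)$ and any finite sets $F\subseteq F'$ one has $\|D_g(c\mathbf 1_{F'\setminus F})\|_2\leq B^{1/2}\|c\mathbf 1_{F'\setminus F}\|_{\ell^2}$. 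Since $c\in\ell^2$, this tail tends to zero independently of the ordering. The same argument applies with $\gamma$ in place of $g$, using the bound $A^{-1}$ from (ii).

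Finally, (iv) is immediate from (i): if $A=B$, then $A\,\mathrm{id}\leq S\leq A\,\mathrm{id}$, and since a self-adjoint operator is determined by its quadratic form, $S=A\,\mathrm{id}$. Consequently $\gamma=A^{-1}g$, and substituting into (iii) yields the tight-frame expansion.
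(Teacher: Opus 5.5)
Your proposal is correct. The paper gives no proof of its own and defers to the standard frame theory in \cite{Grochenig2001,CorderoRodino2020}; your argument (writing $S=C_g^\ast C_g$ so the frame inequality becomes $A\,\mathrm{id}_{L^2}\leq S\leq B\,\mathrm{id}_{L^2}$, using the lattice structure to get $S\pi(\mu)=\pi(\mu)S$, and expanding $f=SS^{-1}f=S^{-1}Sf$) is essentially that standard proof. Two details should be written out: the bound $\|D_gc\|_2\leq B^{1/2}\|c\|_{\ell^2}$ on finitely supported $c$, which follows by duality from the upper frame bound and justifies $D_g=C_g^\ast$; and the fact that the reindexing $\lambda\mapsto\lambda+\mu$ is legitimate because $S=D_gC_g$ with $D_g$ bounded on $\ell^2(\Lambda)$, so the sum does not depend on the enumeration of $\Lambda$.
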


The relations \eqref{eq:reconstruction-formulas} give the reconstruction formulas via the canonical dual frame $\cG(\gamma,\Lambda)=\{\pi(\lambda)\gamma \mid \lambda\in\Lambda\}$, which is still a Gabor frame.

\subsection{Discrete modulation-space norms}

We next compare the continuous STFT norm with a norm of its samples. The order of the discrete variables must agree with the order in $L_m^{p,q}$.

For $\Lambda=\alpha\zd\times\beta\zd$, set
\begin{equation}\label{eq:discrete-mixed-norm}
 \|c\|_{\ell_m^{p,q}(\Lambda)}
 =\left(\sum_{n\in\zd}\left(\sum_{k\in\zd}
 |c_{\alpha k,\beta n}|^p m(\alpha k,\beta n)^p\right)^{q/p}\right)^{1/q},
\end{equation}
with the usual changes when an exponent is infinite. For a general lattice we use the geometric sequence norm
\begin{equation}\label{eq:geometric-sequence-norm}
 \|c\|_{\ell_m^{p,q}(\Lambda)}
 :=\left\|\sum_{\lambda\in\Lambda}|c_\lambda|\,\boldsymbol{1}_{\lambda+Q}\right\|_{L_m^{p,q}},
\end{equation}
where $Q$ is a bounded neighborhood of the origin such that $\Lambda+Q=\rdd$. Different choices of $Q$ give equivalent norms; for a rectangular lattice this agrees with \eqref{eq:discrete-mixed-norm} up to equivalence. This convention fixes the physical time-frequency order rather than an arbitrary enumeration of the lattice; see \cite{Grochenig2001} for the associated sequence-space construction.

At endpoints, weak-$\ast$ convergence of expansions will mean convergence in $M_{1/v}^\infty=(M_v^1)^*$, tested against $M_v^1$. This convention also applies to the quasi-Banach extensions below.

The reconstruction formulas \eqref{eq:reconstruction-formulas} extend to modulation spaces through the mixed Gabor frame operator
\begin{equation}
S_{g,\gamma}f=D_\gamma C_gf=\sum_{\lambda\in\Lambda}\langle f,\pi(\lambda)g\rangle \pi(\lambda)\gamma.
\end{equation}

The analysis map sends a continuous modulation norm to its discrete counterpart. See \cite[Theorem~3.2.32]{CorderoRodino2020}.
\begin{theorem}
Let $g\in M^1_v(\rd)$ and let $m$ be a $v$-moderate weight. Then the analysis operator $C_g$ is bounded from $M_m^{p,q}(\rd)$ to $\ell_m^{p,q}(\Lambda)$, for every $1\leq p,q\leq\infty$ and for every lattice $\Lambda\subset\rdd$, with operator norm
\begin{equation}
\|C_g\|\leq C(p,q,m,v,\Lambda)\|g\|_{M^1_v},
\end{equation}
where the constant may depend on the exponents, the weight bounds, and the lattice.
\end{theorem}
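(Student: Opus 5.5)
The plan is to compare the samples of $V_gf$ with local suprema of the STFT computed with a fixed Gaussian window, and then to control those local suprema by a convolution in $L_m^{p,q}$.

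\emph{Reduction to a local maximal function.} Fix the Gaussian $\varphi$ with $\|\varphi\|_2=1$. Since $\langle\varphi,\varphi\rangle=1$, the pointwise inequality stated after Theorem~\ref{thm:window-independence-stft} (with $h=\gamma=\varphi$) gives
\begin{equation}
|V_gf(z)|\leq \big(|V_\varphi f|\ast|V_g\varphi|\big)(z),\qquad z\in\rdd .
\end{equation}
Let $Q$ be a bounded neighborhood of the origin with $\Lambda+Q=\rdd$, as in \eqref{eq:geometric-sequence-norm}. For $\lambda\in\Lambda$ and $w\in\lambda+Q$ we have
\begin{equation}
|V_gf(\lambda)|\leq \sup_{u\in \overline{Q}}\big(|V_\varphi f|\ast|V_g\varphi|\big)(w-u)
\leq \big(|V_\varphi f|\ast H\big)(w),
\end{equation}
where $H(z):=\sup_{u\in\overline Q}|V_g\varphi(z+u)|$ is the local maximal function of $|V_g\varphi|$. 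Summing over $\lambda$ and using the bounded overlap of the translates $\lambda+Q$ then yields
\begin{equation}
\sum_{\lambda\in\Lambda}|V_gf(\lambda)|\boldsymbol 1_{\lambda+Q}\lesssim |V_\varphi f|\ast H .
\end{equation}

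\emph{Weighted Young inequality.} Since $m$ is $v$-moderate, the mixed-norm Young inequality $L_m^{p,q}\ast L_v^1\hookrightarrow L_m^{p,q}$ holds for $1\leq p,q\leq\infty$ (Minkowski's integral inequality applied in each variable). Together with Theorem~\ref{thm:window-independence-stft}, this gives
\begin{equation}
\|C_gf\|_{\ell_m^{p,q}(\Lambda)}\lesssim \|V_\varphi f\|_{L_m^{p,q}}\,\|H\|_{L^1_v}\asymp \|f\|_{M_m^{p,q}}\,\|H\|_{L^1_v}.
\end{equation}

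\emph{Main obstacle: $\|H\|_{L^1_v}\lesssim\|g\|_{M^1_v}$.} Taking the supremum over a bounded set is not a bounded operation on $L^1_v$ for arbitrary functions. The extra ingredient is that $V_g\varphi$ is itself an STFT, so it reproduces: apply the pointwise inequality once more, with a second Gaussian copy $\gamma=h=\varphi$. For $z\in\rdd$ and $u\in\overline Q$,
\begin{equation}
|V_g\varphi(z+u)|\leq \int_{\rdd}|V_\varphi\varphi(z+u-y)|\,|V_g\varphi(y)|\,dy ,
\end{equation}
hence
\begin{equation}
H\leq |V_g\varphi|\ast\Big(\sup_{u\in\overline Q}|V_\varphi\varphi(\cdot+u)|\Big).
\end{equation}
The second factor is a Gaussian-type function, $\sup_{u\in\overline Q}e^{-\pi|\cdot+u|^2/2}$, which lies in $L^1_v$ because $v$ has at most polynomial growth. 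By submultiplicativity of $v$, $L^1_v$ is a convolution algebra, so $\|H\|_{L^1_v}\lesssim\|V_g\varphi\|_{L^1_v}$. Finally, $|V_g\varphi(x,\xi)|=|V_\varphi g(-x,-\xi)|$, and assuming $v$ is symmetric (as control weights are taken here), $\|V_g\varphi\|_{L^1_v}\asymp\|g\|_{M^1_v}$. This gives the stated operator-norm bound.

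The constants depend on $Q$ (hence on $\Lambda$), on the moderateness constant of $m$, and on $p,q$ only through Young's inequality. The case $p=\infty$ or $q=\infty$ needs no separate treatment, since the sup-norm versions of the argument are identical.
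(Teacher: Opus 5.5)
Your argument is correct and is essentially the standard amalgam-space proof behind this result. The paper gives no proof of its own and only cites \cite[Theorem~3.2.32]{CorderoRodino2020}; there the chain is the domination $|V_gf|\le|V_\varphi f|\ast|V_g\varphi|$, the membership $V_g\varphi\in W(L^\infty,L^1_v)$ (your local maximal function $H$ is exactly this amalgam norm), and the sampling of $W(L^\infty,L^{p,q}_m)$ functions on $\Lambda$. Only cosmetic fixes are needed: take $Q$ symmetric (or define $H$ with $z-u$ instead of $z+u$) so the sign in the supremum matches, and note that the pointwise convolution inequality, which the paper states for Schwartz windows, extends to $g\in M^1_v$ directly from the inversion formula $f=\int V_\varphi f(w)\pi(w)\varphi\,dw$.
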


Analogously, we have the continuity of the synthesis operator, see
\cite[Theorem~3.2.33]{CorderoRodino2020}.
\begin{theorem}
Let $g\in M^1_v(\rd)$ and let $m$ be a $v$-moderate weight. Then the synthesis operator $D_g$ is bounded from $\ell_m^{p,q}(\Lambda)$ to $M_m^{p,q}(\rd)$, for every $1\leq p,q\leq\infty$ and for every lattice $\Lambda\subset\rdd$, with operator norm
\begin{equation}
\|D_g\|\leq C(p,q,m,v,\Lambda)\|g\|_{M^1_v},
\end{equation}
where the constant may depend on the exponents, the weight bounds, and the lattice. Moreover,
\begin{equation}
D_gc=\sum_{\lambda\in\Lambda}c_\lambda\pi(\lambda)g,
\end{equation}
with unconditional convergence in $M_m^{p,q}$ if $p,q<\infty$; at an infinite exponent, convergence is weak-$\ast$ in $M_{1/v}^\infty$.
\end{theorem}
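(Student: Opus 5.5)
The strategy is to reduce the synthesis estimate to the continuous theory by duality, and to control the discrete sum directly. The plan is first to treat finitely supported sequences $c$, for which $D_gc$ is a finite linear combination of time-frequency shifts of $g$ and so lies in $M^1_v\subset M_m^{p,q}$. We then take its STFT with respect to a fixed window $\varphi\in\cS(\rd)\setminus\{0\}$ (say the Gaussian). By linearity and the covariance $|V_\varphi(\pi(\lambda)g)(z)|=|V_\varphi g(z-\lambda)|$ this gives the pointwise bound
\begin{equation}
 |V_\varphi(D_gc)(z)|\leq\sum_{\lambda\in\Lambda}|c_\lambda|\,|V_\varphi g(z-\lambda)|, \qquad z\in\rdd,
\end{equation}
a semi-discrete convolution of the sequence $|c|$ with the function $G:=|V_\varphi g|$. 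By Theorem~\ref{thm:window-independence-stft}, $G\in L^1_v(\rdd)$ with $\|G\|_{L^1_v}\asymp\|g\|_{M^1_v}$.

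The next step is the key semi-discrete Young-type inequality
\begin{equation}
 \Big\|\sum_{\lambda}|c_\lambda|\,G(\cdot-\lambda)\Big\|_{L^{p,q}_m}\lesssim \|c\|_{\ell^{p,q}_m(\Lambda)}\,\|G\|_{W(L^\infty,L^1_v)}.
\end{equation}
To obtain it, fix a relatively compact fundamental domain $Q$ with $\Lambda+Q=\rdd$. For $z\in\mu+Q$ we write $G(z-\lambda)\leq \sup_{w\in Q}G(\mu-\lambda+w)$, which turns the sum into a discrete convolution of $|c|$ with the local-maximum sequence $a_\nu=\sup_{Q}G(\nu+\cdot)$. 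Moderateness, $m(\mu)\lesssim m(\lambda)v(\mu-\lambda)$, then lets us apply the mixed-norm Young inequality $\ell^{p,q}_m*\ell^1_v\subset\ell^{p,q}_m$. This is Minkowski's inequality applied iteratively in the two lattice directions, which for a general lattice we pass through the geometric norm \eqref{eq:geometric-sequence-norm}.

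The main obstacle is that we only know $G\in L^1_v$, whereas the argument needs the amalgam norm $\|G\|_{W(L^\infty,L^1_v)}$. I would handle this through the convolution inequality stated after Theorem~\ref{thm:window-independence-stft}, namely $|V_\varphi g|\lesssim|V_\varphi g|*|V_\varphi\varphi|$. Since $|V_\varphi\varphi|$ is a Gaussian, it lies in $W(L^\infty,L^1_v)$ for polynomial $v$. The convolution relation $L^1_v*W(L^\infty,L^1_v)\subset W(L^\infty,L^1_v)$ then gives $\|G\|_{W(L^\infty,L^1_v)}\lesssim\|g\|_{M^1_v}$, with a constant depending on $\Lambda$ through $Q$.

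Combining these steps yields $\|D_gc\|_{M^{p,q}_m}\leq C\|g\|_{M^1_v}\|c\|_{\ell^{p,q}_m}$ on finite sequences. For $p,q<\infty$ finite sequences are dense, and applying the estimate to tails gives unconditional convergence, since the tails of $c$ tend to zero in $\ell^{p,q}_m$ along any exhausting net. For an infinite exponent we define $D_gc$ weak-$\ast$: we pair with $h\in M^1_v$ and observe that $\langle D_gc,h\rangle=\sum_\lambda c_\lambda\overline{\langle h,\pi(\lambda)g\rangle}$ converges absolutely. This follows from the analysis-operator theorem just stated, which gives $C_gh\in\ell^1_v$, together with the pairing $\ell^\infty_{1/v}\times\ell^1_v$. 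The same bound then follows from the duality $(M_v^1)^*=M_{1/v}^\infty$ of Theorem~\ref{thm:duality} and the solidity of the sequence norm.
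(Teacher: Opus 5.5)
Your proposal is correct and follows essentially the standard proof of the result the paper cites (Theorem~3.2.33 of Cordero--Rodino, modelled on Gr\"ochenig). The steps are the same: pointwise domination of $|V_\varphi(D_gc)|$ by a semi-discrete convolution, the amalgam bound $\|V_\varphi g\|_{W(L^\infty,L^1_v)}\lesssim\|g\|_{M^1_v}$ obtained from $|V_\varphi g|\lesssim|V_\varphi g|*|V_\varphi\varphi|$, the semi-discrete Young inequality, and then density or weak-$\ast$ extension.

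The one step that is worded too loosely is the norm bound at an infinite exponent, since $(M^1_v)^*=M^\infty_{1/v}$ by itself only places $D_gc$ in $M^\infty_{1/v}$. To close it, pair your absolutely convergent series with $h=\pi(z)\varphi\in M^1_v$; this gives the same pointwise domination of $|V_\varphi(D_gc)(z)|$ for arbitrary $c\in\ell^{p,q}_m(\Lambda)$, and your semi-discrete Young estimate, which never used finiteness of $c$, then yields the bound directly.
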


Consequently, we have the continuity of the Gabor frame operator on modulation spaces.
\begin{theorem}
Let $g,\gamma\in M^1_v(\rd)$ and let $m$ be a $v$-moderate weight. Then the Gabor frame operator $S_{g,\gamma}=D_\gamma C_g$ is bounded on $M_m^{p,q}(\rd)$, for every $1\leq p,q\leq\infty$ and for every lattice $\Lambda\subset\rdd$, with operator norm
\begin{equation}
\|S_{g,\gamma}\|\leq C(p,q,m,v,\Lambda)\|g\|_{M^1_v}\|\gamma\|_{M^1_v},
\end{equation}
where the constant may depend on the exponents, the weight bounds, and the lattice.
\end{theorem}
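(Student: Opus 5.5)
The statement follows by composing the two preceding theorems, so no new estimate is needed. Since $S_{g,\gamma}=D_\gamma C_g$, we factor through the sequence space $\ell_m^{p,q}(\Lambda)$:
\begin{equation}
 M_m^{p,q}(\rd)\xrightarrow{\;C_g\;}\ell_m^{p,q}(\Lambda)\xrightarrow{\;D_\gamma\;}M_m^{p,q}(\rd).
\end{equation}
By the continuity of the analysis operator, $\|C_gf\|_{\ell_m^{p,q}}\leq C_1\|g\|_{M^1_v}\|f\|_{M_m^{p,q}}$. By the continuity of the synthesis operator, applied with window $\gamma\in M^1_v$, $\|D_\gamma c\|_{M_m^{p,q}}\leq C_2\|\gamma\|_{M^1_v}\|c\|_{\ell_m^{p,q}}$. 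Multiplying the two bounds gives the claimed estimate with $C=C_1C_2$, which depends only on $p,q$, the weight constants and $\Lambda$.

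The only point requiring care is that $S_{g,\gamma}f$ really equals $D_\gamma(C_gf)$ as an element of $M_m^{p,q}$ for every $f$ in that space, not just for $f\in L^2$. We therefore define $S_{g,\gamma}$ on $M_m^{p,q}$ directly by this composition, with the series interpreted as in the synthesis theorem.
\begin{enumerate}[1.]
\item For $p,q<\infty$, the series converges unconditionally in $M_m^{p,q}$.
\item If an exponent is infinite, the series converges weak-$\ast$ in $M^\infty_{1/v}$. The limit is the same, since norm convergence implies weak-$\ast$ convergence.
\end{enumerate}
On $\cS$ (or $L^2\cap M_m^{p,q}$) this definition agrees with the $L^2$ frame operator. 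So no ambiguity arises, and the definition is consistent with the $L^2$ theory, including the case where $\cG(g,\Lambda)$ is a frame and $\gamma$ is a dual window.

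The main obstacle, such as it is, is this well-definedness and consistency at the endpoints $p=\infty$ or $q=\infty$, where $\cS$ is not dense. There we rely on the weak-$\ast$ convention fixed earlier: $C_g$ is given by pairing $f\in M^\infty_{1/v}$ with $\pi(\lambda)g\in M^1_v$. Weak-$\ast$ convergence of $D_\gamma c$ is tested against $h\in M^1_v$ via
\begin{equation}
\langle D_\gamma c,h\rangle=\sum_{\lambda\in\Lambda}c_\lambda\langle \pi(\lambda)\gamma,h\rangle.
\end{equation}
This sum converges absolutely, because $C_\gamma h\in\ell^1_v$ by the analysis theorem with $m=v$ and $p=q=1$, while $c\in\ell^\infty_{1/v}$ by the $v$-moderateness of $m$.
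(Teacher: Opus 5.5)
Your proposal is correct and follows the paper's route. The paper obtains this theorem immediately ("Consequently") by composing the bounded analysis map $C_g:M_m^{p,q}\to\ell_m^{p,q}(\Lambda)$ with the bounded synthesis map $D_\gamma:\ell_m^{p,q}(\Lambda)\to M_m^{p,q}$, which yields $C_1C_2\|g\|_{M^1_v}\|\gamma\|_{M^1_v}$. Your additional check of well-definedness at infinite exponents is correct and is left implicit in the paper: you place $c\in\ell^\infty_{1/v}$ via $v$-moderateness of $m$ and pair against $C_\gamma h\in\ell^1_v$.
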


In the particular case where $S_{g,\gamma}=D_\gamma C_g=\mathrm{id}_{L^2}$, we have the reconstruction formulas \eqref{eq:reconstruction-formulas} on modulation spaces.

\begin{theorem}
\label{thm:atom-decomposition-Banach}
Let $1\leq p,q\leq\infty$, $m$ be a $v$-moderate weight, and $g,\gamma\in\cS(\rd)$. If $S_{g,\gamma}=D_\gamma C_g=\mathrm{id}_{L^2}$, then
\begin{equation}
\label{eq:gabor-expansion-Banach}
f=\sum_{\lambda\in\Lambda}\scal{f}{\pi(\lambda)\gamma}\pi(\lambda)g, \quad f=\sum_{\lambda\in\Lambda}\scal{f}{\pi(\lambda)g}\pi(\lambda)\gamma,
\end{equation}
with unconditional convergence in $M_m^{p,q}$ if $p,q<\infty$. At an infinite exponent, convergence is weak-$\ast$ in $M_{1/v}^\infty$. Furthermore,
\begin{equation}
 \|f\|_{M_m^{p,q}}
 \asymp\|C_gf\|_{\ell_m^{p,q}(\Lambda)}
 \asymp\|C_\gamma f\|_{\ell_m^{p,q}(\Lambda)}.
\end{equation}
In particular, $\|f\|_{M_m^{p,q}}\leq\|D_\gamma\|\|C_gf\|_{\ell_m^{p,q}}$ and $\|C_gf\|_{\ell_m^{p,q}}\leq\|C_g\|\|f\|_{M_m^{p,q}}$. The analogous bounds for $C_\gamma$ use $D_g$ and $C_\gamma$, respectively.
\end{theorem}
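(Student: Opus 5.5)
The plan is to reduce everything to the boundedness results just stated for $C_g$, $D_\gamma$ and $S_{g,\gamma}$, plus a density/duality argument that extends the identity $D_\gamma C_g=\mathrm{id}_{L^2}$ from $L^2$ to all of $M_m^{p,q}$.

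First, since $g,\gamma\in\cS(\rd)\subset M^1_v(\rd)$ for polynomial $v$, the preceding theorems give that $C_g,C_\gamma:M_m^{p,q}\to\ell_m^{p,q}(\Lambda)$ and $D_g,D_\gamma:\ell_m^{p,q}(\Lambda)\to M_m^{p,q}$ are bounded. Hence $S_{g,\gamma}=D_\gamma C_g$ and $S_{\gamma,g}=D_gC_\gamma$ are bounded on $M_m^{p,q}$ for every $1\leq p,q\leq\infty$.

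Next, I extend the identity. The hypothesis $D_\gamma C_g=\mathrm{id}_{L^2}$ implies, by taking adjoints, that $D_gC_\gamma=(D_\gamma C_g)^*=\mathrm{id}_{L^2}$. Indeed, $\langle D_\gamma C_gf,h\rangle=\sum_\lambda\langle f,\pi(\lambda)g\rangle\langle\pi(\lambda)\gamma,h\rangle=\langle f,D_gC_\gamma h\rangle$.
\begin{itemize}
\item For $p,q<\infty$: $\cS(\rd)\subset L^2$ is dense in $M_m^{p,q}$, both operators coincide with the identity on $\cS$, and they are bounded on $M_m^{p,q}$, so they equal the identity on $M_m^{p,q}$.
\item For $p=\infty$ or $q=\infty$: $M_m^{p,q}\hookrightarrow M^\infty_{1/v}$ because $m$ is $v$-moderate, so $1/v\lesssim m$ up to normalization. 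For $f\in M^\infty_{1/v}$ and $h\in\cS$, the computation above gives $\langle S_{g,\gamma}f,h\rangle=\langle f,S_{\gamma,g}h\rangle=\langle f,h\rangle$. The interchange of sum and pairing is justified because $C_\gamma h\in\ell^1_v$ and $C_gf\in\ell^\infty_{1/v}$. Hence $S_{g,\gamma}f=f$.
\end{itemize}
The same argument works with the roles of $g$ and $\gamma$ swapped. This yields both expansions \eqref{eq:gabor-expansion-Banach}.

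For the mode of convergence, the partial sums over finite $F\subset\Lambda$ are $D_\gamma(C_gf\cdot\boldsymbol 1_F)$. When $p,q<\infty$, finitely supported sequences are dense in $\ell^{p,q}_m$ and $c\boldsymbol 1_F\to c$ unconditionally, so boundedness of $D_\gamma$ gives unconditional convergence in $M_m^{p,q}$. At infinite exponents, I test against $h\in M^1_v$: the sum $\sum_\lambda c_\lambda\langle\pi(\lambda)\gamma,h\rangle$ converges absolutely since $C_\gamma h\in\ell^1_v$, which gives weak-$\ast$ convergence.

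Finally, the norm equivalences follow directly. From $f=D_\gamma C_gf$ we get $\|f\|_{M_m^{p,q}}\leq\|D_\gamma\|\,\|C_gf\|_{\ell^{p,q}_m}$, and boundedness of $C_g$ gives the reverse inequality. The bounds for $C_\gamma$ follow symmetrically from $f=D_gC_\gamma f$.

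The main obstacle is the endpoint case. There one must justify the duality interchange carefully and check the embedding $M_m^{p,q}\hookrightarrow M^\infty_{1/v}$, using $m(z)\gtrsim m(0)/v(-z)$ together with symmetry of $v$. I would handle both via the $\ell^1_v$–$\ell^\infty_{1/v}$ pairing.
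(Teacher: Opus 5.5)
Your proposal is correct and follows essentially the same route the paper indicates (it gives no separate proof and defers to Cordero--Rodino): extend $D_\gamma C_g=\mathrm{id}_{L^2}$ and its adjoint $D_gC_\gamma=\mathrm{id}_{L^2}$ to $M_m^{p,q}$, by density of $\cS$ for finite exponents and by testing against $M^1_v$ through $M_m^{p,q}\hookrightarrow M^\infty_{1/v}$ at infinite exponents. The norm equivalences then come from applying the bounded operators $D_\gamma, C_g$ to $f=D_\gamma C_gf$, and $D_g, C_\gamma$ to $f=D_gC_\gamma f$, which are exactly the ``in particular'' bounds in the statement.
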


As a corollary, we have a characterization of Schwartz functions via their Gabor coefficients.
\begin{corollary}
Let $g\in\cS(\rd)$ and suppose that $\cG(g,\Lambda)$ is a Gabor frame for $L^2(\rd)$. Then
\begin{equation}
f\in\cS(\rd) \quad\Longleftrightarrow\quad \sup_{\lambda\in\Lambda}\langle\lambda\rangle^N|\langle f,\pi(\lambda)g\rangle|<\infty, \ \text{ for all } N\in\bN.
\end{equation}
\end{corollary}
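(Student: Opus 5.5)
The plan is to combine Theorem~\ref{thm:atom-decomposition-Banach} with the description of $\cS(\rd)$ as an intersection of weighted modulation spaces from Section~\ref{sec:stft-definition}, namely $\cS(\rd)=\bigcap_{s\geq0}M^{\infty}_{v_s}(\rd)$, taken with $p=q=\infty$. Since $\cG(g,\Lambda)$ is a frame with $g\in\cS(\rd)$, we need a dual window in $\cS(\rd)$. We take the canonical dual $\gamma=S^{-1}g$; its membership in $\cS(\rd)$ is the one nontrivial input. It follows from Janssen's/Gr\"ochenig--Leinert's Wiener-lemma type result that $S^{-1}$ preserves $M^1_{v_s}$ for every $s\geq0$ when $g\in\cS$, so that $\gamma\in\bigcap_s M^1_{v_s}=\cS$. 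Alternatively, it suffices to have some dual $\gamma\in\cS$ with $S_{g,\gamma}=\mathrm{id}_{L^2}$, which is exactly the hypothesis of Theorem~\ref{thm:atom-decomposition-Banach}. I expect this to be the main obstacle, and I would invoke the cited result rather than reprove it.

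For the forward implication, let $f\in\cS(\rd)$ and $N\in\bN$. Then $f\in M^\infty_{v_N}$. The analysis operator $C_g$ maps $M^\infty_{v_N}$ boundedly into $\ell^\infty_{v_N}(\Lambda)$, because $g\in\cS\subset M^1_{v_N}$ and $v_N$ is $v_N$-moderate. This gives $\sup_\lambda\langle\lambda\rangle^N|\langle f,\pi(\lambda)g\rangle|\lesssim\|f\|_{M^\infty_{v_N}}<\infty$. One could even argue directly: $V_gf\in\cS(\rdd)$, and the coefficients are samples of $V_gf$ on $\Lambda$. The geometric sequence norm \eqref{eq:geometric-sequence-norm} with $p=q=\infty$ is equivalent to the weighted sup over $\Lambda$, since $v_N(\lambda+w)\asymp v_N(\lambda)$ for $w$ in the bounded set $Q$.

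For the converse, assume all the weighted sups are finite. Every $f$ considered lies in $L^2\subset M^\infty_{1/v_s}$, or more generally in $\cS'$, and the coefficient sequence $c=C_gf$ lies in $\ell^\infty_{v_N}(\Lambda)$ for all $N$. By Theorem~\ref{thm:atom-decomposition-Banach}, $f=D_\gamma C_gf$ with weak-$\ast$ convergence. The synthesis bound gives $\|f\|_{M^\infty_{v_N}}\lesssim\|\gamma\|_{M^1_{v_N}}\|C_gf\|_{\ell^\infty_{v_N}}<\infty$. Since this holds for every $N$, $f\in\bigcap_N M^\infty_{v_N}=\cS(\rd)$.

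One point needs care. The identity $f=D_\gamma C_gf$ must be applied in a space where $f$ is already known to live, for instance $M^\infty_{1/v_s}$ for some $s$ if $f\in\cS'$. The weighted estimate then upgrades the membership of $f$, because the series $D_\gamma c$ converges weak-$\ast$ to the same element in both spaces. This uniqueness of the limit in $\cS'$ closes the argument.
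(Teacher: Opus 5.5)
Your proposal is correct and follows essentially the route the paper intends: the corollary comes from Theorem~\ref{thm:atom-decomposition-Banach} (equivalently Theorem~\ref{thm:gabor-characterization-Banach}) with $p=q=\infty$ and $m=v_N$, combined with $\cS(\rd)=\bigcap_{N}M^\infty_{v_N}(\rd)$. You also correctly supply the one input the paper leaves implicit, namely that the canonical dual window $S^{-1}g$ is again in $\cS(\rd)$ (Janssen, Gr\"ochenig--Leinert), and your treatment of the a priori membership $f\in\cS'$ via uniqueness of weak-$\ast$ limits is sound.
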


The decay and summability of the Gabor coefficients characterize modulation spaces, see
\cite[Theorems~3.2.35-3.2.37]{CorderoRodino2020}.

\begin{theorem}[Gabor frame characterization]
\label{thm:gabor-characterization-Banach}
Let $1\leq p,q\leq\infty$, $m$ be a $v$-moderate weight, and $g\in\cS(\rd)\setminus\{0\}$. If $\cG(g,\Lambda)$ is a Gabor frame, then
\begin{equation}
 f\in M_m^{p,q}(\rd)
 \quad\quad\Longleftrightarrow\quad\quad
 \big(\scal{f}{\pi(\lambda)g}\big)_{\lambda\in\Lambda}\in \ell_m^{p,q}(\Lambda),
\end{equation}
with the following norm equivalence:
\begin{equation}
 \norm{f}_{M_m^{p,q}}\asympconst
 \norm{(\scal{f}{\pi(\lambda)g})_{\lambda\in\Lambda}}_{\ell_m^{p,q}}.
\end{equation}
\end{theorem}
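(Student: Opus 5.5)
The plan is to reduce the statement to the boundedness results for the analysis and synthesis operators established just above, combined with the existence of a dual window with good localization. The upper bound is immediate. Since $g\in\cS(\rd)\subset M^1_v(\rd)$ for every polynomially moderate $v$, the boundedness of $C_g:M_m^{p,q}\to\ell_m^{p,q}(\Lambda)$ gives
\begin{equation}
\norm{(\scal{f}{\pi(\lambda)g})_\lambda}_{\ell_m^{p,q}}\lesssim \norm{g}_{M^1_v}\norm f_{M_m^{p,q}} .
\end{equation}
This settles the implication ``$f\in M_m^{p,q}\Rightarrow$ coefficients in $\ell^{p,q}_m$'' together with one side of the norm equivalence.

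For the reverse inequality I would use the canonical dual window $\gamma=S^{-1}g$, for which $D_\gamma C_g=\mathrm{id}_{L^2}$. The crucial point is that $\gamma$ must again be a good window, i.e.\ $\gamma\in M^1_v(\rd)$. Ideally $\gamma\in\cS(\rd)$, so that Theorem~\ref{thm:atom-decomposition-Banach} applies verbatim. This is where I expect the main obstacle to lie. The frame operator $S$ is invertible on $L^2$, but we need $S^{-1}g$ to be well localized in phase space.

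My first approach to this obstacle is the Janssen representation
\begin{equation}
S=\mathrm{vol}(\Lambda)^{-1}\sum_{\mu\in\Lambda^\circ}\scal{g}{\pi(\mu)g}\pi(\mu),
\end{equation}
which expresses $S$ as an element of the twisted convolution algebra over the adjoint lattice. Its coefficients decay faster than any polynomial when $g\in\cS$. A Wiener-type lemma for twisted convolution (Gröchenig–Leinert) then shows that $S^{-1}$ lies in the same weighted algebra $\ell^1_{v_s}(\Lambda^\circ)$ for every $s$. Consequently $\gamma=S^{-1}g\in\bigcap_s M^1_{v_s}=\cS(\rd)$. I would cite this spectral-invariance result rather than reprove it.

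With $\gamma$ in hand, I first establish the identity $f=D_\gamma C_g f$ on all of $M^{p,q}_m$. It holds on $\cS$ by the $L^2$ theory. It then extends to $M^{p,q}_m$ by density when $p,q<\infty$, and by weak-$\ast$ continuity of $C_g$ and $D_\gamma$ at the endpoints. Next, take $f\in\cS'$ whose coefficient sequence $c=C_gf$ lies in $\ell^{p,q}_m$. The series $D_\gamma c$ converges in $M^{p,q}_m$, unconditionally for finite exponents and weak-$\ast$ otherwise. Since $f\in M^\infty_{1/v_s}$ for some $s$, the identity $f=D_\gamma C_gf$ holds in that space, and hence $f=D_\gamma c\in M^{p,q}_m$. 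Boundedness of $D_\gamma$ then gives
\begin{equation}
\norm f_{M^{p,q}_m}\le\norm{D_\gamma}\,\norm{C_gf}_{\ell^{p,q}_m},
\end{equation}
which completes both the characterization and the norm equivalence.
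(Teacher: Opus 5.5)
Your proposal is correct and follows the route behind the statement, which the paper only cites from \cite[Theorems~3.2.35--3.2.37]{CorderoRodino2020}: the upper bound is the boundedness of $C_g$, and the lower bound comes from $D_\gamma C_g=\mathrm{id}$ with the canonical dual window $\gamma=S^{-1}g\in\cS(\rd)$, i.e.\ Theorem~\ref{thm:atom-decomposition-Banach}. The regularity $\gamma\in\cS(\rd)$ is exactly the ``Schwartz-window duality result'' the paper invokes later, and your Janssen/Gr\"ochenig--Leinert argument for it, together with your weak-$\ast$ argument in $M^\infty_{1/v_s}$ for the membership direction, correctly supply the details the paper leaves to the reference.
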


\section{Gabor analysis of operators}
\label{sec:gabor-operators}
Let $T:\cS(\rd)\to\cS'(\rd)$ be a continuous linear operator and fix $g\in\cS(\rd)\setminus\{0\}$. Its Gabor matrix is
\begin{equation}
\label{eq:gabor-matrix}
 K_T(w,z)=\scal{T\pi(z)g}{\pi(w)g},\qquad z,w\in\rdd.
\end{equation}
This matrix is the kernel of the operator obtained by expressing $T$ in STFT coordinates. Indeed, if $\|g\|_2=1$, then the inversion formula for the short-time Fourier transform reads as $V_g^*V_g=\mathrm{id}_{L^2}$, and thus we can write the operator $T$ as
\begin{equation}
T=V_g^*V_gTV_g^*V_g.
\end{equation}
The linear operator $V_gTV_g^*$ is an integral operator having $K_T$ as kernel.

This point of view is very useful as it helps studying the boundedness of $T$ on modulation spaces. In particular, by definition and the inversion formula, $V_g$ is a bounded linear operator from $M_m^{p,q}$ to $L_m^{p,q}$, and $V_g^*$ from $L_m^{p,q}$ to $M_m^{p,q}$. Hence, boundedness of $T$ on modulation spaces can be inferred by the corresponding boundedness of $V_gTV_g^*$ on mixed-norm $L_m^{p,q}$ spaces.

For lattice points $\lambda,\mu\in\Lambda$, one obtains the discrete matrix
\begin{equation}
 K_T(\mu,\lambda)=\scal{T\pi(\lambda)g}{\pi(\mu)g}.
\end{equation}

\subsection{The Sj\"ostrand classes}
Important instances of Gabor matrices associated to linear and continuous operators are those deriving from the quantization of a function $\sigma:\rdd\to\bC$ belonging to a suitable class of symbols.

In 1994 Sj\"ostrand \cite{Sjoestrand1994} introduced a symbol class described by time-frequency concentration on phase space, subsequently identified with the modulation space $M^{\infty,1}(\rdd)$, already introduced by Feichtinger in 1983 \cite{Feichtinger1983}. Explicitly, a symbol $\sigma\in\cS'(\rdd)$ belongs to the Sj\"ostrand class $M^{\infty,1}(\rdd)$ if
\begin{equation}
\int_{\rdd}\sup_{z\in\rdd}|V_\Phi\sigma(z,\zeta)|d\zeta<\infty,
\end{equation}
with respect to a window $\Phi\in\cS(\rdd)$.

The Weyl quantization of a symbol $\sigma$ is the linear operator $\Opw(\sigma)$ given by:
\begin{equation} \label{eq:Weyl-quantization}
\Opw(\sigma)f(x)=\int_{\rdd}\sigma\left(\frac{x+y}{2},\xi\right)e^{2\pi i(x-y)\cdot\xi}f(y)dyd\xi.
\end{equation}
Sj\"ostrand proved the following fundamental results about the Weyl transform of symbols in $M^{\infty,1}(\rdd)$ \cite{Sjoestrand1994,Sjoestrand1995}.
\begin{enumerate}[(i)]
    \item If $\sigma\in M^{\infty,1}(\rdd)$, then $\Opw(\sigma)$ is bounded on $L^2(\rd)$.
    \item If $\sigma_1,\sigma_2\in M^{\infty,1}(\rdd)$ and $\tau$ is a symbol such that $\Opw(\tau)=\Opw(\sigma_1)\Opw(\sigma_2)$, then $\tau\in M^{\infty,1}(\rdd)$. Thus the corresponding Weyl operators form a Banach algebra under composition; the symbol space is an algebra for the induced Weyl product, also called the Sj\"ostrand algebra.
    \item If $\sigma\in M^{\infty,1}(\rdd)$ and $\Opw(\sigma)$ is invertible on $L^2(\rd)$, then $\Opw(\sigma)^{-1}=\Opw(\tau)$ for some $\tau\in M^{\infty,1}(\rdd)$. This is the Wiener property of $M^{\infty,1}(\rdd)$.
\end{enumerate}

Arbitrary modulation spaces as symbol classes for pseudodifferential operators were introduced in \cite{GroechenigHeil1999} independently of Sj\"ostrand's work.
\subsection{Quasi-diagonalization of Gabor matrices}
Gr\"ochenig's time-frequency approach \cite{Grochenig2006} identifies a common mechanism behind these properties: the Gabor matrix of a Weyl operator is controlled by an integrable envelope away from the diagonal. This converts boundedness into a sequence-space estimate and connects composition and inversion with Banach algebras of matrices. It also makes the passage to weights transparent, because the envelope can be measured with the same control weight as the modulation space.

The gain is therefore not merely a different proof of $L^2$ boundedness. The matrix description shows simultaneously how an operator acts on the full modulation scale and how its off-diagonal concentration is encoded in the regularity of its symbol.

If $\sigma:\rdd\to\bC$ is a symbol, then the Gabor matrix of its Weyl quantization $\Opw(\sigma)$ with respect to a window $g\in\cS(\rd)$ is
\begin{equation}
K(w,z)=\scal{\Opw(\sigma)\pi(z)g}{\pi(w)g},\qquad w,z\in\rdd.
\end{equation}

The next result on almost diagonalization is crucial and all properties of the Sj\"ostrand class follow from it. It allows us to characterize symbols $\sigma$ in the weighted Sj\"ostrand class by suitable decay of the Gabor matrix of its Weyl quantization. Let us introduce the
standard symplectic rotation on phase space,

\begin{equation}
 j(x,\xi)=(\xi,-x),\qquad (x,\xi)\in\rdd.
\end{equation}
The symbol $\sigma$ is a distribution on $\rdd$, so its STFT is defined on $\bR^{4d}$. Accordingly, the notation $M_{1\otimes(v\circ j^{-1})}^{\infty,q}(\rdd)$ means that the weight is imposed on the second $2d$-dimensional STFT variable, not on the symbol variable itself.

\begin{theorem}[Almost diagonalization]
\label{thm:almost-diagonalization}
Let $g\in\cS(\rd)\setminus\{0\}$ and $\Lambda\subset\rdd$ be a lattice. Assume that $\cG(g,\Lambda)$ is a Gabor frame. Then the following properties are equivalent.
\begin{enumerate}[(i)]
    \item $\sigma\in M_{1\otimes(v\circ j^{-1})}^{\infty,1}(\rdd)$.
    \item $\sigma\in\cS'(\rdd)$ and there exists a function $H\in L^1_v(\rdd)$ such that
    \begin{equation}
    |\scal{\Opw(\sigma)\pi(z)g}{\pi(w)g}|\leq H(w-z),\qquad z,w\in\rdd.
    \end{equation}
    \item $\sigma\in\cS'(\rdd)$ and there exists a sequence $h\in\ell^1_v(\Lambda)$ such that
    \begin{equation}
    |\scal{\Opw(\sigma)\pi(\mu)g}{\pi(\lambda)g}|\leq h(\lambda-\mu),\qquad \lambda,\mu\in\Lambda.
    \end{equation}
\end{enumerate}
\end{theorem}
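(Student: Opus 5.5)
The plan is to pass through the Wigner distribution, since the Weyl Gabor matrix is a pairing of $\sigma$ with a cross-Wigner distribution. First I would record the identity
\begin{equation}
\scal{\Opw(\sigma)\pi(z)g}{\pi(w)g}=\scal{\sigma}{W(\pi(w)g,\pi(z)g)}.
\end{equation}
Then I would compute $W(\pi(w)g,\pi(z)g)$ explicitly. A direct change of variables in the cross-Wigner distribution ($\tau=1/2$ in \eqref{intro.def.Wtau}) gives, up to a unimodular phase,
\begin{equation}
W(\pi(w)g,\pi(z)g)=M_{j(w-z)}T_{\frac{w+z}{2}}W(g,g)=\pi\Big(\tfrac{w+z}{2},j(w-z)\Big)\Phi,
\end{equation}
where $\Phi=W(g,g)\in\cS(\rdd)$. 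Consequently,
\begin{equation}
|\scal{\Opw(\sigma)\pi(z)g}{\pi(w)g}|=\Big|V_\Phi\sigma\Big(\tfrac{w+z}{2},j(w-z)\Big)\Big|.
\end{equation}
This is the key formula, and the equivalences follow from it.

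For (i)$\Rightarrow$(ii), set
\begin{equation}
H(u)=\sup_{x\in\rdd}|V_\Phi\sigma(x,j(u))|.
\end{equation}
Then $|K(w,z)|\le H(w-z)$, and $\|H\|_{L^1_v}=\int\sup_x|V_\Phi\sigma(x,\zeta)|\,v(j^{-1}\zeta)\,d\zeta$ after the substitution $\zeta=j(u)$, which has unit Jacobian. This is exactly the $M^{\infty,1}_{1\otimes(v\circ j^{-1})}$ norm computed with the window $\Phi$, and that choice of window is admissible by Theorem~\ref{thm:window-independence-stft}. For (ii)$\Rightarrow$(i), the map $(w,z)\mapsto(\frac{w+z}{2},j(w-z))$ is a bijection of $\bR^{4d}$. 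So for each $(x,\zeta)$ we can pick $w,z$ with $x=\frac{w+z}2$ and $w-z=j^{-1}\zeta$, which gives $\sup_x|V_\Phi\sigma(x,\zeta)|\le H(j^{-1}\zeta)$. Integrating against the weight yields (i).

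(ii)$\Rightarrow$(iii) is immediate by restricting to lattice points and defining $h(\lambda-\mu)$ through $H$. The difficulty is that pointwise values of an $L^1_v$ function need not be summable. To avoid this I would route through (i), which gives a concrete continuous envelope $H$. I would then replace $H$ by a local maximal function $H^\sharp(u)=\sup_{|t|\le r}H(u+t)$, and prove that $H^\sharp\in L^1_v$ with a wiener-amalgam argument. That argument uses the convolution inequality following Theorem~\ref{thm:window-independence-stft}, which shows $\sup_x|V_\Phi\sigma(x,\cdot)|$ lies in $W(L^\infty,L^1_v)$. The samples $h(\lambda)=H^\sharp(\lambda)$ are then in $\ell^1_v(\Lambda)$.

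The main obstacle is (iii)$\Rightarrow$(i), the passage from discrete decay back to continuous decay, and this is where the frame hypothesis enters. I would use the dual window $\gamma$, which belongs to $\cS(\rd)$ by the Janssen/Gröchenig–Leinert-type regularity of dual windows I would assume. Expanding $\pi(z)g$ and $\pi(w)g$ via the frame reconstruction formulas \eqref{eq:reconstruction-formulas} gives
\begin{equation}
K(w,z)=\sum_{\lambda,\mu}\overline{\scal{\pi(w)g}{\pi(\lambda)\gamma}}\,K(\lambda,\mu)\,\scal{\pi(z)g}{\pi(\mu)\gamma}.
\end{equation}
Both outer factors are bounded by $|V_\gamma g|$ evaluated at $\lambda-w$ and at $\mu-z$ respectively, and $V_\gamma g\in\cS(\rdd)$. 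This gives
\begin{equation}
|K(w,z)|\le\sum_{\lambda,\mu}|V_\gamma g(\lambda-w)|\,h(\lambda-\mu)\,|V_\gamma g(\mu-z)|.
\end{equation}
The right-hand side is a continuous semi-discrete convolution $H(w-z)$ (after periodization, bounded by a genuine function of $w-z$ via uniform sup over cosets). Its $L^1_v$ norm is controlled by $\|h\|_{\ell^1_v}$ times $v$-weighted amalgam norms of $V_\gamma g$, using submultiplicativity of $v$. This yields (ii), and hence (i).
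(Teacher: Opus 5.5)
Your proposal is correct and is essentially the standard argument of Gr\"ochenig \cite{Grochenig2006}, which the paper cites rather than proving itself. The identity $|\scal{\Opw(\sigma)\pi(z)g}{\pi(w)g}|=|V_\Phi\sigma(\frac{w+z}{2},j(w-z))|$ with $\Phi=W(g,g)$ gives (i)$\Leftrightarrow$(ii); a local-maximal (Wiener amalgam) bound on $\sup_x|V_\Phi\sigma(x,\cdot)|$ handles the sampling step (i)$\Rightarrow$(iii); and the expansion through a Schwartz dual window $\gamma$ gives (iii)$\Rightarrow$(ii).

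The points you leave implicit are routine. The double series is justified because the Gabor expansions of $\pi(z)g$ and $\pi(w)g$ converge in $\cS(\rd)$ while $\Opw(\sigma)\colon\cS(\rd)\to\cS'(\rd)$ is continuous. The periodized envelope satisfies $\sup_{y}\sum_{\lambda\in\Lambda}|V_\gamma g(\lambda-y-u)|\,|V_\gamma g(\lambda-y-\nu)|\lesssim_N\langle u-\nu\rangle^{-N}$, and together with $h\in\ell^1_v(\Lambda)$ and the polynomial growth of $v$ this yields an envelope in $L^1_v(\rdd)$.
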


In particular, for the unweighted case, symbols $\sigma$ in the Sj\"ostrand class $M^{\infty,1}$ are characterized by the almost diagonalization of the Gabor matrix of $\Opw(\sigma)$ for a suitable function $H\in L^1(\rdd)$ or a suitable sequence $h\in\ell^1(\Lambda)$.

An immediate consequence of Theorem \ref{thm:almost-diagonalization} is that any linear and continuous operator $T:\cS(\rd)\to\cS'(\rd)$ satisfying the almost diagonalization estimate is the Weyl quantization of a symbol in the Sj\"ostrand class. This characterizes the Weyl operators with symbols in the Sj\"ostrand class.

\begin{corollary}
Let $T:\cS(\rd)\to\cS'(\rd)$ be a continuous linear operator, $g\in\cS(\rd)\setminus\{0\}$, and let $\Lambda\subset\rdd$ be a lattice. Assume that $\cG(g,\Lambda)$ is a Gabor frame, and that $T$ satisfies the estimate
\begin{equation}
|\scal{T\pi(\mu)g}{\pi(\lambda)g}|\leq h(\lambda-\mu),\qquad \lambda,\mu\in\Lambda,
\end{equation}
for some sequence $h\in\ell_v^1(\Lambda)$. Then there exists a symbol $\sigma\in M_{1\otimes(v\circ j^{-1})}^{\infty,1}(\rdd)$ such that $T=\Opw(\sigma)$.
\end{corollary}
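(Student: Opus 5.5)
The plan is to reduce the corollary to the implication (iii)$\Rightarrow$(i) of Theorem~\ref{thm:almost-diagonalization}. The only thing to supply is a candidate symbol $\sigma\in\cS'(\rdd)$ with $T=\Opw(\sigma)$. Once $\sigma$ exists, the Gabor matrix of $\Opw(\sigma)$ coincides with that of $T$, so the hypothesis is exactly condition (iii), and Theorem~\ref{thm:almost-diagonalization} gives $\sigma\in M_{1\otimes(v\circ j^{-1})}^{\infty,1}(\rdd)$.

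\textbf{Step 1 (existence of the Weyl symbol).} The Schwartz kernel theorem gives a unique kernel $k_T\in\cS'(\rdd)$ with $\langle Tf,h\rangle=\langle k_T,h\otimes\bar f\rangle$ for $f,h\in\cS(\rd)$. The Weyl quantization \eqref{eq:Weyl-quantization} has kernel
\begin{equation}
k(x,y)=\cF_2^{-1}\sigma\Big(\frac{x+y}{2},x-y\Big),
\end{equation}
that is, a partial inverse Fourier transform in the second variable composed with an invertible linear change of variables. Both operations are homeomorphisms of $\cS'(\rdd)$. Hence the correspondence $\sigma\mapsto k$ is a bijection of $\cS'(\rdd)$ onto itself. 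Inverting it produces a unique $\sigma\in\cS'(\rdd)$ with $\Opw(\sigma)=T$ as maps $\cS(\rd)\to\cS'(\rd)$.

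\textbf{Step 2 (transfer of the estimate).} Since $g\in\cS(\rd)$, every $\pi(\mu)g$ is a Schwartz function. Therefore
\begin{equation}
\scal{\Opw(\sigma)\pi(\mu)g}{\pi(\lambda)g}=\scal{T\pi(\mu)g}{\pi(\lambda)g},\qquad \lambda,\mu\in\Lambda.
\end{equation}
This yields the bound $|\scal{\Opw(\sigma)\pi(\mu)g}{\pi(\lambda)g}|\le h(\lambda-\mu)$ with $h\in\ell^1_v(\Lambda)$. Together with the assumption that $\cG(g,\Lambda)$ is a Gabor frame, these are precisely the hypotheses of Theorem~\ref{thm:almost-diagonalization}(iii). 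Invoking (iii)$\Rightarrow$(i) then concludes the proof.

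\textbf{Main obstacle.} Step 1 is the only part that needs care. The kernel theorem and the bijectivity of the kernel–symbol map on $\cS'$ must be stated precisely. This is standard but not recorded earlier in the paper, so I would cite it, for instance from \cite{CorderoRodino2020} or \cite{Grochenig2001}. All the analytic difficulty, namely the passage from discrete off-diagonal decay to membership in the weighted Sjöstrand class, is already contained in Theorem~\ref{thm:almost-diagonalization}.
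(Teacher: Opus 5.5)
Your proposal is correct and takes the same route as the paper. The paper presents the corollary as an immediate consequence of Theorem~\ref{thm:almost-diagonalization}, (iii)$\Rightarrow$(i), once $T$ is written as $\Opw(\sigma)$ with $\sigma\in\cS'(\rdd)$; your Step~1 simply spells out that implicit step via the Schwartz kernel theorem and the bijectivity of the kernel--symbol map on $\cS'(\rdd)$.
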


A generalization of Theorem \ref{thm:almost-diagonalization} to different modulation spaces $M_{1\otimes(v\circ j^{-1})}^{\infty,q}(\rdd)$ is discussed in \cite{BastianoniCordero2022}. Moreover, the envelope estimate in Theorem~\ref{thm:almost-diagonalization} yields the following boundedness result.

\begin{theorem}
Let $\sigma\in M_{1\otimes(v\circ j^{-1})}^{\infty,1}(\rdd)$ and let $m$ be a $v$-moderate weight. Then $\Opw(\sigma)$ is bounded on $M_m^{p,q}(\rd)$ for all $1\leq p,q\leq\infty$, and its operator norm can be estimated uniformly by
\begin{equation}
\|\Opw(\sigma)\|\leq C\|\sigma\|_{M_{1\otimes(v\circ j^{-1})}^{\infty,1}},
\end{equation}
where $C$ may depend on the exponents and the moderation bounds of the weights.
\end{theorem}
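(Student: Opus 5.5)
The plan is to pass to the continuous Gabor matrix and reduce everything to a weighted Schur test on mixed-norm spaces. Fix $g\in\cS(\rd)$ with $\|g\|_2=1$. By the inversion formula $V_g^*V_g=\mathrm{id}$, which holds on $\cS'$, we can write
\begin{equation}
\Opw(\sigma)=V_g^*\,\big(V_g\Opw(\sigma)V_g^*\big)\,V_g ,
\end{equation}
where the middle operator is the integral operator on $\rdd$ with kernel $K(w,z)=\scal{\Opw(\sigma)\pi(z)g}{\pi(w)g}$. By the definition of the $M^{p,q}_m$ norm, $V_g$ maps $M_m^{p,q}$ boundedly into $L^{p,q}_m$. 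By the continuity of synthesis (the continuous version, or equivalently the duality of $V_g$), $V_g^*$ maps $L^{p,q}_m$ boundedly onto $M^{p,q}_m$, with constants depending only on $g$, $m$ and $v$. It therefore suffices to show that the integral operator with kernel $K$ is bounded on $L^{p,q}_m(\rdd)$ with norm $\lesssim\|\sigma\|_{M^{\infty,1}_{1\otimes(v\circ j^{-1})}}$.

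Next we invoke Theorem~\ref{thm:almost-diagonalization}, in its quantitative form: (i)$\Rightarrow$(ii) holds with an envelope satisfying $\|H\|_{L^1_v}\lesssim\|\sigma\|_{M^{\infty,1}_{1\otimes(v\circ j^{-1})}}$. Concretely, $\scal{\Opw(\sigma)\pi(z)g}{\pi(w)g}$ equals, up to a phase, $V_{W(g,g)}\sigma\big(\tfrac{w+z}{2},j(w-z)\big)$. Hence we may take
\begin{equation}
H(u)=\sup_{y}|V_\Phi\sigma(y,j(u))|,\qquad \Phi=W(g,g)\in\cS(\rdd).
\end{equation}
The change of variables $\zeta=j(u)$ then gives $\|H\|_{L^1_v}=\|\sigma\|_{M^{\infty,1}_{1\otimes(v\circ j^{-1})}}$ directly. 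I would verify this identity for $K$ by a direct computation with the Wigner distribution: using the covariance $W(\pi(w)g,\pi(z)g)=\text{phase}\cdot M_{j(w-z)}T_{(w+z)/2}W(g,g)$ together with $\scal{\Opw(\sigma)f}{h}=\scal{\sigma}{W(h,f)}$.

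The main estimate is then the pointwise bound
\begin{equation}
\Big|\int K(w,z)F(z)\,dz\Big|\leq (|F|*H)(w),
\end{equation}
and it remains to show $\| |F|*H\|_{L^{p,q}_m}\lesssim\|H\|_{L^1_v}\|F\|_{L^{p,q}_m}$. This follows from the moderateness $m(w)\lesssim m(z)v(w-z)$, which gives $m\,(|F|*H)\lesssim (m|F|)*(vH)$, together with Minkowski's integral inequality for mixed norms: $\|G*h\|_{L^{p,q}}\le\|h\|_{L^1}\|G\|_{L^{p,q}}$ for all $1\le p,q\le\infty$, since translations are isometries of $L^{p,q}$. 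Composing the three bounds gives the claim. The constant depends on $g$, $p$, $q$ and the moderation constant of $m$, but not on $\sigma$.

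I expect the main obstacle to be rigor rather than the estimates. For $\sigma\in M^{\infty,1}$ the operator is a priori defined only from $\cS$ to $\cS'$, so the factorization must be justified on $\cS$ first. The bound then extends by density when $p,q<\infty$. When $p$ or $q$ is infinite, I would use weak-$\ast$ arguments instead: take the adjoint, whose symbol $\bar\sigma$ lies in the same class, and pass to the duality $(M^{1}_{v})^*=M^\infty_{1/v}$. Alternatively, the identity $V_g\Opw(\sigma)f=\int K(\cdot,z)V_gf(z)\,dz$ can be checked directly for $f\in M^{p,q}_m$ using the absolute convergence supplied by the envelope bound. A second delicate point is the exact form of $K$ in terms of $V_\Phi\sigma$, in particular the role of $j$ and the phases, but only the modulus matters.
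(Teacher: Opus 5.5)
Your proposal is correct and follows the route the paper indicates. The paper writes $\Opw(\sigma)$ through its Gabor matrix $V_g\Opw(\sigma)V_g^*$, controls that matrix by the almost-diagonalization envelope $H\in L^1_v$ of Theorem~\ref{thm:almost-diagonalization}, and concludes with the weighted Young-type estimate $\|\,|F|*H\|_{L^{p,q}_m}\lesssim\|H\|_{L^1_v}\|F\|_{L^{p,q}_m}$; your explicit identification $|K(w,z)|=|V_{W(g,g)}\sigma(\tfrac{w+z}{2},j(w-z))|$ also supplies the quantitative bound $\|H\|_{L^1_v}\lesssim\|\sigma\|_{M^{\infty,1}_{1\otimes(v\circ j^{-1})}}$, which the qualitative statement of Theorem~\ref{thm:almost-diagonalization} leaves implicit and which the operator-norm estimate requires.
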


In particular, for $\sigma\in M^{\infty,1}(\rdd)$, $\Opw(\sigma)$ is bounded on $L^2(\rd)$ and $M^{p,q}(\rd)$ for all $1\leq p,q\leq\infty$.

\section{Quasi-Banach modulation spaces}
\label{sec:quasi-banach}

The quasi-Banach theory is best viewed as a continuation of the Gabor discretization developed above rather than as a separate branch of the subject. The STFT and the coefficient spaces remain the same. What changes is the functional-analytic setting, especially the loss of local convexity and the need for more restrictive universal window classes. We therefore keep the atomic and coefficient characterizations here, immediately after the Banach and operator-theoretic Gabor theory.

The quasi-Banach regime $0<p<1$ or $0<q<1$ is essential for sparsity, nonlinear approximation, and several refined boundedness problems, and the modulation space norm is useful as a flexible lower bound in uncertainty principles. Although  {the STFT quasi-norm in Definition~\ref{def:modulation-space-stft}} still makes sense, several tools from Banach-space theory are no longer available.

\subsection{Quasi-norms and $p$-triangle inequalities}

The quasi-Banach theory of modulation spaces $M_m^{p,q}$ was developed by Galperin and Samarah in \cite{GalperinSamarah2004}, who showed that modulation spaces admit an atomic decomposition and are characterized by summability and decay properties of Gabor coefficients.

Recall that a quasi-Banach space $X$ is a vector space equipped with a quasi-norm $\|\cdot\|:X\to[0,\infty)$ which satisfies the following properties for all $x,y\in X$ and scalars $a$:
\begin{enumerate}
    \item $\|x\|=0 \Leftrightarrow x=0$.
    \item $\|ax\|=|a|\|x\|$.
    \item quasi-triangle inequality: there exists a constant $C\geq1$ (independent of $x$ and $y$) such that $\|x+y\|\leq C(\|x\|+\|y\|)$.
    \item $X$ is complete with respect to $\|\cdot\|$.
\end{enumerate}

Important instances of quasi-Banach spaces are the $L^p$-spaces for $0<p<1$, which satisfy the quasi-triangle inequality with constant $C=2^{1/p-1}$, and the $\ell^p$-spaces for $0<p<1$.

For mixed norms a useful common exponent is $r=\min\{1,p,q\}$. The $r$-triangle inequality reads
\begin{equation}\label{eq:r-triangle}
 \|F+H\|_{L_m^{p,q}}^r\leq\|F\|_{L_m^{p,q}}^r+\|H\|_{L_m^{p,q}}^r.
\end{equation}
It replaces the ordinary triangle inequality when one of the exponents is below one. The same estimate passes to modulation quasi-norms because the STFT is linear in the signal.

We keep Definition~\ref{def:modulation-space-stft} unchanged for $0<p,q\leq\infty$. When $p<1$ or $q<1$, the resulting space is quasi-Banach rather than Banach. Thus the object being measured is the same STFT, while the functional-analytic tools available for manipulating its mixed quasi-norm change.

Some basic properties of modulation spaces with $1\leq p,q\leq\infty$ still hold in the quasi-Banach setting, such as the independence of the window used to compute the quasi-norm.

\begin{theorem}
Let $0<p,q\leq\infty$ and let $m$ be a $v$-moderate weight. If $g_1,g_2\in\cS(\rd)\setminus\{0\}$, then
\begin{equation}
 \norm{V_{g_1}f}_{L_m^{p,q}}\asympconst \norm{V_{g_2}f}_{L_m^{p,q}},\qquad f\in\cS'(\rd).
\end{equation}
\end{theorem}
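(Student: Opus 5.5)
The plan is to prove one inequality, $\norm{V_{g_1}f}_{L_m^{p,q}}\lesssim\norm{V_{g_2}f}_{L_m^{p,q}}$; the reverse follows by symmetry. When $\min\{p,q\}\geq1$ this is Theorem~\ref{thm:window-independence-stft}, which rests on the pointwise inequality $|V_{g_1}f|\leq|\langle h,\gamma\rangle|^{-1}(|V_hf|*|V_{g_1}\gamma|)$ together with Young's inequality $L^{p,q}_m*L^1_v\subset L^{p,q}_m$. For $p<1$ or $q<1$ Young's inequality fails, so the convolution has to be replaced by a discrete one. The strategy is therefore to exploit the smoothness of STFTs with Schwartz windows (Gr\"ochenig–Galperin–Samarah).

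\textbf{Step 1: local maximal function.} Fix a cube $Q=[0,1)^{2d}$ and, for $F$ on $\rdd$, set
\begin{equation}
F^\sharp(z)=\sup_{w\in z+Q}|F(w)|.
\end{equation}
With $\Lambda=\alpha\bZ^{2d}$ and $\alpha$ small, the first claim is that for $\varphi\in\cS$
\begin{equation}
\norm{(V_\varphi f)^\sharp}_{L^{p,q}_m}\asymp\norm{V_\varphi f}_{L^{p,q}_m},\qquad 0<p,q\leq\infty.
\end{equation}
The bound $\gtrsim$ is trivial. For $\lesssim$, use the reproducing formula
\begin{equation}
V_\varphi f=\norm{\varphi}_2^{-2}\,V_\varphi f*_\natural V_\varphi\varphi,
\end{equation}
which is a twisted convolution, so absolute values are unaffected. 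Raising to the power $r=\min\{1,p,q\}$ and using that $|V_\varphi\varphi|$ decays faster than any polynomial gives the sub-mean-value estimate
\begin{equation}
|V_\varphi f(w)|^r\lesssim\int |V_\varphi f(u)|^r\,v(w-u)^{-N}\,du
\end{equation}
for $w\in z+Q$. This is the standard Peetre-type argument. It is the main obstacle: one must show that $r$-th powers of functions in the image of $V_\varphi$ satisfy such an estimate. The route is to bound $|V_\varphi f(w)|\leq\sum_\lambda\sup_{Q_\lambda}|V_\varphi f|\cdot\sup_{Q_\lambda}|V_\varphi\varphi(w-\cdot)|$ and then apply the $r$-triangle inequality \eqref{eq:r-triangle} to the sum. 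Once this holds, Young's inequality applies to the $r$-th powers in $L^{p/r,q/r}$, which is a Banach space, with a kernel in $L^1_{v^r}$ because $m^r$ is $v^r$-moderate.

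\textbf{Step 2: discretize the change of window.} Write
\begin{equation}
V_{g_1}f=\norm{g_2}_2^{-2}\,V_{g_2}f*_\natural V_{g_1}g_2.
\end{equation}
Split the integral over the cells $\lambda+\alpha Q$, $\lambda\in\Lambda$. Bounding each cell by the supremum gives
\begin{equation}
|V_{g_1}f(z)|\lesssim\sum_{\lambda}(V_{g_2}f)^\sharp(\lambda)\,\sup_{u\in\lambda+\alpha Q}|V_{g_1}g_2(z-u)|.
\end{equation}
Take the $r$-th power and use $(\sum a_\lambda)^r\leq\sum a_\lambda^r$. The right side is then a semi-discrete convolution of the sequence $((V_{g_2}f)^\sharp(\lambda))^r$ with a kernel in $L^1_{v^r}$, since $V_{g_1}g_2\in\cS(\rdd)$. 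Weighted Young's inequality in $L^{p/r,q/r}_{m^r}$, $\ell^{p/r,q/r}_{m^r}$ gives
\begin{equation}
\norm{V_{g_1}f}_{L^{p,q}_m}\lesssim\norm{((V_{g_2}f)^\sharp(\lambda))_\lambda}_{\ell^{p,q}_m}.
\end{equation}

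\textbf{Step 3: conclude.} The sequence norm is dominated by $\norm{(V_{g_2}f)^{\sharp\sharp}}_{L^{p,q}_m}$, using a slightly larger cube and moderateness of $m$. Step 1, applied with the enlarged cube, bounds this by $\norm{V_{g_2}f}_{L^{p,q}_m}$. The inversion formula, valid in $\cS'$, is used only pointwise for continuous functions. Hence no convergence issues arise for general $f\in\cS'(\rd)$: if the right side is infinite there is nothing to prove.
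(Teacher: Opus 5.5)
\textbf{There is a genuine gap in Step~1, and Steps~2 and~3 depend on it.} The paper states this result without proof (it defers to Galperin--Samarah), so your argument has to stand on its own. Steps~2 and~3 are the standard semi-discrete convolution argument and are fine. But they rest entirely on the local maximal inequality of Step~1, and the route you give for it is circular.

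Discretizing $|V_\varphi f(w)|\le\norm{\varphi}_2^{-2}\int|V_\varphi f(u)|\,|V_\varphi\varphi(w-u)|\,du$ over cells $Q_\lambda$ and applying the $r$-triangle inequality yields
\[
|V_\varphi f(w)|^r\lesssim\sum_{\lambda}\Big(\sup_{Q_\lambda}|V_\varphi f|\Big)^r\Big(\sup_{u\in Q_\lambda}|V_\varphi\varphi(w-u)|\Big)^r .
\]
The right-hand side still contains the local suprema of $V_\varphi f$. Replacing $\sup_{Q_\lambda}|V_\varphi f|^r$ by an integral of $|V_\varphi f|^r$ over a neighbourhood of $Q_\lambda$ is exactly the sub-mean-value estimate you are trying to prove. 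Raising the linear inequality $|F|\lesssim|F|*|K|$ to a power $r<1$ cannot put $|F|^r$ under the integral, because concavity of $t\mapsto t^r$ works against you. (Minor point: the kernel should be $(1+|w-u|)^{-N}$, not $v(w-u)^{-N}$, which is not integrable if $v$ is bounded, e.g.\ $v\equiv1$.)

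The missing idea is a Peetre-type absorption. Set $F=V_\varphi f$, $K=V_\varphi\varphi$ and $F^*_N(z)=\sup_u|F(u)|(1+|z-u|)^{-N}$.
\begin{enumerate}[1.]
\item Insert $|F(u)|\le|F(u)|^r\,F^*_N(z)^{1-r}(1+|z-u|)^{N(1-r)}$ into $|F(w)|\lesssim\int|F(u)|\,|K(w-u)|\,du$.
\item Multiply by $(1+|z-w|)^{-N}$, use $(1+|a+b|)\le(1+|a|)(1+|b|)$ twice together with the rapid decay of $K$, and take the supremum in $w$. This gives $F^*_N(z)\le C_N\,F^*_N(z)^{1-r}\int|F(u)|^r(1+|z-u|)^{-Nr}\,du$.
\item Dividing by $F^*_N(z)^{1-r}$ yields $F^\sharp(z)^r\lesssim F^*_N(z)^r\lesssim\big(|F|^r*(1+|\cdot|)^{-Nr}\big)(z)$. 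Your Young argument in $L^{p/r,q/r}_{m^r}$ then works for $N$ large.
\end{enumerate}
The division requires $F^*_N(z)<\infty$. This holds only if $N$ exceeds the polynomial growth order of $V_\varphi f$, and for $f\in\cS'$ that order depends on $f$. So you also need a bootstrap. Apply the inequality first with a large $f$-dependent $N'$, and use H\"older in $L^{p/r,q/r}$ to get $|F(z)|\lesssim\norm{F}_{L^{p,q}_m}/m(z)$. This grows at most polynomially, with an order depending only on $v$. Hence a fixed $N=N(d,p,q,v)$ is admissible whenever the right-hand side is finite.

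For the Gaussian window you could instead get the sub-mean-value property from analyticity of the Bargmann transform. That covers only one window, however, while your Step~3 applies Step~1 to $V_{g_2}f$ with $g_2$ an arbitrary Schwartz function. With Step~1 repaired in this way, Steps~2 and~3 complete the proof.
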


In the Banach range, every nonzero window in $M_v^1$ is admissible. Below one, admissible classes depend on the exponents and stronger control of the window is needed: for instance, windows in $M^r_v(\rd)$ with $r=\min\{1,p,q\}$ are admissible \cite{GalperinSamarah2004}. Schwartz windows provide a common class for all $0<p,q\leq\infty$ under the polynomial weight assumptions, but this does not make the corresponding operator bounds uniform in $p$ and $q$.

\begin{theorem}
Let $0<p_1\leq p_2\leq\infty$, $0<q_1\leq q_2\leq\infty$, and let $m$ be a $v$-moderate weight. Then we have the continuous embedding
\begin{equation}
M_m^{p_1,q_1}(\rd)\hookrightarrow M_m^{p_2,q_2}(\rd).
\end{equation}
\end{theorem}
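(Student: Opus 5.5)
The idea is to pass to Gabor coefficients, where the embedding is just the nesting of mixed sequence spaces, and then come back. Fix a window $g\in\cS(\rd)\setminus\{0\}$ (a Gaussian, say) and a lattice $\Lambda=\alpha\zd\times\alpha\zd$ with $\alpha$ small enough that $\cG(g,\Lambda)$ is a Gabor frame with a dual window $\gamma\in\cS(\rd)$. I will assume the quasi-Banach analogue of Theorem~\ref{thm:gabor-characterization-Banach}: for all $0<p,q\le\infty$,
\begin{equation}
\norm{f}_{M^{p,q}_m}\asymp\norm{(\scal{f}{\pi(\lambda)g})_{\lambda}}_{\ell^{p,q}_m(\Lambda)},
\end{equation}
with the equivalence constants allowed to depend on $p$ and $q$. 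This is the Galperin--Samarah atomic characterization recalled at the start of this section.

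Given this, the embedding reduces to sequence spaces:
\begin{equation}
\ell^{p_1,q_1}_m(\Lambda)\hookrightarrow\ell^{p_2,q_2}_m(\Lambda).
\end{equation}
That inclusion is elementary. For each fixed $n$, the inner sequence $k\mapsto c_{\alpha k,\alpha n}m(\alpha k,\alpha n)$ satisfies $\norm{\cdot}_{\ell^{p_2}}\le\norm{\cdot}_{\ell^{p_1}}$ because $p_1\le p_2$. Applying the same nesting to the resulting outer sequence, with $q_1\le q_2$, gives the claim with constant $1$. The only point to note is that $\ell^p$ nesting holds for every $0<p_1\le p_2\le\infty$, including exponents below $1$.

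Chaining the estimates, for $f\in M^{p_1,q_1}_m$ I get
\begin{equation}
\norm{f}_{M^{p_2,q_2}_m}\lesssim\norm{C_gf}_{\ell^{p_2,q_2}_m}\le\norm{C_gf}_{\ell^{p_1,q_1}_m}\lesssim\norm{f}_{M^{p_1,q_1}_m}.
\end{equation}
One must also check that $f$ genuinely lies in $M^{p_2,q_2}_m$, not just that the right-hand side is finite. This holds because $f\in\cS'$ and its STFT norm in $L^{p_2,q_2}_m$ is controlled by the coefficient norm, or because the synthesis $D_\gamma C_gf=f$ converges in $\cS'$. Here I use that $m$ is polynomially moderate and $g,\gamma\in\cS$.

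The main obstacle is the quasi-Banach coefficient characterization itself, specifically the bound $\norm{f}_{M^{p,q}_m}\lesssim\norm{C_gf}_{\ell^{p,q}_m}$ for $p$ or $q$ below $1$. There Young's inequality fails, so the Banach proof does not transfer. My first approach would be the standard sampling argument. The STFT of a synthesized series is dominated pointwise by $\sum_\lambda|c_\lambda||V_g\gamma(\cdot-\lambda)|$. Since $V_g\gamma\in\cS(\rdd)$, this sum is controlled by a discrete convolution with a rapidly decaying sequence, and the $r$-triangle inequality \eqref{eq:r-triangle} with $r=\min\{1,p,q\}$ applied to the amalgam norms makes that convolution bounded on $\ell^{p,q}_m$, the polynomial growth of $v$ being absorbed by the decay. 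For the analysis direction I would use the local maximal function $\sup_{u\in Q}|V_gf(\cdot+u)|$, which is controlled in $L^{p,q}_m$ via the reproducing formula and the same $r$-convexity estimate. The discrete side then only involves sequence norms.
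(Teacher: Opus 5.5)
Your argument is correct. The paper states this embedding without proof and simply recalls it from the Galperin--Samarah theory, so there is no in-paper argument to match line by line.

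Your reduction to sequence spaces is the standard discretization proof, and every ingredient is recorded in the paper:
\begin{enumerate}
\item With a Gaussian $g$, $\Lambda=\alpha\zd\times\alpha\zd$ with $\alpha<1$, and the Schwartz canonical dual $\gamma$, Theorem~\ref{thm:atom-decomposition-quasiBanach} gives $\norm{f}_{M^{p,q}_m}\asymp\norm{C_gf}_{\ell^{p,q}_m}$ for all $0<p,q\le\infty$. This is the version you need when $p_2$ or $q_2$ is infinite, since Theorem~\ref{thm:gabor-characterization-quasi-Banach} is stated only for finite exponents.
\item The nesting of \eqref{eq:discrete-mixed-norm} holds with constant $1$, by solidity and $\ell^{p_1}\subseteq\ell^{p_2}$.
\item Your membership check is sound. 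The identity $D_\gamma C_g=\mathrm{id}$ holds on all of $\cS'$ by duality with $\cS$, so the element of $M^{p_2,q_2}_m$ produced by $D_\gamma$ is $f$ itself.
\end{enumerate}

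A shorter route avoids frames, dual windows and synthesis bounds entirely. Take $Q=[0,\alpha)^{2d}$ and assume the local maximal estimate $\norm{\sup_{u\in Q-Q}|V_gf(\cdot+u)|}_{L^{p,q}_m}\lesssim\norm{V_gf}_{L^{p,q}_m}$. Then
\[
\norm{V_gf}_{L^{p_2,q_2}_m}\lesssim\norm{\big(\sup_{\mu+Q}|V_gf|\big)_{\mu\in\Lambda}}_{\ell^{p_2,q_2}_m}\le\norm{\big(\sup_{\mu+Q}|V_gf|\big)_{\mu\in\Lambda}}_{\ell^{p_1,q_1}_m}\lesssim\norm{V_gf}_{L^{p_1,q_1}_m}.
\]
That route needs only the analysis-side estimate. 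Yours has the advantage of reducing everything to a theorem the paper already states.

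One caution about your last paragraph; it matters only if you intend to prove the characterization rather than cite it. The local maximal estimate does not follow from the reproducing formula and \eqref{eq:r-triangle} alone.
\begin{itemize}
\item Discretizing $|V_gf|\lesssim|V_gf|*|V_gg|$ produces local $L^1$ norms $\int_{\mu+Q}|V_gf|$.
\item For $p<1$, these are controlled by the $L^{p,q}_m$ quasi-norm only through the very local supremum you are trying to bound. The argument is therefore circular as written.
\end{itemize}
An extra ingredient is needed. One option is a Peetre-type absorption argument: split $|V_gf|=|V_gf|^r|V_gf|^{1-r}$, use the a priori polynomial growth of $V_gf$ for $f\in\cS'$ to absorb the factor $|V_gf|^{1-r}$, and then apply Young's inequality in $L^{p/r,q/r}$, where $p/r,q/r\ge1$. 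Another, for a Gaussian window, is the holomorphy of the Bargmann transform together with the subharmonicity of $|F|^r$.
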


\subsection{Atomic and frame methods in the quasi-Banach setting}

The Hilbert-space reconstruction identities also underpin the quasi-Banach coefficient characterizations. The issue is to extend the analysis and synthesis maps to the corresponding mixed quasi-norms.

The analysis and synthesis maps remain bounded with Schwartz windows; their constants must now be allowed to depend on the exponents and on the window. See \cite{GalperinSamarah2004} and \cite[Theorem~8.3]{Rauhut2005}.

\begin{theorem}[Gabor operators in the quasi-Banach regime]\label{thm:gabor-operators-quasi}
Let $0<p,q\leq\infty$, let $m$ be a $v$-moderate weight with polynomially bounded control $v$, and let $\Lambda\subset\rdd$ be a lattice. If $g\in\cS(\rd)$, then
\begin{equation}
 C_g:M_m^{p,q}\longrightarrow\ell_m^{p,q}(\Lambda),
 \qquad D_g:\ell_m^{p,q}(\Lambda)\longrightarrow M_m^{p,q}
\end{equation}
are bounded. In particular, there are finite constants $C_{g,p,q,m,v,\Lambda}$ and $D_{g,p,q,m,v,\Lambda}$ such that
\begin{equation}
 \|C_gf\|_{\ell_m^{p,q}}\leq C_{g,p,q,m,v,\Lambda}\|f\|_{M_m^{p,q}},
 \qquad
 \|D_gc\|_{M_m^{p,q}}\leq D_{g,p,q,m,v,\Lambda}\|c\|_{\ell_m^{p,q}}.
\end{equation}
The synthesis series converges unconditionally in $M_m^{p,q}$ for $p,q<\infty$, and weak-$\ast$ in $M_{1/v}^\infty$ at an infinite exponent. If also $\gamma\in\cS(\rd)$, then $S_{g,\gamma}=D_\gamma C_g$ is bounded on $M_m^{p,q}$.
\end{theorem}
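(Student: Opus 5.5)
The plan is to reduce both bounds to convolution estimates on the lattice and in phase space. Because $r=\min\{1,p,q\}$ may be less than one, the $r$-triangle inequality \eqref{eq:r-triangle} replaces Young's inequality. Throughout, fix a relatively compact fundamental domain $Q$ with $\Lambda+Q=\rdd$, and use the geometric sequence norm \eqref{eq:geometric-sequence-norm}.

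\emph{Analysis operator.} Fix an auxiliary window $\varphi\in\cS(\rd)$ with $\|\varphi\|_2=1$. By the convolution inequality for the STFT (after Theorem~\ref{thm:window-independence-stft}), applied with $h=\gamma=\varphi$, we get
\[
|V_gf(\lambda)|\leq (|V_\varphi f|*|V_g\varphi|)(\lambda).
\]
This step alone is not enough for $p<1$, because we cannot integrate $|V_\varphi f|$ against a kernel in $L^p$ when $p<1$. I therefore first invoke the local maximal (Wiener amalgam) control: $V_\varphi f$ lies in $W(L^\infty,L^{p,q}_m)$ with
\[
\|V_\varphi f\|_{W(L^\infty,L^{p,q}_m)}\lesssim \|V_\varphi f\|_{L^{p,q}_m}.
\]
This is Galperin--Samarah's key lemma. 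It follows from the reproducing formula
\[
V_\varphi f=V_\varphi f * V_\varphi\varphi
\]
(up to a phase), together with the fact that $V_\varphi\varphi$ belongs to $W(L^\infty,L^r_v)$ since it is Schwartz, and the amalgam convolution relation $W(L^\infty,L^{p,q}_m)*W(L^\infty,L^r_v)$ for $r\le 1$. Once $V_gf$ is controlled by the local suprema $a_\mu=\sup_{\mu+Q}|V_\varphi f|$, the bound becomes a discrete convolution
\[
|V_gf(\lambda)|\lesssim \sum_{\mu} a_\mu\, b_{\lambda-\mu},\qquad b_\nu=\sup_{\nu+Q'}|V_g\varphi|,
\]
and $b\in\ell^r_v(\Lambda)$ because $V_g\varphi\in\cS(\rdd)$. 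Finally, the discrete Young inequality
\[
\ell^{p,q}_m*\ell^r_v\subset\ell^{p,q}_m,\qquad r\le\min\{1,p,q\},
\]
follows from the $r$-triangle inequality applied to the sum of translates $\sum_\nu b_\nu\,T_\nu a$, together with $m(\lambda)\lesssim m(\lambda-\nu)v(\nu)$. Hence
\[
\|C_gf\|_{\ell^{p,q}_m}\lesssim \|b\|_{\ell^r_v}\|a\|_{\ell^{p,q}_m}\lesssim\|f\|_{M^{p,q}_m}.
\]

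\emph{Synthesis operator.} For a finitely supported $c$, the STFT of $D_gc$ is
\[
V_\varphi(D_gc)(z)=\sum_\lambda c_\lambda\, V_\varphi(\pi(\lambda)g)(z),\qquad |V_\varphi(\pi(\lambda)g)(z)|=|V_\varphi g(z-\lambda)|.
\]
Let $G(z)=\sup_{w\in Q}|V_\varphi g(z-w)|$, which is Schwartz-decaying. Then
\[
|V_\varphi(D_gc)|\le\sum_\lambda|c_\lambda|\,G(\cdot-\lambda)\le\sum_\lambda |c_\lambda|\,\boldsymbol 1_{\lambda+Q}*\widetilde G .
\]
Estimating this $L^{p,q}_m$ quasi-norm is again a semidiscrete convolution, and it is handled the same way. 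I split $\rdd=\bigcup_\nu(\nu+Q)$ and bound the function on each cell by its supremum, which gives
\[
\|V_\varphi(D_gc)\|_{L^{p,q}_m}\lesssim\|c*\beta\|_{\ell^{p,q}_m},\qquad \beta_\nu=\sup_{\nu+Q}G\in\ell^r_v,
\]
and then I apply the discrete $r$-Young inequality as before. This gives the bound on finite sequences. For $p,q<\infty$, finite sequences are dense, and unconditional convergence follows from the same estimate applied to tails (the tails of $c$ go to zero in $\ell^{p,q}_m$). At an infinite exponent I would argue by weak-$\ast$ convergence: test against $h\in M^1_v$ and use the Banach-range result for the pairing. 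The boundedness of $S_{g,\gamma}=D_\gamma C_g$ then follows by composition.

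The main obstacle is the analysis step for $p<1$ or $q<1$. There the pointwise STFT convolution inequality does not suffice, and one genuinely needs the local-supremum (amalgam) estimate
\[
\|V_\varphi f\|_{W(L^\infty,L^{p,q}_m)}\asymp\|V_\varphi f\|_{L^{p,q}_m}.
\]
Its proof uses the self-reproducing property of $V_\varphi\varphi$ restricted to the range of $V_\varphi$. I would try to establish it by discretizing the reproducing integral over the cells $\mu+Q$: take suprema of the kernel over each cell, bound the integral over a cell by its measure times the supremum, and close the resulting discrete convolution inequality using the $r$-triangle inequality. A priori finiteness of the local suprema holds for $f\in\cS'$, since the STFT has moderate growth.
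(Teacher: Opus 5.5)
\textbf{There is a genuine gap in your analysis half.}

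Your overall scheme is the one the paper relies on; the paper gives no proof of its own and cites Galperin--Samarah and Rauhut. You dominate the sampled STFT by local suprema of a fixed STFT and then apply the discrete inequality $\ell^{p,q}_m*\ell^r_v\subset\ell^{p,q}_m$, which you derive correctly from the $r$-triangle inequality. Your synthesis half works for all $0<p,q\le\infty$: the semidiscrete domination, the density and tail argument, and the weak-$\ast$ testing are all fine.

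The gap sits exactly at the step that makes the quasi-Banach case harder. That step is the local maximal estimate $\|(\sup_{\mu+Q}|V_\varphi f|)_\mu\|_{\ell^{p,q}_m}\lesssim\|V_\varphi f\|_{L^{p,q}_m}$ for $\min\{p,q\}<1$. Neither derivation you sketch proves it.

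\emph{The first derivation is circular.} The amalgam relation $W(L^\infty,L^{p,q}_m)*W(L^\infty,L^r_v)\subset W(L^\infty,L^{p,q}_m)$ needs $V_\varphi f\in W(L^\infty,L^{p,q}_m)$ as its input. No analogue holds with $L^{p,q}_m$ on the left, because for $p<1$ its elements need not even be locally integrable.

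\emph{The second derivation does not close.} Bounding each cell integral $\int_{\mu+Q}|V_\varphi f(w)|\,|V_\varphi\varphi(z-w)|\,dw$ by $|Q|$ times suprema gives $a\lesssim a*b$ with $a_\mu=\sup_{\mu+Q}|V_\varphi f|$. The $r$-Young inequality then yields only $\|a\|\lesssim\|a\|$, and $\|V_\varphi f\|_{L^{p,q}_m}$ never enters. Absorbing at the level of norms is also unavailable, since finiteness of $\|a\|_{\ell^{p,q}_m}$ is exactly what you are trying to prove. Pointwise finiteness of each $a_\mu$ does not help.

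As written, your bound for $C_g$, and hence for $S_{g,\gamma}$, is established only for $\min\{p,q\}\ge1$, where ordinary Young already suffices. If you want to rely on the lemma, cite it as a black box rather than claiming it follows from the argument above.

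\textbf{The missing idea} is to control local suprema by local $L^r$ integrals. Here is an elementary way to do it.

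Let $F=V_\varphi f$ with $\|\varphi\|_2=1$, and set $c_\mu=\int_{\mu+Q}|F|^r$. Insert the bound $\int_{\mu+Q}|F|\le a_\mu^{1-r}c_\mu$ into $|F|\le|F|*|V_\varphi\varphi|$. This gives $a_\lambda\le\sum_\mu b_{\lambda-\mu}a_\mu^{1-r}c_\mu$ with $b$ rapidly decreasing.

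Now absorb \emph{pointwise} through the Peetre-type sequence $a^*_\lambda=\sup_\mu a_\mu\langle\lambda-\mu\rangle^{-N}$. It is finite once $N$ exceeds the polynomial growth order of $V_\varphi f$; this is where moderate growth is genuinely used. Peetre's inequality and division by $(a^*_\lambda)^{1-r}$ give $(a^*_\lambda)^r\lesssim_N\sum_\mu\langle\lambda-\mu\rangle^{-Nr}c_\mu$.

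Since $p/r,q/r\ge1$ and $\|c\|_{\ell^{p/r,q/r}_{m^r}}\lesssim\|F\|^r_{L^{p,q}_m}$, Young's inequality in the Banach range finishes the estimate. A bootstrap makes $N$, and hence the constant, independent of $f$: the same estimate first shows $|F(z)|\lesssim_f\langle z\rangle^{s}$ with $s$ depending only on $v$.

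Alternatively, take $\varphi$ Gaussian. Then $V_\varphi f$ is, up to a phase and a Gaussian factor, the Bargmann transform of $f$. After recentering via $|V_\varphi f(z+u)|=|V_\varphi(\pi(-z)f)(u)|$, $|V_\varphi f|^r$ satisfies a uniform local sub-mean-value inequality. This analyticity argument is essentially the route taken in the Galperin--Samarah paper the authors cite.
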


The distinction is substantive: a bound for $D_g$ controlled uniformly by $\|g\|_{M_v^1}$ would, on applying $D_g$ to a single nonzero coefficient, force an unavailable uniform estimate of the $M_m^{p,q}$ norm of the window by its $M_v^1$ norm.

In the particular case where $S_{g,\gamma}=D_\gamma C_g=\mathrm{id}_{L^2}$, we obtain the reconstruction formulas \eqref{eq:reconstruction-formulas} on modulation spaces for the quasi-Banach setting, see \cite[Theorem~3.7]{GalperinSamarah2004}.

\begin{theorem}
\label{thm:atom-decomposition-quasiBanach}
Let $0<p,q\leq\infty$, $m$ be a $v$-moderate weight, and $g,\gamma\in\cS(\rd)$. If the Gabor frame operator $S_{g,\gamma}=D_\gamma C_g=\mathrm{id}_{L^2}$, then
\begin{equation}
\label{eq:gabor-expansion-quasiBanach}
f=\sum_{\lambda\in\Lambda}\scal{f}{\pi(\lambda)\gamma}\pi(\lambda)g, \quad f=\sum_{\lambda\in\Lambda}\scal{f}{\pi(\lambda)g}\pi(\lambda)\gamma,
\end{equation}
with unconditional convergence in $M_m^{p,q}$ if $p,q<\infty$. At an infinite exponent, convergence is weak-$\ast$ in $M_{1/v}^\infty$. Furthermore,
\begin{equation}
 \|f\|_{M_m^{p,q}}
 \asymp\|C_gf\|_{\ell_m^{p,q}(\Lambda)}
 \asymp\|C_\gamma f\|_{\ell_m^{p,q}(\Lambda)},
\end{equation}
with constants depending on the windows, exponents, weights, and lattice. Each lower bound follows by applying the appropriate synthesis operator to the reconstruction identity.
\end{theorem}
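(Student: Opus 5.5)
The plan is to obtain everything from the boundedness of the analysis and synthesis maps in Theorem~\ref{thm:gabor-operators-quasi}, together with the $L^2$ identity $D_\gamma C_g=\mathrm{id}_{L^2}$ extended by continuity and duality. First, we show that the identity $S_{g,\gamma}f=f$ holds for every $f\in\cS(\rd)$, since $\cS\subset L^2$. Next we pass to $\cS'(\rd)$ by duality. The operator $S_{g,\gamma}$ has a well-defined adjoint $S_{\gamma,g}$, and $S_{\gamma,g}=(D_\gamma C_g)^*=\mathrm{id}$ on $L^2$. Also, $S_{g,\gamma}$ maps $\cS$ to $\cS$ continuously, because Schwartz windows give coefficient sequences of rapid decay; this is the corollary characterizing $\cS$ via Gabor coefficients. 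Hence, for $f\in\cS'$,
\begin{equation}
\langle S_{g,\gamma}f,\varphi\rangle=\langle f,S_{\gamma,g}\varphi\rangle=\langle f,\varphi\rangle,\qquad \varphi\in\cS .
\end{equation}
So $f=\sum_\lambda\langle f,\pi(\lambda)g\rangle\pi(\lambda)\gamma$ holds in $\cS'$, and in particular for every $f\in M^{p,q}_m$. Interchanging the roles of $g$ and $\gamma$ gives the other expansion. This uses that $D_gC_\gamma=(D_\gamma C_g)^*=\mathrm{id}$ as well.

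Next come the norm equivalences. For $f\in M^{p,q}_m$, Theorem~\ref{thm:gabor-operators-quasi} gives $\|C_gf\|_{\ell^{p,q}_m}\le C\|f\|_{M^{p,q}_m}$. Conversely, the reconstruction identity $f=D_\gamma C_gf$ gives $\|f\|_{M^{p,q}_m}\le \|D_\gamma\|\,\|C_gf\|_{\ell^{p,q}_m}$. The case of $C_\gamma$ is symmetric, using $D_g$. One subtlety is that $D_\gamma C_gf$, defined as a series in $M^{p,q}_m$ (or weak-$\ast$), must coincide with the $\cS'$-limit of the previous step. This holds because convergence in $M^{p,q}_m$ implies convergence in $\cS'$, by the continuous embedding into $\cS'$ (via the embedding into $M^\infty_{1/v}$ and the polynomial weight assumption).

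The remaining step is convergence. For $p,q<\infty$, unconditional convergence in $M^{p,q}_m$ follows from the synthesis statement in Theorem~\ref{thm:gabor-operators-quasi}. Indeed, finitely supported sequences are dense in $\ell^{p,q}_m$ when $p,q<\infty$, and the tails of $C_gf$ tend to zero in quasi-norm uniformly over reorderings. We combine this with the $r$-triangle inequality \eqref{eq:r-triangle} and the boundedness of $D_\gamma$. At an infinite exponent we instead test against $M^1_v$. The pairing $\langle D_\gamma c,h\rangle=\sum_\lambda c_\lambda\langle\pi(\lambda)\gamma,h\rangle$ converges absolutely for $c\in\ell^\infty_m$ and $h\in M^1_v$. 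This holds because $C_\gamma h\in\ell^1_{v}$ by the Banach theory, and $m\gtrsim 1/v$ up to constants, so the partial sums converge weak-$\ast$. Here we also use $M^{p,q}_m\hookrightarrow M^\infty_{1/v}$, which holds since $m\gtrsim 1/v$ for $v$-moderate $m$.

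I expect the main obstacle to be the first step: making the passage from the $L^2$ identity to all of $M^{p,q}_m$ rigorous in the quasi-Banach range, where $\cS$ need not be dense (at infinite exponents) and Hahn--Banach is unavailable. The plan avoids density in the quasi-Banach space altogether by working in $\cS'$ with Schwartz test functions, where local convexity is available. The only ingredient that needs care is the continuity of $S_{g,\gamma}$ on $\cS$. That continuity reduces to the rapid decay of $V_g\varphi$ on $\Lambda$ together with $\sum_\lambda\langle\lambda\rangle^{-N}\|\pi(\lambda)\gamma\|_{\text{seminorm}}<\infty$.
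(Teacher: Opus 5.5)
Your proposal is correct and follows the route the paper indicates. It uses the boundedness of $C_g$ and $D_\gamma$ from Theorem~\ref{thm:gabor-operators-quasi} together with the reconstruction identity, and obtains each lower bound by applying the synthesis operator to $f=D_\gamma C_gf$. The paper delegates the extension of $D_\gamma C_g=\mathrm{id}$ beyond $L^2$ to Galperin--Samarah; your $\cS$--$\cS'$ duality argument, which pairs termwise against $\cS$-convergent expansions of Schwartz test functions, supplies that step cleanly without needing density in the quasi-Banach space.
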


This shows that modulation spaces can be characterized by summability and decay properties of Gabor coefficients $c_\lambda$ in Gabor expansions of signals $f$, providing a natural setting for time-frequency analysis. Once again, we notice that the windows $g$ and $\gamma$ are Schwartz functions.

Since the finite sequences are dense in $\ell_m^{p,q}(\Lambda)$ when $0<p,q<\infty$, Theorem \ref{thm:atom-decomposition-quasiBanach} implies that, in this case, $\cS(\rd)$ is a dense subspace of $M_m^{p,q}(\rd)$. For a rectangular lattice $\Lambda=\alpha\zd\times\beta\zd$, the related lower synthesis bound is naturally formulated on the adjoint lattice $\Lambda^\circ=\beta^{-1}\zd\times\alpha^{-1}\zd$, see \cite[Theorem~3.8]{GalperinSamarah2004}.

\begin{theorem}[Lower synthesis bound on the adjoint lattice]
Let $0<p,q<\infty$, $\Lambda=\alpha\zd\times\beta\zd$, let $m$ be a polynomially moderate weight, and let $g\in\cS(\rd)$. If $\cG(g,\Lambda)$ is a Gabor frame, then
\begin{equation}
 \left\|\sum_{k,n\in\zd}c_{k,n}\pi(k/\beta,n/\alpha)g\right\|_{M_m^{p,q}}
 \geq A\|c\|_{\ell_m^{p,q}(\Lambda^\circ)},
\end{equation}
where $A>0$ is independent of $c\in\ell_m^{p,q}(\Lambda^\circ)$. The coefficient weight is evaluated at the actual atom positions $(k/\beta,n/\alpha)$.
\end{theorem}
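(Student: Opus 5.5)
The natural route is duality between Gabor systems on $\Lambda$ and on the adjoint lattice $\Lambda^\circ$, via the Wexler--Raz/Janssen framework. Since $\cG(g,\Lambda)$ is a frame with $g\in\cS(\rd)$, the canonical dual $\gamma=S^{-1}g$ lies in $\cS(\rd)$ (Janssen's theorem, or spectral invariance of the frame operator). By the Wexler--Raz biorthogonality relations,
\begin{equation}
 \langle \gamma,\pi(\mu)g\rangle=\alpha^d\beta^d\,\delta_{\mu,0},\qquad \mu\in\Lambda^\circ .
\end{equation}
Hence the system $\{\pi(\mu)g\}_{\mu\in\Lambda^\circ}$ is a Riesz sequence with biorthogonal partner $\{(\alpha\beta)^{-d}\pi(\mu)\gamma\}_{\mu\in\Lambda^\circ}$. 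The phase factors coming from the commutation rule are unimodular and play no role in what follows.

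Now let $f=\sum_{\mu\in\Lambda^\circ}c_\mu\pi(\mu)g$ with $c\in\ell^{p,q}_m(\Lambda^\circ)$. This series converges in $M^{p,q}_m$ by Theorem~\ref{thm:gabor-operators-quasi} applied to the lattice $\Lambda^\circ$. Testing against $\pi(\nu)\gamma$, $\nu\in\Lambda^\circ$, and using $\pi(\nu)^*\pi(\mu)=c(\mu,\nu)\pi(\mu-\nu)$ with $|c(\mu,\nu)|=1$, biorthogonality gives
\begin{equation}
 |c_\nu|=(\alpha\beta)^{-d}\,|\langle f,\pi(\nu)\gamma\rangle|,\qquad \nu\in\Lambda^\circ .
\end{equation}
Interchanging the pairing with the series needs justification. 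Since $p,q<\infty$, finite sequences are dense in $\ell^{p,q}_m$, and $f\mapsto\langle f,\pi(\nu)\gamma\rangle$ is continuous on $M^{p,q}_m\hookrightarrow\cS'$ because $\gamma\in\cS$. So the identity holds for finite sums and passes to the limit.

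It then suffices to apply Theorem~\ref{thm:gabor-operators-quasi} to the analysis operator $C_\gamma$ on the lattice $\Lambda^\circ$. Since $\gamma\in\cS(\rd)$, this gives
\begin{equation}
 \|c\|_{\ell^{p,q}_m(\Lambda^\circ)}=(\alpha\beta)^{-d}\|C_\gamma f\|_{\ell^{p,q}_m(\Lambda^\circ)}\le (\alpha\beta)^{-d}C_{\gamma,p,q,m,v,\Lambda^\circ}\,\|f\|_{M^{p,q}_m},
\end{equation}
which is the claim with $A=(\alpha\beta)^d/C_{\gamma,p,q,m,v,\Lambda^\circ}$. The weight is evaluated at $\nu=(k/\beta,n/\alpha)$, matching the statement, because $C_\gamma$ samples the STFT at those points.

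The main obstacle is the first step: securing $\gamma\in\cS$ and the Wexler--Raz relations from the frame hypothesis alone. I would first invoke Janssen's result that $S^{-1}$ preserves $\cS(\rd)$, and I would derive the biorthogonality by applying the Poisson summation formula to the Janssen representation of $S$. In the quasi-Banach range the key point is that boundedness of $C_\gamma$ needs only a Schwartz window, and no convexity is used anywhere.
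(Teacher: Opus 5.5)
Your proof is correct and follows the standard route behind the cited source \cite[Theorem~3.8]{GalperinSamarah2004}; the paper itself states the result without proof. A Schwartz canonical dual $\gamma$ together with the Wexler--Raz relations $\langle\gamma,\pi(\mu)g\rangle=(\alpha\beta)^d\delta_{\mu,0}$ on $\Lambda^\circ$ identifies $c$ with $(\alpha\beta)^{-d}C_\gamma f$ on $\Lambda^\circ$, so the lower bound reduces to boundedness of the analysis operator with a Schwartz window, and your finite-sum density argument correctly covers general $c$ when $p,q<\infty$.
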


Stable reconstruction and mixed summability of the coefficients give the following characterization, see \cite[Theorem~8.3]{Rauhut2005}.

\begin{theorem}
\label{thm:gabor-characterization-quasi-Banach}
Let $0<p,q<\infty$, $m$ be a $v$-moderate weight, and $g\in\cS(\rd)\setminus\{0\}$. If $\cG(g,\Lambda)$ is a Gabor frame, then
\begin{equation}
 f\in M_m^{p,q}(\rd)
 \quad\quad\Longleftrightarrow\quad\quad
 \big(\scal{f}{\pi(\lambda)g}\big)_{\lambda\in\Lambda}\in \ell_m^{p,q}(\Lambda),
\end{equation}
with the following norm equivalence:
\begin{equation}
 \norm{f}_{M_m^{p,q}}\asympconst
 \norm{(\scal{f}{\pi(\lambda)g})_{\lambda\in\Lambda}}_{\ell_m^{p,q}}.
\end{equation}
\end{theorem}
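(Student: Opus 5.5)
Two estimates are needed: $\|C_gf\|_{\ell_m^{p,q}}\lesssim\|f\|_{M_m^{p,q}}$ and its reverse. The upper bound is exactly the boundedness of the analysis operator in Theorem~\ref{thm:gabor-operators-quasi}. It applies because $g\in\cS(\rd)$ and $m$ is $v$-moderate with polynomially bounded control. Under this bound, $f\in M_m^{p,q}$ forces $C_gf\in\ell_m^{p,q}(\Lambda)$. All the work is in the lower bound, which we reduce to the reconstruction identity of Theorem~\ref{thm:atom-decomposition-quasiBanach}. For that we need a dual window $\gamma\in\cS(\rd)$ with $S_{g,\gamma}=D_\gamma C_g=\mathrm{id}_{L^2}$.

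\textbf{Step 1 (dual window).} Take the canonical dual $\gamma=S^{-1}g$, where $S=S_{g,g}$ is the frame operator of $\cG(g,\Lambda)$. Then $D_\gamma C_g=\mathrm{id}_{L^2}$ by \eqref{eq:reconstruction-formulas}. What remains is to show $\gamma\in\cS(\rd)$. This is the main obstacle, since the frame hypothesis only gives invertibility of $S$ on $L^2$. I would use the Wiener-type lemma for Gabor frame operators (Gr\"ochenig--Leinert, or Janssen's result for Schwartz windows). With respect to the tight Gaussian frame, or directly in the lattice representation, the Gabor matrix of $S$ has entries $\langle S\pi(\mu)\varphi,\pi(\lambda)\varphi\rangle$ that decay faster than any polynomial off the diagonal. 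It therefore lies in a spectral Banach algebra of matrices with $\ell^1_{v_s}$ off-diagonal envelopes, for every $s$. By spectral invariance, $S^{-1}$ inherits the same envelopes. One can phrase this through Theorem~\ref{thm:almost-diagonalization}: $S^{-1}$ is a Weyl operator with symbol in $M^{\infty,1}_{1\otimes(v_s\circ j^{-1})}$ for all $s$. By the boundedness theorem following that result, $S^{-1}$ is bounded on every $M^1_{v_s}$. Hence $\gamma=S^{-1}g\in\bigcap_s M^1_{v_s}=\cS(\rd)$ by the characterization of $\cS$ in Section~\ref{sec:stft-definition}.

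\textbf{Step 2 (lower bound).} With $\gamma\in\cS$, Theorem~\ref{thm:atom-decomposition-quasiBanach} gives $f=D_\gamma C_gf$ for all $f\in M_m^{p,q}$. Boundedness of $D_\gamma$ from Theorem~\ref{thm:gabor-operators-quasi} then gives $\|f\|_{M_m^{p,q}}\le\|D_\gamma\|\,\|C_gf\|_{\ell_m^{p,q}}$.

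\textbf{Step 3 (sufficiency of the coefficient condition).} Suppose only $f\in\cS'$ with $C_gf\in\ell_m^{p,q}$. Set $h=D_\gamma C_gf$; this lies in $M_m^{p,q}$ by boundedness of $D_\gamma$, with the series converging unconditionally since $p,q<\infty$. It remains to check $h=f$. Because $m$ has polynomial bounds, $f\in M^{\infty}_{1/v_s}$ for some $s$ by the description of $\cS'$ in Section~\ref{sec:stft-definition}. On that Banach space the Gabor expansion of Theorem~\ref{thm:atom-decomposition-Banach} holds in the weak-$\ast$ sense, so $f=D_\gamma C_gf=h$. Therefore $f\in M_m^{p,q}$ and the norm equivalence follows.
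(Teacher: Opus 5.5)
Your proof is correct and follows the route the paper relies on. The paper gives no separate argument: it obtains the statement from \cite[Theorem~8.3]{Rauhut2005} as a consequence of the bounded analysis and synthesis maps (Theorem~\ref{thm:gabor-operators-quasi}) and the reconstruction identity (Theorem~\ref{thm:atom-decomposition-quasiBanach}), with the Schwartz regularity of the canonical dual (the ``Schwartz-window duality result'' invoked in Section~\ref{sec:symplectic}) as the key input, just as in your Steps~1--3.

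One correction in Step~1. Gaussian Gabor frames are never tight, and even for a tight frame the Gabor matrix of $S$ is invertible only on the range of the analysis operator, so spectral invariance cannot be applied verbatim. The matrix sketch would also need the weighted Wiener property, whereas Section~\ref{sec:gabor-operators} states only the unweighted one. It is cleaner to quote Janssen or Gr\"ochenig--Leinert (Wiener's lemma for twisted convolution, applied to the Janssen representation of $S$), which gives $\gamma\in\bigcap_s M^1_{v_s}=\cS(\rd)$ directly.
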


Notice that the window $g$ belongs to the Schwartz class, as we are dealing with the quasi-Banach extension.

\section{Weights, Gelfand-Shilov spaces and ultradistributions}
\label{sec:weights-ultra}
The theory of modulation spaces we have developed so far only deals with weights of polynomial growth. We continue to use the notation $v_s$ from \eqref{eq:polynomial-weight}, with the ambient dimension determined by context.

In this context, the pair $(\cS,\cS')$ is a very convenient framework, and it is sufficient for several kinds of applications, but it is too restrictive for the treatment of general weight functions. Yet the extension to non-polynomial weights is highly desirable in time-frequency analysis. For instance, in the theory of window design one often attempts to construct windows that decay exponentially in time and frequency, and such windows are in the modulation space $M^1_v$ with respect to an exponential weight $v$.

The change of weights affects both test functions and generalized functions. To see the first effect, fix a Gaussian window and consider
\begin{equation}
 v(z)=e^{a|z|^\beta},\qquad a>0,\quad 0<\beta\leq1.
\end{equation}
For every $s\geq0$, polynomial growth is dominated by this weight, with a constant depending on $s,a,\beta$. Consequently,
\begin{equation}\label{eq:inequality-modulation-superexponential}
 \|f\|_{M_{v_s}^\infty}
 =\sup_{z\in\rdd}|V_gf(z)|\langle z\rangle^s
 \lesssim_{s,a,\beta}\sup_{z\in\rdd}|V_gf(z)|e^{a|z|^\beta}
 =\|f\|_{M_v^\infty}.
\end{equation}
Thus a finite rapidly weighted STFT norm forces
\begin{equation}
 M_v^\infty\subseteq\bigcap_{s\geq0}M_{v_s}^\infty=\cS(\rd).
\end{equation}
Arbitrary Schwartz functions need not belong to the space being defined, so they can no longer serve as a universal dense test class.

The reciprocal weight illustrates the other effect. Every tempered distribution has a polynomially bounded STFT and hence finite $M_{1/v}^\infty$ norm. If the ambient class is kept equal to $\cS'$, this condition merely recovers all of $\cS'$; it cannot reveal generalized functions outside it. To obtain a complete theory accommodating both rapid decay and rapid growth, one replaces $(\cS,\cS')$ by a smaller test space and a larger dual space.

Compactly supported smooth test functions do not solve this problem, since the space of compactly supported smooth functions $\cD$ is not Fourier invariant and Fourier transformation is not defined on all of $\cD'$. Gelfand-Shilov spaces provide Fourier-compatible alternatives. This leads to the ultra-modulation framework of \cite{PilipovicTeofanov2002} and to the test-function constructions discussed after the coorbit approach in \cite{FeichtingerGrochenig1989}.

It is important to distinguish the growth regimes. The weight above is submultiplicative for $0<\beta\leq1$: it is subexponential for $\beta<1$ and exponential for $\beta=1$. For $\beta>1$, the superexponential weight $e^{a|z|^\beta}$ is not moderate with respect to a locally bounded submultiplicative control weight. Indeed, the ratio $v(z+w)/v(z)$ is unbounded for a suitable fixed $w$. Such weights can occur in Gelfand-Shilov characterizations, but they are not automatically covered by the moderate-weight definition below.

Under the standing local boundedness assumptions, submultiplicative weights on Euclidean space grow at most exponentially. Accordingly, allowing arbitrary moderate weights here means removing the polynomial-growth restriction, not allowing every superexponential weight. A fixed Fourier-invariant test class such as $\Sigma^1_1$ is small enough to handle all these moderate weights; its dual supplies the required ambient ultradistributions.

\subsection{Gelfand-Shilov spaces through time-frequency decay}

A Schwartz function and all its derivatives decay faster than every polynomial. Gelfand-Shilov spaces refine this information by quantifying the growth of the constants in the derivative and moment estimates. Their parameters control decay in both the spatial and Fourier variables.

\begin{definition} \label{gelfand-shilov}
    Let $r,s\geq0$ and $A,B>0$. The Gelfand-Shilov-type space $\cS^{s,A}_{r,B}(\rd)$ is defined as follows:
    \begin{equation}
        \cS^{s,A}_{r,B}(\rd):=\left\{f\in\cS(\rd) \ \middle | \ \sup_{x\in\rd,\,\alpha,\beta\in\bN^d}\frac{|x^\alpha\partial^\beta f(x)|}{A^{|\alpha|}B^{|\beta|}(\alpha!)^r(\beta!)^s}<\infty\right\}.
    \end{equation}
    Endow this space with the supremum norm appearing in the definition. The Beurling and Roumieu spaces are obtained, respectively, by taking the projective and inductive limits
    \begin{equation}
        \Sigma^s_r(\rd):=\projlim_{A>0,B>0}\cS^{s,A}_{r,B}(\rd), \qquad \cS^s_r(\rd):=\indlim_{A>0,B>0}\cS^{s,A}_{r,B}(\rd).
    \end{equation}
\end{definition}

It can be shown that the space $\cS^s_r(\rd)$ is nontrivial if and only if $r+s>1$, or $r+s=1$ and $r,s>0$. Hence, the smallest nontrivial space with $r=s$ is $\cS^{1/2}_{1/2}(\rd)$. Some examples of functions in $\cS^{1/2}_{1/2}(\rd)$ are $P(x)e^{-a|x|^2}$, where $P$ is a polynomial on $\rd$ and $a>0$. Let us notice the trivial inclusions
\begin{equation}
    \cS^{s_1}_{r_1}(\rd)\subseteq\cS^{s_2}_{r_2}(\rd), \quad \text{ for } s_1\leq s_2 \text{ and } r_1\leq r_2.
\end{equation}
Moreover, if $f\in\cS^s_r(\rd)$, then the same holds for $x^\delta\partial^\gamma f$, for any $\gamma,\delta$.

The next theorem shows the action of the Fourier transform on $\cS^s_r(\rd)$: it interchanges the indices $s$ and $r$.

\begin{theorem}
Let $f\in\cS(\rd)$. Then $f\in\cS^s_r(\rd)$ if and only if $\widehat{f}\in\cS^r_s(\rd)$.
\end{theorem}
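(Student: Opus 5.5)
By the symmetry $\widehat{\widehat f}=f(-\cdot)$ and the invariance of $\cS^s_r(\rd)$ under reflection $x\mapsto -x$, it suffices to prove one implication. We show that $f\in\cS^s_r(\rd)$ implies $\widehat f\in\cS^r_s(\rd)$. The reverse implication then follows by applying this one to $\widehat f$, whose Fourier transform is the reflection of $f$.

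The core is the exchange of moments and derivatives under $\cF$. With the normalization \eqref{intro.defFT},
\begin{equation}
 \xi^\alpha\partial^\beta\widehat f(\xi)=\frac{(-2\pi i)^{|\beta|}}{(2\pi i)^{|\alpha|}}\cF\big(\partial^\alpha(x^\beta f)\big)(\xi).
\end{equation}
Hence
\begin{equation}
 |\xi^\alpha\partial^\beta\widehat f(\xi)|\leq (2\pi)^{|\beta|-|\alpha|}\norm{\partial^\alpha(x^\beta f)}_{L^1}.
\end{equation}
Two steps remain. First, pass from the $L^\infty$ bounds in Definition~\ref{gelfand-shilov} to $L^1$ bounds. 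Second, control $\partial^\alpha(x^\beta f)$ through Leibniz' rule by terms of the form $x^{\beta-\gamma}\partial^{\alpha-\gamma}f$.

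For the first step we use
\begin{equation}
 \norm{h}_{L^1}\leq C_d\sup_x(1+|x|)^{d+1}|h(x)|.
\end{equation}
Absorbing $(1+|x|)^{d+1}$ into finitely many extra monomials shifts $\alpha$ by at most $d+1$. This costs a factor $C^{|\beta|}$ times a constant, since $((\beta+e)!)^r\leq C^{|\beta|}(\beta!)^r$ for $|e|\leq d+1$. For the second step, Leibniz gives
\begin{equation}
 \partial^\alpha(x^\beta f)=\sum_{\gamma\leq\alpha,\beta}\binom{\alpha}{\gamma}\frac{\beta!}{(\beta-\gamma)!}\,x^{\beta-\gamma}\partial^{\alpha-\gamma}f.
\end{equation}
Each term is bounded by
\begin{equation}
 \binom{\alpha}{\gamma}\frac{\beta!}{(\beta-\gamma)!}A^{|\beta|}B^{|\alpha|}((\beta-\gamma)!)^r((\alpha-\gamma)!)^s.
\end{equation}

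The main obstacle is showing that this sum is at most $C^{|\alpha|+|\beta|}(\beta!)^r(\alpha!)^s$ with a geometric constant, using only $r+s\geq1$. I would use
\begin{equation}
 \frac{\beta!}{(\beta-\gamma)!}\leq 2^{|\beta|}\gamma!,\qquad \gamma!\leq (\gamma!)^{r}(\gamma!)^{s},
\end{equation}
where the second inequality holds since $r+s\geq1$ and $\gamma!\geq1$. Then
\begin{equation}
 (\gamma!)^r((\beta-\gamma)!)^r\leq(\beta!)^r,\qquad (\gamma!)^s((\alpha-\gamma)!)^s\leq(\alpha!)^s.
\end{equation}
The remaining binomial factors and the number of terms are bounded by $2^{|\alpha|}$ and $2^{|\alpha|+|\beta|}$. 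This yields
\begin{equation}
 \sup_\xi\frac{|\xi^\alpha\partial^\beta\widehat f(\xi)|}{A'^{|\alpha|}B'^{|\beta|}(\alpha!)^s(\beta!)^r}<\infty,
\end{equation}
with $A'$ comparable to $B$ and $B'$ comparable to $A$. The roles of the two parameter pairs are thus swapped: moments in $\xi$ now carry the Gevrey index $s$ and derivatives in $\xi$ carry $r$. This gives $\cS^{s,A}_{r,B}\to\cS^{r,cA}_{s,cB}$ continuously, with $c$ fixed, which is exactly the Roumieu statement $\widehat f\in\cS^r_s(\rd)$. The same constants show that the inductive limits correspond, and also the projective limits, so the Beurling analogue $\cF\Sigma^s_r=\Sigma^r_s$ follows verbatim.
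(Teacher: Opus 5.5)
The paper does not prove this statement; it quotes it as the classical Gelfand--Shilov result. Your argument is the standard direct proof, and for $r+s\geq1$ it is correct. The identity $\xi^\alpha\partial^\beta\widehat f=(-2\pi i)^{|\beta|}(2\pi i)^{-|\alpha|}\cF(\partial^\alpha(x^\beta f))$, the weighted $L^\infty\to L^1$ bound with its harmless moment shift, Leibniz' rule, and the inequalities $\beta!/(\beta-\gamma)!\leq 2^{|\beta|}\gamma!$, $\gamma!\leq(\gamma!)^{r+s}$ and $\gamma!\,(\beta-\gamma)!\leq\beta!$ give exactly the swapped Gevrey bounds. The reduction to one implication via $\widehat{\widehat f}=f(-\cdot)$ is legitimate because your hypothesis is symmetric in $r$ and $s$. (In the paper's labelling the superscript parameter multiplies the moments, so the target is $\cS^{r,cB}_{s,cA}$ rather than $\cS^{r,cA}_{s,cB}$, as your own remark on $A',B'$ indicates. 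This does not matter for the inductive limit.)

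Within the chapter, the shortest route would be Theorem~\ref{thm:Gelfand-Shilov-characterization}(ii), whose two conditions are visibly exchanged by $f\mapsto\widehat f$, $r\leftrightarrow s$. That route buys little, though. The nontrivial direction (ii)$\Rightarrow$(i), which recovers mixed bounds on $x^\alpha\partial^\beta f$ from pure ones, is more delicate than the statement itself, and it is formulated only for $r,s>0$. Your computation is elementary, self-contained, and also covers $r=0$ or $s=0$.

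Two points need tidying.

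First, the statement places no restriction on $r,s$, and for $r+s<1$ your combinatorics fail. Close this case by noting that then $\cS^s_r=\cS^r_s=\{0\}$ by the nontriviality criterion recalled in the paper, so the equivalence is trivial.

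Second, when you bound each Leibniz term you replace $A^{|\beta-\gamma|}B^{|\alpha-\gamma|}$ by $A^{|\beta|}B^{|\alpha|}$, which tacitly assumes $A,B\geq1$. This is harmless for the Roumieu space, since you may enlarge $A,B$. It does mean your closing claim about $\Sigma^s_r$ does not follow verbatim. For arbitrarily small $A,B$ the extra factor $(AB)^{-|\gamma|}$ must be absorbed by the leftover $(\gamma!)^{r+s-1}$, and this works only when $r+s>1$. For $r+s=1$ you need instead that $\Sigma^s_r=\{0\}$, which follows from a Hardy-type uncertainty principle. Since the Beurling case is not part of the statement, this affects only your final remark.
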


Consequently, for $r=s$, the spaces $\cS^s_s(\rd)$ are invariant under the action of the Fourier transform.

The next theorem shows a characterization of functions in the Gelfand-Shilov spaces in terms of the $L^\infty$ norm of $f$ and of its Fourier transform.

\begin{theorem}
\label{thm:Gelfand-Shilov-characterization}
Suppose $s,r>0$ and $s+r\geq1$. For $f\in\cS(\rd)$, the following conditions are equivalent.
\begin{enumerate}[(i)]
    \item $f\in\cS^s_r(\rd)$.
    \item There exist constants $A,B>0$ such that
    \begin{equation}
        \|x^\alpha f\|_{L^\infty}\lesssim A^{|\alpha|}(\alpha!)^r \quad \text{ and } \quad \|\xi^\beta\widehat{f}\|_{L^\infty}\lesssim B^{|\beta|}(\beta!)^s, \ \quad \alpha,\beta\in\bN^d.
    \end{equation}
    \item There exist constants $A,B>0$ such that
    \begin{equation}
        \|x^\alpha f\|_{L^\infty}\lesssim A^{|\alpha|}(\alpha!)^r \quad \text{ and } \quad \|\partial^\beta f\|_{L^\infty}\lesssim B^{|\beta|}(\beta!)^s, \ \quad \alpha,\beta\in\bN^d.
    \end{equation}
    \item There exist constants $h,k>0$ such that
    \begin{equation}
        \|fe^{h|x|^{1/r}}\|_{L^\infty}<\infty \quad \text{ and } \quad \|\widehat{f}e^{k|\xi|^{1/s}}\|_{L^\infty}<\infty.
    \end{equation}
\end{enumerate}
\end{theorem}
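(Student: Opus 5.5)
The plan is to prove the cycle (i)$\Rightarrow$(iii)$\Rightarrow$(ii)$\Rightarrow$(iv)$\Rightarrow$(i). Each implication trades factorial bounds on moments or derivatives for exponential decay, or vice versa.

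(i)$\Rightarrow$(iii) is immediate from Definition~\ref{gelfand-shilov}. Taking $\beta=0$ or $\alpha=0$ in the defining supremum gives both estimates with the same constants $A,B$. For (iii)$\Rightarrow$(ii) the moment bound is already present, so only the bound on $\xi^\beta\widehat f$ is needed. Since $(2\pi i\xi)^\beta\widehat f=\widehat{\partial^\beta f}$, we have $\|\xi^\beta\widehat f\|_{L^\infty}\lesssim\|\partial^\beta f\|_{L^1}$. We control the $L^1$ norm by $\|(1+|x|^2)^{d}\partial^\beta f\|_{L^\infty}$. This needs mixed bounds $|x^\gamma\partial^\beta f|$ with $|\gamma|\le 2d$, which we obtain from the two pure bounds by a Landau--Kolmogorov/interpolation inequality on balls. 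Alternatively, we estimate $\|x^\gamma\partial^\beta f\|_{L^2}^2$ by integrating by parts, which moves derivatives onto moments, and then use Cauchy--Schwarz. Either way the constants in $\beta$ stay of the form $C^{|\beta|}(\beta!)^s$, since $|\gamma|\le 2d$ is fixed.

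For (ii)$\Rightarrow$(iv) we use the standard trick of summing an exponential series. The bound $|x|^{N}|f(x)|\lesssim C^N (N!)^r$ (obtained from the multi-index bounds via $|x|^N\le d^{N/2}\max_j|x_j|^N$) gives
\[
e^{h|x|^{1/r}}|f(x)|\le \Big(\sum_{N}\frac{(h|x|^{1/r})^{N}}{N!}\Big)|f(x)|.
\]
We raise to the power $1/r$ by writing $(\sum_N a_N^{1/r})\le \dots$, or more simply we bound $\sup_N \frac{(\epsilon|x|)^{N/r}}{N!}|f|^{1/r}$ and multiply by $2^{-N}$. The series converges for $h$ small relative to $C^{-1/r}$. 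The same argument applied to $\widehat f$ gives the $k$-bound.

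(iv)$\Rightarrow$(i) is the main obstacle. Decay of $f$ gives the moments $x^\alpha f$ with $(\alpha!)^r$ growth, via $\sup_t t^{N}e^{-ht^{1/r}}\asymp C^N N^{rN}$. Decay of $\widehat f$ gives $\|\partial^\beta f\|_\infty\le\|(2\pi\xi)^\beta\widehat f\|_{L^1}\lesssim B^{|\beta|}(\beta!)^s$. The joint bound on $x^\alpha\partial^\beta f$ does not follow termwise. Here I would first try the Gelfand--Shilov route: show that $f$ extends holomorphically to a strip or sector (when $s\le1$) with $|f(x+iy)|\lesssim e^{-h'|x|^{1/r}+k'|y|^{1/(1-s)}}$. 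I would then apply Cauchy estimates on polydiscs centred at $x$ of radius comparable to $(1+|x|)^{\dots}$ to get $|\partial^\beta f(x)|\le B^{|\beta|}(\beta!)^s e^{-h''|x|^{1/r}}$, which multiplied by $|x^\alpha|$ yields (i). The hypothesis $s+r\ge1$ is used precisely to make the radii compatible. For $s>1$, where analyticity fails, I would instead use a Phragmén--Lindelöf-free interpolation. One estimates $\|x^\alpha\partial^\beta f\|_{L^2}^2=\langle \partial^\beta f, x^{2\alpha}\partial^\beta f\rangle$ by repeated integration by parts into pure quantities $\|x^{\gamma}f\|$ and $\|\xi^{\delta}\widehat f\|$. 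Factorial bookkeeping with $\binom{\cdot}{\cdot}\le 2^{|\cdot|}$ then gives the product bound. Finally, Sobolev embedding converts $L^2$ to $L^\infty$ at the cost of a geometric factor.
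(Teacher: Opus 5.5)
The paper quotes this characterization without proof, so there is no in-paper argument to compare with; judged on its own, your outline is sound but one branch is not yet a proof. The elementary links are fine. For (ii)$\Rightarrow$(iv), apply the exponential series to $|f(x)|^{1/r}$, not to $|f(x)|$: from $|x|^N|f(x)|\lesssim A^N(N!)^r$ each term satisfies $\frac{(h|x|^{1/r})^N}{N!}|f(x)|^{1/r}\lesssim (hA^{1/r})^N$. The weak point is the analytic branch you designate for $s\le1$. From (iv), Fourier inversion bounds $f(x+iy)$ only by a function of $y$ (e.g. $e^{k'|y|^{1/(1-s)}}$ for $s<1$), with no decay in $x$ off the real axis. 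Decay of $f(\cdot+iy)$ would amount to smoothness of $\widehat f(\xi)e^{-2\pi y\cdot\xi}$, which (iv) does not control. The joint bound $e^{-h'|x|^{1/r}+k'|y|^{1/(1-s)}}$ therefore needs a genuine Phragm\'en--Lindel\"of argument. One way is to work on complex lines, in strips of width comparable to $|y|$, using $s+r\ge1$ to absorb the region $|x|\lesssim|y|$. In the subsequent Cauchy estimates, the polydisc radius must also be tied to $|\beta|$ (of size $|\beta|^{1-s}$ for $s<1$), not to $|x|$. As written, that step is asserted rather than proved.

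You do not need that branch at all. The $L^2$ integration-by-parts argument you reserve for $s>1$ works for every $s,r>0$ with $s+r\ge1$, and it is the standard proof of this result (Chung--Chung--Kim). Moving $\partial^\beta$ across and applying Cauchy--Schwarz gives
\[
\|x^\alpha\partial^\beta f\|_{L^2}^2\le\sum_{\gamma\le\beta,\,\gamma\le2\alpha}\binom{\beta}{\gamma}\binom{2\alpha}{\gamma}\,\gamma!\,\|\partial^{2\beta-\gamma}f\|_{L^2}\,\|x^{2\alpha-\gamma}f\|_{L^2}.
\]
The factor $\gamma!$ comes from $\partial^\gamma x^{2\alpha}=\frac{(2\alpha)!}{(2\alpha-\gamma)!}x^{2\alpha-\gamma}$, and binomial bounds alone do not absorb it. Split $\gamma!=(\gamma!)^{\theta}(\gamma!)^{1-\theta}$ with $\theta\le s$ and $1-\theta\le r$, which is possible precisely because $s+r\ge1$. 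Then $\gamma!\,(2\beta-\gamma)!\le(2\beta)!$ and $\gamma!\,(2\alpha-\gamma)!\le(2\alpha)!$ close the estimate. This is exactly where the hypothesis enters in the real-variable proof, so run this argument for all $s$ and drop the analytic branch.

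Two smaller points. Under (iii) the derivative bounds carry no decay, so obtain $L^2$ control from $\|\partial^\beta f\|_{L^2}^2\le\|\partial^{2\beta}f\|_{L^\infty}\|f\|_{L^1}$. The cycle is also shorter if you prove the lemma once as (iii)$\Rightarrow$(i). Then (i)$\Rightarrow$(ii) is immediate from $\|\xi^\beta\widehat f\|_{L^\infty}\lesssim\|(1+|x|^2)^{d}\partial^\beta f\|_{L^\infty}$ together with the joint bounds for $|\alpha|\le 2d$.
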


A suitable window class for weighted modulation spaces is the Gelfand-Shilov-type space $\Sigma^1_1(\rd)$, consisting of functions $f\in\cS(\rd)$ such that, for every constant $A,B>0$,
\begin{equation}
|x^\alpha\partial^\beta f(x)|\lesssim A^{|\alpha|}B^{|\beta|}\alpha!\beta!, \quad \alpha,\beta\in\bN^d.
\end{equation}
We also have the inclusions $\cS^s_s(\rd)\subseteq\Sigma^1_1(\rd)\subseteq\cS^1_1(\rd)$ for every $s<1$. The characterization given in Theorem \ref{thm:Gelfand-Shilov-characterization} can be adapted to $\Sigma^1_1(\rd)$ by substituting \enquote{there exists} with \enquote{for every} and taking $r=s=1$.

The strong dual spaces of $\cS^s_r(\rd)$ and $\Sigma^1_1(\rd)$ are called spaces of tempered ultradistributions and denoted by $(\cS^s_r)'(\rd)$ and $(\Sigma^1_1)'(\rd)$, respectively. Notice that they contain the space of tempered distributions $\cS'(\rd)$.

The spaces $\cS^s_r(\rd)$ are important examples of nuclear spaces, and this property yields a kernel theorem for Gelfand-Shilov spaces.

\begin{theorem}
\label{thm:kernel-theorem-gelfand-shilov}
Let $r,s>0$ with $r+s\geq1$. There is a kernel correspondence between the continuous linear maps $T:\cS^s_r(\rd)\to(\cS^s_r)'(\rd)$ and $(\cS^s_r)'(\rdd)$, which associates to every $T$ a kernel $K_T\in(\cS^s_r)'(\rdd)$ such that
\begin{equation}
\scal{Tu}{v}=\scal{K_T}{v\otimes\overline{u}}, \quad \text{ for all } u,v\in\cS^s_r(\rd).
\end{equation}
Here $K_T(x,y)$ is ordered with the output variable first. The formula uses the duality convention fixed in the introduction, linear in the distribution and antilinear in the test function.
\end{theorem}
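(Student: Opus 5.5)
The plan is to obtain the kernel theorem from the nuclearity of $\cS^s_r$, following the Schwartz-kernel-theorem scheme. Throughout, I use the completed projective tensor product $\widehat\otimes$.

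\emph{Step 1: the tensor product identification.} First I will show that the map $(u,v)\mapsto v\otimes\bar u$ induces a topological isomorphism
\begin{equation}
\cS^s_r(\rd)\,\widehat\otimes\,\cS^s_r(\rd)\simeq\cS^s_r(\rdd).
\end{equation}
Continuity of the bilinear map is immediate from Definition~\ref{gelfand-shilov}. The weights factor as $x^\alpha y^{\alpha'}$, and the constants satisfy $(\alpha!)^r(\alpha'!)^r\le((\alpha,\alpha')!)^r$, with a similar bound in $\beta$.

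For density of the algebraic tensor product, I will expand $F\in\cS^s_r(\rdd)$ in Hermite functions, $F=\sum c_{k,l}h_k\otimes h_l$. The Gelfand-Shilov regularity of $F$ is characterized by the coefficient decay $|c_{k,l}|\lesssim e^{-\epsilon|(k,l)|^{1/(2\max(r,s))}}$ in the symmetric case. In the general case $r\neq s$, I would instead use Gabor or time-frequency decay, via Theorem~\ref{thm:Gelfand-Shilov-characterization}(iv). This coefficient description shows that the finite partial sums converge in $\cS^s_r(\rdd)$. It also shows that every continuous seminorm is dominated by a projective cross seminorm, and this is where nuclearity enters.

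\emph{Step 2: from operators to kernels and back.} Given a continuous $T:\cS^s_r(\rd)\to(\cS^s_r)'(\rd)$, the sesquilinear form $(v,u)\mapsto\langle Tu,v\rangle$ is separately continuous. On Fr\'echet spaces, or on their inductive-limit (DFS) counterparts in the Roumieu case, separate continuity gives joint continuity. By the universal property of $\widehat\otimes$ together with Step 1, the form extends uniquely to a continuous linear functional $K_T\in(\cS^s_r)'(\rdd)$ satisfying $\langle Tu,v\rangle=\langle K_T,v\otimes\bar u\rangle$. Uniqueness of $K_T$ follows from the density of tensors.

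Conversely, any $K\in(\cS^s_r)'(\rdd)$ defines an operator $T$ by this same formula. Continuity of $T$ into the strong dual follows from the continuity of $u\mapsto v\otimes\bar u$, uniformly for $v$ in bounded sets.

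\emph{Main obstacle.} The main obstacle is the topological identification in Step 1, in particular that the projective topology coincides with the Gelfand-Shilov topology. Closely tied to this is the joint continuity statement for the inductive limit $\cS^s_r$, which is not Fréchet. My first attempt would be to transfer everything to sequence spaces through the Hermite (or Gabor frame) coefficient isomorphism. There, $\cS^s_r$ becomes a Köthe echelon/coechelon space with nuclear weights. For such sequence spaces the tensor-product identity reduces to the elementary fact that $\ell^1$-type Köthe spaces tensorize with product weights. After this reduction, both directions of the correspondence follow formally.
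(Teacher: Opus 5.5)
Your skeleton is the one the paper has in mind. The paper gives no argument beyond the remark that $\cS^s_r$ is nuclear and that this yields the kernel theorem, which is exactly your Step~2 once $\cS^s_r(\rd)\widehat\otimes\cS^s_r(\rd)\simeq\cS^s_r(\rdd)$ is available. Your handling of joint continuity (Fr\'echet, resp.\ barrelled (DF)) and of the converse map into the strong dual is fine. The trouble is in your own proof of Step~1.

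First, the seminorm comparison you single out is the trivial one. That every continuous seminorm of $\cS^s_r(\rdd)$ is dominated on the algebraic tensor product by a projective seminorm is just the continuity of $(v,u)\mapsto v\otimes\bar u$, and it involves no nuclearity. You must also show the converse: the projective topology is not finer than the one induced by $\cS^s_r(\rdd)$. The standard way is to check that the induced topology is finer than the injective one. Concretely, $\{e'\otimes f'\}$ must be equicontinuous on $\cS^s_r(\rdd)$ whenever $e',f'$ run through equicontinuous subsets of $(\cS^s_r)'(\rd)$; this uses $(\alpha,\alpha')! = \alpha!\,\alpha'!$ and differentiation under the pairing in a larger step. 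Then $\varepsilon=\pi$ closes the gap, and that is where nuclearity enters.

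Second, the sequence-space model you propose for the main obstacle does not exist in general when $r\neq s$. Hermite coefficients cannot characterize $\cS^s_r$ for $r\neq s$: their moduli are unchanged under $\cF$, whereas $\cF\cS^s_r=\cS^r_s\neq\cS^s_r$. Moreover, for $\min(r,s)<1/2$ the Hermite functions are not even in the space.

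A Gabor frame does not give an isomorphism onto a K\"othe space either, since the analysis operator of a redundant frame is not onto. At best $\cS^s_r$ is a complemented subspace, and even that needs a dual pair $g,\gamma$ with both windows in $\cS^s_r$. Such pairs are easy when $\max(r,s)>1$, using compactly supported or band-limited Gevrey windows with painless duals. For $\max(r,s)\le 1$, $r\neq s$, however, the weights $e^{\epsilon(|x|^{1/r}+|\xi|^{1/s})}$ are superexponential. No Wiener-type lemma is available to keep the canonical dual in the class, and your proposal offers no substitute. In the Roumieu case you would also need $\widehat\otimes_\pi$ to commute with the inductive limit, which is a (DF)-space fact beyond the echelon identity.

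The simplest repair avoids sequence spaces altogether. Take nuclearity of $\cS^s_r$ as known, as the paper does, and prove density of $\cS^s_r(\rd)\otimes\cS^s_r(\rd)$ with the continuous STFT inversion formula for the window $\Phi=\phi\otimes\phi$, $0\neq\phi\in\cS^s_r(\rd)$:
\begin{itemize}
\item after reordering the phase-space variables, each $\pi(w)\Phi$ is an elementary tensor;
\item the decay $|V_\Phi F(y,\eta)|\lesssim e^{-\epsilon(|y|^{1/r}+|\eta|^{1/s})}$ beats the growth of $\pi(y,\eta)\Phi$ in a sufficiently large step $\cS^{s,A}_{r,B}(\rdd)$;
\item hence $F=\|\Phi\|_2^{-2}\int V_\Phi F(w)\,\pi(w)\Phi\,dw$ is a limit of finite sums of tensors in that Banach step.
\end{itemize}
Combined with the sandwich $\varepsilon\le$ induced topology $\le\pi$, this yields Step~1 for all $r,s>0$ with $r+s\ge 1$.
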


The following characterization expresses Gelfand-Shilov regularity directly as decay in phase space.

\begin{theorem}[Gelfand-Shilov characterization]
Let $s\geq1/2$ and let $g\in\cS^s_s(\rd)\setminus\{0\}$ be a Gelfand-Shilov window.
\begin{enumerate}[(i)]
    \item If $f\in\cS^s_s(\rd)$, then $V_gf\in\cS^s_s(\rdd)$.
    \item If $f\in\cS(\rd)$, then $f\in\cS^s_s(\rd)$ if and only if there exists $\varepsilon>0$ such that $|V_gf(z)|\lesssim e^{-\varepsilon|z|^{1/s}}$ for all $z\in\rdd$.
\end{enumerate}
\end{theorem}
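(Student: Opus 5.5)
Part (i) is handled through the metaplectic viewpoint recalled in Section~\ref{sec:stft-definition}, namely $V_gf=\cF_2\mathfrak T_{R_{st}}(f\otimes\bar g)$. If $f,g\in\cS^s_s(\rd)$, then $f\otimes\bar g\in\cS^s_s(\rdd)$. This follows directly from Definition~\ref{gelfand-shilov}, because $x^\alpha\partial^\beta(f\otimes\bar g)$ factorizes and $(\alpha_1!)(\alpha_2!)\le(\alpha_1+\alpha_2)!$ for multi-indices. Next, $\cS^s_s$ is invariant under linear changes of variables: apply the characterization (iv) of Theorem~\ref{thm:Gelfand-Shilov-characterization}, noting that $|Ez|\asymp|z|$ and $\widehat{F\circ E}=|\det E|^{-1}\widehat F\circ E^{-\top}$. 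Finally, $\cS^s_s(\rdd)$ is invariant under the partial Fourier transform $\cF_2$. For this we use characterization (ii): the factor $x^\alpha$ commutes with $\cF_2$ in the first variable, while $y^\beta$ becomes a derivative in the second variable, and the full Fourier transform of $\cF_2F$ is $\cF F$ up to a reflection. In practice, the simplest route is to use (ii)/(iii) with mixed monomials and derivatives, since these are exchanged by $\cF_2$ with the same factorial bounds. Putting the three steps together gives $V_gf\in\cS^s_s(\rdd)$.

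For part (ii), the forward direction follows from (i). Since $V_gf\in\cS^s_s(\rdd)$, characterization (iv) on $\rdd$ yields $h>0$ with $|V_gf(z)|\lesssim e^{-h|z|^{1/s}}$.

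For the converse, suppose $f\in\cS(\rd)$ satisfies $|V_gf(z)|\lesssim e^{-\varepsilon|z|^{1/s}}$. We check the two conditions of Theorem~\ref{thm:Gelfand-Shilov-characterization}(iv).

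For the decay of $f$, we use the inversion formula of Theorem~\ref{inversionSTFT} with $\gamma=g$:
\[
|f(t)|\le \|g\|_2^{-2}\int_{\rdd}|V_gf(x,\xi)|\,|g(t-x)|\,\ud x\ud\xi .
\]
We have $|g(t-x)|\lesssim e^{-h|t-x|^{1/s}}$. Since $s\ge1/2$, the exponent satisfies $1/s\le2$, so there is the quasi-subadditivity estimate $|t|^{1/s}\le C(|x|^{1/s}+|t-x|^{1/s})$. This is the step needing care: for $1/s\le1$ it is plain subadditivity, while for $1<1/s\le2$ it holds with $C=2^{1/s-1}$. Consequently $e^{-\varepsilon|x|^{1/s}}e^{-h|t-x|^{1/s}}\le e^{-c|t|^{1/s}}e^{-\frac{\varepsilon}{2}|x|^{1/s}}$ for a suitable $c>0$. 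To make this work we split each exponential in half, using one half to produce the factor $e^{-c|t|^{1/s}}$ and keeping the other half to ensure integrability in $x$. The remaining $e^{-\frac{\varepsilon}{2}|\xi|^{1/s}}$, coming from $|z|\ge|\xi|$, makes the $\xi$-integral converge. This gives $|f(t)|\lesssim e^{-c|t|^{1/s}}$.

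For the decay of $\widehat f$, we apply the fundamental identity \eqref{intro.FITA}, $|V_{\widehat g}\widehat f(x,\xi)|=|V_gf(-\xi,x)|$. It shows that $\widehat f$ satisfies the same hypothesis with window $\widehat g\in\cS^s_s$, so the same argument gives $|\widehat f(\omega)|\lesssim e^{-c|\omega|^{1/s}}$. Theorem~\ref{thm:Gelfand-Shilov-characterization}(iv) with $r=s$ then yields $f\in\cS^s_s(\rd)$.

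I expect the main obstacle to be the first part. There one must make sure that $\cF_2$ and the linear change of variables really preserve $\cS^s_s(\rdd)$ with controlled constants. Factorial bookkeeping through the partial Fourier transform is more delicate than for the full transform, and it is best handled via the exponential-decay characterization (iv) applied jointly in both variables.
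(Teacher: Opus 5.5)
The paper states this theorem without proof, so I judge your argument on its own. Your converse in (ii) is correct and is the standard argument: the inversion formula, transfer to $\widehat f$ via \eqref{intro.FITA}, then (iv) of Theorem~\ref{thm:Gelfand-Shilov-characterization}.

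\textbf{The gap.} It sits in (i), exactly at the step you flag. Since you derive the forward direction of (ii) from (i), it propagates there too. Your argument for the invariance of $\cS^s_s(\rdd)$ under $\cF_2$ rests on a false identity. Because $\cF=\cF_1\cF_2$ and $\cF_2^2$ is a reflection in the second variable, one has $\cF(\cF_2F)(\omega,\eta)=c\,(\cF_1F)(\omega,-\eta)$ with $|c|=1$. This is not $\cF F$ up to a reflection. So checking (ii) or (iv) for $\cF_2F$ on $\rdd$ requires decay of the other partial transform $\cF_1F$, which is the same problem in the other variable. Condition (iv) only involves $F$ and $\widehat F$, so invoking it ``jointly'' does not close the argument.

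The route through the mixed seminorms of Definition~\ref{gelfand-shilov} does work, but the factorial bounds do not simply carry over unchanged:
\begin{itemize}
\item One has $x^{\alpha_1}\xi^{\alpha_2}\partial_x^{\beta_1}\partial_\xi^{\beta_2}\cF_2F=c'\,\cF_2\bigl[x^{\alpha_1}\partial_x^{\beta_1}\partial_y^{\alpha_2}(y^{\beta_2}F)\bigr]$ with $|c'|=(2\pi)^{|\beta_2|-|\alpha_2|}$.
\item The supremum in $\xi$ must be controlled by an $L^1$ norm in $y$, which costs an extra factor $\langle y\rangle^{d+1}$.
\item The Leibniz rule produces terms $\binom{\alpha_2}{\gamma}\binom{\beta_2}{\gamma}\gamma!\,x^{\alpha_1}y^{\beta_2-\gamma}\partial_x^{\beta_1}\partial_y^{\alpha_2-\gamma}F$. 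The extra $\gamma!$ is absorbed into $(\alpha_2!)^s(\beta_2!)^s$ only because $\gamma!\le(\gamma!)^{2s}$ for $s\geq1/2$.
\end{itemize}
That estimate is the real content of the step, and your proposal does not supply it.

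\textbf{A shorter repair.} You can bypass $\cF_2$-invariance entirely, using only the tools of your converse. First, $|V_gf(x,\xi)|\le(|f|*|g^*|)(x)$ with $g^*(t)=\overline{g(-t)}$. Second, by \eqref{intro.FITA}, $|V_gf(x,\xi)|=|V_{\widehat g}\widehat f(\xi,-x)|\le(|\widehat f|*|(\widehat g)^*|)(\xi)$. Your splitting estimate bounds these two convolutions by $e^{-c|x|^{1/s}}$ and $e^{-c|\xi|^{1/s}}$ respectively. Taking the minimum of the two bounds gives $|V_gf(z)|\lesssim e^{-c'|z|^{1/s}}$. This already proves the forward direction of (ii) without (i). Moreover, a direct computation gives
\[
\widehat{V_gf}(\omega,\eta)=e^{2\pi i\omega\cdot\eta}f(-\eta)\overline{\widehat g(\omega)},\qquad \omega,\eta\in\rd,
\]
so $|\widehat{V_gf}(\omega,\eta)|\lesssim e^{-c(|\omega|^{1/s}+|\eta|^{1/s})}$. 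Then (iv) on $\rdd$ yields $V_gf\in\cS^s_s(\rdd)$, which is (i). This is the joint use of (iv) your last paragraph hints at, applied to $V_gf$ itself rather than to an arbitrary $\cF_2F$.

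\textbf{Two minor points in your converse.} The inequality $|t|^{1/s}\le2^{1/s-1}(|x|^{1/s}+|t-x|^{1/s})$ holds for every $s\le1$ by convexity, so $s\geq1/2$ plays no role there. Also, split the hypothesis first as $e^{-\varepsilon|z|^{1/s}}\le e^{-\frac{\varepsilon}{2}|x|^{1/s}}e^{-\frac{\varepsilon}{2}|\xi|^{1/s}}$, rather than using $|z|\ge|x|$ and $|z|\ge|\xi|$ each with the full $\varepsilon$.
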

For the Beurling space $\Sigma^s_s$, with $s>1/2$ and $0\neq g\in\Sigma^s_s$, the corresponding estimate must hold for every $\varepsilon>0$, with a constant depending on $\varepsilon$. The quantifier is part of the distinction between the two scales.

\subsection{Modulation spaces as projective and inductive limits}

We are now ready to extend modulation spaces to the setting of ultradistributions as follows. We now allow exponential submultiplicative control weights, as well as the polynomial and subexponential ones considered earlier.

\begin{definition}
Let $1\leq p,q\leq\infty$, let $m$ be a $v$-moderate weight on $\rdd$, and fix $g\in\Sigma^1_1(\rd)\setminus\{0\}$. The ultra-modulation space $M_m^{p,q}(\rd)$ consists of all tempered ultradistributions $f\in(\Sigma^1_1)'(\rd)$ such that
\begin{equation}
\label{eq:ultra-modulation-space-stft}
 \norm{f}_{M_m^{p,q}}:=\norm{V_gf}_{L_m^{p,q}}=\left(\int_{\rd}\left(\int_{\rd}|V_gf(x,\xi)|^pm(x,\xi)^pdx\right)^{q/p}d\xi\right)^{1/q}<\infty,
\end{equation}
with the usual endpoint modifications and the same notation conventions as above.
\end{definition}

Notice that this definition is formally the same as Definition~\ref{def:modulation-space-stft}, but with the window in $\Sigma^1_1(\rd)$ and the signal in $(\Sigma^1_1)'(\rd)$. For polynomial weights this recovers the previous spaces; for faster moderate weights it supplies the appropriate ambient dual.

We also observe that the above integral is convergent for $f,g\in\Sigma^1_1(\rd)$, and thus $\Sigma^1_1(\rd)\subseteq M_m^{p,q}(\rd)$, that is, the modulation space contains the space of windows, as happens in the case of weights of polynomial growth.

The basic properties of window independence and completeness hold in this more general setting.

\begin{theorem}[Window independence]
\label{thm:window-independence-ultra-modulation}
Let $1\leq p,q\leq\infty$ and let $m$ be a $v$-moderate weight. If $g_1,g_2\in\Sigma^1_1(\rd)\setminus\{0\}$, then
\begin{equation}
 \norm{V_{g_1}f}_{L_m^{p,q}}\asympconst \norm{V_{g_2}f}_{L_m^{p,q}},\qquad f\in(\Sigma^1_1)'(\rd).
\end{equation}
\end{theorem}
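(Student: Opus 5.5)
We follow the same route as in the polynomial case, namely the pointwise convolution inequality for the STFT recalled after Theorem~\ref{thm:window-independence-stft}, now with all windows in $\Sigma^1_1(\rd)$ and the signal in $(\Sigma^1_1)'(\rd)$.

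\emph{Step 1: the pointwise inequality.} Fix $\gamma\in\Sigma^1_1(\rd)$ with $\langle g_1,\gamma\rangle\neq0$; the choice $\gamma=g_1$ works. We first check that the STFT inversion formula (Theorem~\ref{inversionSTFT}) holds in $(\Sigma^1_1)'$. For $f\in(\Sigma^1_1)'$ and $g\in\Sigma^1_1$, the function $V_gf$ is continuous. By the projective-limit structure and the Gelfand-Shilov characterization, it grows at most like $e^{\varepsilon|z|}$ for every $\varepsilon>0$. Likewise $z\mapsto\langle \pi(z)\gamma,\phi\rangle$ decays faster than any exponential for every $\phi\in\Sigma^1_1$. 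Hence the weak integral $\int V_{g_1}f(w)\pi(w)\gamma\,dw$ converges against $\Sigma^1_1$ test functions. Its agreement with $\langle\gamma,g_1\rangle f$ follows by density of $L^2$, or directly from the Moyal identity on $\Sigma^1_1$. Testing this identity against $\pi(z)g_2$ yields
\begin{equation}
 |V_{g_2}f(z)|\leq\frac{1}{|\langle\gamma,g_1\rangle|}\big(|V_{g_1}f|\ast|V_{g_2}\gamma|\big)(z),\qquad z\in\rdd .
\end{equation}

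\emph{Step 2: weighted Young inequality.} Since $m(z)\lesssim m(w)\,v(z-w)$, we get
\begin{equation}
 m\,|V_{g_2}f|\lesssim \big(m|V_{g_1}f|\big)\ast\big(v|V_{g_2}\gamma|\big).
\end{equation}
Young's inequality for mixed norms, $L^{p,q}\ast L^1\subseteq L^{p,q}$, valid for $1\leq p,q\leq\infty$, then gives
\begin{equation}
 \|V_{g_2}f\|_{L^{p,q}_m}\lesssim\|V_{g_2}\gamma\|_{L^1_v}\,\|V_{g_1}f\|_{L^{p,q}_m}.
\end{equation}
Exchanging $g_1$ and $g_2$ gives the reverse bound, and the two together give the norm equivalence.

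\emph{Step 3: finiteness of $\|V_{g_2}\gamma\|_{L^1_v}$.} This is the point where the new window class is used. Submultiplicativity gives $v(z)\leq Ce^{a|z|}$ for some $a>0$. Since $g_2,\gamma\in\Sigma^1_1$, the Beurling version of the Gelfand-Shilov characterization (with $s=1$) gives $|V_{g_2}\gamma(z)|\lesssim_\varepsilon e^{-\varepsilon|z|}$ for every $\varepsilon>0$; this needs $V_{g_2}\gamma\in\Sigma^1_1(\rdd)$, the analogue of part (i) of that theorem. Taking $\varepsilon>a$ makes $v|V_{g_2}\gamma|$ integrable.

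\emph{Main obstacle.} The hard step is Step 1: justifying the inversion formula and the resulting pointwise convolution inequality for ultradistributions. This needs the continuity and subexponential growth of $V_gf$ for $f\in(\Sigma^1_1)'$, and a density or duality argument replacing the $\cS'$ argument of \cite[Lemma 1.2.29]{CorderoRodino2020}. Our first attempt will be to verify the identity $\langle f,\pi(z)g_2\rangle=\langle\gamma,g_1\rangle^{-1}\int V_{g_1}f(w)\langle\pi(w)\gamma,\pi(z)g_2\rangle\,dw$. We would do this by applying $f$ to the $\Sigma^1_1$-valued Bochner integral representing $\pi(z)g_2$ through the inversion formula on $\Sigma^1_1$; that integral converges in $\Sigma^1_1$ thanks to the superexponential decay in Step 3.
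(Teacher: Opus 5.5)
Your proof is correct and follows the route the paper has in mind. The paper gives no separate argument for this theorem, but in the polynomial case it relies on exactly this pointwise inequality $|V_{g_2}f|\le|\langle\gamma,g_1\rangle|^{-1}(|V_{g_1}f|\ast|V_{g_2}\gamma|)$, then mixed-norm Young, then the fact that $V_{g_2}\gamma$ decays faster than every exponential while $v(z)\lesssim e^{a|z|}$.

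One claim in Step~1 is false as stated. For $f$ in the Beurling dual $(\Sigma^1_1)'$ you only get $|V_gf(z)|\le Ce^{c|z|}$ for \emph{some} $c>0$ depending on $f$, not for every $\varepsilon>0$. For instance, $f(t)=e^{at_1}$ gives $|V_gf(x,0)|\asymp e^{ax_1}$ for a Gaussian $g$; the every-$\varepsilon$ bound belongs to the Roumieu dual $(\cS^1_1)'$. The error is harmless: every integral you write pairs this exponential growth with a factor decaying faster than any exponential. So the $\Sigma^1_1$-valued Bochner-integral justification in your last paragraph goes through unchanged.
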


\begin{theorem}[Completeness and density]
Let $1\leq p,q\leq\infty$ and let $m$ be a $v$-moderate weight. Then the space $M_m^{p,q}(\rd)$ is a Banach space, and for $1\leq p,q<\infty$, $\Sigma^1_1(\rd)$ is a dense subspace of $M_m^{p,q}(\rd)$.
\end{theorem}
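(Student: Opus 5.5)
We argue that $M_m^{p,q}$ is complete and that $\Sigma^1_1$ is dense by transferring both questions to the mixed-norm space $L_m^{p,q}(\rdd)$ through the STFT and its adjoint, as in the polynomial case. Fix a window $g\in\Sigma^1_1(\rd)$ with $\|g\|_2=1$. The basic tools are the following.

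\begin{enumerate}[(a)]
\item $V_g$ maps $(\Sigma^1_1)'(\rd)$ into continuous functions on $\rdd$. Its range on $\Sigma^1_1(\rd)$ lies in $\Sigma^1_1(\rdd)$, and these functions decay faster than every exponential $e^{-a|z|}$.
\item The adjoint $V_g^*F=\int_{\rdd}F(z)\pi(z)g\,dz$ is well defined from $L_m^{p,q}(\rdd)$ into $(\Sigma^1_1)'(\rd)$. Indeed, for $\phi\in\Sigma^1_1$ the pairing $\langle V_g^*F,\phi\rangle=\int F\,\overline{V_g\phi}$ is bounded by
\begin{equation}
 \|F\|_{L_m^{p,q}}\,\|V_g\phi\|_{L^{p',q'}_{1/m}}.
\end{equation}
Since $m$ is $v$-moderate with $v$ at most exponential, $1/m\lesssim v$, and $V_g\phi$ decays superexponentially. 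Hence this bound is a continuous seminorm on $\Sigma^1_1$.
\item The inversion formula $V_g^*V_g=\mathrm{id}$ holds on $(\Sigma^1_1)'$, proved by duality from the same identity on $\Sigma^1_1$, where Moyal's identity applies.
\end{enumerate}

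\emph{Boundedness of $V_gV_g^*$.} The second step is to show that $V_gV_g^*$ is bounded on $L_m^{p,q}$. Pointwise one has $|V_gV_g^*F|\le |F|*|V_gg|$. Together with moderateness, this gives $m\,(|F|*|V_gg|)\lesssim (m|F|)*(v|V_gg|)$. Since $V_gg\in L^1_v$ (again by the superexponential decay from (a)), the weighted Young inequality in mixed norms gives the bound. It follows that $V_g^*:L_m^{p,q}\to M_m^{p,q}$ is bounded. This is the same argument that underlies Theorem~\ref{thm:window-independence-ultra-modulation}.

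\emph{Completeness.} Let $(f_n)$ be Cauchy in $M_m^{p,q}$. Then $(V_gf_n)$ is Cauchy in $L_m^{p,q}$, which is complete, so it converges to some $F$. Set $f=V_g^*F\in(\Sigma^1_1)'$. By boundedness, $V_gf=V_gV_g^*F$ lies in $L_m^{p,q}$. Moreover
\begin{equation}
 \|f_n-f\|_{M_m^{p,q}}=\|V_gV_g^*(V_gf_n-F)\|_{L_m^{p,q}}\lesssim\|V_gf_n-F\|_{L_m^{p,q}}\to0,
\end{equation}
where we used $f_n=V_g^*V_gf_n$. Hence $f_n\to f$ in $M_m^{p,q}$.

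\emph{Density for $p,q<\infty$.} Given $f\in M_m^{p,q}$, choose compactly supported truncations $F_R=V_gf\cdot\boldsymbol 1_{B_R}$. These satisfy $F_R\to V_gf$ in $L_m^{p,q}$ by dominated convergence, which is where finiteness of $p,q$ enters. Then
\begin{equation}
 \|V_g^*F_R-f\|_{M_m^{p,q}}\lesssim\|F_R-V_gf\|_{L_m^{p,q}}\to0.
\end{equation}
It remains to approximate each $V_g^*F_R=\int_{B_R}F(z)\pi(z)g\,dz$ by elements of $\Sigma^1_1$.

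\emph{Main obstacle.} This last step is where I expect the difficulty. $\Sigma^1_1$ is invariant under the shifts $\pi(z)$, and on compact sets $z\mapsto\pi(z)g$ is continuous into $\Sigma^1_1$, hence into $M_m^{p,q}$. So I would approximate $F_R$ in $L^1(B_R)$ by simple functions, or by Riemann sums. This yields finite linear combinations of $\pi(z_j)g$, all in $\Sigma^1_1$, converging to $V_g^*F_R$ in $M_m^{p,q}$. Justifying this requires two estimates:
\begin{enumerate}[(i)]
\item $\|V_g^*H\|_{M_m^{p,q}}\lesssim \|H\|_{L^1}\sup_{z\in B_R}\|\pi(z)g\|_{M_m^{p,q}}$ for $H$ supported in $B_R$;
\item $\sup_{z\in B_R}\|\pi(z)g\|_{M_m^{p,q}}\lesssim \sup_{z\in B_R}v(z)\,\|g\|_{M_m^{p,q}}<\infty$, which holds because $\|\pi(z)g\|_{M_m^{p,q}}\lesssim v(z)\|g\|_{M_m^{p,q}}$ and $v$ is bounded on compacts.
\end{enumerate}
The delicate points throughout are the exponential-versus-superexponential bookkeeping, so that all pairings remain continuous on $\Sigma^1_1$, and extending the inversion formula to $(\Sigma^1_1)'$.
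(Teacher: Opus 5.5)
The paper states this theorem without proof (it is quoted from the ultra-modulation literature), so there is no in-paper argument to compare against. Your proposal is the standard proof, namely the Gelfand--Shilov version of the polynomial-weight argument. Its ingredients are the inversion $V_g^*V_g=\mathrm{id}$ on $(\Sigma^1_1)'$, the bound $|V_gV_g^*F|\le|F|*|V_gg|$ with $V_gg\in L^1_v$, completeness pulled back from $L^{p,q}_m$, and truncation of $V_gf$ for density. It is correct. The remarks below concern the last step and one ingredient you only sketch.

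\emph{The step you call the main obstacle disappears.} Let $H$ be bounded and supported in $B_R$. Then $h=V_g^*H=\int H(x,\xi)M_\xi T_xg\,dx\,d\xi$ is a Schwartz function. Using $e^{a|t|}\le e^{a|x|}e^{a|t-x|}$, $|\widehat{M_\xi T_xg}|=|T_\xi\widehat g|$, and $|x|,|\xi|\le R$ on the support, one gets for every $a>0$
\[
|h(t)|\,e^{a|t|}\le e^{aR}\|H\|_{L^1}\sup_{s\in\rd}e^{a|s|}|g(s)|,\qquad
|\widehat h(\omega)|\,e^{a|\omega|}\le e^{aR}\|H\|_{L^1}\sup_{s\in\rd}e^{a|s|}|\widehat g(s)|.
\]
By the ``for every'' version of Theorem~\ref{thm:Gelfand-Shilov-characterization} for $\Sigma^1_1$, this gives $V_g^*F_R\in\Sigma^1_1$ directly. (Here $F_R$ is bounded because $V_gf$ is continuous.) So the Riemann-sum approximation can be dropped.

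\emph{As written, that alternative step has a mismatch.} Estimates (i)--(ii) control $V_g^*(F_R-H)$ for densities $H$ that are $L^1$-close to $F_R$. For simple $H$, however, $V_g^*H$ is not a finite combination of the $\pi(z_j)g$; it lies in $\Sigma^1_1$ only by the observation above. For Riemann sums, (i) gives no smallness: even extended to measures, the difference $F_R\,dz-\sum_jF_R(z_j)|Q_j|\delta_{z_j}$ has total variation about $2\|F_R\|_{L^1}$. What Riemann sums actually need is Minkowski's inequality applied cell by cell, together with the uniform continuity of $z\mapsto V_gf(z)\pi(z)g$ from $\overline{B_R}$ into $M^{p,q}_m$. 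You asserted that continuity but did not list it among the needed estimates.

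\emph{Ingredient (c) needs slightly more than Moyal's identity.} Moyal yields $\phi=\int V_g\phi(z)\pi(z)g\,dz$ only weakly in $L^2$. To apply $f\in(\Sigma^1_1)'$ and interchange it with the integral, the integral must converge in the topology of $\Sigma^1_1$. This does hold, because the $\Sigma^1_1$-seminorms of $\pi(z)g$ grow at most exponentially in $|z|$, while $V_g\phi$ decays faster than every exponential.
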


A different point of view about Gelfand-Shilov spaces and ultradistributions spaces is given in \cite{PilipovicTeofanov2002}, where the authors characterize them as projective and inductive limits of weighted modulation spaces.

Fix $\gamma\in(0,1)$ and consider a weight function of exp-type, i.e., a weight function $m$ such that
\begin{equation}
m(x+y,\xi+\eta)\lesssim e^{s(|x|^\gamma+|\xi|^\gamma)}m(y,\eta), \ \quad x,y,\xi,\eta\in\rd,
\end{equation}
for some $s\geq0$. Notice that $m$ is a weight function of exp-type if it is moderate with respect to the weight $v(x,\xi)=e^{s(|x|^\gamma+|\xi|^\gamma)}$ for some $s\geq0$. Particular instances of weight functions of exp-type are the weights
\begin{equation}
    m(x,\xi)=(1+|x|+|\xi|)^ae^{b|x|^{\gamma_1}+c|\xi|^{\gamma_2}}, \ \quad x,\xi\in\rd,
\end{equation}
for $a,b,c\geq0$ and $0<\gamma_1,\gamma_2\leq\gamma<1$; a zero exponent contributes only a constant factor.

Note also that an exp-type weight $m$ satisfies Beurling-Domar's non-quasianalyticity condition:
\begin{equation}
\sum_{n=1}^{+\infty}n^{-2}\log(m(nx,n\xi))<\infty, \ \quad x,\xi\in\rd.
\end{equation}

For fixed $\gamma\in(0,1)$ and $h>0$, we can consider the space $\cS^{(\gamma)}_h(\rd)$ of smooth functions on $\rd$ such that
\begin{equation}
    \sup_{\alpha,\beta\in\bN^d}\left(\frac{h^{|\alpha|+|\beta|}}{(\alpha!)^{1/\gamma}(\beta!)^{1/\gamma}}\|x^\alpha\partial^\beta f\|_\infty\right)<\infty.
\end{equation}
In the notation of Definition~\ref{gelfand-shilov}, the space $\cS^{(\gamma)}_h(\rd)$ coincides with the Gelfand-Shilov-type space $\cS^{s,A}_{r,B}(\rd)$, for $s=r=1/\gamma$ and $A=B=1/h$. Its projective limit is
 \begin{equation}
    \cS^{(\gamma)}(\rd):=\projlim_{h\to\infty}\cS^{(\gamma)}_h(\rd),
\end{equation}
and it naturally coincides with the space $\Sigma^s_r(\rd)$, for $s=r=1/\gamma$. Its topological dual $(\cS^{(\gamma)})'(\rd)=(\Sigma^{1/\gamma}_{1/\gamma})'(\rd)$ is called the space of Gevrey-Beurling tempered ultradistributions.

If we now consider the ultra-modulation space $M_m^{p,q}(\rd)$ as the set of all tempered ultradistributions $f\in(\Sigma^{1/\gamma}_{1/\gamma})'(\rd)=(\cS^{(\gamma)})'(\rd)$, for fixed $\gamma\in(0,1)$, $1\leq p,q\leq\infty$, $m$ an exp-type weight with this exponent $\gamma$, and $g\in\Sigma^{1/\gamma}_{1/\gamma}(\rd)\setminus\{0\}=\cS^{(\gamma)}(\rd)\setminus\{0\}$, such that
\begin{equation}
 \norm{f}_{M_m^{p,q}}:=\norm{V_gf}_{L_m^{p,q}}<\infty,
\end{equation}
we obtain the following projective-limit characterization:
\begin{equation}
    \cS^{(\gamma)}(\rd)=\Sigma^{1/\gamma}_{1/\gamma}(\rd)=\projlim_{s\to\infty}M_{m_s}^{2,2}(\rd),
\end{equation}
where $m_s(x,\xi)=e^{s(|x|^\gamma+|\xi|^\gamma)}$. As a consequence of this relation, by taking the dual spaces of both sides and using the projective-inductive duality valid for this Gelfand-Shilov scale, we obtain the following relation between spaces of ultradistributions and inductive limits of weighted modulation spaces:
\begin{equation}
    (\cS^{(\gamma)})'(\rd)=(\Sigma^{1/\gamma}_{1/\gamma})'(\rd)=\indlim_{s\to\infty}M_{1/m_s}^{2,2}(\rd).
\end{equation}

\subsection{Pseudodifferential operators on Gelfand-Shilov spaces}

The same enlargement of test and distribution spaces also extends the kernel and symbol calculus of pseudodifferential operators. The phase convention is unchanged from the Weyl quantization in Section~\ref{sec:gabor-operators}.

Let $s\geq1/2$ and $\tau\in\bR$. The Shubin $\tau$-representation of a pseudodifferential operator with symbol $\sigma\in\cS^s_s(\rdd)$ is the linear operator $\Op_\tau(\sigma)$ given by:
\begin{equation}
\Op_\tau(\sigma)f(x)=\int_{\rdd}\sigma((1-\tau)x+\tau y,\xi)e^{2\pi i(x-y)\cdot\xi}f(y)dyd\xi.
\end{equation}
Notice that, for $\tau=1/2$, we recover the Weyl quantization defined in \eqref{eq:Weyl-quantization}, that is $\Opw(\sigma)=\Op_{1/2}(\sigma)$. For $\tau=0$, we obtain the Kohn-Nirenberg operator $\sigma(x,D)$ with symbol $\sigma$, that is
\begin{equation}
    \sigma(x,D)f(x)=\int_{\rdd}\sigma(x,\xi)e^{2\pi i(x-y)\cdot\xi}f(y)dyd\xi=\int_{\rd}\sigma(x,\xi)e^{2\pi ix\cdot\xi}\widehat{f}(\xi)d\xi.
\end{equation}
For a recent group-theoretic treatment of $\tau$-pseudodifferential operators on locally compact abelian groups, with modulation spaces as the underlying functional framework, see \cite{Yaremenko2026Gabor}.

The definition of Shubin $\tau$-representation can be extended in weak form to include the case of symbols $\sigma\in(\cS^s_s)'(\rdd)$. Indeed, in this case, $\Op_\tau(\sigma)$ is the linear and continuous operator $\Op_\tau(\sigma):\cS^s_s(\rd)\to(\cS^s_s)'(\rd)$ having distributional kernel
\begin{equation}
    K_{\sigma,\tau}(x,y)=(\cF_2^{-1}\sigma)((1-\tau)x+\tau y,x-y), \quad x,y\in\rd,
\end{equation}
where $\cF_2$ denotes the Fourier transform with respect to the second variable. This definition makes sense because the linear mappings
\begin{equation}
    \cF_2 \quad \text{ and } \quad F(x,y)\mapsto F((1-\tau)x+\tau y,x-y),
\end{equation}
are topological isomorphisms on $(\cS^s_s)'(\rdd)$. In particular, the map $\sigma\mapsto K_{\sigma,\tau}$ is a homeomorphism on $(\cS^s_s)'(\rdd)$.

By the inversion formula and the kernel theorem for operators from Gelfand-Shilov spaces to their duals (Theorem \ref{thm:kernel-theorem-gelfand-shilov}), it follows that the map $\sigma\mapsto\Op_\tau(\sigma)$ is a bijection from $(\cS^s_s)'(\rdd)$ to the set of all linear and continuous operators from $\cS^s_s(\rd)$ to $(\cS^s_s)'(\rd)$.

Shubin representations for different choices of $\tau$ are related in the following sense: if $\tau_1,\tau_2\in\bR$ and $\sigma_1\in(\cS^s_s)'(\rdd)$, there exists a unique $\sigma_2\in(\cS^s_s)'(\rdd)$ such that $\Op_{\tau_1}(\sigma_1)=\Op_{\tau_2}(\sigma_2)$.

\section{Frequency-uniform decompositions and a PDE application}
\label{sec:guo-baoxiang}

We now return to the frequency-uniform decomposition underlying Feichtinger's original construction. Whereas Gabor coefficients discretize both time and frequency, this formulation keeps the spatial norm continuous and discretizes frequency. This is particularly convenient for dispersive estimates: one can first exploit the action of the propagator on each frequency cell and then sum the resulting bounds. We retain the polynomial control-weight assumptions of Sections~\ref{sec:stft-definition} and~\ref{sec:gabor}.

\subsection{Frequency-uniform decompositions}

To apply the characterization from Section~\ref{sec:feichtinger-original}, we choose an explicit smooth partition adapted to the integer lattice.

For $k\in\zd$, we denote by $\mathcal{Q}_k$ the closed unit cube of $\rd$ centered at $k$. Notice that two different unit cubes overlap on a null-measure set and the family $\{\mathcal{Q}_k\}_{k\in\zd}$ is a covering of $\rd$. We define $|\xi|_\infty:=\max_{1\leq i\leq d}|\xi_i|$.

Consider a smooth function $\rho:\rd\to[0,1]$ satisfying the conditions
\begin{equation}
    \rho(\xi)=1 \quad \text{ for } |\xi|_\infty\leq1/2, \quad \rho(\xi)=0 \quad \text{ for } |\xi|_\infty\geq3/4.
\end{equation}
Define
\begin{equation}
    \rho_k(\xi)=T_k\rho(\xi)=\rho(\xi-k), \quad \text{ for } k\in\zd,
\end{equation}
and notice that $\rho_k$ is the translation of $\rho$ at $k$. By the assumption on $\rho$, we obtain that $\rho_k(\xi)=1$ for $\xi\in\mathcal{Q}_k$ and
\begin{equation}
    \sum_{k\in\zd}\rho_k(\xi)\geq1, \quad \text{ for } \xi\in\rd.
\end{equation}
Also denote by
\begin{equation}
    \sigma_k(\xi)=\frac{\rho_k(\xi)}{\sum_{l\in\zd}\rho_l(\xi)}, \quad \text{ for } \xi\in\rd,k\in\zd.
\end{equation}
Observe that $\sigma_k(\xi)=\sigma_0(\xi-k)\in\cD(\rd)$ and the sequence $\{\sigma_k\}_{k\in\zd}$ is a smooth partition of unity adapted to uniformly enlarged unit cubes in frequency space, i.e.,
\begin{equation}
    \sum_{k\in\zd}\sigma_k(\xi)=1, \quad \text{ for } \xi\in\rd.
\end{equation}
This smooth partition of unity allows us to define the frequency-uniform decomposition operators.

\begin{definition}
    The frequency-uniform decomposition operators are defined by
    \begin{equation}
        \Box_k=\cF^{-1}\sigma_k\cF, \quad \text{ for } k\in\zd.
    \end{equation}
\end{definition}

The previous operators allow us to introduce a discrete norm on the weighted modulation spaces $M_{m_1\otimes m_2}^{p,q}(\rd)$ as follows.
\begin{definition}
    Let $1\leq p,q\leq\infty$ and let $m_1,m_2$ be two $v$-moderate weights. A discrete norm for modulation spaces is defined as
    \begin{equation} \label{eq:uniform-decomposition}
        \norm{f}_{M_{m_1\otimes m_2}^{p,q},\mathrm{dec}}:=\left(\sum_{k\in\zd}\norm{\Box_k f}_{L^p_{m_1}}^qm_2(k)^q\right)^{1/q}, \quad \text{ for } f\in\cS'(\rd),
    \end{equation}
    with the usual modification for $q=\infty$.
\end{definition}

This expression recovers the STFT norm:
\begin{theorem}
    Let $1\leq p,q\leq\infty$ and let $m_1,m_2$ be two $v$-moderate weights. Then the norm \eqref{eq:uniform-decomposition} is an equivalent norm on $M_{m_1\otimes m_2}^{p,q}(\rd)$, and it is independent of the choice of $\sigma$.
\end{theorem}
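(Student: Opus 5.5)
We compare the discrete norm \eqref{eq:uniform-decomposition} directly with the STFT norm of Definition~\ref{def:modulation-space-stft}, using a window adapted to the partition. The key observation is that, for a window $g$ with $\widehat g$ compactly supported, the STFT can be written as a modulated convolution:
\begin{equation}
 |V_gf(x,\xi)|=|(f*M_\xi g^*)(x)|,\qquad g^*(t)=\overline{g(-t)},
\end{equation}
and $\cF(f*M_\xi g^*)=\widehat f\,\overline{\widehat g(\cdot-\xi)}$. By Theorem~\ref{thm:window-independence-stft}, we may fix such a $g$ with $\widehat g$ supported in a small cube. Then for $\xi\in\mathcal Q_k$ only the finitely many $\sigma_l$ with $|l-k|_\infty\le N$ meet $\supp\widehat g(\cdot-\xi)$, where $N$ is independent of $k$. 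Hence
\begin{equation}
 f*M_\xi g^*=\sum_{|l-k|_\infty\le N}(\Box_l f)*M_\xi g^* .
\end{equation}

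\textbf{Upper bound for the STFT norm.} From the identity above, Young's inequality in weighted form ($m_1$ is $v$-moderate, so $\|F*h\|_{L^p_{m_1}}\lesssim\|F\|_{L^p_{m_1}}\|h\|_{L^1_{v_1}}$, with $\|M_\xi g^*\|_{L^1_{v_1}}=\|g\|_{L^1_{v_1}}$ uniformly in $\xi$) gives
\begin{equation}
 \|V_gf(\cdot,\xi)\|_{L^p_{m_1}}\lesssim\sum_{|l-k|\le N}\|\Box_lf\|_{L^p_{m_1}},\qquad \xi\in\mathcal Q_k .
\end{equation}
Since the weight acts tensorially, $m_2(\xi)\asymp m_2(k)\asymp m_2(l)$ for $\xi\in\mathcal Q_k$ and $|l-k|\le N$, by moderateness. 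Taking the $L^q$ norm in $\xi$ cube by cube and using the bounded overlap then yields $\|f\|_{M^{p,q}_{m_1\otimes m_2}}\lesssim\|f\|_{\mathrm{dec}}$.

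\textbf{Converse bound.} We reproduce $\Box_k f$ from the STFT via the inversion formula of Theorem~\ref{inversionSTFT}, with a synthesis window $\gamma$ whose Fourier transform is also compactly supported:
\begin{equation}
 \Box_kf=\int_{\rd}\Box_k\big(V_gf(\cdot,\xi)*M_\xi\gamma\big)\,d\xi ,
\end{equation}
up to a constant and a phase. The $\xi$-integrand vanishes unless $\xi$ lies within a bounded distance of $k$. For such $\xi$, $\Box_k$ acts on band-limited functions as convolution with $\cF^{-1}\sigma_k=M_k\cF^{-1}\sigma_0\in L^1_{v_1}$, uniformly in $k$. Minkowski's integral inequality and weighted Young then give
\begin{equation}
 \|\Box_kf\|_{L^p_{m_1}}\lesssim\int_{|\xi-k|\le C}\|V_gf(\cdot,\xi)\|_{L^p_{m_1}}\,d\xi .
\end{equation}
Applying H\"older on the bounded set when $q\ge1$, inserting $m_2(k)\asymp m_2(\xi)$, and summing over $k$ with bounded overlap yields $\|f\|_{\mathrm{dec}}\lesssim\|f\|_{M^{p,q}_{m_1\otimes m_2}}$.

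\textbf{Independence of $\sigma$ and the main obstacle.} Independence of the choice of $\sigma$ follows at once, since both discrete norms are equivalent to the same STFT norm. Alternatively, one can use $\Box'_k=\sum_{|l-k|\le N}\Box'_k\Box_l$ together with the uniform $L^p_{m_1}$ bounds on $\Box'_k$.

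The main technical point is to justify these identities for general $f\in\cS'(\rd)$ rather than Schwartz $f$. This requires care because distributional inversion and the interchange of $\Box_k$ with the $\xi$-integral are involved. I would first prove the estimates for $f\in\cS$ and then extend by the weak-$\ast$ and pointwise continuity of $V_gf$. The extension is legitimate because the identities are local in frequency and every term is smooth and band-limited.
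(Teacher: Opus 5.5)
Your proof is correct, but it takes a different route from the paper. The paper gives no separate argument; it treats the statement as a special case of Feichtinger's BUPU characterization from Section~\ref{sec:feichtinger-original}.

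In the paper's implicit argument, $(\sigma_k)_k$ is a bounded uniform partition of unity for the cubes centred at $\zd$. One combines $M^{p,q}_{m_1\otimes m_2}=\cF^{-1}W(\cF L^p_{m_1},L^q_{m_2})$ (cf. \eqref{eq:lca-modulation-amalgam} with $B_0=L^p_{m_1}$, or \eqref{eq:fourier-modulation-wiener}) with the equivalence between the continuous amalgam norm and the discrete norm $\|\cdot\|_{D(\widehat Q,\cF L^p_{m_1},\ell^q_{m_2})}$. The $k$-th term of that discrete norm is exactly $\|\sigma_k\widehat f\|_{\cF L^p_{m_1}}m_2(k)=\|\Box_kf\|_{L^p_{m_1}}m_2(k)$. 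Independence of $\sigma$ is then the BUPU-independence already built into the amalgam theory.

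You instead stay entirely inside the STFT framework. The band-limited window reduces the upper bound to a finite-overlap estimate. The inversion formula, localized by $\Box_k$, gives the converse. Weighted Young absorbs $m_1$, and moderateness freezes $m_2$ on each cube.

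The abstract route gives generality: LCA groups, arbitrary admissible solid local components, and BUPU-independence for free. Its price is the full Wiener-amalgam machinery, and Theorem~\ref{thm:discrete} is recorded in the paper only for $1\otimes v_s$. Your route is self-contained and treats general $m_1\otimes m_2$ directly. It also shows exactly where $p,q\geq1$ is used (Minkowski and H\"older), which is what would have to change in quasi-Banach ranges.

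Two small precision points.

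First, the inversion formula gives
\begin{equation}
\int_{\rd}V_gf(x,\xi)\pi(x,\xi)\gamma\,dx=(M_\xi V_gf(\cdot,\xi))*M_\xi\gamma=f*M_\xi g^{*}*M_\xi\gamma .
\end{equation}
So the phase $e^{2\pi i\xi\cdot x}$ depends on the integration variable and is not an overall phase. This is harmless, because only $|V_gf|$ enters your estimates.

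Second, the detour through $\cS$ is unnecessary. A density argument would in any case need a norm-controlled approximation plus Fatou, since $\cS$ is not norm dense at infinite exponents. For $f\in\cS'$ you have directly
\begin{equation}
\sigma_k\widehat f=\langle\gamma,g\rangle^{-1}\int\sigma_k\widehat f\,\overline{\widehat g(\cdot-\xi)}\,\widehat\gamma(\cdot-\xi)\,d\xi ,
\end{equation}
where the $\xi$-integral runs over a compact set of $C_c^\infty$ multipliers. Hence your identities hold pointwise in $x$ without any approximation.
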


\subsection{A representative application to nonlinear Schr\"odinger equations}

A useful illustration is provided by dispersive equations with rough initial data. The unit-cube decomposition in frequency permits one to exploit the local behavior of the Schr\"odinger propagator while measuring the global distribution of the localized pieces in a modulation norm. We recall one such result from \cite{WangHudzik2007,WangZhaoGuo2011}.

Consider the nonlinear Schr\"odinger equation
\begin{equation} \label{eq:NLS}
    \begin{cases}
        iu_t+\Delta u+f(u)=0, \\
        u(0)=u_0.
    \end{cases}
\end{equation}
The classical dispersive estimate is
\begin{equation} \label{eq:estimate-S-Lp}
    \|e^{it\Delta}\varphi\|_{L^p}\lesssim|t|^{-d(1/2-1/p)}\|\varphi\|_{L^{p'}}, \quad 2\leq p\leq\infty.
\end{equation}
Its right-hand side is singular at $t=0$. After frequency-uniform localization, the corresponding modulation-space estimate takes the form

\begin{equation} \label{eq:estimate-S-Mp}
    \|e^{it\Delta}f\|_{M_{1\otimes v_s}^{p,q}}\lesssim(1+|t|)^{-d(1/2-1/p)}\|f\|_{M_{1\otimes v_s}^{p',q}},
\end{equation}

with $2\leq p\leq\infty$, $0<q\leq\infty$, and $s\in\bR$. Thus the decay rate for large $|t|$ is preserved while the singularity at $t=0$ disappears.

\begin{theorem} \label{thm:solution-NLS}
    Let $k_0$ be the positive root of $dk^2+(d-2)k-4=0$, let $k\in\bN$ with $k>k_0$, and let $f(u)$ be a monomial of total degree $k+1$ in $u$ and $\overline u$. Set
    \begin{equation}
        p=\frac{k+2}{k+1}.
    \end{equation}
    There exists a sufficiently small $\delta>0$ such that, for every $u_0\in M^{p,1}(\rd)$ with $\|u_0\|_{M^{p,1}}\leq\delta$, the Cauchy problem \eqref{eq:NLS} has a unique global solution satisfying
    \begin{equation}
        \sup_{t\in\bR}(1+|t|)^{dk/(4+2k)}\|u(t)\|_{M^{2+k,1}}<\infty.
    \end{equation}
\end{theorem}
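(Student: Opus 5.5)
We will argue by a contraction (Banach fixed point) in a time-weighted space built on the frequency-uniform decomposition. Write the equation in Duhamel form,
\begin{equation}
 u(t)=e^{it\Delta}u_0+i\int_0^t e^{i(t-\tau)\Delta}f(u(\tau))\,d\tau=:\mathcal T u(t),
\end{equation}
and work in the space $X$ of $u$ with
\begin{equation}
 \|u\|_X:=\sup_{t\in\bR}(1+|t|)^{\alpha}\|u(t)\|_{M^{k+2,1}},\qquad \alpha=\frac{dk}{4+2k}.
\end{equation}
Note that $p=(k+2)/(k+1)$ is exactly the dual exponent of $k+2$, so the dispersive estimate \eqref{eq:estimate-S-Mp} with $s=0$, $q=1$ applies to the pair $(k+2,p)$ with decay rate $d(1/2-1/(k+2))=dk/(2(k+2))=\alpha$. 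This already gives the linear bound $\|e^{it\Delta}u_0\|_{X}\lesssim\|u_0\|_{M^{p,1}}\le\delta$.

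For the nonlinear term we combine \eqref{eq:estimate-S-Mp} with the product estimate of Theorem~\ref{thm:multiplication} (unweighted case). Since $M^{p,1}$ requires local $L^{p}$ with $1/p=(k+1)/(k+2)$, a product of $k+1$ factors in $M^{k+2,1}$ fits Hölder in the $x$-index. In the $\xi$-index we need $\ell^1$ to be an algebra under convolution, which is the case $q=q_1=q_2=1$ of \eqref{condproduct} (since $1/q+1=2\le 2$). Iterating therefore gives $\|f(u)\|_{M^{p,1}}\lesssim\|u\|_{M^{k+2,1}}^{k+1}$. Conjugations $\bar u$ are harmless because complex conjugation preserves $M^{r,1}$ norms up to constants: $|V_{\bar g}\bar f(x,\xi)|=|V_gf(x,-\xi)|$. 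Hence
\begin{equation}
 (1+|t|)^{\alpha}\Big\|\int_0^t e^{i(t-\tau)\Delta}f(u)\,d\tau\Big\|_{M^{k+2,1}}
 \lesssim \|u\|_X^{k+1}\,(1+|t|)^{\alpha}\int_0^{|t|}(1+|t-\tau|)^{-\alpha}(1+|\tau|)^{-\alpha(k+1)}\,d\tau .
\end{equation}

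The main obstacle is showing that this last expression is uniformly bounded in $t$, and this is where $k>k_0$ enters. The standard convolution lemma states that $\int_0^t(1+t-\tau)^{-a}(1+\tau)^{-b}d\tau\lesssim(1+t)^{-a}$ provided $b>1$ and $a\le b$. Here $a=\alpha$ and $b=\alpha(k+1)\ge a$ automatically, so we need $\alpha(k+1)>1$, i.e. $dk(k+1)>2(k+2)$, i.e. $dk^2+(d-2)k-4>0$. This is exactly $k>k_0$. To verify the lemma we split the integral at $\tau=t/2$. On $[0,t/2]$ the first factor is comparable to $(1+t)^{-a}$ and the second is integrable. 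On $[t/2,t]$ the second factor is at most $(1+t)^{-b}$ and the first integrates to at most $(1+t)^{1-a}$ (or a logarithm or a constant), which is dominated by $(1+t)^{-a}$ since $b>1$ and $b\ge a$.

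Thus $\|\mathcal T u\|_X\le C\delta+C\|u\|_X^{k+1}$. Writing $f(u)-f(v)$ as a sum of monomials each containing a factor $u-v$ or $\overline{u-v}$ gives in the same way
\begin{equation}
 \|\mathcal Tu-\mathcal Tv\|_X\le C(\|u\|_X^k+\|v\|_X^k)\|u-v\|_X .
\end{equation}
For $\delta$ small, $\mathcal T$ is then a contraction on the ball of radius $2C\delta$ in $X$. This ball is complete because $M^{k+2,1}$ is a Banach space, and we additionally require strong continuity in time, which follows from the density of $\cS$ in $M^{p,1}$. The fixed point is the global solution with the claimed decay. Uniqueness holds in $X$, and a standard continuity argument extends it to all solutions lying in $X$ with norm at most $2C\delta$.
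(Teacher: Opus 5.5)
Your proposal is correct and follows the argument behind the result the paper quotes from \cite{WangHudzik2007,WangZhaoGuo2011}; the paper itself gives no proof, only the key estimate \eqref{eq:estimate-S-Mp}. That argument is a contraction in the decay-weighted $M^{k+2,1}$ space, combining the truncated dispersive estimate with the product estimate $M^{k+2,1}\cdots M^{k+2,1}\hookrightarrow M^{(k+2)/(k+1),1}$ ($k+1$ factors), where $k>k_0$ is exactly the condition $\alpha(k+1)>1$, $\alpha=dk/(4+2k)$, that keeps the time convolution bounded. The only loose point is your uniqueness sentence: uniqueness among all solutions in $X$, not just in the small ball, follows directly from Gronwall, since $(1+|t-\tau|)^{-\alpha}\le 1$ and $\sup_\tau\big(\|u(\tau)\|_{M^{k+2,1}}^k+\|v(\tau)\|_{M^{k+2,1}}^k\big)<\infty$.
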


The point of this example for the present chapter is not the particular exponent range. It is the role of the modulation-space geometry. The same uniform localization that distinguishes modulation spaces from dyadic scales also regularizes the short-time behavior of the dispersive estimate. The much broader theory of sharp convolution, product, embedding, and PDE estimates is documented in the references cited above and in the literature surveyed in this chapter.

\section{Symplectic analysis of time-frequency spaces}
\label{sec:symplectic}

 {We return to polynomially moderate weights and the pair $(\cS,\cS')$ to address the characterization problem stated in the Introduction.} The symplectic matrices describe the geometry of the change; the order of the mixed norm determines which deformations are admissible.

\subsection{Metaplectic Wigner distributions}

 {For time-frequency distributions on $\rdd$ we work with $\Mp(2d,\bR)$ and its projection onto $\Sp(2d,\bR)$, using for $\mathcal A$ the $d\times d$ block decomposition already fixed in \eqref{intro.blockA}.}
 {We use the metaplectic Wigner distribution $W_{\mathcal A}$ defined in the Introduction. When $f=g$, we write $W_{\mathcal A}f=W_{\mathcal A}(f,f)$.}

Two questions about $W_{\mathcal A}$ should be kept distinct. One concerns the regularity of the distribution $W_{\mathcal A}(f,g)$ itself. The other, which is central for modulation spaces, asks whether a norm of $W_{\mathcal A}(f,g)$ recovers the norm of the input $f$. The continuity properties and Moyal's identity below provide the general calculus. The later shift-invertibility condition is what turns this calculus into a localization theory comparable with the STFT.

This construction includes the principal bilinear representations used here. We first record its continuity and Moyal's identity, and then identify the STFT, Wigner and ambiguity examples.

The next result gives the continuity of the metaplectic Wigner distribution in $L^2$ spaces, in the Schwartz class, and in the space of tempered distributions.

\begin{theorem} \label{thm:action-metaplectic-wigner-distribution}
    Let $\mathcal A\in\Sp(2d,\bR)$.
    \begin{enumerate}[(i)]
        \item If $f,g\in L^2(\rd)$, then $W_{\mathcal A}(f,g)\in L^2(\rdd)$ and the map $W_{\mathcal A}:L^2(\rd)\times L^2(\rd)\to L^2(\rdd)$ is continuous.
        \item If $f,g\in\cS(\rd)$, then $W_{\mathcal A}(f,g)\in\cS(\rdd)$ and the map $W_{\mathcal A}:\cS(\rd)\times\cS(\rd)\to\cS(\rdd)$ is continuous.
        \item If $f,g\in\cS'(\rd)$, then $W_{\mathcal A}(f,g)\in\cS'(\rdd)$ and the map $W_{\mathcal A}:\cS'(\rd)\times\cS'(\rd)\to\cS'(\rdd)$ is continuous.
    \end{enumerate}
\end{theorem}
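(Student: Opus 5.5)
The plan is to factor $W_{\mathcal A}$ as the composition of two maps and check each of the three continuity properties on each factor. By definition $W_{\mathcal A}(f,g)=\hat{\mathcal A}(f\otimes\bar g)$. So we need two ingredients: the sesquilinear tensor map $(f,g)\mapsto f\otimes\bar g$ in each setting, and the continuity of $\hat{\mathcal A}$ on $L^2(\rdd)$, $\cS(\rdd)$ and $\cS'(\rdd)$. The second ingredient was recorded in the Introduction: metaplectic operators are unitary on $L^2(\rdd)$, restrict to homeomorphisms of $\cS(\rdd)$, and extend by duality to homeomorphisms of $\cS'(\rdd)$. Composing a jointly continuous sesquilinear map with a continuous linear map gives a continuous map, so everything reduces to the tensor map.

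For (i), Fubini gives $\|f\otimes\bar g\|_{L^2(\rdd)}=\|f\|_2\|g\|_2$. Hence the tensor map is a bounded sesquilinear map $L^2\times L^2\to L^2(\rdd)$. Unitarity of $\hat{\mathcal A}$ then yields
\[
\|W_{\mathcal A}(f,g)\|_2=\|f\|_2\|g\|_2 ,
\]
and joint continuity follows from the bound $\|W_{\mathcal A}(f,g)-W_{\mathcal A}(f',g')\|_2\le\|f-f'\|_2\|g\|_2+\|f'\|_2\|g-g'\|_2$.

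For (ii), take the Schwartz seminorms $p_{N}(F)=\sup_{|\alpha|,|\beta|\le N}\|z^\alpha\partial^\beta F\|_\infty$. The estimate $p_N(f\otimes\bar g)\lesssim p_{N'}(f)\,p_{N'}(g)$ follows from the Leibniz-free splitting $x^{\alpha_1}y^{\alpha_2}\partial_x^{\beta_1}\partial_y^{\beta_2}(f\otimes\bar g)=(x^{\alpha_1}\partial^{\beta_1}f)\otimes\overline{(y^{\alpha_2}\partial^{\beta_2}g)}$, using $|z^\alpha|\lesssim\sum|x^{\alpha_1}||y^{\alpha_2}|$ for a suitable $N'$. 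This estimate gives joint continuity of the bilinear map into $\cS(\rdd)$. Composing with $\hat{\mathcal A}$ finishes (ii).

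For (iii), define $f\otimes\bar g\in\cS'(\rdd)$ by $\langle f\otimes\bar g,\Phi\rangle=\langle f,\langle\bar g,\Phi(x,\cdot)\rangle\rangle$, which is the standard tensor product of distributions. This step is the main obstacle, because "continuous" must be given a meaning on $\cS'\times\cS'$. I would use the weak-$*$ (or strong) topology. Separate continuity of the tensor map holds because, for fixed $g$, the map $f\mapsto f\otimes\bar g$ is the transpose of the continuous map $\Phi\mapsto\langle\bar g,\Phi(x,\cdot)\rangle$ from $\cS(\rdd)$ to $\cS(\rd)$; the same argument works in the other variable. Joint sequential continuity then follows from the Banach–Steinhaus theorem, since $\cS$ is Fréchet and convergent sequences in $\cS'$ are equicontinuous. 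For the strong topology, nuclearity of $\cS$ gives $\cS'(\rdd)\cong\cS'(\rd)\hat\otimes\cS'(\rd)$, so the canonical bilinear map is hypocontinuous and in particular sequentially continuous. Composing with the homeomorphism $\hat{\mathcal A}$ of $\cS'(\rdd)$ concludes (iii). If one only wants a concrete statement, it suffices to state sequential continuity, and that is what I would prove.
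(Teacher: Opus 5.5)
Your proof is correct and is the argument the statement rests on; the paper records the theorem without proof. Since $W_{\mathcal A}(f,g)=\hat{\mathcal A}(f\otimes\bar g)$, everything reduces to the continuity of the tensor map together with the properties of $\hat{\mathcal A}$ recalled in the Introduction: unitarity on $L^2(\rdd)$, and the homeomorphism property on $\cS(\rdd)$ and on $\cS'(\rdd)$. One refinement for (iii): the identification $\cS'(\rdd)\cong\cS'(\rd)\widehat{\otimes}_{\pi}\cS'(\rd)$ that you invoke already gives genuine joint continuity for the strong topologies, not just hypocontinuity, whereas for the weak-$*$ topologies joint continuity fails and sequential continuity is the right notion.
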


The classical Moyal's identity which holds for the classical Wigner distribution can be extended to the metaplectic Wigner distribution.

\begin{theorem}[Moyal's identity]
    Let ${\mathcal A}\in\Sp(2d,\bR)$, and $f_1,f_2,g_1,g_2\in L^2(\rd)$. Then
    \begin{equation}
        \scal{{W_{\mathcal A}}(f_1,g_1)}{{W_{\mathcal A}}(f_2,g_2)}_{L^2}=\scal{f_1}{f_2}_{L^2}\overline{\scal{g_1}{g_2}_{L^2}}.
    \end{equation}
    In particular, for $f_1=f_2=f$ and $g_1=g_2=g$, we have
    \begin{equation}
        \norm{W_{\mathcal A}(f,g)}^2_{L^2}=\norm{f}^2_{L^2}\norm{g}^2_{L^2}.
    \end{equation}
\end{theorem}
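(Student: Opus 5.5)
The identity follows from two facts: $\hat{\mathcal A}$ is unitary on $L^2(\rdd)$, and the tensor product satisfies $\scal{f_1\otimes\bar g_1}{f_2\otimes\bar g_2}_{L^2(\rdd)}=\scal{f_1}{f_2}\overline{\scal{g_1}{g_2}}$. We first take $f_j,g_j\in\cS(\rd)$, so that every quantity is a genuine integral. By definition, $W_{\mathcal A}(f_j,g_j)=\hat{\mathcal A}(f_j\otimes\bar g_j)$, and unitarity gives
\begin{equation}
\scal{W_{\mathcal A}(f_1,g_1)}{W_{\mathcal A}(f_2,g_2)}_{L^2}=\scal{\hat{\mathcal A}(f_1\otimes\bar g_1)}{\hat{\mathcal A}(f_2\otimes\bar g_2)}_{L^2}=\scal{f_1\otimes\bar g_1}{f_2\otimes\bar g_2}_{L^2}.
\end{equation}
The phase ambiguity in the lift $\hat{\mathcal A}$ does not matter here. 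Replacing $\hat{\mathcal A}$ by $c\hat{\mathcal A}$ with $|c|=1$ multiplies both entries of the inner product by $c$, and the two factors cancel as $c\bar c=1$.

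Next we evaluate the right-hand side with Fubini's theorem:
\begin{equation}
\int_{\rdd}f_1(x)\overline{g_1(y)}\,\overline{f_2(x)}\,g_2(y)\,dx\,dy=\Big(\int_{\rd}f_1\overline{f_2}\Big)\overline{\Big(\int_{\rd}g_1\overline{g_2}\Big)}=\scal{f_1}{f_2}\overline{\scal{g_1}{g_2}}.
\end{equation}
This proves the identity on Schwartz functions. The case $f_1=f_2=f$, $g_1=g_2=g$ then gives $\norm{W_{\mathcal A}(f,g)}_2^2=\norm f_2^2\norm g_2^2$.

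To pass to general $f_j,g_j\in L^2(\rd)$ we use density. The map $(f,g)\mapsto f\otimes\bar g$ is sesquilinear and satisfies $\norm{f\otimes\bar g}_{L^2(\rdd)}=\norm f_2\norm g_2$, so it is jointly continuous from $L^2\times L^2$ to $L^2(\rdd)$. Composing with the bounded operator $\hat{\mathcal A}$ gives the continuity of $W_{\mathcal A}$ on $L^2$, which is Theorem~\ref{thm:action-metaplectic-wigner-distribution}(i). Both sides of the identity are therefore continuous in $(f_1,g_1,f_2,g_2)$, and we approximate each function by Schwartz functions.

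None of these steps is a real obstacle. The only point needing care is that the $L^2$ definition of $W_{\mathcal A}$ agrees with the Schwartz one, and this holds because $\hat{\mathcal A}$ is unitary on $L^2(\rdd)$ and maps $\cS(\rdd)$ into itself. Alternatively, one can skip the density argument and apply unitarity directly in $L^2(\rdd)$, computing the tensor inner product via Fubini, since $f_1\overline{f_2}$ and $g_1\overline{g_2}$ are integrable.
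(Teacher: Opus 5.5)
Your proof is correct and is the standard argument: unitarity of $\hat{\mathcal A}$ on $L^2(\rdd)$ reduces the claim to $\scal{f_1\otimes\bar g_1}{f_2\otimes\bar g_2}=\scal{f_1}{f_2}\overline{\scal{g_1}{g_2}}$, which is how the identity is obtained in the literature the paper relies on (the paper states it without proof). As you note at the end, the direct $L^2$ computation via Fubini already suffices, so the Schwartz-density step is optional.
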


\subsection{Classical examples}

The following examples connect the abstract definition with the familiar analyzers. The displayed formulas use the standard time-frequency normalization; their metaplectic lifts are understood with the constant-phase convention stated above.

\begin{example}[STFT]
Consider the symplectic matrix in block form
\begin{equation} \label{eq:matrix-STFT}
    \cA_{st}=
    \begin{pmatrix}
        I_d & -I_d & 0_d & 0_d \\
        0_d & 0_d & I_d & I_d \\
        0_d & 0_d & 0_d & -I_d \\
        -I_d & 0_d & 0_d & 0_d
    \end{pmatrix}\in\Sp(2d,\bR),
\end{equation}
and its metaplectic representation ${\hat\cA_{st}}\in\Mp(2d,\bR)$. Then
\begin{equation}
 {\hat\cA_{st}}(f\otimes\overline{g})=V_gf, \qquad f,g\in L^2(\rd).
\end{equation}
Therefore, for $\mathcal A=\cA_{st}$, we recover the STFT.
\end{example}

\begin{example}[$\tau$-Wigner distributions]
Let $\tau\in\bR$, and consider the symplectic matrix in block form
\begin{equation} \label{eq:matrix-tau-wigner}
    \mathcal A_{\tau}=
    \begin{pmatrix}
        (1-\tau)I_d & \tau I_d & 0_d & 0_d \\
        0_d & 0_d & \tau I_d & -(1-\tau)I_d \\
        0_d & 0_d & I_d & I_d \\
        -I_d & I_d & 0_d & 0_d
    \end{pmatrix}\in\Sp(2d,\bR),
\end{equation}
and its metaplectic representation $\widehat{\mathcal A}_{\tau}\in\Mp(2d,\bR)$.  {For $\mathcal A=\mathcal A_{\tau}$ we recover precisely the $\tau$-Wigner distribution defined in \eqref{intro.def.Wtau}.} In particular, we retrieve the classical Wigner distribution for $\tau=1/2$, i.e.,
\begin{equation}
    W(f,g)(x,\xi)=\int_{\rd}f(x+y/2)\overline{g(x-y/2)}e^{-2\pi i\xi\cdot y}\ud y, \qquad f,g\in L^2(\rd), \, x,\xi\in\rd
\end{equation}
and 
\begin{equation}
    \mathcal A_{1/2}=
    \begin{pmatrix}
        \frac{1}{2}I_d & \frac{1}{2}I_d & 0_d & 0_d \\
        0_d & 0_d & \frac{1}{2}I_d & -\frac{1}{2}I_d \\
        0_d & 0_d & I_d & I_d \\
        -I_d & I_d & 0_d & 0_d
    \end{pmatrix}.
\end{equation}
\end{example}

\begin{example}[Ambiguity function]
 {Consider the partial Fourier transform with respect to the last $d$ variables, already defined up to a constant phase in \eqref{intro.defF2}.} It extends unitarily to $L^2(\rdd)$ and, up to a constant phase, is metaplectic. Its symplectic projection is
\begin{equation}
    \mathcal A_{\mathrm{FT2}}=
    \begin{pmatrix}
        I_d & 0_d & 0_d & 0_d \\
        0_d & 0_d & 0_d & I_d \\
        0_d & 0_d & I_d & 0_d \\
        0_d & -I_d & 0_d & 0_d
    \end{pmatrix}\in\Sp(2d,\bR).
\end{equation}
Composing $\cF_2$ with the unimodular rescaling $\mathfrak T_{E_{\mathrm{amb}}}$, $E_{\mathrm{amb}}=\left(\begin{smallmatrix}\frac12 I_d& I_d\\ -\frac12 I_d& I_d\end{smallmatrix}\right)$, one obtains the cross-ambiguity function
\begin{equation}
 \operatorname{Amb}(f,g)(x,\xi)=\cF_2\mathfrak T_{E_{\mathrm{amb}}}(f\otimes\bar g)(x,\xi)
 =\int_{\rd}f(y+x/2)\overline{g(y-x/2)}e^{-2\pi i y\cdot\xi}\ud y, \qquad f,g\in L^2(\rd),\; x,\xi\in\rd.
\end{equation}
The change of variables $y'=y+x/2$ shows that $\operatorname{Amb}(f,g)(x,\xi)=e^{\pi i x\cdot\xi}V_gf(x,\xi)$. Thus it is obtained from the STFT by a quadratic chirp on phase space.
\end{example}

\subsection{Equivalence of modulation-space norms}

The preceding examples show that the metaplectic family is algebraically broad.  {We now return to the characterization problem formulated in \eqref{intro.equiv.mod.spaces}: which members of this family may replace the STFT as genuine phase-space analyzers without changing the underlying modulation space?}

The classical Wigner distribution 
already illustrates both the possibility of changing analyzer and the need to transform the weight. The relation
\begin{equation}\label{eq:wigner-stft-rescaling}
 W(f,g)(x,\xi)=2^d e^{4\pi i x\cdot\xi}V_{\cI g}f(2x,2\xi),
 \qquad \cI g(t)=g(-t),
\end{equation}
leads, for $0<p,q\leq\infty$ and $0\neq g\in\cS(\rd)$, to
\begin{equation}
 \|f\|_{M_m^{p,q}}\asymp\|W(f,g)\|_{L_{m\circ(2I_{2d})}^{p,q}}.
\end{equation}
If $m(2z)\asymp m(z)$, as for radial polynomial weights, the weight on the right may be written simply as $m$. 

For $\tau\in(0,1)$, the analogous statement is
\begin{equation}
 \|f\|_{M_m^{p,q}}\asymp\|W_\tau(f,g)\|_{L_{m\circ E_\tau^{-1}}^{p,q}},
\end{equation}
where
\begin{equation}\label{defEtau}
 E_\tau=\begin{pmatrix}
 (1-\tau)I_d&0_d\\0_d&\tau I_d
 \end{pmatrix}.
\end{equation}
The associated window is reflected and rescaled. In the Banach range the Schwartz assumption can be relaxed when this transformed window belongs to $M_v^1$.

 {For a general symplectic matrix $\mathcal A\in\Sp(2d,\bR)$, the answer to \eqref{intro.equiv.mod.spaces} relies on the mapping properties of metaplectic operators on weighted mixed-norm modulation spaces.}
Here the distinction between the one-sided weight $1\otimes v_s$ and $v_s$ used radially on phase space is essential: a general symplectic transformation mixes space and frequency, so an arbitrary metaplectic operator cannot preserve a frequency-only weight.

\begin{theorem}[Sharp metaplectic action on $M^{p,q}$]\label{thm:sharp-metaplectic-action}
 Let $1\leq p,q\leq\infty$ and let $\widehat \delta\in\Mp(d,\bR)$ have symplectic projection 
 \begin{equation}
     \delta=\begin{pmatrix}
         A & B\\
         C & D
     \end{pmatrix}.
 \end{equation}
 Then the following are equivalent:
\begin{enumerate}[(i)]
 \item $\widehat \delta:M^{p,q}(\rd)\to M^{p,q}(\rd)$ is well defined.
 \item $\widehat \delta:M^{p,q}(\rd)\to M^{p,q}(\rd)$ is bounded (indeed, an automorphism).
 \item either $p=q$, or $p\neq q$ and $\delta$ is upper block triangular, equivalently $C=O_d$.
\end{enumerate}
If $m$ is a polynomially bounded moderate weight satisfying
\begin{equation}\label{eq:weight-metaplectic-compatibility}
 m\asymp m\circ \delta^{-1},
\end{equation}
then the corresponding boundedness result transfers to $M_m^{p,q}(\rd)$.
 \end{theorem}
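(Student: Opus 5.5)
The plan is to reduce everything to how the STFT transforms under $\widehat\delta$, and then to the behaviour of linear changes of variables on mixed-norm Lebesgue spaces. The basic tool is the covariance identity
\begin{equation}
 |V_{\widehat\delta g}(\widehat\delta f)(z)|=|V_gf(\delta^{-1}z)|,\qquad z\in\rdd,
\end{equation}
which follows from the intertwining relation $\widehat\delta\pi(z)\widehat\delta^{-1}=\pi(\delta z)$ up to a phase. Since $\widehat\delta g\in\cS(\rd)$ whenever $g\in\cS(\rd)$, Theorem~\ref{thm:window-independence-stft} gives
\begin{equation}
 \|\widehat\delta f\|_{M_m^{p,q}}\asymp\|(V_gf)\circ\delta^{-1}\|_{L^{p,q}_m}.
\end{equation}
So (ii) amounts to the boundedness of $F\mapsto F\circ\delta^{-1}$ on the range of $V_g$ inside $L^{p,q}$. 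The implication (ii)$\Rightarrow$(i) is trivial. For (i)$\Rightarrow$(ii) I would use the closed graph theorem: $\widehat\delta$ is continuous on $\cS'$, $M^{p,q}\hookrightarrow\cS'$ continuously, so the graph is closed, and a well-defined map $M^{p,q}\to M^{p,q}$ is automatically bounded. Applying the same argument to $\widehat\delta^{-1}$, whose projection $\delta^{-1}$ is again upper triangular when $C=O_d$, gives the automorphism claim.

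For (iii)$\Rightarrow$(ii), the case $p=q$ is immediate, because a linear change of variables with $|\det\delta|=1$ is an isometry of $L^p(\rdd)$. If $C=O_d$, then $\delta^{-1}=\left(\begin{smallmatrix}A^{-1}&-A^{-1}B D^{-1}\\0&D^{-1}\end{smallmatrix}\right)$ with $D=A^{-\top}$. The map $(x,\xi)\mapsto(A^{-1}x-A^{-1}BD^{-1}\xi,\,D^{-1}\xi)$ acts, for fixed $\xi$, as an affine change in $x$, and the inner $L^p_x$ norm picks up only a Jacobian factor. The outer substitution $\xi\mapsto D^{-1}\xi$ then costs another constant, so the operator is bounded on $L^{p,q}$. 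For weights, assume $m\asymp m\circ\delta^{-1}$. Then $\|F\circ\delta^{-1}\|_{L^{p,q}_m}\asymp\|(Fm)\circ\delta^{-1}\|_{L^{p,q}}$, and the unweighted bound transfers directly.

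The main obstacle is the necessity direction: assuming $p\neq q$ and $C\neq O_d$, I need a contradiction. I would first factor $\delta$ as a product of an upper-triangular symplectic matrix (harmless, by the step above) and a matrix whose lower-left block $C$ is nonzero. Using the Iwasawa/free-symplectic decomposition, I would reduce to $\widehat\delta$ being a partial Fourier transform in some coordinate directions, composed with bounded pieces. In the extreme case this is $\cF$ itself, and \eqref{eq:fourier-modulation-wiener} identifies $\cF M^{p,q}=W(\cF L^p,\ell^q)$. This differs from $M^{p,q}$ when $p\neq q$: I would test on a sum of well-separated Gaussians $\sum_{k\le N}\pi(k,0)\varphi$ together with its dilated, frequency-spread counterpart, and compare norms of order $N^{1/p}$ against $N^{1/q}$. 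For a partial transform in fewer directions, the same tensor-product test functions in the affected coordinates give the same growth mismatch. Alternatively, I would invoke the characterization of bounded linear changes of variables on mixed-norm spaces from \cite{FuhrShafkulovska2024}, applied on the Gabor-coefficient side via Theorem~\ref{thm:gabor-characterization-Banach}. Once boundedness is established, that result forces $\delta^{-1}$ to preserve the $\xi$-fibration, which is exactly $C=O_d$.
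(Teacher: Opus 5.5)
Your sufficiency argument and your closed-graph proof of (i)$\Rightarrow$(ii) are correct. The sufficiency part consists of the covariance $|V_{\widehat\delta g}\widehat\delta f|=|V_gf\circ\delta^{-1}|$, the affine-in-$x$ form of $\delta^{-1}$ when $C=O_d$, and the weight transfer via $m\asymp m\circ\delta^{-1}$. For necessity you take a genuinely different route from the source the paper relies on, but one step needs correcting.

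The paper gives no proof here; it cites F\"uhr--Shafkulovska. Their argument characterizes which linear changes of variables are bounded on mixed-norm spaces and then transports this to the closed subspace $V_g(M^{p,q})\subset L^{p,q}$. That transport is the substantive part. Your fallback of ``applying it on the Gabor-coefficient side'' is therefore really just the citation: unboundedness on all of $L^{p,q}$ says nothing directly about the range of $V_g$, and $C_g(\widehat\delta f)$ is governed by samples on the deformed lattice $\delta^{-1}\Lambda$.

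Your main route instead divides out the upper block-triangular subgroup $P$ on both sides, which acts by automorphisms by your sufficiency step. You then disprove boundedness of the remaining partial Fourier transform using $\sum_{j\le N}T_{Kje_1}\varphi$ tensored with a Gaussian, whose norm before and after the transform grows like $N^{1/p}$ and $N^{1/q}$ respectively. This is elementary and gives explicit counterexamples. The cited approach instead gives a reusable change-of-variables criterion, of the kind behind the triangularity conditions used later for $W_{\mathcal A}$ and Wiener amalgams.

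The step to fix is the decomposition. Neither the Iwasawa decomposition nor a free-symplectic factorization isolates a single Fourier factor the way you need; a product of two free matrices only gives a word $PJPJP$. The right tool is the Bruhat decomposition relative to $P$. The lower-left block of $p_1\delta p_2$ is $(p_1)_{22}\,C\,(p_2)_{11}$, so $\operatorname{rank}C$ is a double-coset invariant, and $\Sp(d,\bR)=\bigsqcup_{k=0}^{d}PJ_kP$ with $J_k$ the partial Fourier transform in $k$ coordinates. Hence $C\neq O_d$ gives $\delta=p_1J_kp_2$ with $k\geq1$. Boundedness of $\widehat\delta$ would then force boundedness of $\widehat{J_k}=\widehat{p_1}^{-1}\widehat\delta\,\widehat{p_2}^{-1}$ (up to a phase), which your test functions rule out. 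The Iwasawa route, $\delta=up$ with $u$ symplectic and orthogonal, can be salvaged, but only after a further cosine--sine decomposition of $u$ into coordinatewise rotations.
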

 \noindent This is the sharp classification of
\cite{FuhrShafkulovska2024}. See also the weighted and quasi-Banach discussion in
\cite{CorderoGiacchi2024}.  Since $v_s(\delta z)\asymp v_s(z)$ for every fixed invertible linear map $\delta$, condition~\eqref{eq:weight-metaplectic-compatibility} is automatic for radial polynomial phase-space weights.  Consequently every metaplectic operator acts continuously on the diagonal spaces $M_{v_s}^{p}$ and, in the quasi-Banach diagonal regime, the analogous statement follows from the extensions summarized in
\cite{CorderoGiacchi2024}.  By contrast, no such unconditional invariance should be claimed for $M_{1\otimes v_s}^{p,q}$.

\subsection{Shift-invertibility and triangularity}

 {We use the shift-invertibility condition introduced in Definition~\ref{intro.defSI}. With the block notation \eqref{intro.blockA}, the associated matrix $E_{\mathcal A}$ is as in \eqref{intro.defEA} and $W_{\mathcal A}$ is shift-invertible if and only if $E_{\mathcal A}$ is invertible.}
 {
Together with $E_\cA$ defined in \eqref{intro.defEA}, we identify three other submatrices of $\cA$:
\begin{equation}
    \cE_\cA=\begin{pmatrix}
        A_{12} & A_{14}\\
        A_{22} & A_{24}
    \end{pmatrix}, \qquad 
    F_\cA=\begin{pmatrix}
        A_{31} & A_{33}\\
        A_{41} & A_{43}
    \end{pmatrix}, \qquad \cF_\cA=\begin{pmatrix}
        A_{32}& A_{34}\\
        A_{42} & A_{44}
    \end{pmatrix}.
\end{equation}
The invariance relation $\cA^\top J\cA=J$ defining the symplectic group reads, in terms of $E_\cA$, $\cE_\cA$, $F_\cA$ and $\cF_\cA$ as
\begin{equation}
    \begin{cases}
        E_\cA^\top F_\cA-F_\cA^\top E_\cA=J,\\
        \cE_\cA^\top \cF_\cA-\cF_\cA^\top\cE_\cA=J,\\
        E_\cA^\top\cF_\cA-F_\cA^\top\cE_\cA=O_{2d},
    \end{cases}
\end{equation}
where, with a slight abuse of notation, $J$ here denotes the canonical symplectic matrix of $\rdd$, as shown in \cite{CorderoGiacchi2024MetaplecticGabor}. Moreover, Lemma~2.5 therein relates $E_\cA$ and $\cE_\cA$ as follows.
\begin{theorem}\label{thmGA}
Let
\begin{equation}
    L=\begin{pmatrix}
        -I_d & O_d\\
        O_d & I_d
    \end{pmatrix}.
\end{equation}
    For $\cA\in\Sp(2d,\bR)$, we have that $\det(E_\cA)=(-1)^d\det(\cE_\cA)$. Moreover, if $E_\cA\in\mathrm{GL}(2d,\bR)$, then $\delta_\cA:=E_\cA^{-1}\cE_\cA L\in\Sp(d,\bR)$.
\end{theorem}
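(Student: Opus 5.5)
The plan is to work entirely with the block relations displayed just before the statement. The key object is the $4d\times 4d$ matrix obtained from $\cA$ by reordering columns (first, third, second, fourth block columns). In this ordering,
\begin{equation}
    \cA P=\begin{pmatrix}
        E_\cA & \cE_\cA\\
        F_\cA & \cF_\cA
    \end{pmatrix},
\end{equation}
where $P$ is the block permutation swapping the second and third $d$-block columns. The relations $E_\cA^\top F_\cA-F_\cA^\top E_\cA=J$, $\cE_\cA^\top\cF_\cA-\cF_\cA^\top\cE_\cA=J$ and $E_\cA^\top\cF_\cA-F_\cA^\top\cE_\cA=O_{2d}$ say that $\cA P$ maps the form $\mathrm{diag}(J,J)$ on $\bR^{4d}$ to the standard form $J_{4d}$. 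The determinant claim and the symplecticity claim will both be read off from this.

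For the symplecticity of $\delta_\cA$, assume $E_\cA$ is invertible. Transpose the third relation to get $\cF_\cA^\top E_\cA=\cE_\cA^\top F_\cA$. Set $M:=F_\cA E_\cA^{-1}$. The first relation can be rewritten as $M-M^\top=E_\cA^{-\top}JE_\cA^{-1}$, that is, $E_\cA^\top(M-M^\top)E_\cA=J$. The third relation gives $\cF_\cA=E_\cA^{-\top}F_\cA^\top\cE_\cA=M^\top\cE_\cA$. Substituting into the second relation,
\begin{equation}
    J=\cE_\cA^\top M^\top\cE_\cA-\cE_\cA^\top M\cE_\cA=-\cE_\cA^\top(M-M^\top)\cE_\cA.
\end{equation}
Writing $\delta=E_\cA^{-1}\cE_\cA$, we have $\cE_\cA=E_\cA\delta$, so
\begin{equation}
    J=-\delta^\top E_\cA^\top(M-M^\top)E_\cA\delta=-\delta^\top J\delta.
\end{equation}
Hence $\delta^\top J\delta=-J$. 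Since $L^\top JL=-J$ by direct check, we get $(\delta L)^\top J(\delta L)=L(-J)L=J$, so $\delta_\cA=\delta L\in\Sp(d,\bR)$.

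For the determinant identity, first take $E_\cA$ invertible. Then $\cE_\cA=E_\cA\delta$ with $\delta L$ symplectic, so $\det\delta L=1$ and $\det\delta=\det L^{-1}=(-1)^d$. This gives $\det\cE_\cA=(-1)^d\det E_\cA$.

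The main obstacle is the case $\det E_\cA=0$, where we must show $\det\cE_\cA=0$ as well. The simplest route is the symmetric argument: if $\cE_\cA$ is invertible, run the same computation with the roles of $(E_\cA,F_\cA)$ and $(\cE_\cA,\cF_\cA)$ exchanged. This uses the transposed third relation, now with a sign flip, and shows $\cE_\cA^{-1}E_\cA$ is anti-symplectic and in particular invertible. So $E_\cA$ and $\cE_\cA$ are invertible simultaneously, and the identity holds in the degenerate case with both sides zero.

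As a fallback, one can use a density or continuity argument. By \cite[Theorem 1.5]{Giacchi}, matrices with $E_\cA$ invertible are dense in $\Sp(2d,\bR)$, and both sides of the identity are continuous in $\cA$, so it extends to all of $\Sp(2d,\bR)$.
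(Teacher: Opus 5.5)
Your proof is correct and complete. The paper gives no argument for this statement in its text: it displays the three block identities and imports the result from Lemma~2.5 of \cite{CorderoGiacchi2024MetaplecticGabor}. Your derivation is therefore a self-contained proof built on exactly those identities. Each step checks out:
\begin{itemize}
\item the first identity gives $E_\cA^\top(M-M^\top)E_\cA=J$;
\item the third identity gives $\cF_\cA=M^\top\cE_\cA$;
\item substituting into the second gives $\delta^\top J\delta=-J$;
\item $L^\top JL=-J$ then yields $\delta_\cA\in\Sp(d,\bR)$, hence $\det\delta=(-1)^d$.
\end{itemize}
Your handling of the singular case is also right. The system is invariant under $(E_\cA,F_\cA)\leftrightarrow(\cE_\cA,\cF_\cA)$, since the third identity is sent to minus its transpose. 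So invertibility of $\cE_\cA$ forces invertibility of $E_\cA$, and the two determinants vanish together. The density fallback is therefore superfluous. It is also best avoided: it brings in a heavier theorem for an elementary identity, and you would have to check that \cite[Theorem~1.5]{Giacchi} is not itself proved using this lemma.

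If you want to avoid the case split, use $\cA J\cA^\top=J$ (valid because $\Sp(2d,\bR)$ is closed under transposition) instead of $\cA^\top J\cA=J$. Its upper-left $2d\times 2d$ block, after your column permutation, reads
\begin{equation}
E_\cA J E_\cA^\top+\cE_\cA J\cE_\cA^\top=O_{2d}.
\end{equation}
This says the first $2d$ rows of $\cA$ span a Lagrangian subspace. Taking determinants gives $(\det E_\cA)^2=(\det\cE_\cA)^2$ at once. When $E_\cA$ is invertible, it gives $\delta J\delta^\top=-J$, hence $(\delta L)J(\delta L)^\top=J$, and the sign $(-1)^d$ follows from $\det(\delta L)=1$ as in your argument. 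This variant uses only the top half of $\cA$ and no auxiliary $M$. Yours has the merit of relying only on the relations the paper actually records.
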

}
 {We now record the basic examples needed below.}

\begin{example}[STFT]
For the short-time Fourier transform, the matrix $\mathcal A=\cA_{st}$ is shown in \eqref{eq:matrix-STFT}. The associated matrix $E_{st}:=E_{\cA_{st}}$ is given by
\begin{equation}
    E_{st}=
    \begin{pmatrix}
        I_d & 0_d \\
        0_d & I_d
    \end{pmatrix},
\end{equation}
so that $\cA_{st}$ is shift-invertible. Equivalently, it suffices to recall that
\begin{equation}
    |V_g\pi(w)f|=|T_wV_gf|,  \quad f,g\in L^2(\rd), \, w\in\rdd.
\end{equation}
\end{example}

\begin{example}[$\tau$-Wigner distributions]
    For $\tau$-Wigner distributions, with $\tau\in\bR$, the matrix $\mathcal A=\mathcal A_{\tau}$ is shown in \eqref{eq:matrix-tau-wigner}. The associated matrix $E_{\tau}:=E_{\mathcal A_{\tau}}$ is given by
    \eqref{defEtau} so that $\mathcal A_{\tau}$ is shift-invertible if and only if $\tau\in\bR\setminus\{0,1\}$.
\end{example}

The importance of shift-invertibility is not merely that $E_{\mathcal A}$ can be inverted. It identifies precisely the metaplectic Wigner distributions that are STFTs in deformed coordinates.

\begin{theorem}[Shift-invertibility as a rescaled-STFT principle]\label{thm:rescaled-stft}
Let $W_{\mathcal A}$ be shift-invertible. Consider the symmetric matrix
\begin{equation}
    N_\cA=\begin{pmatrix}
        A_{11}^\top A_{31}+A_{21}^\top A_{41} & A_{11}^\top A_{33}+A_{21}^\top A_{43}\\
        A_{13}^\top A_{31}+A_{23}^\top A_{41}+I_d & A_{13}^\top A_{33}+A_{23}^\top A_{43}
    \end{pmatrix}
\end{equation}
and the metaplectic operator $\widehat{\delta_\cA}\in\Mp(d,\bR)$ associated to $\delta_\cA$, defined in Theorem \ref{thmGA}.
Then, up to an overall sign,
\begin{equation}\label{eq:rescaled-stft}
 W_{\mathcal A}(f,g)(z)
 =|\det(E_{\mathcal A})|^{-1/2}
 \Phi_{N_{\mathcal A}}(E_{\mathcal A}^{-1}z)
 V_{\widehat{\delta_{\mathcal A}}g}f(E_{\mathcal A}^{-1}z).
\end{equation}
\end{theorem}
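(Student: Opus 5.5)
Our approach is to factor $\cA$ inside $\Sp(2d,\bR)$ as a product of elementary symplectic matrices whose metaplectic lifts are explicit, and then read off \eqref{eq:rescaled-stft} from the resulting operator identity. Since the projection $\pi^{Mp}$ is a homomorphism and phases are ignored, it suffices to prove the matrix identity
\begin{equation}
 \cA=\cD_{E_\cA}\,V_{N_\cA}\,\cA_{st}\,(I_{2d}\oplus\delta_\cA)^{\sharp},
\end{equation}
where $\cD_{E}=\mathrm{diag}(E,E^{-\top})$ is the projection of $F\mapsto|\det E|^{-1/2}F(E^{-1}\cdot)$, and $V_{N}=\left(\begin{smallmatrix} I&O\\ N&I\end{smallmatrix}\right)$ (or its variant conjugated by $\cD_E$) is the projection of the chirp product $\mathfrak p_N$. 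The factor $(\cdot)^\sharp$ is the symplectic matrix of $\bR^{4d}$ acting on $f\otimes\bar g$ by $f\otimes\bar g\mapsto f\otimes\overline{\widehat{\delta_\cA}g}$; its block form is obtained by letting $\delta_\cA$ act on the $(x_2,\xi_2)$ coordinates, with the conjugation producing a sign flip, which is exactly where $L$ enters. Applying $\hat\cA$ to $f\otimes\bar g$ and using $\hat\cA_{st}(f\otimes\bar h)=V_hf$ then yields the formula, with the chirp evaluated at $E_\cA^{-1}z$ because the rescaling is applied last.

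We would proceed as follows. First, set $\cB:=\cA_{st}^{-1}\cD_{E_\cA}^{-1}\cA$, so that $E_\cB$ becomes the identity. This follows from a direct block computation: the first block row pair of $\cD_{E}^{-1}\cA$ is $E_\cA^{-1}$ times the rows of $\cA$, and left multiplication by $\cA_{st}^{-1}$ is controlled by the explicit matrix \eqref{eq:matrix-STFT}. Second, use the structure of symplectic matrices with prescribed $E$-block. Any $\cB$ with $E_\cB=I$ and with the relations $E^\top F-F^\top E=J$ and $E^\top\cF-F^\top\cE=O$ from the display preceding Theorem~\ref{thmGA} is forced to be a product of a lower-triangular shear, a block acting on the second tensor factor only, and $\cA_{st}$. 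Concretely, $F_\cB-F_\cB^\top=J$ means that $F_\cB$ is symmetric up to the constant antisymmetric part $\tfrac12J$, and this symmetric part produces the chirp $N_\cA$. The relation $\cF_\cB=F_\cB^\top\cE_\cB$ shows that the remaining columns are determined by $\cE_\cB$, which must equal $\delta_\cA L^{-1}$ by Theorem~\ref{thmGA}. Third, translate back: conjugating the shear by $\cD_{E_\cA}$ and expanding $F_\cA^\top E_\cA$ blockwise gives the stated $N_\cA$. The extra $I_d$ in the lower-left block comes from the antisymmetric correction $J$, together with the fact that $\Phi_N$ only depends on the symmetric part of $N$. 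We would check this entrywise.

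The main obstacle is the bookkeeping in the second and third steps: determining precisely which symplectic matrix on $\bR^{4d}$ implements $g\mapsto\widehat{\delta}g$ inside $\bar g$ (complex conjugation reverses $\delta$ to $L\delta L$), and verifying that the leftover block is exactly $\delta_\cA=E_\cA^{-1}\cE_\cA L$ rather than its inverse or transpose. We would first test the whole identity on $\cA_{st}$, where $E=I$, $N=\left(\begin{smallmatrix}O&O\\ I&O\end{smallmatrix}\right)$ and $\delta=I$, and on $\cA_{1/2}$, to compare with \eqref{eq:wigner-stft-rescaling} and fix these sign conventions before doing the general computation. Finally, the scalar factor $|\det E_\cA|^{-1/2}$ follows from unitarity of $\mathfrak T_E$, and the residual overall sign from the two-to-one projection, which explains the qualifier ``up to an overall sign''.
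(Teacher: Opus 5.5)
The paper gives no proof of this statement; it quotes it from \cite{CorderoGiacchi2024MetaplecticGabor}. Your overall plan is a sound way to prove it: establish $\cA=\cD_{E_\cA}V_{N_\cA}\cA_{st}\Delta$, with $\Delta$ acting only on the $(x_2,\xi_2)$ coordinates, then apply the lifts to $f\otimes\bar g$. However, the two computational steps that carry the content are wrong as written.

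First, $\cA_{st}^{-1}\cD_{E_\cA}^{-1}\cA$ does not have identity $E$-block. For $\cA=\cA_{st}$ it equals $I_{4d}$, whose $E$-block is $\mathrm{diag}(I_d,O_d)$, which is singular. The matrix with $E$-block $I_{2d}$ is $\mathcal B:=\cD_{E_\cA}^{-1}\cA$, which has $F_{\mathcal B}=E_\cA^\top F_\cA$, $\cE_{\mathcal B}=E_\cA^{-1}\cE_\cA=\delta_\cA L$ (by definition, not by Theorem~\ref{thmGA}) and $\cF_{\mathcal B}=E_\cA^\top\cF_\cA$. The matrix you must show acts trivially on $(x_1,\xi_1)$ is $\cA_{st}^{-1}V_{N_\cA}^{-1}\mathcal B$, so the shear has to be stripped off before $\cA_{st}$.

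Second, the chirp is not the symmetric part of $F_{\mathcal B}$, and the extra $I_d$ is not an antisymmetric correction. Since $\Delta$ fixes $(x_1,\xi_1)$, the $(x_1,\xi_1)$-columns of $V_N\cA_{st}\Delta$ are $\left(\begin{smallmatrix}I_{2d}\\ N+F_{\cA_{st}}\end{smallmatrix}\right)$, with $F_{\cA_{st}}=\left(\begin{smallmatrix}O&O\\-I_d&O\end{smallmatrix}\right)$. Hence $N=F_{\mathcal B}-F_{\cA_{st}}=N_\cA$, and the $+I_d$ is just $-F_{\cA_{st}}$. $N_\cA$ is exactly symmetric because $F_{\mathcal B}$ and $F_{\cA_{st}}$ both satisfy $F-F^\top=J$. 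The symmetric part of $F_{\mathcal B}$ differs from $N_\cA$ by $\frac12\left(\begin{smallmatrix}O&I_d\\I_d&O\end{smallmatrix}\right)$, i.e. by the factor $e^{i\pi x\cdot\xi}$ that separates the ambiguity function from the STFT. For the same reason your test value is wrong. The displayed formula gives $N_{\cA_{st}}=O_{2d}$, since $A_{23}^\top A_{41}=-I_d$ cancels the $+I_d$. Your value $\left(\begin{smallmatrix}O&O\\I_d&O\end{smallmatrix}\right)$ would turn $V_gf$ into $e^{i\pi x\cdot\xi}V_gf$, so that sanity check would fail rather than fix conventions.

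Once these are corrected, the bookkeeping you were worried about closes cleanly. Write $\Delta=I\oplus\sigma$ with $\sigma$ acting on $(x_2,\xi_2)$. Since $\cE_{\cA_{st}}=L$ and $\cF_{\cA_{st}}=\left(\begin{smallmatrix}O&-I_d\\O&O\end{smallmatrix}\right)$, the $\cE$-blocks match iff $L\sigma=\delta_\cA L$, i.e. $\sigma=L\delta_\cA L$. This is exactly the projection of $\bar g\mapsto\overline{\widehat{\delta_\cA}g}$: complex conjugation sends $\pi(x,\xi)$ to $\pi(x,-\xi)$, hence conjugates $\delta$ into $L\delta L$. So no inverse or transpose appears.

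The $\cF$-blocks then match automatically. From $\cF_{\cA_{st}}L-F_{\cA_{st}}=-J$ one gets
$(N_\cA L+\cF_{\cA_{st}})\sigma=(F_{\mathcal B}-J)\delta_\cA L=F_{\mathcal B}^\top\cE_{\mathcal B}=\cF_{\mathcal B}$.
This uses $E_{\mathcal B}^\top F_{\mathcal B}-F_{\mathcal B}^\top E_{\mathcal B}=J$ and $E_{\mathcal B}^\top\cF_{\mathcal B}-F_{\mathcal B}^\top\cE_{\mathcal B}=O$ with $E_{\mathcal B}=I_{2d}$. Symplecticity of $\sigma$ is automatic, since $\Delta=\cA_{st}^{-1}V_{N_\cA}^{-1}\mathcal B$ is a product of symplectic matrices. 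Applying the lifts to $f\otimes\bar g$ then yields the stated formula, with the chirp evaluated at $E_\cA^{-1}z$, as you describe.
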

The operator $\widehat{\delta_\cA}$ mentioned in the previous statement is called {\em deformation operator} associated with $W_\cA$.
This structural characterization \cite{CorderoGiacchi2024MetaplecticGabor} makes the modulation-space result transparent. The chirp has modulus one, so it does not alter the mixed norm. A Schwartz window remains Schwartz under the metaplectic deformation. What remains is the linear change of variables $E_{\mathcal A}^{-1}$. On diagonal spaces there is no ordering issue. For $M_m^{p,q}$ with $p\neq q$, however, the two integrations in the mixed norm play different roles, and upper-triangular changes preserve their order. For Wiener amalgam spaces the local-global order is reversed, which leads to the lower-triangular condition below.
More explicitly, if $E=\left(\begin{smallmatrix}U&V\\0&W\end{smallmatrix}\right)$ with $U,W\in\GL(d,\bR)$, then
\begin{equation}\label{eq:triangular-mixed-change}
 \|F\circ E^{-1}\|_{L_m^{p,q}}
 =|\det U|^{1/p}|\det W|^{1/q}\|F\|_{L_{m\circ E}^{p,q}},
\end{equation}
with $1/\infty=0$. At each fixed outer variable, the inner change is only an invertible affine change in $x$. This is why block triangularity, rather than arbitrary invertibility, matches a mixed norm.

We can now state the characterization supplied by shift-invertibility. For mixed modulation spaces, the additional upper block-triangular hypothesis on $E_{\mathcal A}$ ensures compatibility with the order of the mixed norm. No such ordering hypothesis is required on the diagonal.

  \begin{theorem}[Shift-invertible characterization \cite{CorderoGiacchi2023JMPA,CorderoGiacchi2024MetaplecticGabor}]  \label{thm:characterization-modulation-metaplectic-wigner}
    Let $g\in\cS(\rd)\setminus\{0\}$, $\mathcal A\in\Sp(2d,\bR)$ such that $W_{\mathcal A}$ is shift-invertible, and $m$ be a $v$-moderate weight such that $m\asympconst m\circ E_{\mathcal A}^{-1}$.
    \begin{enumerate}[(i)]
        \item For $0<p\leq\infty$, we have
        \begin{equation}
            f\in M^p_m(\rd) \quad\Longleftrightarrow\quad W_{\mathcal A}(f,g)\in L^p_m(\rdd),
        \end{equation}
        with equivalence of norms $\norm{f}_{M^p_m}\asympconst\norm{W_{\mathcal A}(f,g)}_{L^p_m}$.
        \item For $0<p,q\leq\infty$, if we add the assumption that $E_{\mathcal A}$ is upper block triangular, we have
        \begin{equation}
            f\in M_m^{p,q}(\rd) \quad\Longleftrightarrow\quad W_{\mathcal A}(f,g)\in L_m^{p,q}(\rdd),
        \end{equation}
        with equivalence of norms $\norm{f}_{M_m^{p,q}}\asympconst\norm{W_{\mathcal A}(f,g)}_{L_m^{p,q}}$.
    \end{enumerate}
\end{theorem}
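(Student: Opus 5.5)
The plan is to reduce everything to the STFT via the structural identity of Theorem~\ref{thm:rescaled-stft}. Since $W_{\mathcal A}$ is shift-invertible, $E_{\mathcal A}\in\GL(2d,\bR)$ and
\begin{equation}
 |W_{\mathcal A}(f,g)(z)|=|\det E_{\mathcal A}|^{-1/2}\,|V_{\widehat{\delta_{\mathcal A}}g}f(E_{\mathcal A}^{-1}z)|,
\end{equation}
because the chirp $\Phi_{N_{\mathcal A}}$ is unimodular. Set $\gamma:=\widehat{\delta_{\mathcal A}}g$. Since metaplectic operators are homeomorphisms of $\cS(\rd)$, $\gamma\in\cS(\rd)\setminus\{0\}$. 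By window independence (Theorem~\ref{thm:window-independence-stft} in the Banach range, and its quasi-Banach analogue in Section~\ref{sec:quasi-banach} for $0<p,q\le\infty$ with Schwartz windows), $\|V_\gamma f\|_{L^{p,q}_m}\asymp\|f\|_{M^{p,q}_m}$ for all $f\in\cS'(\rd)$. The only remaining issue is to compare $\|F\circ E_{\mathcal A}^{-1}\|_{L^{p,q}_m}$ with $\|F\|_{L^{p,q}_m}$ for $F=V_\gamma f$.

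For (i), with $p=q$ the mixed norm is an ordinary weighted $L^p$ norm on $\rdd$. A linear change of variables gives
\begin{equation}
 \|F\circ E^{-1}\|_{L^p_m}=|\det E|^{1/p}\|F\|_{L^p_{m\circ E}}.
\end{equation}
The hypothesis $m\asymp m\circ E_{\mathcal A}^{-1}$ (equivalently $m\circ E_{\mathcal A}\asymp m$, by substituting $z\mapsto E_{\mathcal A}z$) then yields $\|W_{\mathcal A}(f,g)\|_{L^p_m}\asymp\|V_\gamma f\|_{L^p_m}\asymp\|f\|_{M^p_m}$, with the usual modification for $p=\infty$. Both implications $f\in M^p_m\Leftrightarrow W_{\mathcal A}(f,g)\in L^p_m$ follow, because the identity holds for every $f\in\cS'(\rd)$ and both sides are finite or infinite together.

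For (ii), with $E_{\mathcal A}=\left(\begin{smallmatrix}U&V\\0&W\end{smallmatrix}\right)$ upper block triangular, invertibility forces $U,W\in\GL(d,\bR)$. I would then invoke \eqref{eq:triangular-mixed-change}:
\begin{equation}
 \|F\circ E_{\mathcal A}^{-1}\|_{L^{p,q}_m}=|\det U|^{1/p}|\det W|^{1/q}\|F\|_{L^{p,q}_{m\circ E_{\mathcal A}}}.
\end{equation}
The weight assumption then finishes the argument exactly as in (i).

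The main obstacle is justifying \eqref{eq:triangular-mixed-change}, including the quasi-Banach and endpoint cases. To check it, write $E^{-1}(x,\xi)=(U^{-1}x-U^{-1}VW^{-1}\xi,\,W^{-1}\xi)$. For fixed $\xi$, the inner integral in $x$ is an affine change of variables, which produces the factor $|\det U|^{1/p}$ and leaves the $\xi$-dependence only through $W^{-1}\xi$. The outer change $\xi\mapsto W\xi$ then produces $|\det W|^{1/q}$. This needs only Fubini and Tonelli for nonnegative functions, so it is valid for all $0<p,q\le\infty$, with suprema in place of integrals at infinite exponents. This ordering argument is exactly why a general invertible $E_{\mathcal A}$ is not allowed when $p\neq q$.
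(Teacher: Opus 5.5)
Your proof is correct and follows essentially the same route as the paper. The rescaled-STFT formula of Theorem~\ref{thm:rescaled-stft}, together with the unimodular chirp, the Schwartz deformed window $\widehat{\delta_{\mathcal A}}g$ and window independence, reduces the statement to the linear change of variables $E_{\mathcal A}^{-1}$. This change is harmless on diagonal spaces and is handled by \eqref{eq:triangular-mixed-change} in the upper-triangular case, which you verify directly. One implicit point: you apply \eqref{eq:rescaled-stft} to $f\in\cS'(\rd)$, which follows from the $L^2$ case by density of $\cS$ in $\cS'$ and continuity of both sides in $f$.
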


For Banach exponents, the same statements hold for a nonzero $L^2$ window $g$ whenever the deformed window $\widehat{\delta_{\mathcal A}}g$ belongs to $M_v^1$. In particular, $g\in M_v^1$ suffices if the deformation preserves $M_v^1$, as happens when $v$ is equivalent to a radial polynomial weight.

\begin{remark}[The Hilbert-space case]
Shift-invertibility is a sufficient structural mechanism for these characterizations, not a necessary condition for every individual norm identity. When $p=q=2$ and $m=1$, Moyal's identity gives $\|W_{\mathcal A}(f,g)\|_2=\|f\|_2\|g\|_2$ for every metaplectic representation, including those that are not shift-invertible.
\end{remark}

In particular, Theorem~\ref{thm:characterization-modulation-metaplectic-wigner} covers the cases of STFT, $\tau$-Wigner distributions with $\tau\in\bR\setminus\{0,1\}$ and the ambiguity function.

For Wiener amalgam spaces, the local Fourier-Lebesgue norm is taken before the global spatial norm. This reverses the integration order and, correspondingly, the triangularity condition \cite[Theorem~7.2]{CorderoGiacchi2024MetaplecticGabor}.

\begin{theorem}[Wiener amalgam characterization]\label{thm:wiener-metaplectic}
Let $0<p,q\leq\infty$, $0\neq g\in\cS(\rd)$, and let $W_{\mathcal A}$ be shift-invertible. Let $m_1,m_2$ be polynomially moderate weights on $\rd$ such that $m_2(x)\asymp m_2(-x)$. Set
\begin{equation}
 \widetilde E_{\mathcal A}=JE_{\mathcal A}J,
\end{equation}
where $J$ is the canonical symplectic matrix of $\rdd$,
and assume
\begin{equation}
 m_1\otimes m_2\asymp(m_1\otimes m_2)\circ\widetilde E_{\mathcal A}^{-1}.
\end{equation}
If $E_{\mathcal A}$ is lower block triangular, then
\begin{equation}\label{eq:wiener-metaplectic-norm}
 \|f\|_{W(\cF L^p_{m_1},L^q_{m_2})}
 \asymp
 \left(\int_{\rd}\left(\int_{\rd}|W_{\mathcal A}(f,g)(x,\xi)|^p
 m_1(\xi)^p\,d\xi\right)^{q/p}m_2(x)^q\,dx\right)^{1/q},
\end{equation}
with the usual endpoint modifications. For $p=q$, the triangularity assumption may be omitted.
\end{theorem}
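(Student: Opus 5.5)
The plan is to reduce everything to the STFT by means of Theorem~\ref{thm:rescaled-stft}, and then read the Wiener amalgam norm as an STFT norm with the order of integration reversed. By \eqref{eq:rescaled-stft} we have
\begin{equation}
|W_{\mathcal A}(f,g)(x,\xi)|=|\det E_{\mathcal A}|^{-1/2}\,|V_{\gamma}f(E_{\mathcal A}^{-1}(x,\xi))|,
\end{equation}
where $\gamma=\widehat{\delta_{\mathcal A}}g$. The chirp $\Phi_{N_{\mathcal A}}$ has modulus one and disappears, and $\gamma\in\cS(\rd)\setminus\{0\}$ because metaplectic operators are homeomorphisms of $\cS$. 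The right-hand side of \eqref{eq:wiener-metaplectic-norm} is therefore, up to the constant $|\det E_{\mathcal A}|^{-1/2}$, the mixed norm of $F\circ E_{\mathcal A}^{-1}$ with $F=|V_\gamma f|$. In this mixed norm the inner integration runs over $\xi$ and the outer over $x$.

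Next I recall the amalgam side. By \eqref{eq:fourier-modulation-wiener} and \eqref{intro.FITA},
\begin{equation}
\|f\|_{W(\cF L^p_{m_1},L^q_{m_2})}\asymp\Big(\int_{\rd}\Big(\int_{\rd}|V_\gamma f(x,\xi)|^p m_1(\xi)^p\,d\xi\Big)^{q/p}m_2(x)^q\,dx\Big)^{1/q}
\end{equation}
for any Schwartz window $\gamma$. Window independence holds here by the same convolution-inequality argument as in Theorem~\ref{thm:window-independence-stft}, in its quasi-Banach form. The condition $m_2(x)\asymp m_2(-x)$ absorbs the reflection $-\xi$ coming from the fundamental identity.

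To organize the comparison I write $\Theta(x,\xi)=(\xi,x)$ for the coordinate swap. Both sides then become $L^{p,q}$ norms in the standard order, applied to $F\circ\Theta$ and to $F\circ E_{\mathcal A}^{-1}\circ\Theta$ respectively. Since
\begin{equation}
\Theta E_{\mathcal A}^{-1}\Theta=(\Theta E_{\mathcal A}\Theta)^{-1},
\end{equation}
and $\Theta E_{\mathcal A}\Theta$ is upper block triangular exactly when $E_{\mathcal A}$ is lower block triangular, I apply \eqref{eq:triangular-mixed-change}. This gives the equality of the two norms up to the factors $|\det U|^{1/p}|\det W|^{1/q}$, with the weight composed with the conjugated matrix.

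The sign differences between $J$ and $\Theta$ are harmless: $J E_{\mathcal A}J$ coincides with $\Theta E_{\mathcal A}\Theta$ up to sign changes of blocks. These sign changes are absorbed by the symmetry of $m_2$ and by the invariance of Lebesgue measure under reflections. It is in this bookkeeping that the hypothesis $m_1\otimes m_2\asymp(m_1\otimes m_2)\circ\widetilde E_{\mathcal A}^{-1}$ is exactly what allows one to restore the original weight.

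For $p=q$ no triangularity is needed. The norm is an ordinary weighted $L^p$ norm on $\rdd$, so an arbitrary invertible linear change of variables costs only $|\det E_{\mathcal A}|^{1/p}$, and the weight is again handled by the compatibility hypothesis.

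The main obstacle is tracking the coordinates carefully. One has to match the precise swap and sign conventions implicit in $\widetilde E_{\mathcal A}=JE_{\mathcal A}J$ against the order of integration in \eqref{eq:wiener-metaplectic-norm}, so that "lower triangular" is genuinely the condition that makes the inner variable $\xi$ transform affinely for each fixed outer $x$. I would handle this by writing $E_{\mathcal A}=\left(\begin{smallmatrix}U&0\\ V&W\end{smallmatrix}\right)$ explicitly and performing the substitution first in $\xi$ at fixed $x$, then in $x$. The quasi-Banach window-independence step for the amalgam norm is a second, more routine, point to verify.
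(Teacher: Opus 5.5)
Your route is the one the paper indicates. It gives no detailed argument, only Theorem~\ref{thm:rescaled-stft} and the remark that \eqref{eq:triangular-mixed-change} is applied with the integration order reversed. Your analytic core is correct. Take $\Theta(x,\xi)=(\xi,x)$, $S=\Theta E_{\mathcal A}\Theta$, and $\omega=m_1\otimes m_2$, i.e.\ $\omega(a,b)=m_1(a)m_2(b)$. Then $S$ is upper block triangular when $E_{\mathcal A}$ is lower, and you obtain \eqref{eq:wiener-metaplectic-norm} as soon as $\omega\asymp\omega\circ S^{-1}$; for $p=q$ no triangularity is needed. A side remark: your amalgam-norm display needs no symmetry at all. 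It is the definition with window $\bar\gamma$, since $\cF(f\,T_x\bar\gamma)(\xi)=V_\gamma f(x,\xi)$. Deriving it instead from \eqref{eq:fourier-modulation-wiener} and \eqref{intro.FITA} would put the reflection on $m_1$, not on $m_2$.

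The step that fails is your claim that the discrepancy between $\widetilde E_{\mathcal A}=JE_{\mathcal A}J$ and $S$ is absorbed by the evenness of $m_2$ and by reflection invariance of Lebesgue measure. Write $J=L'\Theta=\Theta L$ with $L=\mathrm{diag}(-I_d,I_d)$ and $L'=\mathrm{diag}(I_d,-I_d)$. Then $\widetilde E_{\mathcal A}^{-1}=LS^{-1}L'$. The factor $L'$ is harmless because $m_2$ is even. The factor $L$, however, reflects the argument of $m_1$. So the stated hypothesis is equivalent to $\omega\asymp\check\omega\circ S^{-1}$ with $\check\omega(a,b)=m_1(-a)m_2(b)$, which is not what you need. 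Substituting $\xi\mapsto-\xi$ in the integral does not help, because it reflects $V_\gamma f$ as well.

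This cannot be fixed by more careful bookkeeping. Take $W_{\mathcal A}(f,g)(x,\xi)=V_gf(x,-\xi)$. Then $E_{\mathcal A}=\widetilde E_{\mathcal A}=\mathrm{diag}(I_d,-I_d)$, which is diagonal and hence lower triangular. With $m_2\equiv1$ the weight hypothesis holds for every $m_1$. Yet the right-hand side of \eqref{eq:wiener-metaplectic-norm} is exactly the $W(\cF L^p_{\check m_1},L^q)$ norm, where $\check m_1(\xi)=m_1(-\xi)$. Now let $d=1$ and set $m_1(\xi)=\langle\xi\rangle^s$ for $\xi\geq0$ and $m_1(\xi)=1$ for $\xi<0$, with $s>0$; this weight is $v_s$-moderate. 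Test on $f=M_\eta\varphi$ with $\eta\to+\infty$. The left side is $\gtrsim\langle\eta\rangle^s$, while the right side stays bounded.

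So the statement needs an extra assumption, and the paper's remark does not track this sign either. One option is to require $m_1$ even as well. The other is to state the compatibility as $\omega\asymp\omega\circ S^{-1}$, which for even $m_2$ is the same as using $JE_{\mathcal A}J^{-1}=-\widetilde E_{\mathcal A}$ in place of $\widetilde E_{\mathcal A}$. Under either hypothesis your argument goes through as written.
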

Here the inner variable is $\xi$ and the outer variable is $x$, unlike in $L_m^{p,q}$. The matrix $\widetilde E_{\mathcal A}$ is upper block triangular precisely when $E_{\mathcal A}$ is lower block triangular. This is the same change-of-variables principle as in \eqref{eq:triangular-mixed-change}, applied in the reversed order.

\begin{remark}[Covariance and the Cohen class]

The Cohen class gives a complementary classification of quadratic time-frequency representations. For Schwartz inputs, a sesquilinear representation $Q$ belongs to this class if
\begin{equation}
 Q(f,g)=k*W(f,g)
\end{equation}
for a fixed kernel $k\in\cS'(\rdd)$. Within the metaplectic family, covariance under simultaneous time-frequency shifts is equivalent to Cohen-class membership \cite{CorderoRodino2023JFA}. In particular, the Wigner and $\tau$-Wigner distributions are covariant and belong to the Cohen class, whereas the STFT does not. We do not pursue the explicit matrix classification here because it answers a different question from the one driving the present section. The characterization of modulation and Wiener amalgam spaces is governed instead by shift-invertibility together with the geometry of the mixed norm. The metaplectic family is useful precisely because it contains both Cohen-class distributions and analyzers, such as the STFT, that lie outside that subclass.

\end{remark}

\subsection{Atomic characterization from a metaplectic perspective}

Since the class of metaplectic Wigner distributions contains the STFT, we can use its general definition to introduce an implicit notion of metaplectic atom, which generalizes the time-frequency shifts used to define the STFT.

\begin{definition}
    Let $\mathcal A\in\Sp(2d,\bR)$ and $z\in\rdd$. For $f\in\cS(\rd)$, define the metaplectic atom $\pi_{\mathcal A}(z)f\in\cS'(\rd)$ by
    \begin{equation}
        \scal{\varphi}{\pi_{\mathcal A}(z)f}:=W_{\mathcal A}(\varphi,f)(z), \qquad \varphi\in\cS(\rd).
    \end{equation}
    Here $\langle\varphi,u\rangle=\overline{\langle u,\varphi\rangle}$ for $u\in\cS'$.
\end{definition}

The metaplectic atoms play the same role as the time-frequency shifts for the STFT. The next example shows that this definition is consistent with classical time-frequency shifts.

\begin{example}[STFT]
    For the short-time Fourier transform, projected onto the matrix $\mathcal A=\cA_{st}$ defined in \eqref{eq:matrix-STFT}, an explicit computation yields $\pi(z)=\pi_{\cA_{st}}(z)$. Indeed, for all $z\in\rdd$ and $\varphi,f\in\cS(\rd)$, we have $\scal{\varphi}{\pi(z)f}=V_f\varphi(z)=\scal{\varphi}{\pi_{\cA_{st}}(z)f}$, and therefore $\pi(z)f=\pi_{\cA_{st}}(z)f$ for all $f\in\cS(\rd)$.
\end{example}

\begin{example}[$\tau$-Wigner distributions]
    For the $\tau$-Wigner distribution, with $\tau\in(0,1)$, projected onto the matrix $\mathcal A=\mathcal A_{\tau}$ defined in \eqref{eq:matrix-tau-wigner}, an explicit computation yields
    \begin{equation}
        \pi_{\mathcal A_{\tau}}(x,\xi)f=|\det(E_{\tau})|^{-1/2}e^{-2\pi i\frac{x\cdot\xi}{\tau}}\pi(E_{\tau}^{-1}(x,\xi))\mathfrak{T}_{\tau}f,
    \end{equation}
    where $E_{\tau}$ is defined in \eqref{defEtau} and $\mathfrak{T}_{\tau}f(t)=\frac{(1-\tau)^{d/2}}{\tau^{d/2}}f\left(-\frac{1-\tau}{\tau}t\right)$.
\end{example}

\begin{example}
    The (cross)-Rihaczek distribution is the $\tau$-Wigner distribution with $\tau=0$, given by
    \begin{equation}
        W_0(f,g)(x,\xi)=f(x)\overline{\widehat{g}(\xi)}e^{-2\pi i x\cdot\xi}, \qquad (x,\xi)\in\rdd,
    \end{equation}
    and is projected onto the matrix $\mathcal A=\mathcal A_0$ defined in \eqref{eq:matrix-tau-wigner}. An explicit computation yields
    \begin{equation}
        \pi_{\mathcal A_0}(x,\xi)f=\widehat{f}(\xi)e^{2\pi i x\cdot\xi}T_x\delta_0,
    \end{equation}
    where $\delta_0$ is the delta distribution centered on $0$.
\end{example}

The last example shows that $\pi_{\mathcal A}(z)$ may not define a function.
\begin{theorem}
    Let $\mathcal A\in\Sp(2d,\bR)$, and $z\in\rdd$. Then the metaplectic atom $\pi_{\mathcal A}(z)$ defines a linear operator from $\cS(\rd)$ to $\cS'(\rd)$.
\end{theorem}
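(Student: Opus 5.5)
The plan is to show that $f\mapsto\pi_{\mathcal A}(z)f$ is well defined, linear, and continuous from $\cS(\rd)$ into $\cS'(\rd)$ (with its weak-$\ast$ or strong topology). Fix $z\in\rdd$ and $f\in\cS(\rd)$. First I would check that the functional $\varphi\mapsto W_{\mathcal A}(\varphi,f)(z)$ is well defined. By Theorem~\ref{thm:action-metaplectic-wigner-distribution}(ii), $W_{\mathcal A}(\varphi,f)=\hat{\mathcal A}(\varphi\otimes\bar f)\in\cS(\rdd)$, so point evaluation at $z$ makes sense. Writing it as
\begin{equation}
W_{\mathcal A}(\varphi,f)(z)=\langle \hat{\mathcal A}(\varphi\otimes\bar f),\delta_z\rangle ,
\end{equation}
the map $\varphi\mapsto W_{\mathcal A}(\varphi,f)(z)$ is linear, being a composition of linear maps. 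Hence $\langle\varphi,\pi_{\mathcal A}(z)f\rangle$, defined as this value, is conjugate-linear in $\varphi$ once the convention $\langle\varphi,u\rangle=\overline{\langle u,\varphi\rangle}$ is taken into account. This is consistent with $u=\pi_{\mathcal A}(z)f$ being a linear functional in the paper's antilinear-pairing sense.

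Next I would show that $\pi_{\mathcal A}(z)f\in\cS'(\rd)$, i.e.\ that the functional is continuous on $\cS(\rd)$. The maps $\varphi\mapsto\varphi\otimes\bar f$ from $\cS(\rd)$ to $\cS(\rdd)$, then $\hat{\mathcal A}$ on $\cS(\rdd)$ (a homeomorphism, as recalled in the Introduction), then evaluation at $z$ (a continuous functional on $\cS(\rdd)$, bounded by the sup-norm seminorm) are all continuous. Equivalently, one can invoke the joint continuity in Theorem~\ref{thm:action-metaplectic-wigner-distribution}(ii) with $f$ frozen. This yields a seminorm estimate $|W_{\mathcal A}(\varphi,f)(z)|\le C_{f}\,p_N(\varphi)$ for some Schwartz seminorm $p_N$.

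Linearity of $f\mapsto\pi_{\mathcal A}(z)f$ follows from sesquilinearity. Because $W_{\mathcal A}(\varphi,f)$ is antilinear in $f$, the extra conjugation in the pairing convention makes $\langle\varphi,\pi_{\mathcal A}(z)(af_1+bf_2)\rangle=\overline{W_{\mathcal A}(af_1+bf_2,\varphi)\ldots}$. Rather than chase the convention, I would verify directly that
\begin{equation}
\pi_{\mathcal A}(z)(af_1+bf_2)=a\,\pi_{\mathcal A}(z)f_1+b\,\pi_{\mathcal A}(z)f_2
\end{equation}
by pairing both sides with an arbitrary $\varphi$. The STFT example serves as a sanity check that the conventions produce $\pi(z)$, which is linear.

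For continuity of the operator, I would use the bilinear estimate coming from the continuity of $(\varphi,f)\mapsto\hat{\mathcal A}(\varphi\otimes\bar f)$ on $\cS\times\cS$. A bound $|\hat{\mathcal A}(\varphi\otimes\bar f)(z)|\le C\,p_N(\varphi)p_N(f)$ holds because tensor product is jointly continuous with product seminorm bounds and $\hat{\mathcal A}$ is continuous on $\cS(\rdd)$. This shows that $f_n\to f$ in $\cS$ implies $\pi_{\mathcal A}(z)f_n\to\pi_{\mathcal A}(z)f$ uniformly on bounded subsets of $\cS$, i.e.\ in strong $\cS'$. I expect the only real care point to be this joint seminorm estimate, namely making the tensor-product bound explicit. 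I would handle it by noting $\sup|x^\alpha\partial^\beta(\varphi\otimes\bar f)|\le p(\varphi)p(f)$, which is immediate.
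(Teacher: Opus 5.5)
Your argument is correct and is the standard one. The paper states this result without proof, and the result rests on exactly what you use: continuity of $\varphi\mapsto\hat{\mathcal A}(\varphi\otimes\bar f)$ from $\cS(\rd)$ to $\cS(\rdd)$, composed with evaluation at $z$, together with sesquilinearity of $W_{\mathcal A}$. Your seminorm bound giving continuity in $f$ is valid, but the statement does not require it.

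The only blemish is the linearity paragraph, whose display swaps the arguments of $W_{\mathcal A}$. The check is one line: $\langle\varphi,\pi_{\mathcal A}(z)(af_1+bf_2)\rangle=W_{\mathcal A}(\varphi,af_1+bf_2)(z)=\bar a\,\langle\varphi,\pi_{\mathcal A}(z)f_1\rangle+\bar b\,\langle\varphi,\pi_{\mathcal A}(z)f_2\rangle$, combined with antilinearity of $\langle\varphi,u\rangle$ in $u$. Similarly, in your first paragraph it is $\langle\pi_{\mathcal A}(z)f,\varphi\rangle$, not $\langle\varphi,\pi_{\mathcal A}(z)f\rangle$, that is conjugate-linear in $\varphi$.
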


With the definition of metaplectic atom, we can easily find an inversion formula which extends the classical inversion formula valid for the Wigner distribution.
\begin{theorem}
    Let $\mathcal A\in\Sp(2d,\bR)$, $f,g\in L^2(\rd)$, and $\gamma\in\cS(\rd)$ such that $\scal{\gamma}{g}\neq0$. Then
    \begin{equation}
        f=\frac{1}{\scal{\gamma}{g}}\int_{\rdd}W_{\mathcal A}(f,g)(z)\pi_{\mathcal A}(z)\gamma\,dz,
    \end{equation}
    where the integral must be interpreted in the weak sense of vector-valued integration.
\end{theorem}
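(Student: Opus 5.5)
The plan is to reduce the statement to the STFT inversion formula of Theorem~\ref{inversionSTFT}, by way of a Moyal-type identity for $W_{\mathcal A}$. First fix $\varphi\in\cS(\rd)$ and interpret the weak vector-valued integral as
\begin{equation}
 \Big\langle \int_{\rdd}W_{\mathcal A}(f,g)(z)\pi_{\mathcal A}(z)\gamma\,dz,\varphi\Big\rangle
 :=\int_{\rdd}W_{\mathcal A}(f,g)(z)\,\langle \pi_{\mathcal A}(z)\gamma,\varphi\rangle\,dz .
\end{equation}
By the definition of the metaplectic atom, $\langle \pi_{\mathcal A}(z)\gamma,\varphi\rangle=\overline{\langle\varphi,\pi_{\mathcal A}(z)\gamma\rangle}=\overline{W_{\mathcal A}(\varphi,\gamma)(z)}$. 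The right-hand side therefore becomes $\langle W_{\mathcal A}(f,g),W_{\mathcal A}(\varphi,\gamma)\rangle_{L^2(\rdd)}$.

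This pairing is well defined. By Theorem~\ref{thm:action-metaplectic-wigner-distribution}(i), $W_{\mathcal A}(f,g)\in L^2(\rdd)$. By part (ii), $W_{\mathcal A}(\varphi,\gamma)\in\cS(\rdd)\subset L^2(\rdd)$. Hence the integral converges absolutely for each $\varphi$.

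Next apply the Moyal identity for metaplectic Wigner distributions:
\begin{equation}
 \langle W_{\mathcal A}(f,g),W_{\mathcal A}(\varphi,\gamma)\rangle=\langle f,\varphi\rangle\,\overline{\langle g,\gamma\rangle}=\langle f,\varphi\rangle\langle\gamma,g\rangle .
\end{equation}
This identity is nothing but unitarity of $\hat{\mathcal A}$ applied to $f\otimes\bar g$ and $\varphi\otimes\bar\gamma$. Dividing by $\langle\gamma,g\rangle\neq0$ shows that the weak integral, normalized by $1/\langle\gamma,g\rangle$, agrees with $f$ when tested against every $\varphi\in\cS(\rd)$. Since $\cS$ separates points of $\cS'$, the formula follows.

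The main obstacle is one of bookkeeping rather than analysis. The conjugations in the definition of $\pi_{\mathcal A}$, which is antilinear in the second slot of $\langle\cdot,\cdot\rangle$, must come out so that the constant is $\langle\gamma,g\rangle$ and not its conjugate. The placement of the normalizing constant in the statement is the check on this. One must also confirm that the map $z\mapsto\pi_{\mathcal A}(z)\gamma$ is weakly measurable, which is immediate because $z\mapsto W_{\mathcal A}(\varphi,\gamma)(z)$ is a Schwartz function. As a sanity check, for $\mathcal A=\mathcal A_{st}$ the argument reduces exactly to Theorem~\ref{inversionSTFT}.
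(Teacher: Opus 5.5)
Your proof is correct and is the natural argument for this statement; the chapter recalls the result without proof, and your route is the expected one. You test against $\varphi\in\cS(\rd)$, rewrite the weak integral as $\langle W_{\mathcal A}(f,g),W_{\mathcal A}(\varphi,\gamma)\rangle_{L^2(\rdd)}$ using the definition of $\pi_{\mathcal A}$, and conclude with Moyal's identity; the conjugations correctly produce the constant $\langle\gamma,g\rangle$.

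One small correction: your opening sentence says you reduce to Theorem~\ref{inversionSTFT}, but nothing is actually reduced to it. The argument follows directly from unitarity of $\hat{\mathcal A}$, and the STFT inversion formula is only the special case $\mathcal A=\mathcal A_{st}$. It is also worth noting that any unimodular phase ambiguity in $\hat{\mathcal A}$ cancels between $W_{\mathcal A}(f,g)$ and $\overline{W_{\mathcal A}(\varphi,\gamma)}$.
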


\begin{theorem}
    Let $\mathcal A\in\Sp(2d,\bR)$ have block decomposition \eqref{intro.blockA}, and consider the matrix $\mathcal A_\ast$ with block decomposition
    \begin{equation}
        \mathcal A_*=
        \begin{pmatrix}
            A_{12} & A_{11} & -A_{14} & -A_{13} \\
            A_{22} & A_{21} & -A_{24} & -A_{23} \\
            -A_{32} & -A_{31} & A_{34} & A_{33} \\
            -A_{42} & -A_{41} & A_{44} & A_{43}
        \end{pmatrix}.
    \end{equation}
    Then
    \begin{equation}
        \scal{\pi_{\mathcal A}(z)f}{g}=\scal{f}{\pi_{\mathcal A_\ast}(z)g}, \qquad f,g\in\cS(\rd), \, z\in\rdd.
    \end{equation}
    In particular, if $\pi_{\mathcal A}(z)$ extends to a continuous linear operator on $L^2(\rd)$, then
    \begin{equation}
        \pi_{\mathcal A}(z)^\ast=\pi_{\mathcal A_*}(z), \qquad z\in\rdd.
    \end{equation}
\end{theorem}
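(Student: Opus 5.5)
We have to prove $\langle \pi_{\mathcal A}(z)f,g\rangle=\langle f,\pi_{\mathcal A_*}(z)g\rangle$ for $f,g\in\cS(\rd)$. By the definition of the metaplectic atom, $\langle f,\pi_{\mathcal A_*}(z)g\rangle=W_{\mathcal A_*}(f,g)(z)$. Also $\langle \pi_{\mathcal A}(z)f,g\rangle=\overline{\langle g,\pi_{\mathcal A}(z)f\rangle}=\overline{W_{\mathcal A}(g,f)(z)}$. The identity is therefore equivalent to the statement
\begin{equation}
 W_{\mathcal A_*}(f,g)=\overline{W_{\mathcal A}(g,f)},\qquad f,g\in\cS(\rd),
\end{equation}
that is, $\hat{\mathcal A}_*(f\otimes\bar g)=\overline{\hat{\mathcal A}(g\otimes\bar f)}$. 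I will read both sides as operators applied to $F=f\otimes\bar g$.

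First, the flip $\mathfrak T_{S}F(x,y)=F(y,x)$ is metaplectic, with $S=\left(\begin{smallmatrix}0&I_d\\ I_d&0\end{smallmatrix}\right)$ acting on positions. It maps $f\otimes\bar g$ to $\bar g\otimes f$. Second, complex conjugation $\mathcal C F=\bar F$ is antiunitary and turns $\bar g\otimes f$ into $g\otimes \bar f$. Hence
\begin{equation}
 \overline{\hat{\mathcal A}(g\otimes\bar f)}=\mathcal C\hat{\mathcal A}\,\mathcal C\,\mathfrak T_S(f\otimes\bar g).
\end{equation}
The key lemma is the standard fact that $\mathcal C\hat{\mathcal A}\mathcal C$ is again metaplectic, with projection $\mathcal R\mathcal A\mathcal R$, where $\mathcal R=\mathrm{diag}(I_{2d},-I_{2d})$ is the reflection $(x,\xi)\mapsto(x,-\xi)$. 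I will check this lemma on generators: the Fourier transform (conjugation gives $\cF^{-1}$), rescalings (unchanged) and chirps ($\Phi_Q\mapsto\Phi_{-Q}$). Equivalently, it follows from $\mathcal C\rho(x,\xi;\tau)\mathcal C=\rho(x,-\xi;-\tau)$ together with the intertwining property.

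The remaining step is to verify
\begin{equation}
 \mathcal R\mathcal A\mathcal R\,\mathcal S=\mathcal A_*,\qquad \mathcal S=\mathrm{diag}(S,S).
\end{equation}
Right multiplication by $\mathcal S$ swaps block columns $1\leftrightarrow2$ and $3\leftrightarrow4$. Conjugation by $\mathcal R$ changes the sign of the off-diagonal $2d\times2d$ blocks. Together these produce exactly the displayed $\mathcal A_*$. The equality of operators then holds up to a phase, consistent with the paper's convention of identifying operators up to phase.

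Given the equality $W_{\mathcal A_*}(f,g)=\overline{W_{\mathcal A}(g,f)}$, the second claim is immediate. If $\pi_{\mathcal A}(z)$ extends boundedly to $L^2$, the identity holds on the dense set $\cS$. By the definition of the adjoint, $\pi_{\mathcal A}(z)^*$ agrees with $\pi_{\mathcal A_*}(z)$ on $\cS$, and so $\pi_{\mathcal A_*}(z)$ extends to the same bounded operator.

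The main obstacle is the phase bookkeeping. Conjugation could produce a constant phase $c$ with $\mathcal C\hat{\mathcal A}\mathcal C=c\,\hat{\mathcal A'}$, and the stated identity is exact rather than up to phase. I would handle this by fixing the lift $\hat{\mathcal A}_*:=\mathcal C\hat{\mathcal A}\mathcal C\,\mathfrak T_S$ itself, which is a legitimate metaplectic operator with projection $\mathcal A_*$. This choice agrees with the chapter's convention that a matrix determines its operator only up to a unimodular constant.
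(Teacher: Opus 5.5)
The chapter gives no proof of this statement (it is quoted from \cite{CorderoGiacchi2024MetaplecticGabor}), so there is no in-text argument to compare with. Your proof is correct and self-contained.

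\begin{itemize}
\item The reduction to $W_{\mathcal A_*}(f,g)=\overline{W_{\mathcal A}(g,f)}$ matches the chapter's pairing conventions.
\item The identity $\mathcal C\rho(x,\xi;\tau)\mathcal C=\rho(x,-\xi;-\tau)$ shows that $\mathcal C\hat{\mathcal A}\mathcal C$ intertwines $\rho$ with $\mathcal R\mathcal A\mathcal R$. This matrix is symplectic because $\mathcal R^\top J\mathcal R=-J$.
\item The flip projects to $\mathrm{diag}(S,S)$, and multiplying out $\mathcal R\mathcal A\mathcal R\,\mathrm{diag}(S,S)$ reproduces $\mathcal A_*$ block by block.
\item The adjoint statement follows as you say. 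For $g\in\cS(\rd)$, the identity shows that the distribution $\pi_{\mathcal A_*}(z)g$ is the $L^2$ function $\pi_{\mathcal A}(z)^*g$. Hence $\pi_{\mathcal A_*}(z)$ is the restriction to $\cS(\rd)$ of the bounded operator $\pi_{\mathcal A}(z)^*$.
\end{itemize}

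One correction to your phase bookkeeping. Conjugation creates no phase. Your generator check gives $\mathcal C\,\Mp(2d,\bR)\,\mathcal C=\Mp(2d,\bR)$ exactly, since $\mathcal C(i^{-d}\cF)\mathcal C=(i^{-d}\cF)^{-1}$, $\mathcal C\mathfrak p_Q\mathcal C=\mathfrak p_{-Q}$ and $\mathcal C\mathfrak T_E\mathcal C=\pm\mathfrak T_E$. The intertwining argument alone only gives this up to a unimodular factor, so the generator check is the one to rely on.

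The phase comes from the flip instead. Since $\det S=(-1)^d$, for odd $d$ the pure flip $F(x,y)\mapsto F(y,x)$ is not in $\Mp(2d,\bR)$; its metaplectic lifts are $\pm i$ times it. Compare $(e^{-i\pi/4}\cF)^2f=-i\,f(-\cdot)$ on $L^2(\bR)$.

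The identity forces $\hat{\mathcal A}_*=\mathcal C\hat{\mathcal A}\mathcal C\,\mathfrak T_S$ on all of $L^2(\rdd)$. So for odd $d$ your operator is a unimodular multiple of a metaplectic operator, not \emph{a legitimate metaplectic operator}. With genuine lifts in $\Mp(2d,\bR)$, the two sides of the identity then differ by a factor $\pm i$. The statement is therefore exact only under the chapter's convention of identifying metaplectic operators up to phase. You already invoke that convention, so the proof stands once this claim is rephrased accordingly.
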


The rescaled-STFT formula also identifies the mapping properties of the metaplectic atoms.
\begin{theorem}
    Let $\mathcal A\in\Sp(2d,\bR)$ such that $W_{\mathcal A}$ is shift-invertible, and $z\in\rdd$.
    \begin{enumerate}[(i)]
        \item $\pi_{\mathcal A}(z)$ is a surjective scalar multiple of a unitary operator on $L^2(\rd)$, and
        \begin{equation}
            \|\pi_{\mathcal A}(z)f\|_2=|\det(E_{\mathcal A})|^{-1/2}\|f\|_2, \qquad f\in L^2(\rd).
        \end{equation}
        \item $\pi_{\mathcal A}(z)$ is a topological isomorphism on $\cS(\rd)$.
        \item $\pi_{\mathcal A}(z)$ is a topological isomorphism on $\cS'(\rd)$.
    \end{enumerate}
\end{theorem}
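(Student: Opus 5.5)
The plan is to make the metaplectic atom explicit through the rescaled-STFT formula of Theorem~\ref{thm:rescaled-stft}, and then read off the three mapping properties from the known behaviour of time-frequency shifts and metaplectic operators. Fix $z\in\rdd$ and set $w:=E_{\mathcal A}^{-1}z$. For $\varphi,f\in\cS(\rd)$, Theorem~\ref{thm:rescaled-stft} gives, up to an overall sign,
\begin{equation}
 \scal{\varphi}{\pi_{\mathcal A}(z)f}=W_{\mathcal A}(\varphi,f)(z)
 =|\det E_{\mathcal A}|^{-1/2}\Phi_{N_{\mathcal A}}(w)\,\scal{\varphi}{\pi(w)\widehat{\delta_{\mathcal A}}f}.
\end{equation}
Since this holds for all $\varphi$, we obtain the identification
\begin{equation}
 \pi_{\mathcal A}(z)f=\pm|\det E_{\mathcal A}|^{-1/2}\,\overline{\Phi_{N_{\mathcal A}}(E_{\mathcal A}^{-1}z)}\,\pi(E_{\mathcal A}^{-1}z)\widehat{\delta_{\mathcal A}}f .
\end{equation}
The conjugate appears because the pairing is antilinear in the second slot. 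This matches the $\tau$-Wigner example, which serves as a consistency check.

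From this factorization each item follows quickly.
\begin{enumerate}[(i)]
 \item The operators $\pi(w)$ and $\widehat{\delta_{\mathcal A}}$ are unitary on $L^2(\rd)$, and $|\Phi_{N_{\mathcal A}}|=1$. Hence $\pi_{\mathcal A}(z)$ extends from $\cS$ to a constant multiple of a unitary operator, so it is surjective and $\|\pi_{\mathcal A}(z)f\|_2=|\det E_{\mathcal A}|^{-1/2}\|f\|_2$.
 \item Metaplectic operators in $\Mp(d,\bR)$ are homeomorphisms of $\cS(\rd)$, as recalled in the Introduction for the $2d$ case and analogously for $d$. Time-frequency shifts are also homeomorphisms of $\cS(\rd)$, with inverse $\pi(w)^{-1}=e^{-2\pi i x\cdot\xi}\pi(-w)$. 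Multiplying by a nonzero constant keeps the composition an isomorphism of $\cS(\rd)$.
 \item Both factors extend by duality to homeomorphisms of $\cS'(\rd)$. It remains to check that this duality extension agrees with the operator $\pi_{\mathcal A}(z)$ on $\cS'$, i.e.\ with the continuous extension of the defining formula through $W_{\mathcal A}(\varphi,f)(z)$. This holds because the duality extension is the unique weak-$*$ continuous extension from the dense subspace $\cS$, and it is also compatible with Theorem~\ref{thm:action-metaplectic-wigner-distribution}(iii).
\end{enumerate}

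The main obstacle is step one: the pointwise identification of $\pi_{\mathcal A}(z)f$. We must track the conjugations and the sesquilinearity convention carefully, so that the chirp phase and the sign ambiguity come out as unimodular constants depending only on $z$. In particular, we must make sure that the deformation operator acts on the window slot, which is the slot $f$ occupies in the definition of the atom. Once that is secured, the remaining steps are routine compositions of known isomorphisms.
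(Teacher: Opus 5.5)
Your proposal is correct and takes essentially the same approach as the paper, which only remarks that the rescaled-STFT formula of Theorem~\ref{thm:rescaled-stft} ``identifies the mapping properties of the metaplectic atoms.'' Your factorization $\pi_{\mathcal A}(z)=|\det E_{\mathcal A}|^{-1/2}\,\overline{\Phi_{N_{\mathcal A}}(E_{\mathcal A}^{-1}z)}\,\pi(E_{\mathcal A}^{-1}z)\widehat{\delta_{\mathcal A}}$ makes that remark explicit: you place the deformation on the window slot and take the conjugate phase from antilinearity, consistent with the $\tau$-Wigner example. Items (i)--(iii) then follow from the corresponding properties of time-frequency shifts and metaplectic operators.
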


Sampling these atoms gives the corresponding frame system.
\begin{definition}
    Let $\mathcal A\in\Sp(2d,\bR)$ such that $\pi_{\mathcal A}(z)$ extends to a continuous linear operator on $L^2(\rd)$ for all $z\in\rdd$, $g\in L^2(\rd)$, and $\Lambda\subset\rdd$ a lattice. We define the metaplectic Gabor system as the set
    \begin{equation}
        \cG_{\mathcal A}(g,\Lambda)=\{\pi_{\mathcal A}(\lambda)g\}_{\lambda\in\Lambda}.
    \end{equation}
    We say that $\cG_{\mathcal A}(g,\Lambda)$ is a metaplectic Gabor frame if it is a frame for $L^2(\rd)$, i.e., if there exist constants $A,B>0$ such that
    \begin{equation}
        A\norm{f}_{2}^2\leq \sum_{\lambda\in\Lambda}|\scal{f}{\pi_{\mathcal A}(\lambda)g}|^2\leq B\norm{f}_{2}^2, \qquad f\in L^2(\rd),
    \end{equation}
    with metaplectic Gabor coefficients
    \begin{equation}
        c_\lambda=\scal{f}{\pi_{\mathcal A}(\lambda)g}=W_\cA(f,g)(\lambda).
    \end{equation}
    If $\cG_{\mathcal A}(g,\Lambda)$ is a metaplectic Gabor frame for $L^2(\rd)$, then $\|(\scal{f}{\pi_{\mathcal A}(\lambda)g})\|_{\ell^2}$ is an equivalent norm for $L^2(\rd)$. Explicitly, the frame condition can be written as
    \begin{equation}
        A\norm{f}_{2}^2\leq \sum_{\lambda\in\Lambda}|W_{\mathcal A}(f,g)(\lambda)|^2\leq B\norm{f}_{2}^2, \qquad f\in L^2(\rd).
    \end{equation}
\end{definition}

For shift-invertible representations this new frame notion is not detached from the classical one. The rescaled-STFT formula identifies it exactly with an ordinary Gabor frame after deforming the window and the lattice.

\begin{theorem}[Metaplectic frames as deformed Gabor frames]\label{thm:metaplectic-frame-equivalence}
Assume that $W_{\mathcal A}$ is shift-invertible and let $\widehat{\delta_{\mathcal A}}$ be the deformation operator in Theorem~\ref{thm:rescaled-stft}. Let $g\in L^2(\rd)$ and let $\Lambda\subset\rdd$ be discrete. Then $\cG_{\mathcal A}(g,\Lambda)$ is a metaplectic Gabor frame with bounds $A,B$ if and only if
\begin{equation}
 \cG(\widehat{\delta_{\mathcal A}}g,E_{\mathcal A}^{-1}\Lambda)
\end{equation}
is a Gabor frame with bounds $|\det(E_{\mathcal A})|A$ and $|\det(E_{\mathcal A})|B$.
\end{theorem}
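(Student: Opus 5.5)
The plan is to substitute the rescaled-STFT formula of Theorem~\ref{thm:rescaled-stft} directly into the frame inequality for $\cG_{\mathcal A}(g,\Lambda)$. By definition, the metaplectic Gabor coefficients are $c_\lambda=W_{\mathcal A}(f,g)(\lambda)$. Theorem~\ref{thm:rescaled-stft} then gives, for every $f\in L^2(\rd)$ and $\lambda\in\Lambda$,
\begin{equation}
 |W_{\mathcal A}(f,g)(\lambda)|=|\det(E_{\mathcal A})|^{-1/2}\,\bigl|V_{\widehat{\delta_{\mathcal A}}g}f(E_{\mathcal A}^{-1}\lambda)\bigr|.
\end{equation}
Two features of the formula make this clean: the chirp $\Phi_{N_{\mathcal A}}$ is unimodular, and the overall sign ambiguity (in $\hat{\mathcal A}$, hence in the formula) disappears once we take absolute values.

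Squaring and summing over $\lambda\in\Lambda$ yields
\begin{equation}
 \sum_{\lambda\in\Lambda}|W_{\mathcal A}(f,g)(\lambda)|^2=|\det(E_{\mathcal A})|^{-1}\sum_{\mu\in E_{\mathcal A}^{-1}\Lambda}|\scal{f}{\pi(\mu)\widehat{\delta_{\mathcal A}}g}|^2 .
\end{equation}
Here we reindex with $\mu=E_{\mathcal A}^{-1}\lambda$, which is a bijection $\Lambda\to E_{\mathcal A}^{-1}\Lambda$ because $E_{\mathcal A}\in\GL(2d,\bR)$ by shift-invertibility. We also use $V_\gamma f(\mu)=\scal{f}{\pi(\mu)\gamma}$ with $\gamma=\widehat{\delta_{\mathcal A}}g\in L^2(\rd)$, which holds because $\widehat{\delta_{\mathcal A}}$ is unitary; $\gamma$ is nonzero exactly when $g$ is.

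Consequently, $A\|f\|_2^2\le\sum_\lambda|W_{\mathcal A}(f,g)(\lambda)|^2\le B\|f\|_2^2$ for all $f$ holds if and only if
\begin{equation}
|\det E_{\mathcal A}|A\|f\|_2^2\le\sum_{\mu}|\scal{f}{\pi(\mu)\gamma}|^2\le|\det E_{\mathcal A}|B\|f\|_2^2
\end{equation}
for all $f$. The two statements are equivalent line by line, since multiplying by the positive constant $|\det E_{\mathcal A}|$ is reversible. This is exactly the claimed correspondence of frame bounds. Discreteness of $\Lambda$ transfers to $E_{\mathcal A}^{-1}\Lambda$, and if $\Lambda$ is a lattice, so is its image.

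The only delicate point is justifying that the identity of Theorem~\ref{thm:rescaled-stft} holds pointwise at each sample $\lambda$ for arbitrary $f,g\in L^2(\rd)$, not merely almost everywhere. I would handle this by noting that both sides are continuous functions of $z$: the right side is an STFT of $L^2$ functions composed with a linear map and multiplied by a smooth chirp. The identity is first proved for Schwartz functions and then extends by density, using the $L^2$-continuity in Theorem~\ref{thm:action-metaplectic-wigner-distribution} together with the uniform bound $\|V_\gamma f\|_\infty\le\|f\|_2\|\gamma\|_2$. Convergence in $L^2$ combined with uniform convergence of the STFT side forces $W_{\mathcal A}(f,g)$ to coincide everywhere with the continuous right-hand side. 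This is also what makes the samples $W_{\mathcal A}(f,g)(\lambda)=\scal{f}{\pi_{\mathcal A}(\lambda)g}$ well defined in the first place.
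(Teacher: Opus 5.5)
Your proof is correct and follows the route the paper indicates. Substituting the rescaled-STFT formula of Theorem~\ref{thm:rescaled-stft} gives exactly the sampled identity $|C_{\mathcal A,g}f(\lambda)|=|\det E_{\mathcal A}|^{-1/2}|V_{\widehat{\delta_{\mathcal A}}g}f(E_{\mathcal A}^{-1}\lambda)|$ that the paper uses, and reindexing over $E_{\mathcal A}^{-1}\Lambda$ then transfers the frame bounds; your continuity and density justification of pointwise sampling for $L^2$ inputs supplies a detail the paper leaves implicit.
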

Thus the metaplectic discretization deforms, rather than replaces, the Gabor discretization developed in Section~\ref{sec:gabor}.

For the remainder of this subsection, assume that $W_{\mathcal A}$ is shift-invertible. The preceding equivalence transports the classical frame theory to this setting, including the structure of the canonical dual.

Given a lattice $\Lambda\subset\rdd$ and $g\in\cS(\rd)$, we can consider the (metaplectic) analysis operator
\begin{equation}
C_{\mathcal A,g}f=(\langle f,\pi_{\mathcal A}(\lambda)g\rangle)_{\lambda\in\Lambda}, \quad \text{ for } f\in L^2(\rd),
\end{equation}
the (metaplectic) synthesis operator
\begin{equation}
D_{\mathcal A,g}c=\sum_{\lambda\in\Lambda}c_\lambda\pi_{\mathcal A}(\lambda)g, \quad \text{ for } c\in \ell^2(\Lambda),
\end{equation}
and the (metaplectic) Gabor frame operator
\begin{equation}
\mathcal S_{\mathcal A,g}f=D_{\mathcal A,g}C_{\mathcal A,g}f=\sum_{\lambda\in\Lambda}\langle f,\pi_{\mathcal A}(\lambda)g\rangle \pi_{\mathcal A}(\lambda)g.
\end{equation}

The canonical dual frame of $\cG_{\mathcal A}(g,\Lambda)$ is again a metaplectic Gabor frame $\cG_{\mathcal A}(\gamma_{\mathcal A},\Lambda)$. When $g\in\cS(\rd)$, the ordinary Schwartz-window duality result and the metaplectic deformation imply $\gamma_{\mathcal A}\in\cS(\rd)$ as well.

Consequently, if $\cG_{\mathcal A}(g,\Lambda)$ is a metaplectic Gabor frame for $L^2(\rd)$ with frame bounds $A$ and $B$, then we have the reconstruction formulas
\begin{equation}
    f=\sum_{\lambda\in\Lambda}\scal{f}{\pi_{\mathcal A}(\lambda)g}\pi_{\mathcal A}(\lambda)\gamma_{\mathcal A}, \quad f=\sum_{\lambda\in\Lambda}\scal{f}{\pi_{\mathcal A}(\lambda)\gamma_{\mathcal A}}\pi_{\mathcal A}(\lambda)g,
\end{equation}
and these series converge unconditionally in the norm of $L^2(\rd)$. The corresponding coefficient estimates are precisely the frame inequalities already built into the definition, while the canonical dual carries the reciprocal bounds. The coefficient characterizations below extend these Hilbert-space bounds to modulation norms.

Furthermore, the characterization of modulation spaces via Gabor frames yields an equivalent discrete norm for modulation spaces in terms of metaplectic Gabor frames.

\begin{theorem}[Discrete characterization in fixed phase-space coordinates]\label{thm:metaplectic-discrete}
Let $0<p,q\leq\infty$, $0\neq g\in\cS(\rd)$, and let $W_{\mathcal A}$ be shift-invertible. Let $m$ be $v$-moderate for a polynomially bounded control weight $v$, with $m\asymp m\circ E_{\mathcal A}^{-1}$. Assume that $\cG_{\mathcal A}(g,\Lambda)$ is a frame. If $p\neq q$, assume additionally that $E_{\mathcal A}$ is upper block triangular. Then the analysis and synthesis maps are bounded,
\begin{equation}
 C_{\mathcal A,g}:M_m^{p,q}\longrightarrow\ell_m^{p,q}(\Lambda),
 \qquad D_{\mathcal A,g}:\ell_m^{p,q}(\Lambda)\longrightarrow M_m^{p,q},
\end{equation}
where the sequence norm is the geometric norm \eqref{eq:geometric-sequence-norm}. Moreover,
\begin{equation}
 \|f\|_{M_m^{p,q}}
 \asymp\|C_{\mathcal A,g}f\|_{\ell_m^{p,q}(\Lambda)}
 \asymp\|C_{\mathcal A,\gamma_{\mathcal A}}f\|_{\ell_m^{p,q}(\Lambda)}.
\end{equation}
The reconstruction series converge unconditionally in $M_m^{p,q}$ when $p,q<\infty$, and weak-$\ast$ in $M_{1/v}^\infty$ at an infinite exponent. The norm-equivalence constants may depend on the exponents, weights, windows, and lattice.
\end{theorem}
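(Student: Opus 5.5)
The plan is to transport everything to the classical Gabor setting through the rescaled-STFT formula of Theorem~\ref{thm:rescaled-stft}, and then invoke the classical discrete characterizations (Theorem~\ref{thm:gabor-characterization-Banach} and Theorems~\ref{thm:gabor-operators-quasi}, \ref{thm:atom-decomposition-quasiBanach}). Write $E=E_{\mathcal A}$, $\tilde g=\widehat{\delta_{\mathcal A}}g\in\cS(\rd)$ and $\tilde\Lambda=E^{-1}\Lambda$, which is again a lattice. By \eqref{eq:rescaled-stft}, the chirp factor is unimodular, so
\begin{equation}
|C_{\mathcal A,g}f(\lambda)|=|W_{\mathcal A}(f,g)(\lambda)|=|\det E|^{-1/2}\,|V_{\tilde g}f(E^{-1}\lambda)|,\qquad\lambda\in\Lambda.
\end{equation}
Thus the metaplectic coefficient sequence is, in modulus, the ordinary Gabor coefficient sequence of $f$ on $\tilde\Lambda$, reindexed by $\mu=E^{-1}\lambda$. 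By Theorem~\ref{thm:metaplectic-frame-equivalence}, $\cG(\tilde g,\tilde\Lambda)$ is a Gabor frame with a Schwartz window.

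The next step compares sequence norms. I claim that for $c_\lambda=b_{E^{-1}\lambda}$,
\begin{equation}
\|c\|_{\ell^{p,q}_m(\Lambda)}\asymp\|b\|_{\ell^{p,q}_m(\tilde\Lambda)}.
\end{equation}
To prove this, take the geometric norm \eqref{eq:geometric-sequence-norm} with a cell $Q$ for $\tilde\Lambda$, and use $EQ$ as the cell for $\Lambda$. This gives
\begin{equation}
\sum_{\lambda}|c_\lambda|\mathbf 1_{\lambda+EQ}=\Big(\sum_\mu|b_\mu|\mathbf 1_{\mu+Q}\Big)\circ E^{-1}.
\end{equation}
Composition with $E^{-1}$ changes the weight to $m\circ E$, and $m\circ E\asymp m$ by the hypothesis $m\asymp m\circ E^{-1}$.
\begin{itemize}
\item When $p=q$, a linear change of variables costs only $|\det E|^{1/p}$.
\item When $p\neq q$ and $E$ is upper block triangular, the norm is controlled by \eqref{eq:triangular-mixed-change}.
\end{itemize}
In both cases the factor is a constant, and independence of the cell choice (up to equivalence) finishes the comparison. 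With this in hand, $\|C_{\mathcal A,g}f\|_{\ell^{p,q}_m(\Lambda)}\asymp\|C_{\tilde g}f\|_{\ell^{p,q}_m(\tilde\Lambda)}\asymp\|f\|_{M^{p,q}_m}$ by the classical characterization.

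For the synthesis map, I would use the atom formula implied by Theorem~\ref{thm:rescaled-stft}, namely $\pi_{\mathcal A}(\lambda)g=\overline{\phi_\lambda}\,|\det E|^{-1/2}\pi(E^{-1}\lambda)\tilde g$ with $|\phi_\lambda|=1$. Then $D_{\mathcal A,g}c=|\det E|^{-1/2}D_{\tilde g}b$, where $b_\mu=\overline{\phi_{E\mu}}c_{E\mu}$. Boundedness follows from Theorem~\ref{thm:gabor-operators-quasi} together with the sequence-norm equivalence above, since the unimodular factors do not affect solid sequence norms.

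For the dual window, the canonical dual frame of $\cG(\tilde g,\tilde\Lambda)$ is $\cG(\tilde\gamma,\tilde\Lambda)$ with $\tilde\gamma\in\cS$. The intertwining above shows that $\gamma_{\mathcal A}$ corresponds to $\widehat{\delta_{\mathcal A}}^{-1}\tilde\gamma$, up to the constant $|\det E|$. The same argument then gives the equivalence for $C_{\mathcal A,\gamma_{\mathcal A}}$, and the reconstruction series and their convergence modes are inherited from Theorem~\ref{thm:atom-decomposition-quasiBanach}.

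The main obstacle is the bookkeeping in the sequence-norm comparison when $p\neq q$. One must check that the geometric norm really transforms through \eqref{eq:triangular-mixed-change}, including the quasi-Banach range. One must also check that the chirp phases $\Phi_{N_{\mathcal A}}(E^{-1}\lambda)$ are harmless in the synthesis direction, which holds because they are unimodular and can be absorbed into the coefficients.
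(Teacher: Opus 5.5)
Your proposal is correct and follows essentially the same route as the paper: read off $|C_{\mathcal A,g}f(\lambda)|=|\det E|^{-1/2}|V_{\widehat{\delta_{\mathcal A}}g}f(E^{-1}\lambda)|$ from the rescaled-STFT formula, identify the frame with the ordinary Gabor frame on $E^{-1}\Lambda$, and match the geometric sequence norms using $m\circ E\asymp m$ together with \eqref{eq:triangular-mixed-change} (or an arbitrary invertible change when $p=q$). The classical Gabor analysis, synthesis and reconstruction results then apply, and the canonical dual is transported by the same deformation. Your version simply makes explicit what the paper only sketches. This includes the choice of the cell $EQ$ for $\Lambda$, the absorption of the unimodular chirp factors into the coefficients in the synthesis direction, and the identification $\gamma_{\mathcal A}=|\det E|\,\widehat{\delta_{\mathcal A}}^{-1}\tilde\gamma$. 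All of these check out.
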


\begin{proof}
Write $E=E_{\mathcal A}$ and $h=\widehat{\delta_{\mathcal A}}g$. By \eqref{eq:rescaled-stft},
\begin{equation}\label{eq:metaplectic-sampled-stft}
 |C_{\mathcal A,g}f(\lambda)|
 =|\det E|^{-1/2}|V_hf(E^{-1}\lambda)|.
\end{equation}
Theorem~\ref{thm:metaplectic-frame-equivalence} identifies the frame on the right with an ordinary Gabor frame on $E^{-1}\Lambda$. The weight compatibility and \eqref{eq:triangular-mixed-change} identify its geometric coefficient norm with the one on $\Lambda$; for $p=q$ an arbitrary invertible change suffices. The ordinary Gabor analysis, synthesis, and reconstruction statements therefore apply. The canonical dual is transported by the same deformation.
\end{proof}

For an arbitrary invertible $E_{\mathcal A}$, the full discrete characterization is retained by transporting the sequence norm together with the sampling coordinates. Explicitly, define
\begin{equation}\label{eq:transported-sequence-norm}
 \|c\|_{\ell_{m,E_{\mathcal A}}^{p,q}(\Lambda)}
 :=\|(c_{E_{\mathcal A}\mu})_{\mu\in E_{\mathcal A}^{-1}\Lambda}\|_{\ell_m^{p,q}(E_{\mathcal A}^{-1}\Lambda)}.
\end{equation}
Then \eqref{eq:metaplectic-sampled-stft} and ordinary Gabor theory give the same analysis, synthesis, and norm-equivalence statements with this transported norm, without a triangularity assumption. This is the deformed-coordinate interpretation of the discrete result in \cite{CorderoGiacchi2024MetaplecticGabor}. For $p\neq q$, it must be distinguished from keeping the original time-frequency order fixed on $\Lambda$.

\subsection{A recent refinement: exponential Gabor sparsity}

The preceding sections prepare three ingredients for a quantitative application: Gabor matrices encode an operator in phase space, Gelfand-Shilov windows provide controlled exponential localization, and the symplectic projection describes the underlying deformation. They meet in the study of sparse Gabor representations of metaplectic propagators.

For such windows, Cordero, Giacchi, Pucci and Trapasso obtain explicit exponential decay of the Gabor matrix away from the graph of the associated symplectic map \cite{CorderoGiacchiPucciTrapasso2026}. These estimates quantify dispersion and wave-packet spreading, and yield confinement results for quadratic Schr\"odinger propagators. The examples include the harmonic oscillator, a free particle in a constant magnetic field, and fractional Fourier transforms. Thus the continuous geometry and the discrete coefficient description do more than characterize a function space: together they describe how localized signals evolve.

\section{Concluding perspectives}
\label{sec:conclusion}

The history of modulation spaces reveals the stability of Feichtinger's local-global principle across several distinct kinds of development. At fixed exponents and weights, uniform decompositions, the STFT, and Gabor coefficients provide equivalent descriptions of the same regularity. The quasi-Banach and ultradistribution extensions change the functional-analytic setting as well, but retain the underlying organization of phase-space information. Keeping the full weight $m(x,\xi)$ visible makes these connections precise: it records which variables carry decay or regularity and how this information transforms.

The metaplectic results make the analyzer aspect especially explicit. Shift-invertible $W_{\mathcal A}$ are rescaled STFTs up to a chirp and a deformation of the window. Under the appropriate weight and mixed-norm compatibility conditions, they therefore provide alternative measurements of Feichtinger's spaces rather than a competing scale. Upper- and lower-triangular conditions express compatibility with the integration order for modulation and Wiener amalgam norms. After sampling, the same distinction persists: one must either preserve the original order of the coefficient norm or transport that norm together with the lattice.

From Feichtinger's 1983 LCA construction to the recent metaplectic and sparse-Gabor developments, the recurring principle is the same: localize phase-space information and then measure its global organization in a flexible norm. Its continuing ability to accommodate new analyzers, weights, generalized functions, operator classes, and evolution equations is a fitting mathematical tribute to Hans Feichtinger and to the breadth of the framework he initiated.
\section*{Acknowledgements}
Antonio Caputo, Elena Cordero and Gianluca Giacchi have been supported by the Gruppo Nazionale per l’Analisi Matematica, la Probabilità e le loro Applicazioni (GNAMPA) of the Istituto Nazionale di Alta Matematica (INdAM). Gianluca Giacchi has been funded by the Swiss National Science Foundation starting grant ``Multiresolution methods for unstructured data'' (TMSGI2\_211684).

\bibliographystyle{abbrv}

\end{document}